\numberwithin{equation}{subsection}
\newtheorem{thm}[equation]{Theorem}
\newtheorem*{thm*}{Theorem}
\newtheorem{cor}[equation]{Corollary}
\newtheorem*{cor*}{Corollary}
\newtheorem{lemma}[equation]{Lemma}
\newtheorem{prop}[equation]{Proposition}
\newtheorem{thmA}{Theorem}
\theoremstyle{definition}
\newtheorem{definition}[equation]{Definition}
\theoremstyle{remark}
\newtheorem{remark}[equation]{Remark}
\newtheorem{example}[equation]{Example}
\theoremstyle{definition}
\def\End{\operatorname{End}}
\def\ker{\operatorname{ker}}
\def\im{\operatorname{im}}
\def\dim{\operatorname{dim}}
\def\id{\operatorname{id}}
\def\Ind{\operatorname{Ind}}
\def\Hom{\operatorname{Hom}}
\def\sgn{\operatorname{sgn}}
\def\C{\mathbb{C}}
\def\R{\mathbb{R}}
\def\N{\mathbb{N}}
\def\Z{\mathbb{Z}}
\def\F{\mathbb{F}}
\def\OO{\mathcal{O}}
\def\KK{\mathcal{K}}
\def\HH{\mathcal{H}}
\def\FF{\mathcal{F}}
\def\GG{\mathcal{G}}
\def\NN{\mathcal{N}}
\def\PP{\mathcal{P}}
\def\SS{\mathcal{S}}
\def\CC{\mathcal{C}}
\def\As{\mathscr{A}}
\def\Bs{\mathscr{B}}
\def\Cs{\mathscr{C}}
\def\Ds{\mathscr{D}}
\def\g{\mathfrak{g}}
\def\h{\mathfrak{h}}
\def\l{\mathfrak{l}}
\def\s{\mathfrak{s}}
\def\ol{\overline}
\def\mod{\operatorname{mod}}
\def\tr{\text{tr}}
\def\Spec{\text{Spec}}
\def\Mod{\text{mod}}
\def\Ver{\operatorname{Ver}}
\def\sub{\subseteq}
\def\t{\tilde}
\def\Rep{\operatorname{Rep}}
\def\verp{\operatorname{Ver}_p}
\def\rep{\operatorname{Rep}_\Bbbk}
\def\vec{\operatorname{vec}_\Bbbk}
\def\svec{\operatorname{svec}_\Bbbk}
\newcommand{\op}[1]{\operatorname{#1}}
\newcommand{\tab}{\operatorname{Tab}}
\newcommand{\Tak}{\mathcal{T}ak}
\newcommand{\AbCat}{\mathcal{A}b\mathcal{C}at}
\newcommand{\comod}{\mathrm{comod}}
\newcommand{\trun}{\operatorname{tr}}
\newcommand{\st}{\operatorname{st}}
\newcommand{\Tilt}{\operatorname{Tilt}}
\newcommand{\Young}{\operatorname{Young}}
\def\De{\Delta}
\def\Na{\nabla}
\newcommand{\Filt}{\operatorname{Filt}\nolimits}
\newcommand{\inj}{\operatorname{inj}\nolimits}
\newcommand{\proj}{\operatorname{proj}\nolimits}
\def\D{\mathbf D}
\def\K{\mathbf K}
\def\L{\mathbf L}
\def\R{\mathbf R}
\def\xto{\xrightarrow}
\begin{document}
\normalem

\title{Semisimplifying categorical Heisenberg actions and periodic equivalences}

\author{Chris Hone}
\address{C.~Hone: School of Mathematics and Statistics, University of Sydney, Australia}
\email{christopher.t.hone@gmail.com}
\author{Finn Klein}
\address{F.~Klein: School of Mathematics and Statistics, University of Sydney, Australia}
\email{finn.klein@sydney.edu.au}
\author{Bregje Pauwels}
\address{B.~Pauwels: School of Mathematical and Physical Sciences, Macquarie University, Australia}
\email{bregje.pauwels@mq.edu.au}
\author{Alexander Sherman}
\address{A.~Sherman: School of Mathematics and Statistics, University of New South Wales, Australia}
\email{alex.sherman@unsw.edu.au}
\author{Oded Yacobi}
\address{O.~Yacobi: School of Mathematics and Statistics, University of Sydney, Australia}
\email{oded.yacobi@sydney.edu.au}
\author{Victor L. Zhang}
\address{V.L.~Zhang: School of Mathematics and Statistics, University of New South Wales, Australia}
\email{victor.l.zhang@unsw.edu.au}

\begin{abstract}
We systematically apply semisimplification functors in modular representation theory. Motivated by the Duflo--Serganova functor in Lie superalgebras, we construct various functors of interest. In the setting of finite groups, we refine the cyclic group Brauer construction and categorify the Glauberman correspondence.  In the setting of degenerate categorical Heisenberg actions, we obtain a rich collection of functors which commute with the categorical action.  Applied to well-known categorifications of the basic representation and Fock space, our functors give explicit realizations of periodic equivalences for  polynomial functors and symmetric groups first studied by Henke--Koenig. In fact, we globalize the equivalences of Henke--Koenig by symmetric monoidal functors.  We apply these results to deduce branching properties of certain modular representations of~$S_n$.  
\end{abstract}

\maketitle
\pagestyle{plain}


\section{Introduction}\label{section:introduction}

Representation theory in characteristic $p>0$ is incredibly rich and complicated.  For a finite group $G$, representations need not be semisimple, and many approaches have been developed to study the homological properties of $\rep G$.  Local representation theory and support theory have been particularly successful and continue to attract widespread interest (\cite{Alperin}, \cite{Evens}).  In a separate vein, computing the decomposition numbers (or even the dimensions of irreducible representations) of the symmetric group remains one of the largest open problems in modular representation theory (\cite{James2}, \cite{Williamson}), and has important connections to the modular representation theory of the algebraic group $GL_n$.

In this paper we introduce \emph{One Tree Island (OTI) functors} for studying $\rep G$ and categorical Heisenberg actions in characteristic $p$.  Such functors were already introduced and studied in \cite{BFRSW} in the setting of algebraic groups, where a (conjectural) connection was found with the Finkelberg--Mirkovic (FM) conjecture.  The themes of \cite{BFRSW} and this paper are that OTI functors:
\begin{enumerate}
    \item have explicit, computable definitions,
    \item are compatible with categorical structures (e.g.~they are symmetric monoidal and/or define morphisms of categorical actions),
    \item are non-exact but define maps on Grothendieck groups mod $p$,
    \item naturally detect the cohomological support of a module,
    \item behave well on generating objects such as permutation modules and tilting modules,
    \item categorify a central element of the enveloping algebra of $\widehat{\mathfrak{sl}}_p$ modulo $p$,
    \item and provide extensions of known functors of interest (hypercohomology composed with FM equivalence in the setting of \cite{BFRSW}, and periodic equivalences of symmetric group representations in the setting of this paper). 
\end{enumerate}

Using OTI functors we obtain explicit consequences for representations of symmetric groups, which we believe further motivate the study of OTI functors in modular representation theory and categorical Heisenberg actions.

The remainder of the introduction will provide a brief description of the OTI functor and the main results of this paper.  We will also discuss how this work is motivated by the Duflo--Serganova functor from the theory of Lie superalgebras.

\subsection{What is an OTI functor? The case of finite groups}\label{section intro fin gps} Let $G$ be a finite group such that $p \operatorname{\big|} |G|$.  Let $\Bbbk$ be an algebraically closed field of characteristic $p$, and let $\rep G$ be the category of finite dimensional representations of $G$ over $\Bbbk$.

By Cauchy's theorem, there exists a cyclic subgroup $H\sub G$ with $H\cong C_p$.  One may then consider the functor $\Phi_H$ defined by the following commutative diagram:
\[
\xymatrix{ 
\rep G \ar[rr]^{\operatorname{Res}_H}\ar[drr]_{\Phi_H} && \rep C_p \ar[d]^{ss} \\
&& \operatorname{Ver}_p}
\]
Here, $\operatorname{Ver}_p$ is a semisimple, symmetric tensor category defined as the \textit{semisimplification} of the tensor category $\operatorname{Rep}C_p$. As an abelian category, this category is just $\prod_{i=1}^{p-1}\vec$.
The semisimplication functor $ss$ is symmetric monoidal, but not exact, and in this case admits an explicit, computable description given in Section \ref{section oti explicit}.

Thus the functor $\Phi_H$ is defined by restricting, then semisimplifying.  The centralizer subgroup $C_G(H)$ naturally acts on $\Phi_H(M)$ for any module $M$, and $H$ acts trivially.  Hence we obtain a functor
\[
\Phi_H:\rep G\to \Rep_{\Ver_p} C_G(H)/H,
\]
where $\Rep_{\Ver_p} C_G(H)/H$ denotes the representations of $C_G(H)/H$ in $\Ver_p$.  The functor $\Phi_H$ is our first example of an OTI functor, and it satisfies many desirable properties collected in Lemma \ref{lemma properties of OTI finite groups}.

As an application in the finite groups setting, we consider the situation where we have a group $H$ of order $p$ acting on $G$. The Glauberman correspondence is a canonical bijection between the irreducible representations of $G$ fixed by $H$ and the irreducible representations of the invariants $G^H$ (Theorem \ref{Glauberman corresp}). By Clifford theory, an irreducible representation of $G$ that is fixed by $H$ extends uniquely to an irreducible representation of $G \rtimes H$. 
Observing that $C_{G\rtimes H}(H)/H \cong G^H$, we obtain an OTI functor:
\begin{align*}
\Phi_H:\rep (G\rtimes H) \to \Rep_{\Ver_p}(G^H).
\end{align*}
\begin{thm*}
    The OTI functor $\Phi_H$ above categorifies the Glauberman correspondence. 
\end{thm*}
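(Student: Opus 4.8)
The plan is to combine Clifford theory over the semisimple algebra $\Bbbk G$ with the Green correspondence, and then to match the output against the classical multiplicity characterization of the Glauberman correspondence. Since $p \nmid |G|$, the algebra $\Bbbk G$ is semisimple, and Clifford theory for the normal subgroup $G \trianglelefteq G \rtimes H$ of index $p$ shows that the simple $\Bbbk[G \rtimes H]$-modules fall into two families: for each $H$-orbit of size $p$ on $\operatorname{Irr}(G)$ an induced module $\Ind_G^{G \rtimes H} V$, and for each $H$-fixed simple $V$ a \emph{single} extension $\tilde V$ to $G \rtimes H$ — unique because $\operatorname{Hom}(C_p, \Bbbk^\times) = 1$ and $H^2(C_p, \Bbbk^\times) = 0$ in characteristic $p$. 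Thus $V \mapsto \tilde V$ identifies the domain $\operatorname{Irr}_H(G)$ of the Glauberman correspondence with these extensions, so it suffices to prove: (a) $\Phi_H$ annihilates every induced simple, and (b) $\Phi_H(\tilde V)$ is a simple object of $\Rep_{\Ver_p}(G^H)$ whose underlying $G^H$-module is the Glauberman correspondent of $V$. Part (a) is immediate: $\operatorname{Res}_H \Ind_G^{G \rtimes H} V$ is free over $\Bbbk H$ (Mackey), and $ss$ kills exactly the free $\Bbbk C_p$-modules (Section~\ref{section oti explicit}).

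For (b), I would first observe that $\tilde V$ is non-projective over $\Bbbk[G \rtimes H]$: since $\dim V$ divides $|G|$, which is prime to $p$, the module $\operatorname{Res}_H \tilde V$ has dimension prime to $p$ and hence is not free; as $H \cong C_p$ is a Sylow $p$-subgroup, $\tilde V$ is not projective, and because $C_p$ has no proper nontrivial subgroup the vertex of $\tilde V$ equals $H$. Moreover $N_{G \rtimes H}(H) = C_{G \rtimes H}(H) = G^H \times H$ (an element of $G$ normalizes $H$ inside $G \rtimes H$ iff it is $H$-fixed), so the Green correspondence attaches to $\tilde V$ an indecomposable $\Bbbk[G^H \times H]$-module $g(\tilde V)$ of vertex $H$ with $\operatorname{Res}_{G^H \times H} \tilde V \cong g(\tilde V) \oplus P$, $P$ projective (the exceptional summands have vertex contained in $H \cap {}^x H = 1$ for $x \notin N_{G \rtimes H}(H)$). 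Since $\Bbbk[G^H \times H] = \Bbbk G^H \otimes \Bbbk C_p$ with $\Bbbk G^H$ semisimple, every indecomposable $\Bbbk[G^H \times H]$-module has the form $W \otimes J_k$ with $W \in \operatorname{Irr}(G^H)$ and $J_k$ the $k$-dimensional indecomposable $\Bbbk C_p$-module, so $g(\tilde V) = V' \otimes J_{k_0}$ for a unique $V' \in \operatorname{Irr}(G^H)$ and a unique $1 \le k_0 \le p-1$ ($k_0 \ne p$ by non-projectivity). Applying $\Phi_H = ss \circ \operatorname{Res}_H$ — which carries the residual $G^H \cong C_{G\rtimes H}(H)/H$-action, is additive, kills $\operatorname{Res}_H P$, and sends $J_{k_0}$ to the simple object $L_{k_0}$ of $\Ver_p$ — I obtain $\Phi_H(\tilde V) \cong V' \boxtimes L_{k_0}$, a simple object of $\Rep_{\Ver_p}(G^H) \cong \prod_{i=1}^{p-1} \rep G^H$.

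It then remains to identify $V'$ with the Glauberman correspondent of $V$. Forgetting the $H$-action in $\operatorname{Res}_{G^H \times H} \tilde V \cong (V' \otimes J_{k_0}) \oplus P$ shows that $V'$ occurs in $\operatorname{Res}_{C_G(H)} V = \operatorname{Res}_{G^H} V$ with multiplicity $k_0 + p(\cdots)$, which is prime to $p$ since $1 \le k_0 \le p-1$, while every other irreducible of $G^H$ occurs with multiplicity divisible by $p$. Hence $V'$ is the unique irreducible constituent of $\operatorname{Res}_{C_G(H)} V$ whose multiplicity is prime to $p$, which is precisely the characterization of the Glauberman correspondent of $V$ when $|H| = p$ (Theorem~\ref{Glauberman corresp}; equivalently, $V'$ is the $H$-Brauer quotient $\tilde V[H]$, the Brauer-construction model of the Glauberman correspondence). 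Together with (a), $\Phi_H$ annihilates every simple $\Bbbk[G \rtimes H]$-module except the extensions $\tilde V$ of $H$-fixed simples $V$ of $G$, and sends $\tilde V \mapsto V' \boxtimes L_{k_0}$ with $V'$ the Glauberman correspondent; so $\Phi_H$ categorifies the Glauberman correspondence. I expect the main obstacle to be exactly this final identification — invoking (or re-deriving, as part of the paper's refinement of the cyclic Brauer construction) the multiplicity-mod-$p$ characterization of the Glauberman correspondence for a cyclic group of order $p$; a secondary technical point is making the compatibility of $ss$, the Green correspondence and the residual $G^H$-action precise, for which the explicit model of $ss$ from Section~\ref{section oti explicit} is used.

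As a refinement, one can pin down the $\Ver_p$-component $L_{k_0}$. Since $\operatorname{End}_\Bbbk(V)$ is an $H$-stable block of $\Bbbk G$, the module $\operatorname{Res}_H \operatorname{End}_\Bbbk(V) \cong \operatorname{Res}_H \tilde V \otimes (\operatorname{Res}_H \tilde V)^*$ is a direct summand of $\operatorname{Res}_H \Bbbk G \cong J_1^{\oplus |G^H|} \oplus J_p^{\oplus (|G| - |G^H|)/p}$ (as $H$ permutes the basis $G$ with orbits of size $1$ or $p$), so $\Phi_H(\tilde V) \otimes \Phi_H(\tilde V)^*$ is a multiple of the unit object in $\Ver_p$; in a semisimple symmetric tensor category this forces $L_{k_0}$ to be an invertible simple object, and the only invertible simples of $\Ver_p$ are $L_1$ and $L_{p-1}$. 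Comparing $\Bbbk$-dimensions modulo $p$ then shows that $k_0$ is $1$ or $p-1$ according to whether $\dim V \equiv + \dim V'$ or $\dim V \equiv - \dim V' \pmod p$, so the component of $\Phi_H(\tilde V)$ records the sign of the Glauberman correspondence.
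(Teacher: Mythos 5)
Your proof is correct and reaches the same conclusion as the paper, but via a genuinely different route. Where the paper uses an elementary split-induction argument, you invoke the Green correspondence. Specifically, the paper observes that since $[G\rtimes H : G^H\times H]$ is coprime to $p$, every simple $U$ of $G\rtimes H$ is a direct summand of $\Ind^{G\rtimes H}_{G^H\times H}(W\boxtimes M_j)$, and then uses the compatibility $\Phi_{G\rtimes H,H}\circ\Ind^{G\rtimes H}_{G^H\times H}\cong\Phi_{G^H\times H,H}$ (their Lemma~4.4.3) to conclude $\Phi(U)\subset_\oplus W\boxtimes L_j$, hence $\Phi(U)$ is simple or zero. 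You instead compute $\operatorname{Res}_{G^H\times H}\tilde V$ directly: non-projectivity of $\tilde V$, the fact that $H$ has prime order, and $N_{G\rtimes H}(H)=G^H\times H$ give a Green correspondent $g(\tilde V)=V'\otimes J_{k_0}$ with projective complement, so $\Phi_H(\tilde V)\cong V'\boxtimes L_{k_0}$. Your route is heavier machinery but yields a more explicit decomposition of the restriction; the paper's route is lighter and dodges Green correspondence entirely. Both proofs ultimately converge on the same closing step: appealing to Proposition~\ref{lemma red groth ring finite gps}, i.e.\ the fact that $\Phi_H$ computes $[\operatorname{Res}_{C_G(H)}(-)]$ on $\mod p$ Grothendieck groups, and that the Glauberman correspondence (with sign) is characterized by this congruence. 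Your "refinement" paragraph deriving $k_0\in\{1,p-1\}$ from the monoidality of $\Phi_H$ and the invertibility of $L_{k_0}$ is a nice alternative to that Grothendieck-ring step, and shows more directly where the sign in $L_{\pm 1}$ comes from — close in spirit to the paper's stated motivation for the theorem.
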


We refer to Theorem \ref{OTIGlauberman} for a precise statement and proof. Note that one novel aspect of this categorification is that it explains the sign that appears in the Glauberman correspondence, see~\ref{sec:Glauberman} for more details.

\subsection{Periodic equivalences for symmetric groups}  It was first observed in~\cite{HK} (see also~\cite{MW}, and~\cite{Har} for a more recent treatment) that there is an equivalence between certain full abelian subcategories of $\rep S_{n}$ and $\rep S_{n-p^r}$, corresponding to truncating the first row of a partition by $p^r$.  

For the purposes of this introduction, we say a partition $\lambda$ of $n$ is $p^r$-stable if $\lambda_1\gg p^r$ and $\sum\limits_{i>1}\lambda_i<p^r$.  Let $\mathcal{S}_{n,p^r}$ be the smallest abelian subcategory of $\rep S_n$ containing all permutation modules $M^{\lambda}$, where $\lambda$ is $p^r$-stable.  The category $\mathcal{S}_{n,p^r}$ contains all Specht modules for $p^r$-stable partitions and all simple modules for $p^r$-stable, $p$-regular partitions.  Then~\cite{Har, HK} produce equivalences $\mathcal{S}_{n,p^r}\xto{\sim}\mathcal{S}_{n-p^r,p^r}$, which on simple, Specht and permutation modules correspond to taking the partition $\lambda=(\lambda_1,\lambda_2,\dots)$ to $(\lambda_1-p^r,\lambda_2,\dots)$. We call these ``periodic equivalences''.

The equivalences of~\cite{Har, HK} have useful applications in computing decomposition numbers and constructing Deligne interpolation categories, but they are inexplicit and difficult to compute directly.  Both functors are only defined on subcategories of $\operatorname{Rep}S_n$, and require passage through Schur--Weyl duality amongst other operations to define them.  

Our first theorem improves upon~\cite{Har, HK} by providing explicit, global functors $\rep S_n\to\rep S_{n-p^r}$ which restrict to the periodic equivalences.
Furthermore, our functors commute with the well-known categorical action of $\widehat{\mathfrak{sl}_p}$ on $\bigoplus_n\rep S_n$.

To state our theorem, let $S_{p^r}\sub S_n$ denote the natural subgroup determined by the permutations of $\{n-p^r+1,\dots,n\}$.  Then $S_{p^r}$ contains a transitive, elementary abelian subgroup $A$ of order $p^r$. Write $\sigma_1,\dots,\sigma_r$ for generators of $A$. Let $a_1,\dots,a_r\in\Bbbk$ be linearly independent over $\mathbb{F}_p$, and  set 
\[
z_{\mathbf{a}}=a_1(\sigma_1-1)+\dots+a_r(\sigma_r-1)\in\Bbbk A.
\]
Note that $z_{\mathbf{a}}$ defines an operator on any $V \in \rep S_n$, 
which commutes with $S_{n-p^r}$. Hence $\ker(z_{\mathbf{a}}), \im(z_{\mathbf{a}})$, etc. can be viewed as functors from $\rep S_{n}\to\rep S_{n-p^r}$.
The following is a direct corollary of~Theorem~\ref{thm periodicity for scs}.

\begin{thmA}\label{thm A intro}
    The  functors $\rep S_{n}\to\rep S_{n-p^r}$ given by
    \[
    \frac{\ker(z_{\mathbf{a}})}{\ker(z_{\mathbf{a}})\cap\im(z_{\mathbf{a}})} \ \text{ and } \ \frac{\ker(z_{\mathbf{a}})}{\im(z_{\mathbf{a}}^{p-1})}
    \]
    have the following properties:
    \begin{enumerate}
        \item they restrict to the periodic equivalences $\mathcal{S}_{n,p^r}\xto{\sim}\mathcal{S}_{n-p^r,p^r}$, and
        \item they commute with the $\widehat{\mathfrak{sl}_p}$ categorical action.
    \end{enumerate}
\end{thmA}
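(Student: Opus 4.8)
The plan is to realize both functors in Theorem~A as instances of an OTI functor attached to the elementary abelian subgroup $A\subseteq S_{p^r}\subseteq S_n$, and then deduce the two asserted properties from the general machinery: property (1) from the explicit description of periodic equivalences in \cite{Har, HK} together with an explicit computation on permutation modules $M^\lambda$, and property (2) from the compatibility of the semisimplification functor with the categorical Heisenberg (hence $\widehat{\mathfrak{sl}_p}$) action. The point is that the operator $z_{\mathbf a}=a_1(\sigma_1-1)+\dots+a_r(\sigma_r-1)\in\Bbbk A$ is, up to a scalar, a "generic" nilpotent element of the augmentation ideal of $\Bbbk A$, and restriction of a $\Bbbk S_n$-module to $\Bbbk A$ followed by the semisimplification functor $\mathrm{ss}\colon\rep A\to\Ver_p$ (via the inclusion of a $p$-th root of unity, i.e.\ via $z_{\mathbf a}$) produces exactly the subquotients $\ker(z_{\mathbf a})/(\ker(z_{\mathbf a})\cap\im(z_{\mathbf a}))$ and $\ker(z_{\mathbf a})/\im(z_{\mathbf a}^{p-1})$ as the two extreme simple summands in $\Ver_p=\prod_{i=1}^{p-1}\vec$. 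So the first step is to identify these two functors with the composite of $\operatorname{Res}^{S_n}_{S_{n-p^r}\times A}$, the relevant simple-summand projection of the semisimplification $\operatorname{Res}^{S_n}_A\rightsquigarrow \Ver_p$, and the residual $S_{n-p^r}$-action — exactly as in the finite-group discussion of Section~\ref{section intro fin gps}, but now with the extra $S_{n-p^r}$-equivariance tracked throughout. Since $A$ is transitive on $\{n-p^r+1,\dots,n\}$ and elementary abelian, $C_{S_n}(A)\supseteq S_{n-p^r}\times A$ and $A$ acts trivially on $\Phi_A(V)$, so everything is genuinely a functor to $\rep S_{n-p^r}$ (or its $\Ver_p$-enriched version, which collapses to $\rep S_{n-p^r}$ on the two extreme components).

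For property (1): I would reduce to checking the statement on generators of $\mathcal S_{n,p^r}$, namely the permutation modules $M^\lambda$ for $p^r$-stable $\lambda$, since $\mathcal S_{n,p^r}$ is by definition the smallest abelian subcategory containing them, and an additive functor that is exact on this subcategory (which must be verified — see below) is determined by its values there. The key computation is: for $\lambda$ $p^r$-stable, $\operatorname{Res}^{S_n}_{A}M^\lambda$ is a free $\Bbbk A$-module up to a controlled "stable" part, because the condition $\lambda_1\gg p^r$ forces the point stabilizers to meet $A$ in the whole of $A$ on a cofree complement, while $\sum_{i>1}\lambda_i<p^r$ controls the non-free part; concretely $z_{\mathbf a}$ acts with a single Jordan block of size $p$ on each free summand, and the semisimplification kills all the free summands except contributing the top/bottom layer, yielding precisely $M^{(\lambda_1-p^r,\lambda_2,\dots)}$ as an $S_{n-p^r}$-module. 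Matching this with the explicit effect $\lambda\mapsto(\lambda_1-p^r,\lambda_2,\dots)$ of the \cite{Har, HK} equivalence on permutation modules, and invoking that those equivalences are themselves determined by their values on the $M^\lambda$, gives property (1). One must also check exactness of our functors when restricted to $\mathcal S_{n,p^r}$: this follows because on $p^r$-stable modules $z_{\mathbf a}$ has a single Jordan type (all Jordan blocks of size $p$, plus the stable block), so $\ker$, $\im$ and the relevant quotients behave additively — this is the analogue of the "behaves well on permutation modules" property listed in the introduction.

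For property (2): the categorical $\widehat{\mathfrak{sl}_p}$-action on $\bigoplus_n\rep S_n$ is built from induction/restriction functors $E,F$ along $S_{n-1}\subseteq S_n$, and these commute with restriction to $A$ and with the residual $S_{n-p^r}$-action for purely combinatorial reasons ($A$ lives in the last $p^r$ points, the $E,F$ functors act on the first $n-p^r$ points, so the two operations are on disjoint "tensor factors"); thus one gets a natural isomorphism of $\operatorname{Res}^{S_n}_A(-)$ with the $A$-restriction of $E$/$F$-translates. The remaining point is that the semisimplification functor $\mathrm{ss}$, being symmetric monoidal, and the simple-summand projections intertwine with these induction/restriction functors — this is exactly the statement proved in the body for degenerate categorical Heisenberg actions (the OTI functor "defines a morphism of categorical actions"), applied here. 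I expect the main obstacle to be the first step: pinning down the precise $S_{n-p^r}$-equivariant form of the semisimplification-on-restriction computation for $M^\lambda$ — i.e.\ controlling the Jordan type of $z_{\mathbf a}$ on $\operatorname{Res}^{S_n}_A M^\lambda$ uniformly over $p^r$-stable $\lambda$ and identifying the extreme $\Ver_p$-layers with the actual permutation module $M^{(\lambda_1-p^r,\dots)}$ rather than merely an abstractly isomorphic $S_{n-p^r}$-module — and then reconciling this with the (inexplicit) construction of \cite{Har, HK} to conclude the two functors literally agree on the nose, not just up to an unspecified natural isomorphism. Once that identification is in hand, properties (1) and (2) are formal consequences of the general OTI results quoted above.
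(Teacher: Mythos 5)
Your overall framework is the right one: the two functors are indeed the OTI functors $\Phi_{\varphi}$ attached to CF functors on the transitive elementary abelian subgroup $A\subset S_{p^r}$ (the first is the component $\varphi_1$ from \eqref{formulaOTI}, the second is the Brauer construction from Example \ref{example CF functors}(3)), and the paper does treat this as the single derivation of both statements. Your treatment of property (2) is essentially correct in spirit: it is exactly Theorem \ref{thm OTI categorical commutation}, whose proof is the diagrammatic content of Section \ref{section:appendix_proof_of_categorical_commutation}. But your stated reason — that the $E,F$ operators and $A$ live on ``disjoint tensor factors,'' so commutation is ``purely combinatorial'' — is incorrect and papers over the actual difficulty. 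On the left-hand side of $\Phi_\varphi\circ E\cong (E\boxtimes 1)\circ\Phi_\varphi$, the copy of $A$ used after restriction to $S_{n-1}$ occupies $\{n-p^r,\dots,n-1\}$, while on the right it occupies $\{n-p^r+1,\dots,n\}$; these overlap $S_{n-1}$, and the identification is not a tautology. The real mechanism is that $E^{p^r}F\cong FE^{p^r}\oplus\Ind_{S_{p^r-1}}^{S_{p^r}}(E^{p^r-1})^{\oplus k}$ as $S_{p^r}$-modules (Lemma \ref{lemma cat action structural}) and CF functors kill the induced correction term (Corollary \ref{cor:phi-kills-tr}); that is why commutation holds, and it relies on the full Heisenberg category structure, not on a disjointness of supports.

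For property (1) there is a genuine gap. Your plan is to check the formulas on permutation modules $M^\lambda$ (which matches Proposition \ref{prop:oti_on_permutation_modules} and Theorem \ref{thm:oti_eq_on_perm}) and then extend to all of $\mathcal{S}_{n,p^r}$ by arguing that $z_{\mathbf a}$ acts on every $V\in\mathcal{S}_{n,p^r}$ with Jordan blocks only of sizes $1$ and $p$, so that $\ker$, $\im$, and the relevant quotients ``behave additively'' and $\Phi_\varphi$ is exact on the subcategory. But this Jordan type uniformity is exactly Corollary \ref{cor proof from intro}, which the paper obtains only as a \emph{consequence} of the equivalence, not as an input to it. You offer no independent argument for it, and it is not at all obvious: $\mathcal{S}_{n,p^r}$ contains arbitrary subquotients of $p^r$-stable permutation modules, and restriction to $\Bbbk A$ (or to the shifted cyclic subalgebra $R_{\mathbf a}$) is not exact and does not preserve Jordan type under subquotients. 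So your exactness step is circular as written. The paper's route is genuinely different: it transfers the problem through Schur--Weyl duality to $\mathcal{P}ol^{\st}_{d,p^r}$, which is a highest weight category, and proves exactness there by a two-step homological argument — first establishing that the OTI functor $\Phi_{\mathbf a}$ agrees with its tilting derived functor on standard and costandard objects (using the splitness criterion of Lemma \ref{lemma exactness property}, which is the precise replacement for your Jordan-type heuristic), and then upgrading this to all of $\mathcal{P}ol^{\st}_{d,p^r}$ via the recollement/highest-weight lemma (Lemma \ref{lemma hw cat nonsense} and Theorem \ref{thm periodicity for scs}). The highest weight structure is doing essential work here; a direct Jordan type argument on $\rep S_n$ has no analogue of this scaffolding. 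If you wanted to salvage the direct approach you would need a stand-alone proof that restriction of any object of $\mathcal{S}_{n,p^r}$ to a generic shifted cyclic subgroup is a sum of trivial and free modules, which is precisely the difficult content.
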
 

Theorem \ref{thm A intro} provides many distinct functors which globalize our equivalence of interest: we can choose any generic tuple $\mathbf{a}$ and  either functor given in the  statement.  
In fact,~Theorem~\ref{thm periodicity for scs} exhibits many more global functors that satisfy (1) and (2), some of which are symmetric monoidal. 
In the next section we will generalise the construction of OTI functors, and the functors above will be specific examples of this general construction.

\subsection{An OTI functor from categorical actions} 
Let us distill the definition of the OTI functor from the finite group case as follows.  We may replace $\rep G$ and $\rep H$ by (nice) abelian categories $\Cs,\Ds$, and replace $\operatorname{Res}_H:\rep G\to\rep H$ by a functor $R:\Cs\to\Ds$ with an action of $C_p$. Then we obtain a functor
\[
\xymatrix{ 
\Cs \ar[rr]^-{R}\ar[drr]_-{\Phi} && \Ds\boxtimes \rep C_p \ar[d]^{ss} \\
&& \Ds\boxtimes \operatorname{Ver}_p.}
\]
Here, $\Ds\boxtimes \rep C_p$ is the category of objects in $\Ds$ with a $C_p$-action, and $\Ds\boxtimes\operatorname{Ver}_p$ is equivalent to $p-1$ copies of $\Ds$.  Note that the Frobenius functor is of the above form, where $R(M)=M^{\otimes p}$.

Our primary case of interest comes from the categorical action of the Heisenberg algebra.
Recall that a degenerate categorical Heisenberg action on $\Cs$ is the data of a pair of biadjoint functors $E,F:\Cs\to\Cs$ and natural transformations $x:E \to E$ and $T: E^2 \to E^2$ satisfying certain compatibilities.  One such requirement is that $T$ generates an action of $S_n$ on $E^n$, which in particular provides an action of $C_p$ on $E^p$. This provides the functor $R=E^p$ above.

We also generalise the semisimplification functor $ss:\operatorname{Rep}C_p\to\Ver_p$ to any $\Bbbk$-linear functor $\varphi:\rep S_{p^r}\to\As$ such that $\varphi\circ\Ind_{H}^{S_{p^r}}=0$ for all non-transitive subgroups $H\sub S_{p^r}$, see~Section~\ref{section cf functors}.
We call such a functor $\varphi$ a \textit{CF functor}, a modest generalisation of the notion of $\mathbf{V}$-functor introduced in~\cite{CF}.  CF functors satisfy the minimal conditions necessary for our machinery to behave well.

Now given a category $\Cs$ with a degenerate Heisenberg action and a CF functor $\varphi$, we define the OTI functor $\Phi_{\varphi}:\Cs\to\Cs\boxtimes\As$ by the following (now familiar) diagram:
\[
\xymatrix{
\Cs \ar[rr]^{E^{p^r}} \ar[drr]_{\Phi_{\varphi}} && \Cs\boxtimes\rep S_{p^r} \ar[d]^{1\boxtimes\varphi}\\
&& \Cs\boxtimes\As
}
\]
There are many interesting, well-studied examples of degenerate categorical Heisenberg actions,
the most famous being on the category $\mathcal{S}ym:=\bigoplus\limits_{n\in\N}\rep S_n$.  In this case, $\Phi_{\varphi}$ will be symmetric monoidal whenever $\varphi$ is.  For example, if $p^r=p$ and $\varphi:\rep S_p\to \Ver_p$ is given by restriction to $C_p$ and semisimplification, we obtain an OTI functor $\rep S_n\to\rep S_{n-p}\boxtimes \Ver_p$ (see Example \ref{example fin gps S_n}).  On the other hand, the functors introduced in the statement of Theorem \ref{thm A intro} are also examples of OTI functors.  

To state our next theorem, note that $\Cs\boxtimes\As$ inherits a degenerate categorical Heisenberg action from $\Cs$.  Furthermore, recall that a degenerate categorical Heisenberg action on $\Cs$ induces an action of an integral form of $\widehat{\mathfrak{sl}_p}$ on $K_0(\Cs)$, where $e:=[E]$ is the sum of positive Chevalley generators.  

\begin{thmA}\label{thm intro B}
Let $\varphi$ be a CF functor.
\begin{enumerate}
    \item The OTI functor $\Phi_{\varphi}:\Cs\to\Cs\boxtimes\As$ defines a morphism of degenerate categorical Heisenberg actions.
    \item If $\varphi:\rep S_p\to\Ver_p$ is given by restriction to $C_p$ followed by semisimplification, then $\Phi_{\varphi}$ induces an endomorphism of the mod $p$ Grothendieck group $K_0(\Cs)/(p)$. This endomorphism is given by the central element $e^p$ in the mod $p$ enveloping algebra of $\widehat{\mathfrak{sl}_p}$.
\end{enumerate}
\end{thmA}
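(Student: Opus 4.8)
The plan is to treat the two parts separately, with part (1) being the more structural statement and part (2) a computational identification at the level of Grothendieck groups.

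For part (1), I would first make precise what a morphism of degenerate categorical Heisenberg actions means: a functor $\Psi\colon\Cs\to\Cs'$ together with natural isomorphisms intertwining the $E$- and $F$-functors and compatible with the natural transformations $x$ and $T$, satisfying the obvious coherence. The key observation is that $\Phi_\varphi$ is built out of $E^{p^r}\colon\Cs\to\Cs\boxtimes\rep S_{p^r}$ followed by $1\boxtimes\varphi$, and that the Heisenberg action on $\Cs\boxtimes\As$ is by construction the one induced from $\Cs$ (i.e.\ $E_{\Cs\boxtimes\As}=E_\Cs\boxtimes 1$). So the heart of the matter is to produce natural isomorphisms $\Phi_\varphi\circ E\cong (E\boxtimes 1)\circ\Phi_\varphi$ and similarly for $F$. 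For the $E$ side, this amounts to commuting an extra copy of $E$ past the block $E^{p^r}$: the relevant natural transformation comes from the embedding $S_{p^r}\embedsto S_{p^r+1}$ (or better, the adjunction structure), and one must check that after applying $1\boxtimes\varphi$ the $S_{p^r+1}$-action collapses correctly. Here the defining property of a CF functor, $\varphi\circ\Ind_H^{S_{p^r}}=0$ for non-transitive $H$, is exactly what is needed: when we decompose $\Res^{S_{p^r+1}}_{S_{p^r}}$ of a transitive-type object, the non-transitive pieces are killed, leaving precisely the expected $E$-twist. I expect the $F$ side to follow either by biadjointness (transporting the $E$-intertwiner through the biadjunction units and counits) or by a symmetric argument using the co-induction description. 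The compatibility with $x$ and $T$ should then be a formal consequence, since these live on $E^2$ and the intertwiner was constructed from the $S_n$-equivariant structure. The main obstacle here is bookkeeping: keeping track of which symmetric group acts on which tensor power of $E$ as one slides factors past each other, and verifying the CF vanishing applies to the correct induction maps. I would isolate this as a lemma about how $E^{n}$ interacts with $\Res$ and $\Ind$ between symmetric groups, then feed it into the general framework.

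For part (2), the starting point is part (1): since $\Phi_\varphi$ is a morphism of Heisenberg actions, on $K_0$ it commutes with the operator $e=[E]$, hence with all the Chevalley generators of the integral form of $\widehat{\mathfrak{sl}_p}$ acting on $K_0(\Cs)$. But $\Phi_\varphi$ itself is not exact, so it does not descend to $K_0(\Cs)$ on the nose; the claim is that after reducing mod $p$ it does. The mechanism is standard for semisimplification-type functors: for the specific $\varphi\colon\rep S_p\to\Ver_p$ (restrict to $C_p$, then semisimplify), one has an Euler-characteristic identity expressing $[\varphi(M)]$ as a $\Z/p$-linear combination of $[M]$-data, because $ss$ is additive on the semisimplification and the ``error terms'' coming from non-exactness are sums over the negligible ideal, which contributes multiples of $p$ to the relevant dimension/character counts. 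Concretely, I would invoke (or reprove) the fact that on $K_0$ mod $p$, semisimplifying $\Res_{C_p}$ of a $C_p$-module $V$ gives a class computable from the Jordan block structure, and that this assignment $[V]\mapsto[ss\,\Res\,V]$ is well-defined mod $p$ even though $ss$ is not exact — the key input being that the class of a short exact sequence's ``defect'' under $ss$ is $p$-divisible. Granting this, $\Phi_\varphi$ induces a well-defined endomorphism $\bar\Phi$ of $K_0(\Cs)/(p)$.

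It then remains to identify $\bar\Phi$ with the action of $e^p$. Here I would argue as follows. Both $\bar\Phi$ and multiplication by $e^p$ are endomorphisms of $K_0(\Cs)/(p)$ that commute with the $\widehat{\mathfrak{sl}_p}$-action — for $\bar\Phi$ by part (1), and for $e^p$ because $e^p$ is central in the mod $p$ enveloping algebra of $\widehat{\mathfrak{sl}_p}$ (this is a known feature of restricted enveloping algebras / the Frobenius-twist phenomenon in characteristic $p$, which the paper is presumably citing). By a density or faithfulness argument — e.g.\ in the universal case where $\Cs=\mathcal{S}ym$ and $K_0(\mathcal{S}ym)/(p)$ is the level-one Fock space mod $p$ on which $\widehat{\mathfrak{sl}_p}$ acts with enough cyclicity — an endomorphism commuting with the action is determined by its value on a single cyclic vector (the vacuum, i.e.\ the class of the trivial module of $S_0$). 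So I would compute $\bar\Phi$ on the vacuum directly: $\Phi_\varphi$ applied to the monoidal unit is $ss(\Res_{C_p} E^p(\mathbf{1}))$, and $E^p(\mathbf 1)$ in $\mathcal{S}ym$ is the regular representation $\Bbbk S_p$, whose restriction to $C_p$ and semisimplification one computes explicitly; on the other side, $e^p$ applied to the vacuum in Fock space is a specific standard basis class. Matching these two computations finishes the proof. The main obstacle in part (2) is the well-definedness mod $p$ — pinning down precisely why the non-exactness defect of $ss\circ\Res_{C_p}$ is $p$-divisible on $K_0$ — and I would handle this by the explicit description of the semisimplification functor on $\rep C_p$ referenced in Section \ref{section oti explicit}, computing the class of $ss(V)$ from the dimension and the ranks of the nilpotent $(\sigma-1)$ on $V$ and checking the resulting formula is $\Z/p$-valued and additive in short exact sequences mod $p$.
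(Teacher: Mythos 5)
Your proposal correctly identifies the role of the CF vanishing condition and the mechanism behind $p$-divisibility, but it leaves gaps at precisely the points where the paper's proof does its real work.

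For part (1), the paper's proof is diagrammatic, and the natural isomorphisms $\zeta_E,\zeta_F$ are simply the full crossings of one new strand past the block of $p^r$ strands — not something arising from an embedding $S_{p^r}\hookrightarrow S_{p^r+1}$. Your remark that compatibility with $x$ and $T$ "should then be a formal consequence" is not accurate: the $x$-compatibility (condition~(1) of Definition~\ref{defn categorical commutation}) is one of the two nontrivial conditions, and verifying it already requires the CF vanishing in the form of Corollary~\ref{cor:phi-kills-tr}. Similarly, your suggestion that the $F$-intertwiner "should follow by biadjointness" glosses over the hardest part of the proof: the cap-compatibility (condition~(4)) rests on Lemma~\ref{lemma cat action structural}, which establishes a natural $S_n$-module isomorphism $E^nF(M)\cong FE^n(M)\oplus \Ind_{S_{n-1}}^{S_n}\bigl(E^{n-1}(M)\bigr)^{\oplus k}$ for central charge $k\geq 0$. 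Recognizing the complementary summand as an \emph{induced} module is where the difficulty lies; only then does Proposition~\ref{prop:commutes-modulo-traces} package the error terms as transfers $\op{Tr}_{S_{p^r-1},S_{p^r}}(f)$ through non-transitive inductions, which the CF axiom kills. Without this structural lemma the proof is incomplete.

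For part (2), your well-definedness argument matches the paper's Proposition~\ref{prop:mod_p_groth_ring} in spirit (the explicit formulas for the $\varphi_i$ yield a telescoping identity $[X]\equiv\sum_i i\cdot[\varphi_i(X)]\pmod p$). However, your identification of the resulting endomorphism with $e^p$ has a genuine gap. You propose to show both endomorphisms commute with the $\widehat{\mathfrak{sl}_p}$-action (invoking part~(1) and centrality of $e^p$) and then compare them on a cyclic vector — but the theorem is stated for an arbitrary $\Cs$ with a degenerate Heisenberg action, and $\overline{K_0}(\Cs)$ need not be a cyclic $\widehat{\mathfrak{sl}_p}$-module; your "e.g.\ in the universal case $\Cs=\mathcal{S}ym$" acknowledges this, and the universal case does not imply the general one without a functoriality argument you do not supply. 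The paper's route is both more elementary and more general: the telescoping identity already shows directly that $[\Phi_\varphi(X)]=[E^p(X)]$ in $\overline{K_0}(\Cs)$, so the induced endomorphism is $e^p$ by inspection — with no reliance on part~(1), no commutation argument, and no cyclic vector.
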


The complete statement of this theorem appears in~Sections~\ref{sec:OTIcommute} and~\ref{ss:mod_p_groth_ring}. The proof of~(1) uses a diagrammatic framework and appears in~Section~\ref{section:appendix_proof_of_categorical_commutation}. 

\subsection{Strengthening Theorem \ref{thm A intro}}  
Let $\PP ol=\bigoplus\limits_{n\geq 0}\PP ol_n$ be Friedlander and Suslin's category of strict polynomial functors (see~Section~\ref{section:polynomial_functors}).  There is a degenerate categorical Heisenberg action on $\PP ol$, and the Schur--Weyl duality functor $\mathcal{F}:\PP ol\to\mathcal{S}ym$ is a morphism of categorical actions.  Because the OTI functor is defined purely in terms of the categorical action, we obtain a commutative diagram for any CF functor $\varphi$:
\[
\xymatrix{
\PP ol_n\ar[rr]^{\mathcal{F}}\ar[d]_{\Phi_{\varphi}} && \rep S_n\ar[d]^{\Phi_{\varphi}} \\
\PP ol_{n-p^r}\boxtimes\As \ar[rr]^{\mathcal{F}\boxtimes 1} && \rep S_n\boxtimes\As.
}
\]
This allows us to work with the category of polynomial functors, which is a highest weight category, to deduce results about OTI functors on $\mathcal{S}ym$.  The following is a special case of Theorem~\ref{thm proof from intro ss}:

\begin{thmA}\label{thm intro C}
    Consider a subgroup $H$ with $A\sub H\sub S_{p^r}$ and let $\varphi:\rep H\to(\rep H)^{ss}$ be the semisimplification functor.
    The OTI functor $\Phi_{\varphi}:\rep S_n\to\rep S_{n-p^r}\boxtimes(\rep H)^{ss}$ is a symmetric monoidal functor which restricts to the  periodic equivalence $\mathcal{S}_{n,p^r}\xto{\sim}\mathcal{S}_{n-p^r,p^r}$.  
\end{thmA}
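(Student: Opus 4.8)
The plan is to reduce the statement to a Mackey computation on permutation (or divided-power) modules, exploiting that $\Phi_\varphi$ is built solely from the categorical action, and then to use the highest weight structure of $\PP ol$ to promote that object-level computation to an equivalence of abelian categories. Note first that the CF functor entering the construction is really the composite $\varphi=ss\circ\operatorname{Res}^{S_{p^r}}_H\colon\rep S_{p^r}\to(\rep H)^{ss}$ (restriction to $H$, then semisimplification). I would begin by checking that this $\varphi$ is indeed a CF functor, i.e.\ that $\varphi\circ\Ind^{S_{p^r}}_K=0$ for every non-transitive $K\subseteq S_{p^r}$. Fix such a $K$ and $M\in\rep K$. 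By Mackey, $\operatorname{Res}^{S_{p^r}}_H\Ind^{S_{p^r}}_K M$ is a direct sum of modules $\Ind^H_{K'}M'$ with $K'=H\cap gKg^{-1}$. Since $A$ is elementary abelian of order $p^r$, it acts regularly on $\{1,\dots,p^r\}$, so a subgroup of $A$ is transitive if and only if it is all of $A$; as every conjugate of $K$ is non-transitive, $A\cap hK'h^{-1}$ is a \emph{proper} subgroup of $A$ for every $h\in H$. Hence $\operatorname{Res}^H_A\Bbbk[H/K']$ is a direct sum of modules $\Ind^A_B\Bbbk$ with $B\subsetneq A$; each of these is indecomposable with endomorphism algebra the group algebra of the $p$-group $A/B$ (hence local), of dimension $[A:B]\equiv 0\pmod p$, and therefore negligible in $\rep A$. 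So $\operatorname{Res}^H_A\Bbbk[H/K']$ is negligible in $\rep A$, and since every $\Bbbk H$-linear endomorphism is in particular $\Bbbk A$-linear, $\Bbbk[H/K']$ is already negligible in $\rep H$. The projection formula exhibits $\Ind^H_{K'}M'$ as a direct summand of $\Ind^H_{K'}M'\otimes\Bbbk[H/K']$, which is negligible; summing, $\varphi(\Ind^{S_{p^r}}_K M)=0$, as needed. As $\varphi$ is a composite of two symmetric monoidal functors it is symmetric monoidal, so $\Phi_\varphi$ is a symmetric monoidal functor $\rep S_n\to\rep S_{n-p^r}\boxtimes(\rep H)^{ss}$.

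Next I would compute $\Phi_\varphi$ on permutation modules. With $S_\lambda=S_{\lambda_1}\times S_{\lambda_2}\times\cdots$ the Young subgroup, the categorical action gives $E^{p^r}M^\lambda=\operatorname{Res}^{S_n}_{S_{n-p^r}\times S_{p^r}}\Bbbk[S_n/S_\lambda]=\bigoplus_\mu M^{\lambda-\mu}\boxtimes M^\mu$, summed over compositions $\mu$ with $\mu_i\le\lambda_i$ and $|\mu|=p^r$. Applying $1\boxtimes\varphi$ and using the CF property kills every summand for which $S_\mu$ is non-transitive. If $\lambda$ is $p^r$-stable, then $\lambda_i<p^r$ for all $i>1$, so the only $\mu$ with $S_\mu$ transitive is $\mu=(p^r,0,\dots)$; there $M^\mu$ is the trivial module and $\varphi(M^\mu)=\mathbf 1$, so $\Phi_\varphi(M^\lambda)=M^{(\lambda_1-p^r,\lambda_2,\dots)}\boxtimes\mathbf 1$, with $(\lambda_1-p^r,\lambda_2,\dots)$ again $p^r$-stable. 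This is exactly the effect of the periodic equivalence on permutation modules.

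To turn this into the assertion that $\Phi_\varphi$ restricts to an \emph{equivalence} $\mathcal S_{n,p^r}\xto{\sim}\mathcal S_{n-p^r,p^r}$ I would pass to strict polynomial functors: because $\Phi_\varphi$ is built from the categorical action, the commuting square with the Schur--Weyl functor $\mathcal F$ from the introduction gives $\Phi_\varphi\circ\mathcal F=(\mathcal F\boxtimes 1)\circ\Phi_\varphi$, and $\mathcal F$ sends the divided powers $\Gamma^\lambda$ ($\lambda$ $p^r$-stable) to $M^\lambda$. It therefore suffices to prove the statement in $\PP ol$, which is a highest weight category; there the subcategory generated by the $\Gamma^\lambda$ with $\lambda$ $p^r$-stable is quasi-hereditary with standard objects of $p^r$-stable shape, and its projective objects are $\Delta$-filtered with sections of $p^r$-stable shape. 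Repeating the Mackey computation for divided powers, one finds that after $\operatorname{Res}_H$ the only $\Bbbk S_{p^r}$-modules surviving $1\boxtimes\varphi$ in the relevant $\Delta$-filtrations are trivial modules (everything else is negligible); this lets one check that $\Phi_\varphi$ is exact on this subcategory, sends $\Delta$-, tilting- and simple objects of shape $\lambda$ to those of shape $(\lambda_1-p^r,\lambda_2,\dots)$, and hence restricts to an exact equivalence onto the $p^r$-stable subcategory of $\PP ol_{n-p^r}\boxtimes\mathbf 1$. Since the periodic equivalence of Henke--Koenig is, up to natural isomorphism, the unique exact equivalence of these subcategories commuting with the $\widehat{\mathfrak{sl}_p}$-action (equivalently, with the prescribed effect on permutation modules) --- and $\Phi_\varphi$ commutes with that action by Theorem~\ref{thm intro B}(1) --- the two agree; transporting back along $\mathcal F$ and $\mathcal F\boxtimes 1$ yields the statement for $\mathcal{S}ym$.

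The main obstacle is the third paragraph. Since $\Phi_\varphi$ is genuinely non-exact on all of $\rep S_n$ (semisimplification is not exact, and in fact it can destroy extensions between non-negligible objects), proving that its restriction to $\mathcal S_{n,p^r}$ is exact --- and therefore an equivalence rather than merely ``correct on generators'' --- really does require the highest weight model, in which one can identify precisely which $\Bbbk H$-modules occur in the relevant $\Delta$-filtrations and see that only trivial modules and negligibles appear; the $p^r$-stability hypothesis is exactly what makes this happen. Matching the resulting explicit equivalence with the inexplicit functor of Henke--Koenig requires, in addition, a rigidity statement pinning that functor down by its interaction with the categorical action.
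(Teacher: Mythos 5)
Your first two paragraphs are essentially correct and line up with the paper: the Mackey argument showing $ss\circ\operatorname{Res}$ is a CF functor is the same one underlying Lemma~\ref{lemma CF nontriv} (where one reduces to a Sylow or transitive $p$-subgroup), and the computation of $\Phi_\varphi$ on permutation modules is exactly Proposition~\ref{prop:oti_on_permutation_modules}. You also correctly identify that the serious work is in the third paragraph, and you are right to route through $\PP ol$ and the highest weight structure via Schur--Weyl.

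The gap lies in how you propose to fill that third paragraph, and in one transfer step you elide. You assert that ``repeating the Mackey computation for divided powers, one finds that after $\operatorname{Res}_H$ the only $\Bbbk S_{p^r}$-modules surviving $1\boxtimes\varphi$ in the relevant $\Delta$-filtrations are trivial modules (everything else is negligible).'' A direct Mackey computation is not available here: the Weyl functors $\De_{\lambda^t}$ are not permutation modules, and the $H$-module structure of $E^{p^r}(\De_{\lambda^t})$ is not controlled by a Mackey decomposition the way $E^{p^r}$ of a tensor power of exterior powers is. The paper's route is fundamentally different: it first passes to a \emph{shifted cyclic subgroup} functor $\varphi_{\mathbf a}$ built from a generic element $z_{\mathbf a}\in\Bbbk A$, for which the key exactness criterion Lemma~\ref{lemma exactness property} applies --- semisimplification over $\Bbbk[x]/x^p$ preserves a short exact sequence if and only if the sequence splits. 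Combined with the abstract highest-weight-category input Lemma~\ref{lemma hw cat nonsense} (which shows an additive functor that is ``exact at the right places'' agrees with its tilting derived functor), this yields the equivalence for $\Phi_{\mathbf a}$ on $\PP ol^{\st}_{d,p^r}$ (Theorem~\ref{thm periodicity for scs}). Only then, in Theorem~\ref{thm periodicity poly functors}, is the result transferred from $\Phi_{\mathbf a}$ to the semisimplification CF functor $\varphi$ (or any CF functor dominated by $\varphi_{\mathbf a}$), using the comparison $\Phi_{\sgn_H}\simeq\Phi_{\mathbf a}\boxtimes\sgn_H$ on standards (Lemma~\ref{lem iso on stand}) --- and it is \emph{that} comparison, not a direct filtration analysis, which establishes that the non-$\sgn_H$-isotypic part of $E^{p^r}(\De_{\lambda^t})$ is killed by $\varphi$. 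Without the shifted-cyclic-subgroup intermediary and the exactness Lemma~\ref{lemma exactness property}, your proposed exactness check does not close. Finally, ``transporting back along $\mathcal F$ and $\mathcal F\boxtimes 1$'' is more delicate than you indicate, since $\mathcal F$ is a Serre quotient functor and not an equivalence; the paper handles this with the quotient-reflective lemma preceding Theorem~\ref{thm proof from intro ss}, which shows that the equivalence of highest weight categories descends correctly along the Serre quotient onto $\mathcal{S}_{n,p^r}$.
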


Modular branching results are extremely difficult, with one of the most famous results being Kleshchev's description of branching laws for socles of simple modules \cite{Kl}.  Theorem~\ref{thm intro C} can be used to prove a branching result of a different flavour, that is more in the spirit of Cor.~1.1 of \cite{BFRSW}. Indeed,~Corollary~\ref{cor proof from intro} says that up to negligible summands, any object in $\SS_{n,p^r}$ restricts to $H$ as a trivial module:

\begin{cor*}
    Suppose that $V\in\mathcal{S}_{n,p^r}$.  Then we have a decomposition 
    \[
    V|_{H}=\mathbf{1}^{\oplus \ell} \oplus \bigoplus\limits_i N_{i},
    \]
    where each $N_i$ is indecomposable and $p \mid \dim(N_i)$ for all $i$. 
\end{cor*}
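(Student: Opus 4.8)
The plan is to deduce the corollary directly from Theorem~\ref{thm intro C} (the special case of Theorem~\ref{thm proof from intro ss}) together with basic properties of the semisimplification functor. Let $V \in \mathcal{S}_{n,p^r}$. Writing $\Phi_\varphi$ for the OTI functor of Theorem~\ref{thm intro C}, recall that by construction $\Phi_\varphi(V)$ is obtained from $E^{p^r}(V)$ (equivalently, an induced module $\operatorname{Ind}_{S_{n-p^r}\times S_{p^r}}^{S_n}$-type object) by applying $1 \boxtimes \varphi$, where $\varphi \colon \rep H \to (\rep H)^{ss}$ is semisimplification. Concretely, the $H$-module whose semisimplification we are taking is $V$ restricted to $S_{p^r}$ and then to $H$ (via the transitive elementary abelian $A \subseteq H$), twisted appropriately; the key point is that $V|_H$ is precisely the $H$-module being fed into $\varphi$ in the OTI construction.

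First I would record the standard structural fact that for a finite group $H$ with $p \mid |H|$, the semisimplification functor $\varphi \colon \rep H \to (\rep H)^{ss}$ kills exactly the negligible morphisms, and an indecomposable $H$-module $N$ becomes zero in $(\rep H)^{ss}$ if and only if $p \mid \dim N$ (its categorical dimension, i.e.\ the Brauer character value at the identity, vanishes mod $p$). Thus for any $H$-module $W$ with Krull--Schmidt decomposition $W = \bigoplus_j W_j^{\oplus m_j}$ into pairwise non-isomorphic indecomposables, we have $\varphi(W) = \bigoplus_{j : p \nmid \dim W_j} \varphi(W_j)^{\oplus m_j}$, and the surviving $\varphi(W_j)$ are the simple objects of $(\rep H)^{ss}$.

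Next I would invoke Theorem~\ref{thm intro C}: since $V \in \mathcal{S}_{n,p^r}$, the functor $\Phi_\varphi$ restricts to the periodic equivalence on this subcategory, so in particular $\Phi_\varphi(V)$ lands in $\mathcal{S}_{n-p^r,p^r} \boxtimes (\rep H)^{ss}$ and is the image of $V$ under an equivalence of categories — nonzero whenever $V$ is nonzero, with a controlled (small) image. The crucial extraction is that the $(\rep H)^{ss}$-component of $\Phi_\varphi(V)$, read off through the projection $\rep S_{n-p^r} \boxtimes (\rep H)^{ss} \to (\rep H)^{ss}$, is $\varphi$ applied to $V|_H$ (up to the twist built into the construction, which does not affect dimensions mod $p$ or indecomposability). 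Because $V$ lies in $\mathcal{S}_{n,p^r}$, this image is forced to be a multiple of the unit object $\mathbf{1}$ of $(\rep H)^{ss}$ — this is exactly the content of the periodic equivalence matching permutation/Specht/simple modules for $p^r$-stable partitions to the corresponding objects for truncated partitions, where the extra $S_{p^r}$-factor contributes only trivial representations. Combining with the description of $\varphi$ from the previous paragraph: the only indecomposable summand $W_j$ of $V|_H$ with $p \nmid \dim W_j$ and $\varphi(W_j)$ surviving must be $W_j = \mathbf{1}$, giving $V|_H = \mathbf{1}^{\oplus \ell} \oplus \bigoplus_i N_i$ with every $N_i$ indecomposable of dimension divisible by $p$.

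The main obstacle I anticipate is pinning down precisely the identification of the $(\rep H)^{ss}$-component of $\Phi_\varphi(V)$ with $\varphi(V|_H)$ — i.e.\ untangling the definition of the OTI functor enough to see that no nontrivial simple object of $(\rep H)^{ss}$ can appear, which is where one genuinely uses that $V$ is built from permutation modules of $p^r$-stable partitions and that the periodic equivalence sends these to permutation modules on the truncated side (whose restriction through $E^{p^r}$ to the $H$-factor is a sum of trivial modules). Once that identification and the triviality of the surviving component are in hand, the corollary follows purely formally from the dimension-mod-$p$ criterion for negligibility. A secondary, more routine point is checking that the twist appearing in the OTI construction (coming from the $x$ and $T$ natural transformations / the sign in the Glauberman-type normalization) is by a one-dimensional $H$-module, hence harmless for both the divisibility claim and indecomposability.
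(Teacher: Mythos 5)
Your argument is correct and follows the same route as the paper, which proves this corollary simply by invoking Theorem~\ref{thm proof from intro ss} with $\varphi$ the semisimplification functor $\rep H\to(\rep H)^{ss}$, then reading off the $(\rep H)^{ss}$-component of $\Phi_\varphi(V)$ as $\varphi(V|_H)$ and using that $\varphi$ kills exactly the indecomposables of $p$-divisible dimension while separating the others. The obstacles you flag are in fact non-issues: by Example~\ref{example rep S_n 3}, $E^{p^r}$ on $\mathcal{S}ym$ is restriction to $S_{n-p^r}\times H$, so the $\rep H$-component of $E^{p^r}V$ is literally $V|_H$ with no twist, and the forced landing of $\Phi_\varphi(V)$ in the $\langle\varphi(\mathbf{1})\rangle$-component is precisely the corestriction assertion of Theorem~\ref{thm proof from intro ss} together with the standing identification $\mathscr{C}\boxtimes\langle L\rangle\simeq\mathscr{C}$, so there is no need to re-derive it from the action on permutation modules.
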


\subsection{Motivation from Lie superalgebras}

Representations of Lie superalgebras over the complex numbers are generally not semisimple, even for the general linear Lie superalgebra.  One of the most powerful tools for their study is the \emph{Duflo--Serganova (DS)} functor (originally introduced in \cite{DS}, although see \cite{GHSS} for a more recent survey).  The DS functor has been used to describe central characters, understand certain tensor products, construct abelian envelopes of Deligne interpolating categories, and compute superdimensions of irreducible representations (which is the super-analogue of taking dimension mod $p$). The Theorem \ref{thm A intro} can be viewed as a modular analogue of the equivalence induced by the $DS$ functor in \cite{EntovaHinichSerganova}.

To make the connection to the OTI functor clear, consider a (quasireductive) Lie superalgebra $\mathfrak{g}$. Then $\mathfrak{g}$ has non-semisimple representation theory if and only if it contains a one-dimensional, odd abelian subalgebra $\h=\mathbb{C}\langle x\rangle$ (up to semisimple central extension).   One obtains the $DS$ functor as follows:
\[
\xymatrix{
\Rep\g\ar[rr]^{\operatorname{Res}_{\h}} \ar[drr]_{DS_x} && \Rep\h\ar[d]^{ss}\\
&& \operatorname{svec}.
}
\]
In this case, the semisimplification functor $ss$ takes the particularly simple form $\frac{\operatorname{ker}(x)}{\operatorname{\ker}(x)\cap\im(x)}$ (compare with the formula for OTI functor in Section \ref{section oti explicit}).  
We remark that the OTI functor exhibits some of the remarkable behaviour previously observed for $DS$ functors, along with new phenomena.

\subsection{Roadmap} In Section \ref{section:preliminaries} we set up preliminary notation and machinery for Deligne tensor products and semisimplification which will be used throughout much of the paper. In Section \ref{sec verp} we recall the Verlinde category $\operatorname{Ver}_p$ and an explicit formula for the semisimplification functor of $\rep C_p$.  Section \ref{section:oti_for_finite_groups} introduces the OTI functor for finite groups, proves its basic properties, and shows that it categorifies the Glauberman correspondence.  

In Section \ref{section:oti_functors_on_heisenberg_categorifications} we shift to the categorical setting, defining degenerate categorical Heisenberg actions, with important examples.  We introduce the OTI functor in this setting, and precisely state~Theorem~\ref{thm intro B}(1). Sections \ref{section:symmetric_groups} and \ref{section:polynomial_functors} contain necessary background on representations of symmetric groups and polynomial functors, and study how the OTI functor behaves on special classes of modules.  Schur--Weyl duality, which connects polynomial functors to representations of symmetric groups, is proven to be compatible with the OTI functor.  Our work culminates with Section \ref{section:periodicity_result} where we prove Theorems \ref{thm A intro} and \ref{thm intro C} using all the machinery developed prior.  Finally, in Section \ref{section:appendix_proof_of_categorical_commutation} we introduce diagrammatics for the OTI functor which provide a streamlined proof of Theorem \ref{thm intro B}.

\subsection{Acknowledgements}  We thank Joe Baine, Tasman Fell, Anna Romanov, and Geordie Williamson, whose support and consultation made this project  possible.  We thank Nate Harman for explaining his work \cite{Har} and suggesting an idea which led to Theorem \ref{thm intro C}.  We further thank Kevin Coulembier, Eoghan McDowell, and Vera Serganova for many inspiring conversations.  We acknowledge Magma for its assistance in computations toward the start of this project. We would also like to thank Ruby, Heinrich and Lukas from the University of
Sydney’s One Tree Island Research Station for a stimulating research environment and a suitable name for our functor.  This work was supported by the ARC grant DP230100654. A.S. was also partially supported by ARC grant DP210100251 and an AMS-Simons travel grant.  B.P. was supported by a 2024 Macquarie University FSE Strategic Start-up Grant. V.L.Z was supported by a UNSW FRTG2024 grant.

\section{Preliminaries}\label{section:preliminaries}

Let $p$ be a prime.  We work throughout over an algebraically closed field $\Bbbk$ of characteristic $p>0$.  In particular, we assume that all categories are $\Bbbk$-linear.  We will work exclusively with finite-dimensional representations, writing $\vec$ for the category of finite dimensional vector spaces, and $\rep G$ for the category of finite-dimensional representations of $G$.

\subsection{Locally finite categories}\label{sec ab cat}  
Let $\Cs$ be an essentially small abelian category.  We say that $\Cs$ is \emph{locally finite} if every object of $\Cs$ has finite length and hom spaces are finite-dimensional.  By a theorem of Takeuchi~(\cite[Theorem 1.9.15]{EGNO}), $\Cs$ is locally finite if and only if there exists a $\Bbbk$-coalgebra $C$ such that $\Cs$ is equivalent to the category of finite-dimensional right comodules over $C$, written $C\operatorname{--}\comod$.

We denote by $\AbCat$ the 2-category of essentially small $\Bbbk$-linear abelian categories and we write $\Tak$ for the 2-subcategory of locally finite abelian categories. The subcategory $\Tak^{\text{ex}}\subset \Tak$ has the same objects, but the 1-morphisms are exact functors. 

\subsection{Deligne tensor product}\label{deligne tensor} Let $\Bs$ and $\Cs$ be essentially small $\Bbbk$-linear abelian categories. In \cite[\S 5.1]{Del90}, Deligne defines their tensor product $\Bs\boxtimes\Cs$, when it exists, to be a $\Bbbk$-linear abelian category admitting a bilinear bifunctor 
\[
\Bs\times\Cs\to\Bs\boxtimes\Cs, \ \ (X,Y)\mapsto X\boxtimes Y,
\]
which is universal in the following sense: for any $\Bbbk$-linear abelian category $\Ds$, restriction yields an equivalence between the category of right exact $\Bbbk$-linear functors $\Bs\boxtimes\Cs\to \Ds$ and the category of bilinear bifunctors $\Bs\times\Cs\to\Ds$ which are right exact in $\Bs$ and $\Cs$.  

Now suppose $\Cs\simeq C\operatorname{--}\comod$ for some $\Bbbk$-coalgebra $C$. Following~\cite{CF}, we define the category $C\operatorname{--}\comod_{\Bs}$ as the category of objects $X$ in $\Bs$ together with an algebra morphism $C^*_X\to\End(X)$
for some finite-dimensional subcoalgebra $C_X\subset C$.
There is a bilinear bifunctor
$\Cs\times\Bs\to C\operatorname{--}\comod_{\Bs}$
and~\cite[Lemma 3.2.3]{CF} shows we get an equivalence
$$\Cs\boxtimes\Bs\xrightarrow{\simeq} C-\operatorname{comod}_{\Bs}.$$
Using this equivalence, \cite[Proposition 3.2.6]{CF} further constructs a pseudo-functor
$$-\boxtimes -: Tak^{\text{ex}}\times\AbCat\to \AbCat$$
such that $(F\boxtimes \varphi)(X\boxtimes Y)\simeq F(X)\boxtimes \varphi(Y)$ for any 1-morphism $F$ in $\Tak^{\text{ex}}$ and 1-morphism $\varphi$ in $\AbCat$.  The following are immediate consequences.

\begin{lemma}\label{lemma additive functors extend}  
Let $\Cs$ be a locally finite abelian category.
    \begin{enumerate}
        \item The pseudo-functor $\Cs\boxtimes-:\AbCat\to \AbCat$ preserves adjunctions and exactness properties of functors (1-morphisms) in $\AbCat$.
        \item There is a 2-isomorphism in $\AbCat$
        \[(F\boxtimes 1)\circ(1\boxtimes\varphi)\simeq(1\boxtimes\varphi)\circ(F\boxtimes1)\]
        natural for any 1-morphism $F$ in $\Tak^{\text{ex}}$ and 1-morphism $\varphi$ in $\AbCat$.
    \end{enumerate}
\end{lemma}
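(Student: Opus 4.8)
The plan is to deduce both statements formally from the pseudo-functor $-\boxtimes-\colon\Tak^{\mathrm{ex}}\times\AbCat\to\AbCat$ of~\cite[Proposition 3.2.6]{CF}. As $\id_\Cs$ is exact it is a $1$-morphism in $\Tak^{\mathrm{ex}}$, so restricting along $\{\id_\Cs\}\times\AbCat$ gives the pseudo-functor $\Cs\boxtimes-=(\id_\Cs\boxtimes-)\colon\AbCat\to\AbCat$, with unitor $\Cs\boxtimes\id_\Bs\simeq\id_{\Cs\boxtimes\Bs}$ and compositors $\Cs\boxtimes(\psi\varphi)\simeq(\Cs\boxtimes\psi)\circ(\Cs\boxtimes\varphi)$. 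For the adjunction part of~(1) I would invoke the standard fact that a pseudo-functor sends adjunctions to adjunctions: given $\varphi\dashv\psi$ in $\AbCat$ with unit $\eta$ and counit $\varepsilon$, transporting $\Cs\boxtimes\eta$ and $\Cs\boxtimes\varepsilon$ through the unitor and compositors produces a unit and counit for $\Cs\boxtimes\varphi\dashv\Cs\boxtimes\psi$, the triangle identities being a coherence diagram chase from those of $(\eta,\varepsilon)$.

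For the exactness part of~(1), I would work in the model $\Cs\boxtimes\Bs\simeq C\operatorname{--}\comod_\Bs$ of~\cite[Lemma 3.2.3]{CF}, with $\Cs\simeq C\operatorname{--}\comod$. Under this identification the construction of~\cite{CF} sends $\varphi\colon\Bs\to\Bs'$ to the functor $\Cs\boxtimes\varphi$ carrying $\bigl(Y,\,C_Y^*\to\End Y\bigr)$ to $\bigl(\varphi Y,\,C_Y^*\to\End Y\xrightarrow{\varphi}\End\varphi Y\bigr)$, so that $\Cs\boxtimes\varphi$ sits in a commuting square with $\varphi$ over the forgetful functors $U_\Bs\colon C\operatorname{--}\comod_\Bs\to\Bs$ and $U_{\Bs'}$. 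Each $U_\Bs$ is exact and faithful (kernels and cokernels of structure-compatible morphisms are computed on underlying objects), hence reflects exact sequences. Therefore, if $\varphi$ is exact (resp.\ left, resp.\ right exact) and $\Sigma$ is an exact sequence of the corresponding shape in $\Cs\boxtimes\Bs$, then $U_\Bs\Sigma$ is exact in $\Bs$, so $\varphi(U_\Bs\Sigma)\simeq U_{\Bs'}\bigl((\Cs\boxtimes\varphi)\Sigma\bigr)$ has the same exactness in $\Bs'$, and since $U_{\Bs'}$ reflects it, so does $(\Cs\boxtimes\varphi)\Sigma$.

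For~(2), I would use that in $\Tak^{\mathrm{ex}}\times\AbCat$ there are strict identities of $1$-morphisms $(F,\id_{\Bs'})\circ(\id_\Cs,\varphi)=(F,\varphi)=(\id_{\Cs'},\varphi)\circ(F,\id_\Bs)$, for $F\colon\Cs\to\Cs'$ in $\Tak^{\mathrm{ex}}$ and $\varphi\colon\Bs\to\Bs'$ in $\AbCat$. Applying $-\boxtimes-$ and its compositors, each of $(F\boxtimes 1)\circ(1\boxtimes\varphi)$ and $(1\boxtimes\varphi)\circ(F\boxtimes 1)$ is canonically $2$-isomorphic to $F\boxtimes\varphi$, and composing these $2$-isomorphisms gives the asserted one. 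Naturality in $F$ and $\varphi$ — compatibility with horizontal and vertical composition — is exactly the coherence satisfied by the compositors as part of the pseudo-functor data.

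I expect the one point needing care to be the explicit form of $\Cs\boxtimes\varphi$ on the comodule model used in the exactness argument (equivalently, the commuting square over the forgetful functors): the excerpt records only the value of $F\boxtimes\varphi$ on objects $X\boxtimes Y$, and one must extract the general description from~\cite{CF} — or, when $\varphi$ is right exact, reconstruct it via the universal property of $\boxtimes$ by comparing the two right exact functors $U_{\Bs'}\circ(\Cs\boxtimes\varphi)$ and $\varphi\circ U_\Bs$ on the generators $X\boxtimes Y$. Everything else is routine $2$-categorical bookkeeping.
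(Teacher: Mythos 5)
Your proposal unpacks exactly the route the paper takes: the paper simply declares the lemma to be an ``immediate consequence'' of the Coulembier--Flake pseudo-functor $-\boxtimes-:\Tak^{\mathrm{ex}}\times\AbCat\to\AbCat$, and you supply the standard details (pseudo-functors preserve adjunctions; exactness is checked through the forgetful functor to $\Bs$ on the comodule model, which is exact and faithful hence reflects exactness; and (2) comes from applying the compositors to the two factorisations of $(F,\varphi)$ through $(F,\id)$ and $(\id,\varphi)$). This is correct and in the same spirit; the only point you flag --- the explicit action of $\Cs\boxtimes\varphi$ on the comodule model --- is indeed the one piece that must be read off from \cite[Prop.~3.2.6]{CF}, and the description you give is the right one.
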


\begin{remark}\label{k0}
Let us write $K_0(\mathscr{C})$ for the Grothendieck group of an abelian category $\Cs$.  Recall that for locally finite abelian categories $\Bs,\Cs$, we have a natural isomorphism
\begin{equation*}
K_0(\Bs\boxtimes\Cs)\cong K_0(\Bs)\otimes_{\Z}K_0(\Cs).
\end{equation*}
Indeed, all simple objects of $\Bs\boxtimes\Cs$ are of the form $L\boxtimes L'$ for simple objects $L$ of $\Bs$, and $L'$ of $\Cs$.  Further, $L_1\boxtimes L_1'\cong L_2\boxtimes L_2'$ if and only if $L_1\cong L_2$ and $L_1'\cong L_2'$.
\end{remark}

\begin{remark}
There is another class of abelian categories which play an important role in categorical actions, namely those which are \emph{Schurian}. Schurian abelian categories are in some sense dual to locally finite abelian categories, with the main technical difference being that their objects are not all finitely presented (equivalently, compact).  To work with them in this paper we would be forced to treat them as a separate case, which seems both undesirable and possibly unmotivated at this stage of our work. Thus we do not consider Schurian categories in our setting.  However our understanding is that much of our work carries over to that setting. 
\end{remark}

 If $\mathscr{E}_1\sub \Cs$ and $\mathscr{E}_2\sub \Ds$ are additive subcategories of abelian categories $\Cs,\,\Ds$, we will at times abuse notation and write $\mathscr{E}_1\boxtimes\mathscr{E}_2$ for the smallest full additive subcategory of $\Cs\boxtimes \Ds$ containing the objects $X_1\boxtimes X_2$ with $X_i\in\mathscr{E}_i$.
\subsection{Symmetric tensor categories} An essentially small, $\Bbbk$-linear symmetric monoidal category $(\mathscr{C},\otimes,\mathbf{1})$ will be called a \emph{symmetric tensor category} (over $\Bbbk$) if
    \begin{enumerate}
        \item $\mathscr{C}$ is abelian,
        \item $\Bbbk\xto{\sim}\End(\mathbf{1})$,
        \item $(\mathscr{C},\otimes,\mathbf{1})$ is rigid,
        \item each object of $\mathscr{C}$ is of finite length (in particular $\Cs$ is artinian).
    \end{enumerate}
    In particular, it follows that $\mathscr{C}$ is locally finite.  Given an object $X$ in a tensor category $\mathscr{A}$, we write $X^*$ for its dual.
    It is well known (see~\cite[\S 5]{Del90}) that if $\Bs$ and $\Cs$ are symmetric tensor categories over an algebraically closed field,  then $\Bs\boxtimes\Cs$ exists and will once again be a symmetric tensor category.

\begin{example}
    The two most significant symmetric tensor categories in this paper are:  
    \begin{enumerate}
        \item $\operatorname{Rep}_{\Bbbk}(G)$, where $G$ is a finite group.  This is the category of $G$-modules, equivalently modules over the group algebra $\Bbbk G$, or equivalently comodules over the coordinate algebra $\Bbbk[G]:=(\Bbbk G)^*$.  

        \item The second category is the Verlinde category $\operatorname{Ver}_p$, which will be explained in~Section~\ref{sec verp}.
    \end{enumerate}
\end{example}

\subsection{Semisimplification}\label{eqn trace in tens cat}  We now recall the notion of semisimplication, which was originally introduced in \cite{BW}.  Our main reference will be \cite{EO2}.  

Let $\Cs$ be a symmetric tensor category.  Recall that the \textit{trace} $\tr(f)\in\Bbbk=\End(\mathbf{1})$ of a morphism $f:X\to X$, is given by  
\begin{equation*}
\mathbf{1}\to X\otimes X^*\xto{f\otimes 1}X\otimes X^*\xto{\sim}X^*\otimes X\to \mathbf{1},   
\end{equation*}
where the first map is coevaluation and the last map is evaluation.  

The \textit{ideal of negligible morphisms} $\NN$ of $\Cs$ consists of all morphisms $f:X\to Y$ such that for any morphism $g:Y\to X$, we have $\tr(fg)=0$.
It is well known that for an indecomposable object $X$, the identity morphism $\operatorname{id}_{X}$ is negligible if and only if~$\operatorname{tr}(\operatorname{id}_{X})=~0$.

By \cite[Lem.~2.3]{EO2}, $\NN$ forms a tensor ideal, so the quotient category $\Cs^{ss}:=\CC/\NN$ is a symmetric monoidal category.  We call $\Cs^{ss}$ the \emph{semisimplification} of $\Cs$, and write 
$$\varphi_{\Cs}:\Cs\to\Cs^{ss}$$
for the canonical quotient functor.  By \cite[Prop.~2.4]{EO2}, $\Cs^{ss}$ is a semisimple tensor category, whose simple objects are given by $\varphi_{\Cs}(M)$, where $M$ runs over all indecomposable objects of $\Cs$ with $\tr(\id_M)\neq0$. Note that $\varphi_{\Cs}$ is additive, but not exact in general. 

\begin{example}
    If $\mathscr{C}=\Rep_\Bbbk G$ for a finite group $G$ and a field $\Bbbk$ of characteristic $p$, we will write $\varphi_{G}$ for $\varphi_{\Cs}$.  
    For an indecomposable representation $V$, $\id_V$ is negligible if and only if $\dim V$ is a multiple of $p$. 
    Therefore $(\Rep_\Bbbk G)^{ss}$ is a semisimple category with simple objects given by $\varphi_G(V)$, where $V$ runs over all indecomposable $G$-modules of dimension not a multiple of $p$.

    By the main theorem of \cite{CEO}, there exists a faithful, exact tensor functor $(\rep G)^{ss}\to\Ver_p$.  This implies that objects in $(\rep G)^{ss}$ may be viewed as objects in $\Ver_p$ with additional structure (a linear action by an algebraic group). 
\end{example}

\section{The Verlinde Category}\label{sec verp} 

The Verlinde category $\Ver_p$ plays an important role in this paper, and in this section we recall some of the basics about this category and the linear algebraic construction of semisimplification in this setting (see~\cite{EO} and references therein).

\subsection{The Verlinde category}
We define the \emph{Verlinde category} as the semisimplification $\Ver_p := (\Rep_\Bbbk C_p)^{ss}$ (cf. Section \ref{eqn trace in tens cat}). This is a semisimple tensor category with simple objects given by $L_i := \varphi_{C_p}(M_i)$ for $1 \leq i \leq p-1$, where $\varphi_{C_p}$ denotes the semisimplification functor. 
Hence as an abelian category, this category is just $\prod_{i=1}^{p-1}\vec$. The decomposition of the tensor product of simple objects $L_i$ and $L_j$ is given by the ``Verlinde formula":
\[
L_i\otimes L_j=\bigoplus\limits_{k=1}^{\min(i,j,p-i,p-j)}L_{|i-j|+2k-1}.
\]
Note that $L_1$ and $L_{p-1}$ generate a copy of $\svec$ in $\Ver_p$.

\begin{example}
If $p=2$, $\Ver_p$ has one simple object, $L_1$, and it is equivalent to $\Vec$. If $p=3$, $\Ver_3$ is equivalent to the category of super vector spaces.  The first interesting example is $\Ver_5$, which has 4 simple objects.  We have 
    \[L_2\otimes L_4\cong L_3, \ \ \ L_3\otimes L_4\cong L_2, \ \ \ L_3^{\otimes 2}=L_3\oplus L_1.
    \] 
    In the Grothendieck ring of $\Ver_5$, this last equality becomes $[L_3]^2-[L_3]-[L_1]=0$ so there cannot exist an additive monoidal functor to vector spaces.
\end{example}

\begin{remark}
The Grothendieck ring of $\verp$ is isomorphic to $\mathbb{Z}[\zeta_p+\zeta_p^{-1}][x]/x^2$, with $[L_{p-1}]\mapsto x$ and $[L_2]\mapsto \zeta_p+\zeta_p^{-1}$.
This description reflects a decomposition $\verp\simeq\verp^+\boxtimes\svec$.
\end{remark}

\subsection{A linear algebraic functor}\label{section oti explicit}
Consider the group algebra $\Bbbk C_p=:R\cong \Bbbk[x]/x^p$. We view an object of $R\operatorname{--}\mod$ as a pair $(V,x)$ of a finite-dimensional $\Bbbk$-vector space $V$ and an endomorphism $x$ satisfying $x^p=0$. The indecomposable objects in $R\operatorname{--}\mod$ are all of the form 
\[
M_j:=R/x^j=\Bbbk[x]/x^j.
\]
The module $M_1$ is a one dimensional vector space with trivial $x$ action, and $M_p$ is a projective $R$ module. All modules $M_i$ are iterated extensions of $M_1$.

We define functors $\varphi_i:R\operatorname{--}\mod \to \Vec$ by the formula:
\begin{equation}\label{formulaOTI}
    \varphi_i(V):=\frac{\ker(x)\cap \im(x^{i-1})}{\ker(x)\cap \im(x^i)}.
\end{equation}
One may interpret $\varphi_i$ as  extracting the $M_i$ isotypic subspace inside $V$:

\begin{lemma}\label{prop OTI on indecomps}
The additive functor $\varphi_i$ applied to $M_j$ is zero unless $i=j$, and $\varphi_i(M_i)=\Bbbk$.
On morphisms, 
$\varphi_i:\End(M_i)\to \Bbbk$ is the (unique) $\Bbbk$-algebra morphism to the residue field~$\Bbbk$.
\end{lemma}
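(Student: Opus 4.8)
The plan is to analyze the functor $\varphi_i$ directly on the indecomposable module $M_j = \Bbbk[x]/x^j$, where $x$ acts by multiplication (so $x$ is nilpotent of order exactly $j$ on a space of dimension $j$). First I would compute the images and kernels: on $M_j$ we have $\im(x^{i-1}) = x^{i-1}\Bbbk[x]/x^j$, which is spanned by $\{x^{i-1}, x^i, \dots, x^{j-1}\}$ when $i \le j$ and is zero when $i > j$; similarly $\ker(x)$ on $M_j$ is the one-dimensional space spanned by $x^{j-1}$. The intersection $\ker(x) \cap \im(x^{i-1})$ is therefore $\Bbbk \cdot x^{j-1}$ precisely when $i-1 \le j-1$, i.e.\ $i \le j$, and zero otherwise. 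Plugging into the defining formula \eqref{formulaOTI}, for $i < j$ both numerator and denominator equal $\Bbbk \cdot x^{j-1}$, so $\varphi_i(M_j) = 0$; for $i > j$ the numerator is already zero; and for $i = j$ the numerator is $\Bbbk \cdot x^{j-1}$ while the denominator $\ker(x) \cap \im(x^i) = \ker(x) \cap \im(x^j) = 0$, giving $\varphi_i(M_i) = \Bbbk$. This establishes the statement on objects.

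For the statement on morphisms, I would recall that $\End(M_i) = \End_R(\Bbbk[x]/x^i) \cong \Bbbk[x]/x^i$ (a local ring with residue field $\Bbbk$), where an endomorphism is given by multiplication by a polynomial $q(x) \bmod x^i$. Such an endomorphism induces a linear map on the one-dimensional space $\varphi_i(M_i) = \ker(x) \cap \im(x^{i-1}) = \Bbbk \cdot x^{i-1}$ (the denominator being zero, no quotient is taken). Multiplication by $q(x) = c_0 + c_1 x + \cdots$ sends $x^{i-1}$ to $c_0 x^{i-1}$ in $\Bbbk[x]/x^i$, since all higher terms vanish. Hence $\varphi_i$ sends the endomorphism to scalar multiplication by the constant term $c_0$, which is exactly the image of $q(x)$ under the residue map $\Bbbk[x]/x^i \to \Bbbk$. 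Since this is a ring homomorphism and the only such map to $\Bbbk$, the claim follows; functoriality (that $\varphi_i$ is indeed a functor, and in particular additive) is built into its definition as a subquotient of a linear-algebraic construction.

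I do not expect a serious obstacle here: the argument is a direct bookkeeping of kernels and images of powers of a single nilpotent Jordan block. The one point that requires a moment's care is keeping track of the strict inequalities — making sure the numerator and denominator in \eqref{formulaOTI} coincide exactly when $i < j$ (forcing vanishing), and that the denominator drops to zero precisely at $i = j$ — and correctly identifying $\im(x^{i-1})$ when $i-1$ exceeds the nilpotency order. A secondary bookkeeping point is the identification of the action on the one-dimensional space $\varphi_i(M_i)$ with the residue-field quotient, for which it suffices to note that any endomorphism of $M_i$ acts on the socle $\ker(x) = \Bbbk \cdot x^{i-1}$ by its constant term.
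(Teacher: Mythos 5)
Your proof is correct and takes essentially the same approach as the paper: the object-level claim is a direct bookkeeping of $\ker(x)$ and $\im(x^{i-1})$ on a single Jordan block, and the morphism-level claim reduces to noting that an endomorphism of $M_i$ acts on its socle by its constant term. The paper's version is more compressed — it appeals to $\Bbbk$-linearity and functoriality to conclude that the induced map $\End(M_i)\to\End(\varphi_i(M_i))=\Bbbk$ is the unique $\Bbbk$-algebra map to the residue field, whereas you compute this map explicitly — but the content is the same.
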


\begin{proof}
The kernel of $x$ in $M_j$ is always one dimensional, and lies in the image of $x^{i-1}$ if and only if $i\leq j$. If we have $i<j$, then this kernel lies in the image of $x^i$, so $\varphi_i$ is nonzero precisely on $M_i$. The description of $\varphi_i$ on morphisms is uniquely determined by the fact that $\varphi_i$ is $\Bbbk$-linear.
\end{proof}

\subsection{The semisimplification functor}\label{ss components}

We can describe the semsimplification functor $\varphi_{C_p}:\Rep_\Bbbk(C_p) \to \Ver_p$ using the functors $\varphi_i$. 
To do so, endow $R\operatorname{--}\mod$ and $\prod_{i=1}^{p-1}\vec$ with symmetric tensor structures using the equivalences
\begin{align*}
    R\operatorname{--}\Mod \cong \Rep_\Bbbk(C_p)\quad \text{ and }\quad \prod_{i=1}^{p-1}\vec \cong \Ver_p.
\end{align*}
Then, define $\varphi:R\operatorname{--}\mod \to \prod_{i=1}^{p-1}\vec$  by 
\begin{equation}\label{eqn varphi}
V\mapsto \varphi(V):=(\varphi_1(V),\varphi_2(V),...,\varphi_{p-1}(V))   
\end{equation}

The following is proven in \cite[Prop.~4.8]{EO2}.
\begin{prop}
The functor $\varphi$ is equivalent to the semisimplification functor $\varphi_{C_p}$. In particular, $\varphi$ is monoidal.
\end{prop}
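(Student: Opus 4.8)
The plan is to show that the functor $\varphi$ defined in~\eqref{eqn varphi} agrees, as a symmetric monoidal functor $\Rep_\Bbbk C_p\to\Ver_p$, with the canonical semisimplification functor $\varphi_{C_p}$; since the statement is quoted from~\cite[Prop.~4.8]{EO2}, I would simply reproduce its proof. First I would recall that, by the universal property of semisimplification, $\varphi_{C_p}$ kills exactly the negligible morphisms of $\Rep_\Bbbk C_p$, and that its target is the semisimple category with simple objects $L_i=\varphi_{C_p}(M_i)$ for $1\le i\le p-1$, while $\varphi_{C_p}(M_p)=0$ because $\dim M_p=p$ is negligible. So the task reduces to identifying $\varphi$ with $\varphi_{C_p}$, and it suffices to check this on indecomposables and on Hom-spaces, then to promote the resulting equivalence of additive functors to a monoidal one.

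The key additive input is Lemma~\ref{prop OTI on indecomps}: $\varphi(M_j)=(\varphi_1(M_j),\dots,\varphi_{p-1}(M_j))$ equals $L_j$ for $1\le j\le p-1$ (all but the $j$-th component vanish and the $j$-th is $\Bbbk$), and $\varphi(M_p)=0$ since $\ker(x)\subset\im(x^{p-1})$ in $M_p$ forces every $\varphi_i(M_p)=0$. On morphisms, the lemma says $\varphi\colon\End(M_i)\to\Bbbk=\End(L_i)$ is the unique $\Bbbk$-algebra map, which is exactly the behaviour of $\varphi_{C_p}$ on the local ring $\End(M_i)$. Any additive functor out of $\Rep_\Bbbk C_p$ that is zero on $M_p$ and sends $M_i\mapsto L_i$ compatibly on endomorphisms is determined up to natural isomorphism (every object is a direct sum of $M_i$'s and the $M_i$ are pairwise non-isomorphic with local endomorphism rings, so Hom-spaces between sums are controlled by the diagonal blocks), which gives a natural isomorphism $\varphi\simeq\varphi_{C_p}$ of additive functors. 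Then "in particular, $\varphi$ is monoidal" follows by transporting the monoidal structure of $\varphi_{C_p}$ across this natural isomorphism; alternatively one verifies directly that $\varphi$ respects $\otimes$ by comparing the Verlinde formula with the well-known Clebsch--Gordan decomposition of $M_i\otimes M_j$ over $\Bbbk C_p$, where the non-negligible summands are precisely the $M_k$ with $k=|i-j|+2\ell-1$, $1\le\ell\le\min(i,j,p-i,p-j)$.

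The main obstacle is the bookkeeping in the monoidal comparison: one must check not only that $\varphi(M_i\otimes M_j)\cong\varphi(M_i)\otimes\varphi(M_j)$ as objects, but that the coherence isomorphisms (associativity, unit, symmetry) match up, i.e.\ that the natural transformation $\varphi(X)\otimes\varphi(Y)\to\varphi(X\otimes Y)$ induced from the monoidal structure on $\varphi_{C_p}$ is the expected one on the $\varphi_i$-components. In practice this is handled cleanly by the transport-of-structure argument rather than a direct computation: once $\varphi\simeq\varphi_{C_p}$ as additive functors and $\varphi_{C_p}$ is known to be symmetric monoidal, the natural isomorphism canonically upgrades $\varphi$ to a symmetric monoidal functor, so the only genuine content is Lemma~\ref{prop OTI on indecomps} together with the identification of negligible indecomposables (those of dimension divisible by $p$, here just $M_p$). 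I would therefore present the proof as: (i) recall the structure of $\varphi_{C_p}$; (ii) invoke Lemma~\ref{prop OTI on indecomps} and the vanishing $\varphi(M_p)=0$; (iii) conclude $\varphi\simeq\varphi_{C_p}$ as additive functors, hence as symmetric monoidal functors.
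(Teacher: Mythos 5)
The paper does not supply its own argument here: it simply cites \cite[Prop.~4.8]{EO2}, so there is no in-paper proof to compare against. Your reconstruction is correct in substance. The logical skeleton — identify both functors on indecomposables via Lemma~\ref{prop OTI on indecomps}, note that both kill $M_p$ (the only indecomposable of dimension divisible by $p$), conclude a natural isomorphism of additive functors by the Krull--Schmidt argument, then transport the symmetric monoidal structure across that isomorphism — is exactly the right shape, and the transport-of-structure step is the clean way to get monoidality without chasing coherences.

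One phrase is imprecise and worth tightening: you say ``the $M_i$ are pairwise non-isomorphic with local endomorphism rings, so Hom-spaces between sums are controlled by the diagonal blocks.'' In $\Rep_\Bbbk C_p$ the off-diagonal Hom-spaces $\Hom(M_i,M_j)$ with $i\ne j$ are very much nonzero (e.g.\ socle inclusions and cosocle projections), so ``controlled by the diagonal'' is not a feature of the source category. What makes the argument work is that it is a feature of the \emph{target}: $\Ver_p$ is semisimple with $\Hom(L_i,L_j)=0$ for $i\ne j$, so any $\Bbbk$-linear functor sending $M_i\mapsto L_i$ is forced to annihilate those off-diagonal Homs. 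Combined with the fact that $\End(M_i)$ is local with residue field $\Bbbk$ (so that both functors induce the unique $\Bbbk$-algebra map $\End(M_i)\to\Bbbk$), the chosen isomorphisms $\alpha_i:\varphi(M_i)\xrightarrow{\sim}\varphi_{C_p}(M_i)$ are automatically natural on the full subcategory of indecomposables, and extend to all of $\Rep_\Bbbk C_p$ by additivity and Krull--Schmidt. With that clarification the proof is complete.
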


While $\varphi$ is not left or right exact, we have the following:
\begin{lemma}\label{lemma exactness property}
    Suppose that $0\to X\to Y\to Z\to 0$ is a short exact sequence of $R$-modules.  The following are equivalent:
    \begin{enumerate}
        \item $\varphi(X)\to\varphi(Y)$ is injective.
        \item $\varphi(Y)\to\varphi(Z)$ is surjective.
        \item $0\to\varphi(X)\to\varphi(Y)\to\varphi(Z)\to 0$ is exact.
        \item $0\to X\to Y\to Z\to 0$ is split as a short exact sequence of $R$-modules.
    \end{enumerate}
\end{lemma}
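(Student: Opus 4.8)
The plan is to establish the cycle of implications $(4)\Rightarrow(3)\Rightarrow(1)$, $(3)\Rightarrow(2)$, and then close the loop by showing $(1)\Rightarrow(4)$ and $(2)\Rightarrow(4)$. The implication $(3)\Rightarrow(1)$ and $(3)\Rightarrow(2)$ are trivial. For $(4)\Rightarrow(3)$, note that if the sequence splits then $Y\cong X\oplus Z$ as $R$-modules, and since $\varphi$ is additive we get $\varphi(Y)\cong\varphi(X)\oplus\varphi(Z)$ compatibly with the maps induced by the split inclusion and projection; hence the induced sequence on $\varphi$ is split exact, in particular exact. So the real content is the two implications $(1)\Rightarrow(4)$ and $(2)\Rightarrow(4)$, i.e.\ that $\varphi$ detects splitting.

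First I would reduce to the case where $X$, $Y$, $Z$ are sums of indecomposables $M_j=\Bbbk[x]/x^j$, which is automatic since $R$ is a Nakayama (uniserial) algebra of finite representation type. The key computational tool is Lemma~\ref{prop OTI on indecomps}: $\varphi_i(M_j)=\Bbbk$ if $i=j$ and $0$ otherwise, so $\varphi(M)$ records exactly the multiplicities of the indecomposable summands of $M$, graded by $i$. In particular, for the short exact sequence $0\to X\to Y\to Z\to0$ the map $\varphi(X)\to\varphi(Y)$ is injective precisely when, for every $i$, the multiplicity of $M_i$ in $X$ is at most its multiplicity in $Y$; and similarly $\varphi(Y)\to\varphi(Z)$ is surjective precisely when the multiplicity of $M_i$ in $Z$ is at most that in $Y$. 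Since $R$ has finite representation type, the Grothendieck group of the split category is the free abelian group on the $[M_i]$, $1\le i\le p$, and I want to compare this with the genuine Grothendieck group where $[M_j]=j[M_1]$.

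The crux is then a lemma about extensions over $\Bbbk[x]/x^p$: given a non-split short exact sequence $0\to X\to Y\to Z\to0$, the middle term $Y$ is ``more spread out'' than $X\oplus Z$ in the sense that passing from $X\oplus Z$ to $Y$ strictly increases some $M_i$-multiplicity and strictly decreases another (with larger index absorbing smaller ones via extensions $0\to M_a\to M_{a+b}\to M_b\to 0$ and their Baer sums). Concretely I would argue: if $0\to X\to Y\to Z\to0$ is non-split, then by general theory it is a Baer sum / pushout of nontrivial extensions between indecomposable summands, and each nontrivial extension $0\to M_a\to M_c\oplus(\text{junk})\to M_b\to 0$ with $c>\max(a,b)$ produces a summand $M_c$ not present in $X\oplus Z$. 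More carefully, one can use the explicit structure: $\Ext^1_R(M_b,M_a)\cong \Bbbk^{\min(a,b)}$ when $a+b\le p$ (and is nonzero more generally), and a nontrivial class realizes a middle term containing $M_{a+b}$ (or a longer module), so the multiplicity vector strictly changes in a way that makes either $\varphi(X)\to\varphi(Y)$ fail to be injective or $\varphi(Y)\to\varphi(Z)$ fail to be surjective. Contrapositively: if either $(1)$ or $(2)$ holds, then no such ``spreading'' occurred, which forces the sequence to split; this gives $(1)\Rightarrow(4)$ and $(2)\Rightarrow(4)$.

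\textbf{Main obstacle.} The delicate point is making the ``spreading'' argument precise and exhaustive: an arbitrary non-split extension of $X=\bigoplus M_{a_i}$ by $Z=\bigoplus M_{b_j}$ can be a complicated Baer combination of component extensions, and I need to rule out cancellation, i.e.\ that the multiplicity changes from different components could conspire so that both the $M_i$-multiplicities of $Y$ dominate those of $X$ \emph{and} those of $Z$ simultaneously. I expect the cleanest way around this is to work with a numerical invariant: for an $R$-module $M$, the function $d_k(M):=\dim\ker(x^k\mid M)=\sum_j \min(j,k)\cdot[\text{mult of }M_j]$. These $d_k$ are additive on the split category but \emph{subadditive} (resp.\ superadditive) on short exact sequences in the right direction, with equality iff the sequence splits; then injectivity of $\varphi(X)\to\varphi(Y)$ translates (after a Möbius-type inversion of the multiplicity-vs-$d_k$ relation) into the equality case of all these inequalities, forcing splitting. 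I would carry out this $d_k$-based argument rather than the direct Baer-sum bookkeeping, as it handles the potential cancellation uniformly.
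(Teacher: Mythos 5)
The implications $(4)\Rightarrow(3)\Rightarrow(1),(2)$ are handled the same way as in the paper, so the content is in $(1)\Rightarrow(4)$ and $(2)\Rightarrow(4)$, and here your proposal has a genuine gap. You treat condition $(1)$ as the collection of dimension inequalities $\dim\varphi_i(X)\le\dim\varphi_i(Y)$ (multiplicity of $M_i$ in $X$ at most that in $Y$) and then try to leverage these, together with sub/super-additivity of the invariants $d_k(M)=\dim\ker(x^k\mid M)$, to force the $d_k$-equalities and hence splitting. But $(1)$ is a statement about injectivity of a \emph{specific} linear map $\varphi(X)\to\varphi(Y)$, which is strictly stronger than a dimension inequality, and the extra information cannot be recovered numerically. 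Concretely, consider the non-split sequence
$0\to M_1\xrightarrow{(0,\iota)} M_1\oplus M_2\to M_1\oplus M_1\to 0$,
where $\iota:M_1\hookrightarrow M_2$ is the socle inclusion. Here the multiplicity of each $M_i$ in $X=M_1$ is dominated by its multiplicity in $Y=M_1\oplus M_2$, the Euler characteristic constraint holds, and the $d_k$-subadditivity inequalities are all satisfied (indeed $d_1(Y)=2<3=d_1(X)+d_1(Z)$). So from the numerical data your method has access to, nothing forces $d_k$-equality, and the intended ``M\"obius inversion'' step cannot go through. What actually fails is that the $M_1\to M_1$-entry of $X\to Y$ is zero, so $\varphi_1(X)\to\varphi_1(Y)$ is the zero map $\Bbbk\to\Bbbk$: non-injective despite having the ``right'' dimensions. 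Your $d_k$-bookkeeping, and equally the Baer-sum bookkeeping, is blind to this because it only tracks multiplicity vectors, not the maps.

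The paper's proof avoids dimension counting entirely. Decompose $X\cong\bigoplus_i M_i^{e_i}$, $Y\cong\bigoplus_i M_i^{f_i}$. Since $\Hom(M_j,M_i)$ dies under $\varphi$ for $j\ne i$, the map $\varphi(X)\to\varphi(Y)$ is block-diagonal, with $i$th block the reduction mod radical of the $(i,i)$-matrix block $M_i^{e_i}\to M_i^{f_i}$ over the local ring $\End(M_i)$. Injectivity of $\varphi(X)\to\varphi(Y)$ thus says each such matrix is injective mod radical, hence a split monomorphism (Nakayama over the local ring $\End(M_i)$). Choosing splittings blockwise and assembling them into $\pi:Y\to X$, the off-diagonal blocks of $\pi\circ f$ land in $\operatorname{rad}\End(X)$ (maps between non-isomorphic indecomposables), so $\pi\circ f\equiv\id\pmod{\operatorname{rad}\End(X)}$, hence $\pi\circ f$ is invertible and $f$ is split. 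I would recommend replacing your numerical-invariant plan with this map-level argument, which is exactly what the injectivity hypothesis is designed to feed.
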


\begin{proof}
It is clear that (4) implies (3), which implies (1) and (2). Assuming (1), choose decompositions of $X$ and $Y$, say $X\cong \oplus_i M_i^{e_i}$ and $Y\cong \oplus_i M_i^{f_i}$. Then by injectivity of $\varphi(X)\to \varphi(Y)$, the induced map \[M_i^{e_i}\to X\to Y\to M_i^{f_i}\] is a split monomorphism. So we may define splittings on each summand of this decomposition of $Y$, giving a map $\pi:Y\to X$, whose composition $X\to Y\xto{\pi} X$ is equal to the identity modulo the radical of $\End(X)$. This composition is thus an isomorphism, so the inclusion of $X$ into $Y$ is split. An analogous argument shows that (2) implies (4).
\end{proof}

\begin{remark}
For an $R$-module $M$, the Tate cohomology of $M$ is defined by $$\ker(x)/\im(x^{p-1}).$$ This carries a filtration by the images of $x^i$, and the associated graded pieces of this filtration are our components $\varphi_i$.
This description allows for the general expression of exactness for the components $\varphi_i$: they are associated graded parts of a natural filtration on zero-th Tate cohomology for the action of $x$. In general, one may understand the exactness properties of $\varphi$ as arising from a filtration by images of $x$ on the cohomology of the two periodic complex \[M\xto{x}M\xto{x^{p-1}}M\to\dots \ .\]
\end{remark}

\section{OTI for finite groups}\label{section:oti_for_finite_groups}
In this section, we introduce the One Tree Island (OTI) functor, in the concrete case of representations of a finite group. 

\subsection{Representations in $\Ver_p$}

Let $G$ be a finite group. We write $\operatorname{Rep}_{\Ver_p}G$ for the category of $G$-modules in $\Ver_p$. The objects of this category are $\mathbf{V} \in \Ver_p$ equipped with an algebra map $\Bbbk G \to \End(\mathbf{V})$. Note that $\operatorname{Rep}_{\Ver_p}G$ is equivalent to the Deligne tensor product 
$$\Rep_{\Ver_p}G\cong\Rep_{\Bbbk}G\boxtimes\Ver_p,$$ 
hence every object $\mathbf{V}$ of $\Rep_{\Ver_p}G$ decomposes as 
\[
\mathbf{V}=(V_1\boxtimes L_1)\oplus (V_2\boxtimes L_2)\oplus\cdots\oplus (V_{p-1}\boxtimes L_{p-1}),
\]
where $V_1,\dots,V_{p-1}$ are representations of $G$.
The indecomposable objects are then of the form $V\boxtimes L_i$, where $V$ is an indecomposable $G$-module.  
Morphisms between objects are determined by
\[
\Hom(V\boxtimes L_i,V'\boxtimes L_j)\cong\begin{cases}
    \Hom(V,V') & \text{ if }i=j,\\
    0 & \text{ if }i\neq j.
\end{cases}
\]
Similarly, the tensor product on $\Rep_{\Ver_p}G$ is given by:
\[
(V\boxtimes L_i)\otimes(V'\boxtimes L_j)=(V\otimes V')\boxtimes(L_i\otimes L_j).
\]
Consider the natural inclusion of tensor categories $\Ver_p\sub\Rep_{\Ver_p}G$, given by endowing an object in $\Ver_p$ with the trivial action of $G$. 
The tensor product then induces an exact functor
\begin{equation}\label{eqn action verp on repG}
\Rep_{\Ver_p}G\boxtimes\Ver_p\to\Rep_{\Ver_p}G.
\end{equation} 

\subsection{OTI functors for finite groups}

In characteristic $p$, the representation theory of a finite group is not semisimple if and only if the order of the group $|G|$ is divisible by $p$. By Cauchy's theorem, this occurs exactly when $G$ contains a cyclic group $H$ of order $p$. 

Let $H$ be such a cyclic subgroup of $G$.
Consider the composition of the restriction functor with semisimplification:
\[
\Phi_H:\operatorname{Rep}_{\Bbbk}G\xto{\operatorname{Res}_{H}}\operatorname{Rep}_{\Bbbk}H\xto{\varphi_H}\operatorname{Ver}_p.
\]
Note that the centraliser $C_G(H)$ naturally acts on each component $\varphi_i(V)$~(see~\eqref{formulaOTI}) of $\varphi_H(V)$ with trivial $H$ action, and hence $\Phi_H(V)$ naturally carries an action of $C_G(H)/H$.    
The \textit{OTI functor for finite groups} is given by:
\begin{equation*}
    \Phi_H:
    \Rep_{\Bbbk}G\to \Rep_{\verp}C_G(H)/H.
\end{equation*}

Note we may upgrade the source to get a functor 
\begin{equation*}
\Rep_{\verp}G\cong \Rep_{\Bbbk}(G)\boxtimes \verp\xrightarrow{\Phi_H\boxtimes 1} \Rep_{\verp}(C_G(H)/H)\boxtimes \verp\xrightarrow{(\ref{eqn action verp on repG})} \Rep_{\verp}C_G(H)/H,   
\end{equation*}
which we also denote by $\Phi_H$.
Explicitly, $\Phi_H(M\boxtimes L_i)=\Phi_H(M)\otimes L_i$, where $\Phi_H(M)$ is a $C_G(H)/H$-module in $\verp$.

\begin{remark}
The functor $\Phi_H$ semisimplifies the action of the cyclic subgroup $H$, hence $\rep C_G(H)/H$ is ``closer'' to being semisimple.  
Now set $G_1:=G$, $H_1:=H$, and~$G_2~:=~C_{G}(H)/H$.  If $p \op{\big|} |G_2|$, choose a cyclic subgroup $H_2\sub G_2$ of order $p$, and let $G_3:=C_{G_2}(H_2)/H_2$.  Continuing like this until $(|G_n|,p)=1$, we obtain symmetric monoidal functors
\[
\Rep_{\Ver_p}G_1\xto{\Phi_{H_1}}\Rep_{\Ver_p}G_2\xto{\Phi_{H_2}}\Rep_{\Ver_p}G_3\xto{}\cdots\xto{\Phi_{H_{n-1}}}\Rep_{\Ver_p}G_n,
\]
where $\Rep_{\Ver_p}G_n$ is \emph{semisimple}.  Note that this composition of functors depends on the choices of cyclic subgroups at each step.  
The above iteration procedure has been much exploited in the super world, and was used to the compute the superdimensions of irreducible representations (see for instance \cite{GH} and \cite{HW}).
\end{remark}

\begin{example}\label{example fin gps S_n}
Let $G=S_n$ for $n\geq p$, and let $H\sub S_n$ denote the subgroup generated by the $p$-cycle $(n-p+1,\dots,n)$.  Then we obtain an OTI functor $\Phi_H:\rep S_n\to\Rep_{\Ver_p}S_{n-p}$.
Applying this iteratively as above, we get a functor $\Rep_{\Ver_p} S_n\to\Rep_{\Ver_p}S_{r}$, where $n=qp+r$ for $r<p$. Note that $\Rep_{\Ver_p}S_{r}$ is semisimple.
\end{example}

We collect some properties of $\Phi_H$.  First, for $\mathbf{V}=\bigoplus_i(V_i\boxtimes L_i)\in\Rep_{\Ver_p}G$, define 
\begin{equation}\label{eqn dimp}
\dim_p\mathbf{V}:=\dim (V_1)+2\dim (V_2)+\dots+(p-1)\dim (V_{p-1}) \quad \text{ (mod }p)\in\F_p.
\end{equation}
Note that $\dim_p\mathbf{V}=\operatorname{tr}(\id_{\mathbf{V}})$ (see~Section~\ref{eqn trace in tens cat}), the categorical dimension of $\mathbf{V}$.

\begin{lemma}\label{lemma properties of OTI finite groups}
    The OTI functor $\Phi_H:\rep G\to \Rep_{\Ver_p}(C_G(H)/H)$ satisfies the following properties:
    \begin{enumerate}
        \item[$\operatorname{(i)}$] $\Phi_H$ is $\Bbbk$-linear and symmetric monoidal;
        \item[$\operatorname{(ii)}$] $\dim_p \Phi_H(M)\equiv\dim M \quad (\mod p)$;   
        \item[$\operatorname{(iii)}$] $\Phi_H(M)=0$ if and only if $\operatorname{Res}_HM$ is a projective (equivalently, free) $H$-module;
        \item[$\operatorname{(iv)}$] $\Phi_H(P)=0$ for any projective $G$-module.  
        \item[$\operatorname{(v)}$] If $X$ is a $G$-set and we write $\Bbbk[X]$ for the corresponding permutation module, then 
        \[
        \Phi_H(\Bbbk[X])=\Bbbk[X^H]\boxtimes L_1.
        \]
    \end{enumerate}
\end{lemma}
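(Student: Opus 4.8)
The plan is to verify the five properties essentially one at a time, using the explicit description of $\varphi_H$ via the functors $\varphi_i$ from~\eqref{formulaOTI} together with the structure of $\Rep_{\Ver_p}G$ already set up. Property (i) is immediate: $\operatorname{Res}_H$ is $\Bbbk$-linear and symmetric monoidal, and $\varphi_H = \varphi_{C_p}$ is symmetric monoidal by the Proposition in Section~\ref{ss components}; the $C_G(H)/H$-equivariant structure is functorial in $V$ and compatible with tensor products because the $C_G(H)$-action commutes with the $H$-action and the semisimplification is computed $H$-equivariantly. So $\Phi_H$ is a composite of symmetric monoidal $\Bbbk$-linear functors.

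For (ii), I would use the observation recorded just before the lemma that $\dim_p$ is the categorical dimension (trace of the identity), together with the fact that $\Phi_H$ is symmetric monoidal and hence preserves categorical dimensions; the categorical dimension of $M\in\rep G$ is $\dim M \bmod p$, and the categorical dimension of $\Phi_H(M)\in\Rep_{\Ver_p}(C_G(H)/H)$ is $\dim_p\Phi_H(M)$. Alternatively, one can argue directly: restricting $M$ to $H\cong C_p$ and decomposing $\operatorname{Res}_H M \cong \bigoplus_j M_j^{\oplus m_j}$, Lemma~\ref{prop OTI on indecomps} gives $\varphi_H(\operatorname{Res}_H M) = \bigoplus_j L_j^{\oplus m_j}$ (with $m_0$ the number of free summands dropping out since $M_p$ is negligible), so $\dim_p\Phi_H(M) = \sum_{j=1}^{p-1} j\, m_j$, while $\dim M = \sum_{j=1}^{p} j\, m_j \equiv \sum_{j=1}^{p-1} j\, m_j \pmod p$ since the $M_p$ summands contribute $p\cdot m_p \equiv 0$. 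For (iii): $\operatorname{Res}_H M$ is free over $H$ iff every indecomposable summand is $M_p$, iff all $m_j = 0$ for $j<p$, iff $\varphi_H(\operatorname{Res}_H M) = 0$, iff $\Phi_H(M) = 0$. Property (iv) then follows from (iii), since a projective (= injective) $\Bbbk G$-module restricts to a projective $\Bbbk H$-module (restriction of projectives to subgroups is projective), which for $H$ of prime order is the same as free.

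The only property requiring genuine computation is (v). Here I would restrict the permutation module $\Bbbk[X]$ to $H$: as $X$ decomposes into $H$-orbits, $\operatorname{Res}_H\Bbbk[X] \cong \bigoplus_{\text{orbits }\mathcal{O}}\Bbbk[\mathcal{O}]$, and an orbit has size either $1$ (a fixed point, contributing the trivial module $M_1$) or $p$ (contributing the regular module $\Bbbk H \cong M_p$, which is free). Thus $\operatorname{Res}_H\Bbbk[X] \cong M_1^{\oplus |X^H|}\oplus M_p^{\oplus t}$ where $t = (|X|-|X^H|)/p$, and applying $\varphi_H$ kills the $M_p$-part and sends the $M_1$-part to $L_1^{\oplus |X^H|}$, i.e. $\varphi_H(\operatorname{Res}_H\Bbbk[X]) = \Bbbk[X^H]\boxtimes L_1$ as a vector space in $\Ver_p$. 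It remains to check that the $C_G(H)/H$-equivariant structure on this is the permutation action on $X^H$: this is because $C_G(H)$ preserves $X^H$ setwise (if $c$ centralizes $H$ and $h\cdot x = x$ for all $h\in H$, then $h\cdot(c\cdot x) = c\cdot(h\cdot x) = c\cdot x$), the basis vector $e_x$ for $x\in X^H$ spans the copy of $M_1$ inside $\operatorname{Res}_H\Bbbk[X]$ supported on $x$, and $c\cdot e_x = e_{c\cdot x}$, which is precisely the permutation action. I expect this last equivariance bookkeeping to be the main (though still routine) obstacle, since it requires tracking the identification $\varphi_1(V) = \ker(x)/(\ker(x)\cap\im(x))$ through the orbit decomposition; everything else is formal or a direct consequence of Lemma~\ref{prop OTI on indecomps}.
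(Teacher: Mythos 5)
Your proof is correct and follows essentially the same route as the paper's: (i) is a composite of symmetric monoidal functors; (ii) via preservation of categorical dimension (your first argument is exactly the paper's; the elementwise count is a fine alternative); (iii) from Lemma~\ref{prop OTI on indecomps}; (iv) from (iii) since restriction preserves projectivity; (v) by decomposing $X$ into $H$-orbits and using that nontrivial orbits give free $H$-modules. The only place you go beyond the paper is in the final equivariance bookkeeping for (v), which the paper states without comment; your extra detail is harmless and not misplaced.
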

\begin{proof}
    Property (i) follows because $\Phi_H$ is a composition of restriction and semisimplification, which are both symmetric monoidal functors.  Property (ii) holds because symmetric monoidal functors preserve the trace of an endomorphism and $\tr(\id_M)\equiv \dim M$ mod $p$. Property (iii) follows from~Lemma~\ref{prop OTI on indecomps}, and (iv) follows immediately from (iii).

    For (v), we may decompose $X=\bigcup\limits_{i}\OO_i$ into a union of $H$-orbits, and obtain a corresponding decomposition 
    $\Bbbk[X]=\bigoplus\limits_i\Bbbk[\OO_i]$ of $H$-modules, with each $\Bbbk[\OO_i]$ an indecomposable $H$-module of dimension $|\OO_i|$.  Since $H$ is a $p$-group, all nontrivial orbits are of size a multiple of $p$, hence are killed by $\Phi_H$, leaving us with only trivial orbits, i.e.~$\Bbbk[X^H]$.
\end{proof}

\begin{remark}
    On summands of permutation modules, $\Phi_H$ lands in the $L_1$-component of $\Rep_{\verp}(G)$, and~Lemma~\ref{lemma properties of OTI finite groups}(v) shows that $\Phi_H$ agrees with the Brauer construction (Tate cohomology) on these modules. 
\end{remark}

\begin{remark}
There is another approach to obtaining the action of $C_G(H)$ on $\Phi_H$ which is more in the spirit of Hopf algebras.  Recall that a $G$-module is simply a module over the group algebra $\Bbbk G$, which is itself a cocommutative Hopf algebra.  Since $\Phi_H:\rep G\to\Ver_p$ is symmetric monoidal, $\Phi_H(\Bbbk G)$ is once again a cocommutative Hopf algebra, where $H$ acts on $\Bbbk G$ by conjugation.  Further, for any $G$-module $M$, $\Phi_H(M)$ is naturally a module over $\Phi_H(\Bbbk G)$.  Using (v) of~Lemma~\ref{lemma properties of OTI finite groups}, it is not difficult to show that $\Phi_H(\Bbbk G)$ is canonically identified, as a Hopf algebra, with $\Bbbk C_G(H)$.
\end{remark}

\subsection{OTI functors from shifted cyclic subgroups}\label{section OTI shifted cyclic}  We continue to let $G$ be a finite group.  We may generalize our construction above by considering subalgebras $R$ of $\Bbbk G$ isomorphic to $\Bbbk[x]/(x^p)$, and applying the functor 
\[
\Phi_R:\Rep_{\Bbbk} G\xto{\operatorname{Res}_{R}} R\operatorname{--}\mod\xrightarrow{\varphi} \Ver_p,
\]
with $\varphi$ defined as in (\ref{eqn varphi}).  In this case, $\Phi_R(M)$ admits a natural action of the centralizer of $x$ in $\Bbbk G$, and $\Phi_R(M)=0$ if and only if $M$ is free over $R$. 

One case of particular interest are the subalgebras arising from shifted cyclic subgroups, which we now briefly recall (see \cite[Sec.~5.8]{Benson}).

Let $A\cong C_p^r$ be an elementary abelian $p$-group, and write \[
\Bbbk A=\Bbbk[x_1,\dots,x_r]/(x_1^p,\dots,x_r^p)
\] where $1-x_1,\dots,1-x_r$  are generators for $A\cong C_p^r$.  A \emph{shifted cyclic subgroup} $R_{\mathbf{a}}$ of $A$ is a subalgebra of $\Bbbk A$ generated by an element of the form $z_{\mathbf{a}}:=a_1x_1+\dots+a_rx_r$, where $\mathbf{a}=(a_1,\dots,a_r)\in\Bbbk^r$ is non-zero. Thus $R_{\mathbf{a}}\cong\Bbbk[z_{\mathbf{a}}]/(z_{\mathbf{a}}^p)$.  Note that $R_{\mathbf{a}}$ will not be the group algebra of a subgroup of $A$ in general.

Shifted cyclic subgroups correspond to the closed points of $\Spec H^\bullet(A,\Bbbk)$ (which happens to be the Balmer spectrum \cite{Balmer} of $\D(\rep A)$).  If $M$ is a finite-dimensional $A$-module, then $\Phi_R(M)=0$ for all shifted cyclic subgroups $R$ of $A$ if and only if $M$ is a projective (equivalently, free) $A$-module.  Combined with Chouinard's Lemma \cite{Benson}, which states that a module over a general finite group $G$ is projective if and only if it is so over every elementary abelian subgroup, the OTI functors $\Phi_R$ can detect projectivity of any finite-dimensional $G$-module. 

\subsection{Morphism on mod $p$ Grothendieck rings}\label{lem:K0 of ss}  We again let $H\sub G$ be a cyclic subgroup of order~$p$.
The functor $\Phi_H$ is not exact, so does not induce a morphism on Grothendieck groups.  Nevertheless, a weaker statement, shown below, still holds.
Throughout, we will write $\overline{K_0}(\Cs):=K_0(\Cs)\otimes_{\Z}\F_p$ for the $\mod p$ Grothendieck group of an abelian category $\Cs$.
Note that $\mathbb{F}_p$ is a $K_0(\Ver_p)$-algebra via the morphism $\dim_p$ (see~(\ref{eqn dimp})), and we have an isomorphism
\begin{align*}
    K_0(\operatorname{Rep}_{\operatorname{Ver}_p}G)\otimes_{K_0(\Ver_p)}\mathbb{F}_p &\cong
    (K_0(\operatorname{Rep}_{\Bbbk}G)\otimes_{\mathbb{Z}} K_0(\operatorname{Ver}_p))\otimes_{K_0(\Ver_p)}\mathbb{F}_p
    \\
    & \cong \overline{K_0}(\operatorname{Rep}_{\Bbbk}G).
\end{align*}
Since the semisimplification functor $\varphi:\Rep_{\Bbbk} C_p\to\Ver_p$ is symmetric monoidal, it preserves categorical dimension, meaning that $\dim_p\varphi(M)\equiv\dim M$ (mod $p$).  It follows that the assignment
\[
\phi:\overline{K_0}(\Rep_{\Bbbk} C_p)\to  \mathbb{F}_p, \quad [X]\mapsto \dim_p \varphi(X)
\]
is well-defined and agrees with the algebra morphism 
$$\overline{K_0}(\Rep_{\Bbbk} C_p)\to  \overline{K_0}(\operatorname{vec}) = \mathbb{F}_p$$
induced by the exact functor $\Rep_\Bbbk C_p\to \operatorname{vec}$, forgetting the $C_p$-action. The following is an easy corollary, and is proved more generally in~Proposition~\ref{prop:mod_p_groth_ring}.

\begin{prop}\label{lemma red groth ring finite gps}
The OTI functor $\Phi_H:\rep G\to \Rep_{\Ver_p}(C_G(H)/H)$
induces an algebra morphism on $\mod p$ Grothendieck groups, 
\[
\phi_H:\overline{K_0}(\rep G)\to \overline{K_0}(\rep C_G(H)/H), \quad [M]\mapsto[\Phi_H(M)]
\]
Under the natural identification $\ol{K_0}(C_G(H))\cong\ol{K_0}(C_G(H)/H)$, we have 
\[
\phi_H([M])=[\operatorname{Res}_{C_G(H)}(M)].
\]
\end{prop}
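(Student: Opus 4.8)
The plan is to prove this by hand; it is also a special case of Proposition~\ref{prop:mod_p_groth_ring}, but the finite-group case admits a short direct proof. Abbreviate $C:=C_G(H)$, fix a generator $\sigma$ of $H$, and put $x:=\sigma-1$, a central element of $\Bbbk C$ with $x^p=0$. First I would pin down the identification of Grothendieck groups. Since $H\subseteq Z(C)$ is a $p$-group, $\sigma$ acts on any simple $\Bbbk C$-module by a $p$-th root of unity in $\Bbbk$, hence by $1$; so every simple $C$-module is inflated from $C/H$, and inflation induces an isomorphism $\ol{K_0}(\rep C/H)\xrightarrow{\sim}\ol{K_0}(\rep C)$. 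This is the ``natural identification'' of the statement, and under it I will freely regard each component $\varphi_i(V)$ (which carries trivial $H$-action by construction) as a $C$-module.

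The heart of the argument is a filtration identity in $\ol{K_0}(\rep C)$. For $V\in\rep C$, the images $x^jV=\im(x^j)$ are $C$-submodules (as $x$ is $C$-central), yielding a $C$-stable filtration $V\supseteq xV\supseteq\cdots\supseteq x^{p-1}V\supseteq x^pV=0$. For $1\le i\le p-1$, multiplication by $x$ descends to a $C$-equivariant surjection $x^{i-1}V/x^iV\twoheadrightarrow x^iV/x^{i+1}V$, and a direct computation from \eqref{formulaOTI} identifies its kernel with $\varphi_i(V)$, so that
\[
0\longrightarrow\varphi_i(V)\longrightarrow x^{i-1}V/x^iV\xrightarrow{\ \cdot x\ }x^iV/x^{i+1}V\longrightarrow0
\]
is exact in $\rep C$. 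Writing $q_i:=[x^{i-1}V/x^iV]$ in $K_0(\rep C)$, these sequences give $q_i=[\varphi_i(V)]+q_{i+1}$ for $i<p$, hence $q_i=\sum_{j=i}^{p-1}[\varphi_j(V)]+q_p$; summing over the whole filtration then gives
\[
[V]=\sum_{i=1}^{p}q_i=\sum_{j=1}^{p-1}j\,[\varphi_j(V)]+p\,[x^{p-1}V],
\]
so that $[V]\equiv\sum_{j=1}^{p-1}j\,[\varphi_j(V)]$ in $\ol{K_0}(\rep C)$.

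To finish, recall that $\operatorname{Res}_C:\rep G\to\rep C$ is exact and symmetric monoidal, so $[M]\mapsto[\operatorname{Res}_C M]$ is a well-defined ring homomorphism $\ol{K_0}(\rep G)\to\ol{K_0}(\rep C)$. Now unwind the isomorphism $K_0(\Rep_{\Ver_p}(C/H))\otimes_{K_0(\Ver_p)}\F_p\cong\ol{K_0}(\rep C/H)$ recorded just before the statement: using Remark~\ref{k0} it sends $[\Phi_H(M)]=\sum_i[\varphi_i(\operatorname{Res}_C M)]\otimes[L_i]$ to $\sum_i i\,[\varphi_i(\operatorname{Res}_C M)]$ (because $[L_i]\mapsto\dim_p L_i=i$), and by the identity above applied to $V=\operatorname{Res}_C M$ this equals $[\operatorname{Res}_C M]$ once we pass along $\ol{K_0}(\rep C/H)\cong\ol{K_0}(\rep C)$. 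Thus $[\Phi_H(-)]$ coincides with $[\operatorname{Res}_C(-)]$ under the natural identification; in particular it is additive (so descends to $\ol{K_0}(\rep G)$) and multiplicative (the latter also being immediate from $\Phi_H$ being symmetric monoidal, Lemma~\ref{lemma properties of OTI finite groups}(i)), which is exactly the proposition.

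The only genuine content is the $C$-equivariant kernel computation in the middle paragraph---showing $\ker\big(\,x\colon x^{i-1}V/x^iV\to x^iV/x^{i+1}V\,\big)=\varphi_i(V)$ as $C$-modules. This is elementary linear algebra, and is essentially the assertion (in the remark following Lemma~\ref{lemma exactness property}) that the $\varphi_i$ are the associated graded pieces of a natural filtration by images of $x$; once it is in hand, everything else is bookkeeping with the equivalence $\Rep_{\Ver_p}(C/H)\simeq\rep(C/H)\boxtimes\Ver_p$ and with the standard fact that a normal $p$-subgroup acts trivially on simple modules in characteristic~$p$. So I do not expect any real obstacle---consistent with the statement calling this an ``easy corollary''.
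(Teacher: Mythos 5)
Your proof is correct. The paper treats this proposition as a corollary of the later, more general Proposition~\ref{prop:mod_p_groth_ring}, whose proof works with the short exact sequences
\[
0\to \ker(x)\cap\im(x^{i-1})\to \im(x^{i-1})\xrightarrow{\;x\;}\im(x^i)\to 0
\]
and then telescopes in $K_0$; you instead filter $V$ itself by the images $x^jV$ and use the $C$-equivariant sequences
\[
0\to\varphi_i(V)\to x^{i-1}V/x^iV\xrightarrow{\;\cdot x\;}x^iV/x^{i+1}V\to 0.
\]
These are really two presentations of the same computation: your second sequence gives $[\varphi_i]=[x^{i-1}V]-2[x^iV]+[x^{i+1}V]$ in $K_0(\rep C)$, which is exactly the identity the paper telescopes. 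What your route buys is a fully self-contained, equivariant argument in the finite-group case, including an explicit justification of the ``natural identification'' $\ol{K_0}(\rep C_G(H))\cong\ol{K_0}(\rep C_G(H)/H)$ (central $p$-subgroups act trivially on simples), and an explicit check of the ring-homomorphism claim, neither of which is spelled out in the paper. The one step you assert without detail is the $C$-equivariant identification of $\ker(\cdot x\colon x^{i-1}V/x^iV\to x^iV/x^{i+1}V)$ with $\varphi_i(V)$; as you say it is elementary (send $\bar v$, where $xv=x^{i+1}w$, to the class of $v-x^iw$), and verifying on the indecomposables $M_j$ reassures that the map is an isomorphism. So the argument stands, and is arguably a nicer direct proof of this special case than appealing to the general proposition.
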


\begin{remark}\label{remark super red gr gp}
    Note that Proposition~\ref{lemma red groth ring finite gps} has a well-known analogue in the super setting, first observed in \cite{HR} (see \cite{GHSS} for a general description in terms of Grothendieck rings).
\end{remark}
\subsection{Application: Glauberman Correspondence}\label{sec:Glauberman}

In this section, we use the OTI functor to categorify the cyclic case of the Glauberman correspondence \cite{GL}, which is a  bijection relating irreducible characters of finite groups. Our categorification 
provides a natural interpretation for the sign that appears in this bijection.  First we recall the statement.  

\begin{thm}\label{Glauberman corresp}
Let $H$ be a group of order $p$ acting on a finite group $G$ of order coprime to $p$. Then there exists a canonical bijection between the (characteristic zero) irreducible representations of $G$ fixed by $H$ and the irreducible representations of the invariants $G^H$: \[V\mapsto \widehat{V}\]
Up to signs, this correspondence is induced by the restriction map on Grothendieck rings mod $p$:
\begin{equation}\label{eq:glaub_congruence}
    [\mathrm{Res}^G_{G^H}(V)]\equiv\pm [\widehat{V}] \quad \mathrm{mod}\ p.
\end{equation}
\end{thm}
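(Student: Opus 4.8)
The most direct route is to cite Glauberman's original paper \cite{GL} (or a modern exposition), since Theorem~\ref{Glauberman corresp} is classical. Since the point of this section is to read the correspondence off the OTI functor, however, the plan is to organize the argument around $\Phi_H$ for the semidirect product $\Gamma:=G\rtimes H$, which simultaneously yields the congruence \eqref{eq:glaub_congruence}.

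First I would pass to the field $\Bbbk$ of characteristic $p$. Since $\gcd(|G|,p)=\gcd(|G^H|,p)=1$, the categories $\rep G$ and $\rep{G^H}$ are semisimple, and reduction mod $p$ gives $H$-equivariant bijections $\operatorname{Irr}_\C(G)\leftrightarrow\operatorname{Irr}_\Bbbk(G)$ and $\operatorname{Irr}_\C(G^H)\leftrightarrow\operatorname{Irr}_\Bbbk(G^H)$ compatible with $\operatorname{Res}^G_{G^H}$, so it is enough to build the bijection over $\Bbbk$ and check the congruence in $\overline{K_0}$. Since $\Gamma/G\cong C_p$ is a $p$-group it has a unique simple $\Bbbk$-representation, so Clifford theory attaches to each $V\in\operatorname{Irr}_\Bbbk(G)^H$ a \emph{unique} extension $\tilde V\in\operatorname{Irr}_\Bbbk(\Gamma)$, while the remaining simple $\Bbbk\Gamma$-modules are the $\Ind_G^\Gamma V_0$ with $V_0$ not $H$-fixed; these restrict to a free $\Bbbk H$-module and are therefore annihilated by $\Phi_H$ (Lemma~\ref{lemma properties of OTI finite groups}(iii)).

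Next, using $C_\Gamma(H)=G^H\times H$ and hence $\Phi_H\colon\rep\Gamma\to\Rep_{\verp}(G^H)$, I would decompose $\operatorname{Res}^\Gamma_{G^H\times H}\tilde V\cong\bigoplus_{j=1}^{p}U_j\boxtimes M_j$ with $U_j\in\rep{G^H}$ (possible since $\Bbbk G^H$ is semisimple); then Lemma~\ref{prop OTI on indecomps} gives $\Phi_H(\tilde V)\cong\bigoplus_{i=1}^{p-1}U_i\boxtimes L_i$, and restricting once more to $G^H$ gives $\operatorname{Res}^G_{G^H}V\cong\bigoplus_{j=1}^{p}U_j^{\oplus j}$. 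Comparing with Proposition~\ref{lemma red groth ring finite gps} (which gives $\phi_H([\tilde V])=[\operatorname{Res}^G_{G^H}V]$ in $\overline{K_0}(G^H)$) we obtain $[\operatorname{Res}^G_{G^H}V]=\sum_{i=1}^{p-1}i\,[U_i]$, and since $\dim_p\Phi_H(\tilde V)\equiv\dim_\Bbbk V\not\equiv 0\pmod p$ (as $\dim V\mid|G|$), the object $\Phi_H(\tilde V)$ is nonzero.

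The hard part, which I do not expect to admit a short proof, is the heart of Glauberman's theorem: that $\operatorname{Res}^G_{G^H}V$ has a \emph{unique} irreducible constituent $\widehat V$ of multiplicity prime to $p$, and that this multiplicity is $\equiv\pm1\pmod p$; equivalently, that $\Phi_H(\tilde V)$ is $\pm$ a single simple object of $\Rep_{\verp}(G^H)$ up to negligible summands. I would prove this by induction on $|G|$ as in \cite{GL}, reducing via the compatibility of the correspondence with normal subgroups and quotients, together with the character-theoretic isometry arguments for coprime actions, to base cases checkable directly. Granting this, $V\mapsto\widehat V$ is bijective: injectivity because two $H$-fixed irreducibles with the same $\widehat V$ would have $\operatorname{Res}^G_{G^H}$-classes proportional mod $p$ and hence equal, and surjectivity from the count $|\operatorname{Irr}_\C(G)^H|=|\operatorname{Irr}_\C(G^H)|$, a consequence of Brauer's permutation lemma applied to the coprime $H$-action on conjugacy classes. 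The congruence \eqref{eq:glaub_congruence} is then immediate from $\phi_H([\tilde V])\equiv\pm[\widehat V]$, and — foreshadowing Theorem~\ref{OTIGlauberman} — the sign is exactly the index $i$ with $U_i=\widehat V$, recorded by the $L_i$-component of $\Phi_H(\tilde V)$.
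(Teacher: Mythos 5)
The paper does not prove Theorem~\ref{Glauberman corresp}: it is stated as a recalled classical result with a citation to Glauberman \cite{GL}, and the section's new content is Theorem~\ref{OTIGlauberman}, which \emph{assumes} the correspondence and shows the OTI functor recovers it. So there is no ``paper's own proof'' for you to match; the correct answer here is simply to cite \cite{GL}, as you say in your first sentence.

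The remainder of your proposal mixes up two things: material that actually belongs to the proof of Theorem~\ref{OTIGlauberman} (decomposing $\operatorname{Res}^{\Gamma}_{G^H\times H}\tilde V$, reading off $\Phi_H(\tilde V)$, the comparison via Proposition~\ref{lemma red groth ring finite gps}) and the genuine content of Glauberman's theorem. Those OTI computations are correct but they do \emph{not} yield the theorem: everything you derive is tautologically consistent with $\operatorname{Res}^G_{G^H}V=\bigoplus_j U_j^{\oplus j}$, and nothing in the OTI formalism forces $\Phi_H(\tilde V)$ to be a \emph{single} simple object $\widehat V\boxtimes L_{\pm1}$. You flag this as ``the hard part'' and defer to Glauberman's induction without sketching it --- that is precisely the step the paper also declines to reprove, so you are in good company, but it means your write-up is a commentary on the theorem rather than a proof of it. Two smaller issues worth flagging: your injectivity argument (``$\operatorname{Res}^G_{G^H}$-classes proportional mod $p$ and hence equal'') is not a valid deduction as stated --- distinct irreducibles can have congruent restriction classes mod $p$ in general, and Glauberman's injectivity uses character-theoretic isometries, not a mod-$p$ count; and the equality $\lvert\operatorname{Irr}_\C(G)^H\rvert=\lvert\operatorname{Irr}_\C(G^H)\rvert$ needs Brauer's permutation lemma \emph{together with} the coprime-action fact that $H$-fixed conjugacy classes of $G$ biject with conjugacy classes of $G^H$, which deserves its own citation.

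For the paper's purposes the clean move is: state Theorem~\ref{Glauberman corresp} with the reference to \cite{GL}, and reserve all the OTI machinery for Theorem~\ref{OTIGlauberman}, which is where it actually earns its keep.
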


As the orders of $G$ and $G^H$ are coprime to $p$, the representation theory of these groups in characteristic $p$ is semisimple, with irreducible representations mirroring those in characteristic $0$.
Therefore, we may just as well interpret the Glauberman correspondence as a bijection relating simple modules in characteristic $p$.

Moreover, by Clifford theory (see Theorem 4 of \cite{Alperin}), for any simple module $V$ of $G$ fixed by $H$, there is a unique simple module $\tilde{V}$ of $G\rtimes H$ restricting to $V$. Let $\Phi_{G\rtimes H,H}$ denote the OTI functor for $H$ in $G\rtimes H$ and note that $C_{G\rtimes H}(H)/H\cong G^H$.

\begin{thm}\label{OTIGlauberman}
In characteristic $p$, the OTI functor 
\[\Phi_{G\rtimes H,H}:\Rep_{\Bbbk}(G\rtimes H)\to \Rep_{\verp} (G^H)\]
categorifies the Glauberman correspondence: it takes the extension of an $H$-fixed simple module $V$ in $\Rep_{\Bbbk}(G\rtimes H)$ to its Glauberman correspondent $\widehat{V}$, in the component determined by the sign mod $p$:
\[\Phi_{G\rtimes H,H}(\tilde{V})=\widehat{V}\boxtimes L_{\pm 1}\]
\end{thm}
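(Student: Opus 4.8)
The plan is to combine three ingredients: the categorical identification of $\Phi_{G\rtimes H,H}$, Proposition~\ref{lemma red groth ring finite gps} on mod $p$ Grothendieck rings, and the characterizing property (\ref{eq:glaub_congruence}) of the Glauberman correspondence. First I would unwind the definition of the target: since $\tilde V$ is a \emph{simple} $(G\rtimes H)$-module and $\Phi_{G\rtimes H,H}$ is symmetric monoidal (Lemma~\ref{lemma properties of OTI finite groups}(i)), $\Phi_{G\rtimes H,H}(\tilde V)$ is an object of $\Rep_{\verp}(G^H)$; because $|G^H|$ is coprime to $p$, this category is semisimple and its indecomposables are of the form $W\boxtimes L_i$ with $W$ a simple $G^H$-module. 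The restriction of $\tilde V$ to $H$ decomposes into $\ker(\sigma-1)$ and a free part, and one checks directly (using that $V=\tilde V|_G$ is $H$-fixed, so $\tilde V|_H$ has a one-dimensional $H$-submodule structure coming from the choice of extension) that $\Phi_{G\rtimes H,H}(\tilde V)$ is concentrated in a single component $L_{\pm1}$; more precisely, since $\dim_p$ is monoidal, the component index $j$ satisfies $j\cdot\dim W\equiv \dim V \pmod p$, pinning down the parity once we know $j\in\{1,p-1\}$. To see $j\in\{1,p-1\}$, I would invoke the fact that $\tilde V$ restricted to $H$, after semisimplification, lands in the subcategory $\svec\subset\Ver_p$ generated by $L_1,L_{p-1}$: indeed $\Res_H \tilde V$ has a single Jordan block of non-$p$-divisible size (all other blocks are of size divisible by $p$, since $\tilde V|_G$ is semisimple and fixed by $H$, so the eigenvalues of $\sigma$ pair up), forcing $\varphi_H(\Res_H\tilde V)$ to be $L_1$ or $L_{p-1}$.

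Next I would identify $W$. Apply $\phi_{G\rtimes H, H}$ (the induced map on $\ol{K_0}$ from Proposition~\ref{lemma red groth ring finite gps}) to $[\tilde V]$. By that proposition, $\phi_{G\rtimes H, H}([\tilde V]) = [\Res_{C_{G\rtimes H}(H)}(\tilde V)]$, and since $C_{G\rtimes H}(H) = G^H\times H$ with $H$ acting trivially on the component we retain, this equals $[\Res^{G\rtimes H}_{C_{G\rtimes H}(H)}(\tilde V)]$, which under the identification $\ol{K_0}(G^H\times H)\cong\ol{K_0}(G^H)$ becomes $[\Res^{G\rtimes H}_{G^H}(\tilde V)] = [\Res^G_{G^H}(V)]$ (using $\tilde V|_G = V$ and that $G^H\subset G$). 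On the other hand, $\phi_{G\rtimes H, H}([\tilde V]) = [\Phi_{G\rtimes H,H}(\tilde V)] = \pm[W]$ in $\ol{K_0}(G^H)$, the sign being exactly the component index $L_{\pm 1}$ read off via $\dim_p$. Comparing with (\ref{eq:glaub_congruence}), which says $[\Res^G_{G^H}(V)] \equiv \pm[\widehat V] \pmod p$, and using that simple modules of $G^H$ give a basis of $\ol{K_0}(G^H) = K_0(\rep G^H)\otimes\F_p$ over which characters are detected (semisimplicity plus coprimality), we conclude $W = \widehat V$ and that the two signs agree. Hence $\Phi_{G\rtimes H,H}(\tilde V) = \widehat V\boxtimes L_{\pm1}$ with the correct sign.

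The main obstacle is the sign bookkeeping: showing that the component $L_{\pm1}$ in which $\Phi_{G\rtimes H,H}(\tilde V)$ lives is precisely the sign in the Glauberman correspondence, not merely \emph{a} sign. The subtle point is that (\ref{eq:glaub_congruence}) only determines $[\widehat V]$ up to sign a priori, whereas our functor produces a genuine, unambiguous component; so I must instead argue that the \emph{definition} of the Glauberman sign (e.g.~via the Jordan–Hölder multiplicities of $\Res^G_{G^H}V$ mod $p$, or via the classical sign $\varepsilon$ coming from the character-theoretic construction in \cite{GL}) matches $\varphi_H$'s assignment of $L_1$ versus $L_{p-1}$. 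Concretely I would show that the Glauberman sign equals $(-1)^{k}$ where the Jordan type of $\sigma-1$ on $\tilde V|_H$ has its unique non-$p$-divisible block of size $k$ (so $L_k$ before projecting to $\svec$, and $k\equiv\pm1$ determines whether we get $L_1$ or $L_{p-1}$), reconciling this with the known formula for the Glauberman correspondent's sign. A secondary, more routine obstacle is verifying the claim that $\Res_H\tilde V$ has exactly one non-$p$-divisible Jordan block; this follows because $\tilde V|_G$ is $H$-stable and semisimple over a field of characteristic $p\nmid|G|$, so $H$ permutes a basis of eigenvectors in a way whose non-fixed orbits have size $p$, but it requires care to phrase correctly at the level of the $(G\rtimes H)$-module structure rather than just the $G$-module structure.
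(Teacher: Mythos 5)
Your overall shape is right (establish that $\Phi_{G\rtimes H,H}(\tilde V)$ is a single simple object $W\boxtimes L_j$, then use Proposition~\ref{lemma red groth ring finite gps} and the congruence~\eqref{eq:glaub_congruence} to identify $W=\widehat V$ and $j\equiv\pm 1$), and your Grothendieck-group computation matches the paper's. But the step where you establish concentration in one component is a genuinely different attempt from the paper's, and it has a real gap.

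You claim that $\Res_H\tilde V$ has a \emph{single} Jordan block of size not divisible by $p$, ``since $\tilde V|_G$ is semisimple and fixed by $H$, so the eigenvalues of $\sigma$ pair up.'' This is not correct. First, in characteristic $p$ the generator $\sigma$ of $H$ is unipotent, so $\sigma-1$ has only the eigenvalue $0$; there are no eigenvalues to ``pair up.'' Second, and more seriously, the claim is false in general: $\Res_H\tilde V$ will typically have $\dim\widehat V$ non-projective Jordan blocks (all of the same size $j$), not one. For instance, take $G=A^{\times p}$ for a group $A$ with a simple module $W$ of dimension $>1$, with $H=C_p$ cyclically permuting the factors; then $V=W^{\boxtimes p}$ is $H$-fixed, $G^H$ is the diagonal copy of $A$, and $\Res_H\tilde V$ has $\dim W>1$ trivial blocks plus free blocks. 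What is actually true is that all non-projective blocks of $\Res_H\tilde V$ have the \emph{same} size $j$, with $j\in\{1,p-1\}$ — but even the fact that $j\in\{1,p-1\}$ rather than an arbitrary value in $\{1,\dots,p-1\}$ is exactly what must be proved, and your pairing argument does not deliver it.

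The paper's route is cleaner and avoids any Jordan-block analysis of $\tilde V$ itself: it first proves a lemma that $\Phi_{G\rtimes H,H}\circ\Ind^{G\rtimes H}_{G^H\times H}\cong\Phi_{G^H\times H,H}$ (because $H$ fixes only the trivial coset of $G^H$ in $G$), and then notes that, since $[G\rtimes H:G^H\times H]$ is prime to $p$, every simple $U$ of $G\rtimes H$ is a direct summand of some $\Ind^{G\rtimes H}_{G^H\times H}(W\boxtimes M_j)$ with $W$ simple. Applying $\Phi$ and the lemma gives $\Phi(U)$ as a direct summand of the \emph{simple} object $W\boxtimes L_j$, hence $\Phi(U)$ is simple or zero — no a priori information about $j$ is needed. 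Only then does the mod $p$ Grothendieck group identification, exactly as you carry it out, pin down $W=\widehat V$ and $j\in\{1,p-1\}$ with the correct sign. That comparison also settles your worry in the last paragraph: the sign in~\eqref{eq:glaub_congruence} is determined by the classical Glauberman correspondence, and since simple modules form a basis for $\overline{K_0}(\rep G^H)$, the equation $j[W]=\pm[\widehat V]$ simultaneously fixes $W$, $j$, and the agreement of signs. No additional reconciliation with the classical sign formula from \cite{GL} is required.

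To repair your argument you would either (a) adopt the paper's induction/summand argument to show $\Phi(\tilde V)$ is simple or zero, or (b) correctly argue that all non-projective Jordan blocks of $\Res_H\tilde V$ have a common size $j$ (which in fact follows from the paper's lemma) and accept that $j\in\{1,p-1\}$ is only established afterwards via the Grothendieck-group comparison; the attempt to derive $j\in\{1,p-1\}$ directly from the $H$-module structure of $\tilde V$ is the part that does not go through.
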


\begin{remark}
Note that on the level of vector spaces, the Glauberman correspondence may also be induced by the Brauer construction for $H$. By involving the Verlinde category we provide a new categorical interpretation of the sign that appears in the correspondence. Our approach also illustrates an extra monoidal compatibility of this map.
\end{remark}

The proof is a corollary of a compatibility between induction and the OTI functor:

\begin{lemma}
Let $H$ be a group of order $p$ acting on a group $G$ of order coprime to $p$, with centraliser $G^H$. Then as functors from $\Rep_{\Bbbk}(G^H\times H)$ to $\Rep_{\verp} (G^H)$ we have:
\[\Phi_{G\rtimes H,H}\circ \Ind^{G\rtimes H}_{G^H\times H}\cong \Phi_{G^H\times H,H}.\]
\end{lemma}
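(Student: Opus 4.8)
The plan is to unwind both OTI functors through their definitions as ``restrict to the cyclic $p$-group $H$, then semisimplify'' and match the $H$-module structures using Mackey theory. First I would note that both sides, after composing with the forgetful functor to $\Ver_p$, factor through restriction to $H\subset G^H\times H$ (on the right) and $H\subset G\rtimes H$ (on the left), followed by the semisimplification functor $\varphi_H$. So the heart of the statement is the Mackey-type isomorphism of $H$-modules
\[
\operatorname{Res}_H\operatorname{Ind}^{G\rtimes H}_{G^H\times H}(W)\;\cong\;\operatorname{Res}_H(W)\;\oplus\;(\text{free }H\text{-module}),
\]
natural in $W\in\Rep_\Bbbk(G^H\times H)$. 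Granting this, Lemma~\ref{lemma properties of OTI finite groups}(iii) (equivalently the fact that $\varphi_H$ kills free $H$-modules, cf.~Lemma~\ref{prop OTI on indecomps}) makes the free summand disappear after semisimplification, giving the isomorphism of functors to $\Ver_p$; and since all the $G^H$-actions in sight are induced from the same data, they are intertwined by this isomorphism, upgrading it to an isomorphism of functors into $\Rep_{\verp}(G^H)$.

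The key steps, in order: (1) Apply the Mackey decomposition formula to $\operatorname{Res}_H\operatorname{Ind}^{G\rtimes H}_{G^H\times H}(W)$, indexed by double cosets $H\backslash (G\rtimes H)/(G^H\times H)$. Since $|G|$ is coprime to $p$ and $|H|=p$, a Sylow/counting argument identifies the double cosets: the cosets split into the ``diagonal'' double coset containing the identity, whose $H$-stabilizer is all of $H$ and which contributes a summand isomorphic to $\operatorname{Res}_H(W)$, and all other double cosets, whose $H$-stabilizers are trivial (because $H\cap {}^g(G^H\times H)$ is a $p$-subgroup of $G^H\times H$ not lying in the $H$-factor unless $g$ normalizes, forcing it to be trivial), hence contribute free $H$-modules by Mackey. (2) Conclude the $H$-module isomorphism above with its naturality in $W$. (3) Semisimplify: $\varphi_H$ of the free part vanishes, so $\varphi_H\circ\operatorname{Res}_H\circ\operatorname{Ind}^{G\rtimes H}_{G^H\times H}\cong \varphi_H\circ\operatorname{Res}_H$ as functors to $\Ver_p$. (4) Track the $G^H$-action: on the left, $C_{G\rtimes H}(H)/H\cong G^H$ acts on $\Phi_{G\rtimes H,H}\circ\operatorname{Ind}$; on the right, $G^H$ acts through the $G^H$-factor of $G^H\times H$; the identity double coset being $H$-stable and $G^H$-stable, the isomorphism from step (1) restricted to the diagonal summand is $G^H$-equivariant by construction of the induced-module action, so the isomorphism of step (3) upgrades to one of functors into $\Rep_{\verp}(G^H)$.

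I expect the main obstacle to be step (1): carefully computing the double coset space $H\backslash(G\rtimes H)/(G^H\times H)$ and verifying that the stabilizer $H\cap {}^g(G^H\times H)$ is trivial for $g$ outside the identity double coset. This is where the coprimality hypothesis $p\nmid|G|$ is essential — it is what prevents any nontrivial element of $H$ from being conjugate into $G^H\times H$ in a way that would survive intersection with $H$, and it is the statement that makes the Mackey summands other than the obvious one genuinely free (rather than merely non-trivial). Once this group-theoretic fact is pinned down, the rest is a bookkeeping exercise: matching the natural $G^H$-equivariant structure on the diagonal Mackey summand with the module structure on $\Phi_{G^H\times H,H}(W)=\varphi_H(\operatorname{Res}_HW)$, which is immediate from how $C_G(H)$ acts on the components $\varphi_i$ in~\eqref{formulaOTI}.
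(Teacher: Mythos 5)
Your proposal is essentially the paper's argument: both decompose $\operatorname{Res}_H\Ind^{G\rtimes H}_{G^H\times H}(W)$ into $H$-orbits on $(G\rtimes H)/(G^H\times H)\cong G/G^H$ and observe that only the trivial coset is $H$-fixed, so every other orbit contributes a free $H$-module that is killed by semisimplification. The paper outsources the fixed-point claim to Glauberman's Lemma~3, whereas you sketch it directly; your Sylow sketch reaches the right conclusion but the parenthetical ``not lying in the $H$-factor unless $g$ normalizes'' should be replaced by the cleaner chain: if ${}^{g^{-1}}H\subseteq G^H\times H$ then ${}^{g^{-1}}H=H$ since $H$ is central, hence the unique Sylow $p$-subgroup, of $G^H\times H$, so $g^{-1}\in N_{G\rtimes H}(H)=G^H\times H$.
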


\begin{proof}
This induction splits as a direct sum of subspaces for the cosets of $G^H$ in $G$, and the action of $H$ permutes these. As $H$ and $G$ are coprime, the action of $H$ on $G/G^H$ fixes only the trivial coset of $G^H$ (see Lemma 3 of \cite{GL}). Thus, the action of $H$ on this decomposition fixes only the trivial component, inducing the desired identification of functors.
\end{proof}

\begin{proof}[Proof of Theorem \ref{OTIGlauberman}]
First note that the index of $G^H\times H$ in $G\rtimes H$ is coprime to $p$, so for any $G$ module $U$, the canonical map \[\Ind^{G\rtimes H}_{G^H\times H}\mathrm{Res}^{G\rtimes H}_{G^H\times H}(U)\to U\] is split surjective. From this, we see that every simple module of $G\rtimes H$ is a direct summand of the induction of an indecomposable module of $G^H\times H$. By uniserial theory (see \cite{Alperin}), these indecomposable modules all have the form $W\boxtimes M_j$ for $W$ an irreducible module of~$G^H$. So for $U$ a simple $G\rtimes H$ module, there exists some $W$ and $M_j$ such that
\[U\subset_{\oplus} \Ind^{G\rtimes H}_{G^H\times H}(W\boxtimes M_j).\]
Now with $\Phi \coloneqq \Phi_{G\rtimes H,H}$, we have the split inclusion:
\[\Phi(U)\subset_{\oplus} \Phi\Ind^{G\rtimes  H}_{G^H\times  H}(W\boxtimes M_j)\cong \Phi(W\boxtimes M_j)\cong W\boxtimes L_j.\]
This shows for simple $U$, the object $\Phi(U)$ is either simple or zero in $\Rep_{\verp}(G^H)$. As the Glauberman correspondence (with corresponding sign) is determined by restriction in mod $p$ Grothendieck groups, Proposition~\ref{lemma red groth ring finite gps} completes the proof.
\end{proof}


\section{OTI functors on  categorical Heisenberg actions}\label{section:oti_functors_on_heisenberg_categorifications}

We discuss the OTI functor in the setting of categorical Heisenberg actions. 

\subsection{CF functors}\label{section cf functors}
Let $n\in\N$ and fix a subgroup $H\sub S_n$. 
Recall that a subgroup $H'\sub S_n$ is \emph{transitive} if its action on $\{1,\dots,n\}$ is transitive. 
 
\begin{definition}\label{def:cf-functor}
    Let $\As$ be an abelian category.
    A $\Bbbk$-linear functor $\varphi:\rep H\to\As$ is a  \textit{CF functor} for $H\sub S_n$ if $\varphi(\Ind_{H'}^{H}M)=0$ for all non-transitive subgroups $H'\sub H$ and all $H'$-modules $M$. 
\end{definition}

We note that Definition \ref{def:cf-functor} allows for $\varphi$ to be the zero functor, but we will usually impose an extra nontriviality assumption on $\varphi$ (see also Lemma \ref{lemma CF nontriv}).

\begin{example}\label{example CF functors}
    If $n=p$, $C_p$ is the unique minimal transitive subgroup of $S_p$ up to conjugacy.  A $\Bbbk$-linear functor $\varphi:\rep C_p\to\As$ is a CF functor for $C_p\subset S_p$ if and only if $\varphi$ vanishes on projective modules.  There are several examples of such functors:
    \begin{enumerate}
        \item We may take $\As=\Ver_p$ and $\varphi:\rep C_p\to\Ver_p$ the semisimplification functor.  Then $\varphi$ is given, componentwise, by the formulas in (\ref{formulaOTI}).  In this case $\varphi$ is symmetric monoidal.
        
        \item Take $\As=\operatorname{vec}_{\Bbbk}$ and let $\varphi=\varphi_1:\rep C_p\to\As$ 
        (see (\ref{formulaOTI})). This choice of CF functor is a direct summand of the one in example (1).  It is no longer monoidal, although it is lax monoidal.
        
        \item Take $\As=\operatorname{vec}_{\Bbbk}$ and let $\varphi(V)=\ker(x)/\im(x^p)$, where we follow the notation of~Section~\ref{section oti explicit}.  This CF functor is the well-known Brauer construction.
    \end{enumerate}
\end{example}

\begin{example}\label{cf varphi 1}
    Let $\As$ be a locally finite abelian category.
    Suppose $\varphi:\rep H\to \As$ is a CF functor for $H\sub S_{n}$.  Write $\mathbf{1}$ for the trivial representation in $\rep H$ as usual.
    Set $\varphi_{\mathbf{1}}:\rep H\to\vec$ to be 
    \[
    \varphi_{\mathbf{1}}(M)=\frac{M^H}{M^H\cap\left(\sum\limits_{h\in H}\im(1-h)\right)}.
    \]
    It is straightforward to check that $\varphi_{\mathbf{1}}$ is a CF functor, and that we have a noncanonical, split inclusion $\varphi_{\mathbf{1}}(M)\boxtimes\varphi(\mathbf{1})\to \varphi(M)$ for any $H$-module $M$.
\end{example}

\begin{example}\label{example CF to vec general}
    Let $\varphi:\rep H\to\As$ be a CF functor for $H\sub S_n$.  If $\As$ is additionally locally finite (which is our main case of interest), then it admits an exact conservative functor $\omega:\As\to\vec$, and $\omega\circ\varphi:\rep H\to\vec$ will be another CF functor.
\end{example}

\begin{remark}
    In \cite{CF}, Coulembier and Flake introduce a very similar notion, which they call a $\mathbf{V}$-functor.  The difference to our setup is that in \emph{loc.~cit.}~it is assumed that $\As$ is a tensor category and that $\varphi$ is monoidal.  We drop this monoidality assumption as it is less natural in our setting.
    In particular, if $\As$ and $\varphi$ are monoidal, then the condition to be a CF functor may be weakened to $\varphi(\Ind_{H'}^{H}\mathbf{1})=0$, as is done in \cite{CF}. 
\end{remark}

The following is a slight generalization of \cite[Lem.~5.1.3]{CF}.

\begin{lemma}\label{lemma CF nontriv}
    The following are equivalent for a subgroup $H\sub S_{n}$:
    \begin{enumerate}
        \item There exists a nonzero CF functor for $H\sub S_{n}$;
        \item The semisimplification functor $\rep H\to (\rep H)^{ss}$ is a CF functor;
        \item There is a $p$-subgroup $P\sub H$ which is transitive in $S_n$;
        \item $n$ is a power of $p$, and $H$ is a transitive subgroup of $S_n$.
    \end{enumerate}
\end{lemma}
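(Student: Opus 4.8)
The plan is to prove the cycle of implications $(4)\Rightarrow(2)\Rightarrow(1)\Rightarrow(3)\Rightarrow(4)$, which is the most economical arrangement given the tools available. The implication $(2)\Rightarrow(1)$ is essentially free: the semisimplification functor $\rep H\to(\rep H)^{ss}$ is nonzero (it sends the trivial module $\mathbf 1$, which has $\tr(\id_{\mathbf 1})=1\neq 0$, to a nonzero simple object), so once we know it is a CF functor we immediately have a nonzero CF functor. The implication $(1)\Rightarrow(3)$ is the contrapositive of a combinatorial observation: if $H$ contains no transitive $p$-subgroup, I want to show every CF functor $\varphi$ vanishes. The key point here is that any transitive subgroup of $S_n$ has order divisible by every prime dividing $n$ (since a transitive action has orbit of size $n$, so $n\mid|H'|$); combining with Sylow theory one argues that if no $p$-subgroup of $H$ is transitive then in fact $p\nmid n$ or there is a non-transitive subgroup through which we can factor restriction-then-induction trivially — more precisely, one shows the trivial $H$-module, hence (by taking $H'=H$) every module, is a summand of $\Ind_{H'}^H$ of something for a proper non-transitive $H'$, or else directly that $\varphi(\mathbf 1)=0$ forces $\varphi=0$ via the splitting in Example~\ref{cf varphi 1}.

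For $(3)\Rightarrow(4)$: if $P\sub H$ is a transitive $p$-subgroup of $S_n$, then since the $P$-orbit of any point has size $|P$-orbit$|$ dividing $|P|$ and the orbits partition $\{1,\dots,n\}$, transitivity gives $n=|P\cdot 1|$ is a power of $p$; and $H\supseteq P$ is then automatically transitive. For the main implication $(4)\Rightarrow(2)$, suppose $n=p^m$ and $H\sub S_{p^m}$ is transitive; I must show that $\varphi_{\mathrm{ss}}:=\varphi_H$ kills $\Ind_{H'}^H M$ for every non-transitive $H'\sub H$. Since $\varphi_H$ is monoidal and additive, and $\Ind_{H'}^H M$ is a summand of $\Ind_{H'}^H(M\otimes\Bbbk[H'/1])$-type modules, it suffices to handle $\Ind_{H'}^H\mathbf 1=\Bbbk[H/H']$; but actually one needs all $M$, so the cleanest route is: $\Ind_{H'}^H M$ is a summand of $\Ind_{H'}^H(\Bbbk[H']\otimes_{\Bbbk}M_0)\cong \Bbbk[H]\otimes_\Bbbk M_0$ for a vector space $M_0$, i.e. of a sum of copies of the regular module $\Bbbk[H]$, which is projective, hence negligible, hence killed by $\varphi_H$. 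Wait — that argument shows $\varphi_H$ kills $\Ind_{H'}^H M$ for \emph{every} $H'$, which would be too strong. The correct statement is that $\Ind_{H'}^H M$ for non-transitive $H'$ need not be projective, so this shortcut fails and one must genuinely use transitivity of $H$ together with non-transitivity of $H'$.

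The real argument for $(4)\Rightarrow(2)$ is the heart of the matter, and I expect it to be the main obstacle. The idea is to show that for $H$ transitive in $S_{p^m}$ and $H'$ non-transitive, every indecomposable summand of $\Ind_{H'}^H M$ has dimension divisible by $p$ — equivalently, is negligible in $\rep H$ — so that $\varphi_H(\Ind_{H'}^H M)=0$. One approaches this via vertices: by a theorem of Green, the vertex of any indecomposable summand of $\Ind_{H'}^H M$ is contained in a conjugate of a vertex of $M$, hence in a conjugate of (a subgroup of) a Sylow $p$-subgroup $Q$ of $H'$. Now $Q$ is a $p$-subgroup of $H\sub S_{p^m}$, and since $H'$ is non-transitive, $Q\sub H'$ acts with all orbits of size $<p^m$. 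One shows such a $Q$ cannot be transitive in $S_{p^m}$, and that a non-transitive $p$-subgroup $Q$ has the property that a module with vertex inside $Q$, when restricted to a transitive $p$-subgroup $P\sub H$ (which exists because $H$ transitive $\Rightarrow$ a Sylow $p$-subgroup of $H$ is transitive, as $p^m\mid|H|$ and the Sylow subgroup's orbits have $p$-power size summing to $p^m$ forcing one orbit of full size — this needs care), becomes free, hence $p$ divides its dimension. Concretely: $\varphi_H$ on an indecomposable $V$ is nonzero iff $p\nmid\dim V$ iff $V$ is not free over some transitive $p$-subgroup $P\sub H$ (by Lemma~\ref{prop OTI on indecomps} applied to $\Res_P$, since $P\cong C_p^{?}$ acts freely iff projectively iff $p\mid\dim$ on indecomposables — more precisely one uses that a module is projective over the $p$-group $P$ iff free iff dimension divisible by $|P|$ for indecomposables is false, so instead one uses the shifted-subgroup detection of Section~\ref{section OTI shifted cyclic}). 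This comparison of vertices with transitivity is the delicate combinatorial/representation-theoretic core; I would model it closely on \cite[Lem.~5.1.3]{CF}, adapting their $\mathbf V$-functor argument to drop monoidality by using the splitting $\varphi_{\mathbf 1}(M)\boxtimes\varphi(\mathbf 1)\hookrightarrow\varphi(M)$ from Example~\ref{cf varphi 1} to reduce general $\varphi$ to the monoidal case applied to $\varphi_H$.
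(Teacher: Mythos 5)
Your overall cycle $(4)\Rightarrow(2)\Rightarrow(1)\Rightarrow(3)\Rightarrow(4)$ is a reasonable arrangement, and $(3)\Rightarrow(4)$ and $(2)\Rightarrow(1)$ are handled correctly. However, the two nontrivial implications are not actually established, and the paper's own proof takes a shorter route, so let me address both issues.

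On $(1)\Rightarrow(3)$: your write-up circles around the right idea without landing on it. The paper's argument is one sentence: take $P\subseteq H$ a Sylow $p$-subgroup; since $[H:P]$ is prime to $p$, any $H$-module $M$ is a direct summand of $\Ind_P^H\Res_P^H M$, so if $\varphi$ is nonzero on some $M$, the CF axiom forces $P$ to be transitive. Your version talks about ``the trivial $H$-module ... (by taking $H'=H$)'' and invokes the splitting $\varphi_{\mathbf 1}(M)\boxtimes\varphi(\mathbf 1)\hookrightarrow\varphi(M)$ from Example~\ref{cf varphi 1}; neither of these moves is what is needed, and the parenthetical ``(by taking $H'=H$)'' does not parse as written. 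You should state the Sylow splitting cleanly and stop there.

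On $(4)\Rightarrow(2)$, the heart of the lemma: you correctly identify the vertex approach, but the details you supply are wrong or circular, and you yourself end by conceding you would ``model it closely on [CF, Lem.~5.1.3]'' — which is exactly what the paper does, with a citation rather than a reconstruction. The clean argument is: (i) any Sylow $p$-subgroup of a transitive subgroup of $S_{p^m}$ is itself transitive (orbit–stabilizer plus a $p$-part count, which you do flag ``needs care'' but never supply); (ii) by Green's theorem the vertex $Q$ of an indecomposable summand $V$ of $\Ind_{H'}^H M$ lies, up to conjugacy, inside $H'$; (iii) by the vertex–dimension theorem, if $p\nmid\dim V$ then $Q$ is a Sylow $p$-subgroup of $H$; (iv) combining, $Q$ would be transitive yet contained in the non-transitive $H'$, a contradiction, so $p\mid\dim V$ and $\varphi_H$ kills it. Several of your intermediate claims do not hold and would break a formal write-up: a transitive $p$-subgroup $P\subseteq S_{p^m}$ is not generally of the form $C_p^{?}$; Lemma~\ref{prop OTI on indecomps} only applies to $C_p$, not to a general transitive $p$-subgroup; and the remark about ``dimension divisible by $|P|$ for indecomposables'' is a red herring that you correctly sense is wrong but then replace with a vague appeal to shifted cyclic subgroups that is not needed here. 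In short, the skeleton is right but the proof is not completed, whereas the paper's proof consists of a citation to [CF, Lem.~5.1.3] plus the Sylow-splitting patch for $(1)\Rightarrow(3)$.
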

\begin{proof}
    We may use the same proof as given in \cite{CF}, only the direction $(1)\Rightarrow(3)$ needs a small adjustment. For this, let $P\sub H$ be a Sylow subgroup.  Then for any $H$-module $M$, $M$ splits off $\Ind_P^HM$.  Since $\varphi$ is nonzero, this forces $P$ to be a transitive subgroup.
\end{proof}

\begin{remark}\label{remark CF general subgroup}
    If $\varphi:\rep H\to\As$ is a CF functor for $H\sub S_{p^r}$, then $\tilde{\varphi}:=\varphi\circ\operatorname{Res}_H:\rep S_{p^r}\to\As$ is also a CF functor.  Thus without loss of generality we may assume all CF functors have $H=S_{p^r}$. However we will work with more general subgroups, mostly for conceptual clarity.
\end{remark}


\subsection{Categorical Heisenberg actions}\label{sec:Heis-act} In the following, we mainly follow the conventions and notation developed in \cite{BSW}. Let $\Cs$ be a locally finite abelian category.  A \emph{degenerate categorical Heisenberg action} on $\Cs$ is the data of:
\begin{enumerate}
    \item An integer $k\in\Z$ (the \emph{central charge}).
    \item Biadjoint endofunctors $E,F:\Cs\to\Cs$ (in particular, $E$ and $F$ are exact).  This gives rise to units and counits of adjunctions, which we denote as follows:
    \[
        \eta_1:\mathbf{1} \to FE, \hspace{1em} \eta_2: \mathbf{1}\to EF,
    \]
    \[
        \epsilon_1: EF \to \mathbf{1}, \hspace{1em} \epsilon_2: FE \to \mathbf{1}.
    \]
    \item A natural transformation $x:E\to E$.
    \item A natural transformation $T:E^2\to E^2$.
\end{enumerate}
This data is subject to a number of relations, which are explained in~\cite[Sec.~3]{BSW}, as well as in~Section~\ref{section diagrammatics}.
One consequence of these relations is that the natural transformation $T$ induces an action of $S_n$  on $E^n$ via $s_i \mapsto E^{n-i-1}TE^{i-1}$, where $s_i$ is the simple transposition $(i,i+1)$.  Another consequence is that in the Grothendieck group we have the Heisenberg relation $[E][F]-[F][E]=k$.

Now suppose that $\As$ is another locally finite abelian category.  Then by~Lemma~\ref{lemma additive functors extend},  the Deligne tensor product $\Cs\boxtimes\As$ inherits a natural degenerate categorical Heisenberg action.  For example, our biadjoint functors will be $E\boxtimes1$ and $F\boxtimes1$. 

\begin{example}\label{example rep S_n 1}
Representations of symmetric groups provide the central example of a categorical Heisenberg action in this paper.  Let $\mathcal{S}ym=\bigoplus\limits_{n\in\N}\rep S_n$.  For $M\in\rep S_n$, set
$$E=\bigoplus\limits_{n\geq 1}\operatorname{Res}_{S_{n-1}}^{S_n}\quad \text{and}\quad F=\bigoplus\limits_{n\in\N}\Ind_{S_{n}}^{S_{n+1}}.$$
Then $E$ and $F$ are biadjoint functors on $\mathcal{S}ym$.  We have an endomorphism $x:E\to E$ given on $\operatorname{Res}_{S_{n-1}}^{S_n}$ by the action of the Jucys--Murphy element $x=(1,n)+(2,n)+\dots+(n-1,n)\in \Bbbk E$.  
There is a natural transformation $T$ on $E^2 $ given by right multiplication by the simple transposition $s_{n-1}$ on the component $\operatorname{Res}^{S_n}_{S_{n-2}}$.
This data defines a degenerate categorical Heisenberg action on $\mathcal{S}ym$ of central charge $-1$  (see \cite{LLT} for more details), which categorifies the Fock space representation of the Heisenberg algebra.
\end{example}

\subsection{Morphisms  of categorical actions} 
Suppose that $(\Cs,E,F,T,x,\eta_1,\eta_2,\epsilon_1,\epsilon_2),$ and $(\Ds,E',F',T',x',\eta_1',\eta_2',\epsilon_1',\epsilon_2')$ are locally finite abelian categories with degenerate categorical Heisenberg actions of the same central charge.

\begin{definition}\label{defn categorical commutation}
    A morphism of degenerate categorical Heisenberg actions is the data of a functor $\Psi:\Cs\to\Ds$ and natural isomorphisms
    \[
    \zeta_E:\Psi E\xto{\sim}E'\Psi, \ \ \ \zeta_F:\Psi F\xto{\sim}F'\Psi,
    \]
    such that the following diagrams commute:
    \begin{enumerate}
        \item \phantom{.}\vspace*{-1em} 
            \[
            \xymatrix{
            \Psi E\ar[rr]^{\zeta_E} \ar[d]^{\Psi x} & & E'\Psi\ar[d]^{x'\Psi}\\
            \Psi E \ar[rr]^{\zeta_E} &  &E'\Psi
            }
            \vspace{0.5em}
            \]
        \item \phantom{.}\vspace*{-1em} 
            \[
            \xymatrix{
            \Psi E^2\ar[rr]^{\zeta_EE}\ar[d]_{\Psi T} && E'\Psi E\ar[rr]^{E'\zeta_E} && (E')^2\Psi\ar[d]^{T'\Psi} \\ 
            \Psi E^2\ar[rr]^{\zeta_EE} && E'\Psi E\ar[rr]^{E'\zeta_E} && (E')^2\Psi
            }
            \vspace{0.5em}
            \]
        \item \phantom{.}\vspace*{-1em} 
            \[
            \xymatrix{
            && \Psi\ar[dll]_{\Psi\eta_1} \ar[drr]^{\eta_1'\Psi} && \\
            \Psi FE \ar[rr]^{\zeta_FE} && F'\Psi E\ar[rr]^{F'\zeta_E} && F'E'\Psi
            }
            \vspace{0.5em}
            \]
        \item \phantom{.}\vspace*{-1em} 
             \[
            \xymatrix{
            && \Psi\ar[dll]_{\Psi\eta_2} \ar[drr]^{\eta_2'\Psi} && \\
            \Psi EF \ar[rr]^{\zeta_EF} && E'\Psi F\ar[rr]^{E'\zeta_F} && E'F'\Psi
            }
            \vspace{0.5em}
            \]
    \end{enumerate}
\end{definition}

\subsection{Categorical Kac--Moody action}
By the biadjunction between $E$ and $F$, the natural transformation $x:E\to E$ induces a natural transformation $y:F\to F$.  By the assumptions made on $\Cs$, the actions of $x$ and $y$ on $E(M)$ and $F(M)$ will be locally finite for any object $M$ of $\Cs$.  Thus $E(M)$ and $F(M)$ admit generalized eigenspace decompositions which are natural in $M$.  This implies we obtain a decomposition of functors:
\[
E=\bigoplus_{i\in\Bbbk} E_{i}, \ \ \ F=\bigoplus\limits_{i\in\Bbbk}F_i.
\]
Let $I$ be the set of eigenvalues that appear in these decompositions. There is an associated Kac--Moody algebra $\mathfrak{g}_I$ constructed from $I$ (see \cite{BSW} for the precise definition). The main result of \cite{BSW} is that this gives rise to a categorical Kac--Moody representation of $\mathfrak{g}_I$ on $\Cs$.  Decategorifying induces an action of $\mathfrak{g}_I$ on $K_0(\Cs)$, where $E_i$ and $F_i$ decategorify to the Chevalley generators.
In cases of interest to us below, $I=\F_p$ and the Kac--Moody algebra is $\widehat{\s\l}_p' = \s\l_p(\C[t,t^{-1}])\oplus \C c$.  

\begin{example}\label{example rep S_n2}
 The categorical Heisenberg action of Example \ref{example rep S_n 1} induces an action of $\widehat{\s\l}_p'$ on $\mathcal{S}ym$. The Grothendieck group of $\mathcal{S}ym$ then provides an integral form for the basic representation of $\widehat{\s\l}_p'$:
$$
K_0(\mathcal{S}ym)\otimes_{\Z}\C \cong L(\Lambda_0).
$$
\end{example}

\subsection{OTI functor on categorical actions}

Let $H$ be a subgroup of $S_{p^r}$, so that by restriction we obtain a homomorphism $\Bbbk H\to \End(E^{p^r})$.  It follows that we may view $E^{p^r}$ as defining a functor:
\[
E^{p^r}:\Cs\to \Rep_{\Cs}(H)\simeq \Cs\boxtimes\rep H.
\]

\begin{definition}
Let $\varphi:\rep H\to\As$ be a CF functor for $H\subset S_{p^r}$. Given a degenerate categorical Heisenberg action on $\Cs$, define the \textit{OTI functor} $\Phi_{\varphi}:\Cs\to\Cs\boxtimes\As$ by the composition:
    \[
    \Cs\xto{E^{p^r}}\Cs\boxtimes\rep H\xto{1\boxtimes\varphi}\Cs\boxtimes\As.
    \]
\end{definition}

\begin{remark}
    Note that the roles of $E$ and $F$ may be interchanged in the definition of a categorical Heisenberg action, so we could also define an OTI functor in terms of $F$.
\end{remark}

We now provide examples of OTI functors for different, well-known categorical actions.

\begin{example}\label{example rep S_n 3}
We continue to study $\mathcal{S}ym$ as in Examples \ref{example rep S_n 1} and \ref{example rep S_n2}.
Let $H\sub S_{p^r}$, and let $\varphi:\rep H\to \As$ be a CF functor.  Then we obtain an OTI functor $\Phi_{\varphi}:\mathcal{S}ym\to\mathcal{S}ym\boxtimes \As$ whose components are given by the following composition:
\[
\rep S_n\xto{E^{p^r}}\rep S_{n-p^r}\boxtimes\rep H\xto{1\boxtimes\varphi}\rep S_{n-p^r}\boxtimes\As. 
\]
Here, the first arrow is simply the restriction functor to the subgroup $S_{n-p^r}\times H\sub S_n$.  In particular, if $r=1$ and $\varphi:\rep S_p\to\Ver_p$ is the semisimplification functor, then $\Phi_{\varphi}$ agrees with the OTI functor in~Example~\ref{example fin gps S_n}.  
This example is the main topic of~Section~\ref{section:symmetric_groups}.
\end{example}

\begin{example}
   Let $\PP ol$ be the category of strict polynomial functors. Objects in $\PP ol$ are functors $M:\vec\to\vec$ such that the induced map on morphisms is polynomial (see~Section~\ref{section poly functors defn} for the precise definition).  The category $\PP ol$ admits a degenerate categorical Heisenberg action, originally defined in \cite{HY}, see also Section \ref{section poly functors defn}.  It was proven in \cite{HY} that the Schur--Weyl duality functor $\mathcal{F}:\PP ol\to\mathcal{S}ym$ is a morphism of categorical actions, and we will use this in Section \ref{section SW duality} to show that $\mathcal{F}$ intertwines the OTI functors on each category.
\end{example}
We provide a final example in which the OTI functor exhibits a very different behavior.
\begin{example}\label{example glv}
    Let $V$ be a vector space, and consider the category of algebraic representations $\rep GL(V)$.  We have a degenerate categorical Heisenberg action of central charge $0$ given by:
    \[
    E(M)=V\otimes M, \ \ \ F(M)=V^*\otimes M.
    \]
    We refer to the introduction of \cite{BSW} for more details and references.
    Let $H\sub S_{p^r}$ and choose a CF functor $\varphi:\rep H\to\As$.  Then we have
    \[
    \Phi_{\varphi}(M)=V^{(r)}\otimes M\boxtimes \varphi(\mathbf{1}),
    \]
    where $V^{(r)}$ denotes the $r$th Frobenius twist of $V$.  (If we define the OTI functor in terms of $F$, then we obtain 
    $\Phi_{\varphi}(M)=(V^*)^{(r)}\otimes M\boxtimes \varphi(\mathbf{1})$.)

    Two phenomena of note appear in this example.  One is that $\Phi_{\varphi}$ only depends on $\varphi(\mathbf{1})$ and the power of $p$. 
    The second is that $\Phi_{\varphi}$ is exact.  The exactness of $\Phi_{\varphi}$ is notable, as it holds for all known (to the authors) examples of categorical Heisenberg actions of central charge 0.  
\end{example}

\begin{remark}
    Consider the setting of Example \ref{example glv}. Let $A_0$ denote the (unshifted) fundamental alcove, $\lambda_r$ the $r$th Steinberg weight, and $\mathfrak{X}_r=\lambda_r+p^rA_0$.  Then for $\lambda\in\mathfrak{X}_r$, the Donkin tensor product theorem (\cite{Don93}) implies that 
    \[
    T_{\lambda}\otimes V^{(r)}\cong T_{\lambda+p^r\varpi_1},
    \]
    where $T_{\mu}$ denotes in the indecomposable tilting module of highest weight $\mu$.
    This isomorphism is suggestive of a periodic equivalence in this setting, in the spirit of those we study for $\SS ym$ and $\PP ol$ further on in the paper.
\end{remark}

\subsection{OTI functors commute with the categorical action}\label{sec:OTIcommute}
Categorical actions are both rich and highly rigid.  For instance, we saw in Example \ref{example rep S_n2} that $\SS ym$ categorifies an integral form of the basic representation of $\widehat{\mathfrak{sl}_p}'$, which is irreducible over the complex numbers.  Irreducibility is an immediate obstruction to the existence of a nontrivial exact functor  which commutes with the categorical action.  

However, it is still possible for a \emph{non-exact} functor to commute with the categorical action.  Furthermore, the mod $p$ enveloping algebra of $\widehat{\mathfrak{sl}_p}'$ admits a very large centre, and the mod $p$ reduction of the basic representation is reducible (see \cite{DKK}).  In light of Proposition \ref{lemma red groth ring finite gps}, one might hope that there in an OTI functor which categorifies a central element of the mod $p$ center, and realizes an endomorphism of the mod $p$ reduction of the basic representation.  

We will now explain that there is an OTI functor which indeed does all of the above.  To this end, suppose that $\Cs$ admits a degenerate categorical Heisenberg action, and let $\As$ be a locally finite abelian category.  As explained in Section \ref{sec:Heis-act}, $\Cs\boxtimes\As$ will inherit the natural structure of a degenerate categorical Heisenberg action of the same central charge. In Section~\ref{section:appendix_proof_of_categorical_commutation}, we prove the following remarkable theorem.

\begin{thm}\label{thm OTI categorical commutation}
    The OTI functor $\Phi_{\varphi}:\Cs\to\Cs\boxtimes\As$ defines a morphism of categorical actions, where the natural isomorphisms
    \[
    \zeta_E:\Phi_{\varphi}\circ E\xto{\sim}(E\boxtimes1)\circ\Phi_{\varphi}, \ \ \ \zeta_F:\Phi_{\varphi}\circ F\xto{\sim}(F\boxtimes1)\circ\Phi_{\varphi},
    \]
    are explicitly described in~Section~\ref{proof thm cat commutation}.
\end{thm}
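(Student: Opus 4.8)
The plan is to produce the two natural isomorphisms $\zeta_E$ and $\zeta_F$ explicitly and then verify the four coherence squares of Definition~\ref{defn categorical commutation}. We may assume $\varphi\neq 0$, since otherwise $\Phi_\varphi=0$ and the statement is vacuous; by Lemma~\ref{lemma CF nontriv} the subgroup $H$ is then transitive in $S_{p^r}$.

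\emph{Constructing $\zeta_E$.} Unwinding the definitions, $\Phi_\varphi\circ E=(1\boxtimes\varphi)\circ E^{p^r}\circ E$ while $(E\boxtimes1)\circ\Phi_\varphi\simeq(1\boxtimes\varphi)\circ(E\boxtimes1)\circ E^{p^r}$ by Lemma~\ref{lemma additive functors extend}(2); in both composites the functor $\Cs\to\Cs$ underlying the source of $1\boxtimes\varphi$ is $E^{p^r+1}$, and the only difference is the $\rep H$-module structure. In the first composite $H\subset S_{p^r}$ acts on the outer $p^r$ of the $p^r+1$ strands --- i.e.\ via the embedding $H\hookrightarrow S_{p^r+1}$ fixing the innermost strand --- and in the second it acts on the inner $p^r$ strands, fixing the outermost one. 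Conjugation by the long cycle $c\in S_{p^r+1}$ carries one embedding to the other, compatibly with the standard identifications of both with $S_{p^r}$; since the natural transformation $T$ equips $E^{p^r+1}$ with an action of $S_{p^r+1}$, the resulting natural automorphism of $E^{p^r+1}$ is $H$-equivariant, and applying $1\boxtimes\varphi$ turns it into $\zeta_E$. (For $\Cs=\mathcal{S}ym$ this is simply multiplication by a $(p^r{+}1)$-cycle that cyclically moves around the unique strand not carrying the $H$-action.)

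\emph{Constructing $\zeta_F$.} Here I would use the degenerate Heisenberg relations. Iterating the isomorphism relating $EF$ and $FE$ --- whose ``difference'' is a direct sum of copies of $\mathbf{1}$ --- one finds that $E^{p^r}\circ F$ and $F\circ E^{p^r}$ agree, $H$-equivariantly, up to a functor $\mathcal J\colon\Cs\to\Cs\boxtimes\rep H$ which is a finite direct sum of terms on which $H$ acts through a representation induced from the stabiliser $H'\subsetneq H$ of a single strand. Such a point-stabiliser cannot be transitive on $p^r>1$ points, so the CF condition (Definition~\ref{def:cf-functor}) gives $(1\boxtimes\varphi)\circ\mathcal J=0$, and therefore
\[
\Phi_\varphi\circ F=(1\boxtimes\varphi)(E^{p^r}\circ F)\ \simeq\ (1\boxtimes\varphi)(F\circ E^{p^r})\ \simeq\ (F\boxtimes1)\circ(1\boxtimes\varphi)\circ E^{p^r}=(F\boxtimes1)\circ\Phi_\varphi,
\]
using Lemma~\ref{lemma additive functors extend}(2) once more; this isomorphism (or its inverse, depending on the sign of the central charge) is $\zeta_F$. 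For $\Cs=\mathcal{S}ym$ it is precisely the Mackey decomposition of $\operatorname{Res}^{S_{n+1}}_{S_{n+1-p^r}\times H}\Ind^{S_{n+1}}_{S_n}$ into its two double cosets, $\varphi$ annihilating the summand induced from a point stabiliser. (One could instead try to obtain $\zeta_F$ as the mate of $\zeta_E$ under the biadjunction $E\dashv F$, but verifying that mate is invertible seems to require the same input, so I prefer the explicit decomposition.)

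\emph{Verifying the axioms; the main obstacle.} It remains to check that $(\Phi_\varphi,\zeta_E,\zeta_F)$ is compatible with $x$, $T$, $\eta_1$ and $\eta_2$ (axioms (1)--(4) of Definition~\ref{defn categorical commutation}). After unwinding, each reduces to an identity between morphisms of powers of $E$ that must hold after applying $1\boxtimes\varphi$: because $x$, $T$ and the strand crossings obey the degenerate affine relations, sliding a dot, a crossing, or a cup/cap past $\zeta_E$ or $\zeta_F$ generates correction terms, and the substance of the verification is that each such term factors, $H$-equivariantly, through an object induced from a non-transitive subgroup of $S_{p^r}$, hence dies under $1\boxtimes\varphi$. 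Axioms (1) and (2) are comparatively short; axioms (3) and (4), the compatibilities with the units $\eta_1\colon\mathbf{1}\to FE$ and $\eta_2\colon\mathbf{1}\to EF$, are the crux and the main obstacle, since one cannot work with the leading term $F\circ E^{p^r}$ alone --- the entire point is that the subleading terms, which a priori would obstruct $\Phi_\varphi$ from commuting with the units, are exactly those annihilated by the CF property. This bookkeeping is cleanest in a string-diagram calculus for the degenerate Heisenberg category in which $\Phi_\varphi$ is drawn as a box absorbing $p^r$ upward strands; developing that calculus and completing the four verifications is the content of Section~\ref{section:appendix_proof_of_categorical_commutation}.
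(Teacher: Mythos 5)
Your outline correctly identifies the paper's strategy: $\zeta_E$ is the long-cycle braid on $E^{p^r+1}$ furnished by $T$ composed with the canonical swap of $1\boxtimes\varphi$ past $E\boxtimes1$, and $\zeta_F$ is the iterated $\downarrow$-over-$\uparrow$ crossing (the leading component of the Heisenberg isomorphism) combined with the same swap. But there is a real gap where you claim $\zeta_F$ is an isomorphism. You write that iterating $EF\cong FE\oplus\mathbf{1}^{\oplus k}$ shows $E^{p^r}F$ and $FE^{p^r}$ ``agree, $H$-equivariantly, up to a functor $\mathcal J$ which is a direct sum of terms induced from point stabilisers.'' The naive iterate $\iota:E^nF\xto{\sim}FE^n\oplus\bigoplus_{i=0}^{n-1}(E^{n-1})^{\oplus k}$ is \emph{not} $S_n$-equivariant for the braid action on $E^n$, so $H$-equivariance cannot simply be ``found'' by iterating the basic isomorphism. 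The paper's Lemma~\ref{lemma cat action structural} is exactly the nontrivial assertion that there is an isomorphism of $S_n$-modules $E^nF(M)\cong FE^n(M)\oplus\Ind_{S_{n-1}}^{S_n}\!\bigl(E^{n-1}(M)\bigr)^{\oplus k}$; its proof first observes that the projection onto $FE^n(M)$ is manifestly $S_n$-equivariant (it is a single symmetric crossing diagram), and then exhibits an $S_{n-1}$-stable subspace $W_0$ of the complement and, using the normal-form Lemma~\ref{claim:move-dot-right} together with explicit coset representatives of $S_n/S_{n-1}$, shows the complement is $\bigoplus_\ell t_\ell(W_0)$. The Mackey argument you invoke for $\mathcal Sym$ is only an instance of this and is not available for an abstract degenerate Heisenberg action.

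The other missing piece is the verification of axioms (1)--(4), which you correctly identify as reducing to showing the correction terms die under $\varphi$, but explicitly defer to ``the content of Section~\ref{section:appendix_proof_of_categorical_commutation}.'' The paper does this via Proposition~\ref{prop:commutes-modulo-traces}, which states that the obstruction to sliding a dot, cup, or cap past the $E^{p^r}$-strand is a transfer $\operatorname{Tr}_{S_{p^r-1},S_{p^r}}(f)$, and then applies the reformulation of the CF condition in Corollary~\ref{cor:phi-kills-tr} (via Lemma~\ref{lem:phi-kills-quotient-orbit}) to kill these transfers. Since establishing the $S_n$-module structure in $\zeta_F$ and carrying out this transfer bookkeeping are precisely where the theorem is proved, your proposal, while strategically on target, leaves the technical core of the argument open.
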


Theorem \ref{thm OTI categorical commutation} immediately implies the following:
\begin{cor}
    For $i\in\Bbbk$, we have natural isomorphisms
    \[
    \Phi_{\varphi}\circ E_i\cong(E_i\boxtimes 1)\circ\Phi_{\varphi}, \ \ \ \Phi_{\varphi}\circ F_i\cong(F_i\boxtimes 1)\circ\Phi_{\varphi}.
    \]
\end{cor}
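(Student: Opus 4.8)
The plan is to deduce the corollary from Theorem~\ref{thm OTI categorical commutation} by showing that the natural isomorphism $\zeta_E$ (resp.\ $\zeta_F$) is automatically compatible with the generalized eigenspace decompositions of $x$ (resp.\ $y$). Recall that, objectwise, $E=\bigoplus_{i\in\Bbbk}E_i$ is the generalized eigenspace decomposition of the locally finite endomorphism $x\colon E\to E$; the associated spectral projections $e_i\colon E\to E$ are, on each object $M$, polynomials in $x|_{E(M)}$ (by primary decomposition), and in particular $(x-i)^N$ annihilates $E_i(M)$ for $N\gg0$. Since $\Phi_{\varphi}$ is additive it commutes with the (objectwise finite) direct sum, so $\Phi_{\varphi}\circ E=\bigoplus_i\Phi_{\varphi}\circ E_i$; applying $\Phi_{\varphi}$ to the relation $(x-i)^N e_i=0$ shows that $\Phi_{\varphi}\circ E_i$ is precisely the generalized $i$-eigenspace of the endomorphism $\Phi_{\varphi}x$ of $\Phi_{\varphi}\circ E$. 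The identical discussion applied to $x\boxtimes1$ identifies $(E_i\boxtimes1)\circ\Phi_{\varphi}$ with the generalized $i$-eigenspace of $(x\boxtimes1)\circ\Phi_{\varphi}$ on $(E\boxtimes1)\circ\Phi_{\varphi}$.

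Now diagram~(1) of Definition~\ref{defn categorical commutation}, which holds by Theorem~\ref{thm OTI categorical commutation}, says exactly that $\zeta_E$ intertwines $\Phi_{\varphi}x$ and $(x\boxtimes1)\circ\Phi_{\varphi}$. Consequently $\zeta_E$ carries each spectral projection to $e_i\boxtimes1$ and hence restricts to a natural isomorphism $\zeta_{E_i}\colon\Phi_{\varphi}\circ E_i\xto{\sim}(E_i\boxtimes1)\circ\Phi_{\varphi}$, giving the first statement. For the functors $F_i$ I would run the same argument with $y\colon F\to F$ in place of $x$ and $F=\bigoplus_i F_i$ its generalized eigenspace decomposition. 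The only extra input needed is that $\zeta_F$ intertwines $\Phi_{\varphi}y$ and $(y\boxtimes1)\circ\Phi_{\varphi}$. This follows from a standard ``mates'' computation: $y$ is obtained from $x$ by conjugating with the units and counits $\eta_1,\eta_2,\epsilon_1,\epsilon_2$, while diagrams~(3) and~(4) of Definition~\ref{defn categorical commutation} say precisely that $\zeta_F$ is the two-sided mate of $\zeta_E$ under the biadjunctions (and, in particular, force the compatibility of $\zeta_E,\zeta_F$ with the counits). Pasting the relevant naturality squares transports diagram~(1) for $x$ to the analogous square for $y$, and then the eigenspace argument of the previous paragraph yields $\zeta_{F_i}\colon\Phi_{\varphi}\circ F_i\xto{\sim}(F_i\boxtimes1)\circ\Phi_{\varphi}$.

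The only mildly delicate point is this $F$-case bookkeeping: one must check carefully that $\zeta_F$ is indeed the two-sided mate of $\zeta_E$ and that passing to mates is functorial enough to move diagram~(1) to its counterpart for $y$. I expect the cleanest way to finesse this is to invoke \cite{BSW}: the categorical Kac--Moody data on $\Cs$ (in particular the decompositions $E=\bigoplus_iE_i$ and $F=\bigoplus_iF_i$, together with the auxiliary natural transformations) is constructed functorially from the degenerate categorical Heisenberg data, so a morphism of degenerate categorical Heisenberg actions automatically induces a morphism of the associated categorical $\mathfrak{g}_I$-actions. Specializing that induced morphism to the Chevalley pieces gives exactly the asserted natural isomorphisms, with the hands-on eigenspace computation above serving as an explicit alternative.
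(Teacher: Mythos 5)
Your proof is correct and supplies exactly the reasoning the paper leaves implicit when it states that the corollary follows immediately from Theorem~\ref{thm OTI categorical commutation}. The eigenspace argument for $E$ (additivity of $\Phi_\varphi$ identifies $\Phi_\varphi\circ E_i$ with the generalized $i$-eigenspace of $\Phi_\varphi x$, and diagram~(1) makes $\zeta_E$ an intertwiner) is the intended route, and your handling of the $F$-side via mates, or equivalently via the functoriality of the Kac--Moody construction in~\cite{BSW}, correctly accounts for the only extra input needed — namely that $\zeta_F$ also intertwines the dot $y$, which follows from diagrams (1), (3), (4) together with the fact that unit-compatibility of isomorphic mates forces counit-compatibility.
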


\begin{remark}
    For the reader interested in functors $\Cs\to\Cs$ which are morphisms of categorical actions, this amounts to the case $\As=\vec$, and many examples of such CF functors were provided in Examples \ref{cf varphi 1} and \ref{example CF to vec general}.  
\end{remark}

\subsection{OTI functors and mod p Grothendieck rings}\label{ss:mod_p_groth_ring}

\begin{prop}
\label{prop:mod_p_groth_ring}
Consider the semisimplification functor $\varphi: \Rep_{\Bbbk} C_p\to\Ver_p$.
Let $\Cs $ and $\Ds$ be locally finite abelian categories and consider an exact functor $S:\Cs\to \Ds\boxtimes \Rep_\Bbbk(C_p)$. Then the composition
$$\Cs\xto{S}\Ds\boxtimes \Rep_\Bbbk(C_p)\xto{1\boxtimes \varphi} \Ds\boxtimes \Ver_p$$
induces a morphism on $\mod p$ Grothendieck groups, 
\begin{gather*}
    \overline{K_0}(\Cs) \to \overline{K_0}(\Ds)\cong K_0(\Ds\boxtimes \Ver_p)\otimes_{K_0(\Ver_p)} \mathbb{F}_p: \quad [X] \mapsto [(1\boxtimes \varphi)S(X)],
\end{gather*}
that agrees with the morphism induced by the exact functor
\[\Cs\xto{S} \Ds\boxtimes \Rep_\Bbbk(C_p)\to \Ds\boxtimes \vec\xto{\sim} \Ds.\]
\end{prop}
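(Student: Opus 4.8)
The statement to prove is Proposition~\ref{prop:mod_p_groth_ring}: for an exact functor $S\colon\Cs\to\Ds\boxtimes\Rep_\Bbbk(C_p)$, the (a priori only additive, not exact) composite $(1\boxtimes\varphi)\circ S$ nonetheless descends to $\overline{K_0}$, and the induced map agrees with the one coming from forgetting the $C_p$-action. The plan is to reduce everything to the single-object computation that the additive functor $1\boxtimes\varphi$ behaves well on classes mod $p$, using the explicit description of $\varphi$ and the structure of $\Rep_\Bbbk(C_p)$ recalled in Section~\ref{sec verp}.

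First I would set up the target. By~Remark~\ref{k0} we have $K_0(\Ds\boxtimes\Ver_p)\cong K_0(\Ds)\otimes_\Z K_0(\Ver_p)$ and $K_0(\Ds\boxtimes\Rep_\Bbbk C_p)\cong K_0(\Ds)\otimes_\Z K_0(\Rep_\Bbbk C_p)$, with simple objects $L\boxtimes L_i$ (resp.\ $L\boxtimes[M_1]$, the only simple $C_p$-module). Tensoring the evident $K_0(\Ver_p)$-algebra map $\dim_p\colon K_0(\Ver_p)\to\F_p$ (see~(\ref{eqn dimp})) against $K_0(\Ds)$ gives exactly the identification $K_0(\Ds\boxtimes\Ver_p)\otimes_{K_0(\Ver_p)}\F_p\cong\overline{K_0}(\Ds)$ asserted in the statement, so the target is unambiguous.

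The key step is to show that $[X]\mapsto[(1\boxtimes\varphi)S(X)]\in\overline{K_0}(\Ds)$ is additive on short exact sequences \emph{after} reduction mod $p$. Since $S$ is exact, it suffices to prove this for the additive functor $1\boxtimes\varphi\colon\Ds\boxtimes\Rep_\Bbbk C_p\to\Ds\boxtimes\Ver_p$, i.e.\ that for any short exact sequence $0\to A\to B\to C\to 0$ in $\Ds\boxtimes\Rep_\Bbbk C_p$ one has $[(1\boxtimes\varphi)B]\equiv[(1\boxtimes\varphi)A]+[(1\boxtimes\varphi)C]\pmod p$ in $K_0(\Ds\boxtimes\Ver_p)$. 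Here I would use the exactness criterion of~Lemma~\ref{lemma exactness property}: the failure of $\varphi$ to be exact is measured entirely by non-split extensions, and the only non-projective indecomposables of $\Rep_\Bbbk C_p$ are the $M_j$ ($j<p$), on which $\varphi$ is explicitly computed via the filtration on Tate cohomology (the Remark after~Lemma~\ref{lemma exactness property}). Concretely, an arbitrary object of $\Ds\boxtimes\Rep_\Bbbk C_p$ decomposes, after picking a decomposition of the coalgebra side, as a finite sum of objects $N\boxtimes M_j$ with $N\in\Ds$; the "defect" in a short exact sequence is controlled by the Tate cohomology long exact sequence for the two-periodic complex $M\xrightarrow{x}M\xrightarrow{x^{p-1}}M\to\cdots$, whose Euler characteristic vanishes. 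Since $\varphi$ is (componentwise) the associated graded of the filtration on $\ker(x)/\im(x^{p-1})$, one gets $[\varphi(B)]-[\varphi(A)]-[\varphi(C)]=[\ker]-[\mathrm{coker}]$ of the connecting map, and each such class in $K_0(\Rep_\Bbbk C_p)$, viewed in $K_0(\Ver_p)$ via $\varphi$, has $\dim_p$ equal to $0$ mod $p$ because $\dim_p\varphi(M)\equiv\dim M\pmod p$ (established before~Lemma~\ref{lemma red groth ring finite gps}) and dimensions add in a short exact sequence of vector spaces. Thus the discrepancy dies in $\overline{K_0}(\Ds)$. This is the main obstacle: carefully tracking the connecting homomorphism so that the correction term is seen to land in the kernel of $K_0(\Ds\boxtimes\Ver_p)\to\overline{K_0}(\Ds)$.

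Finally I would identify the resulting map with the one induced by the forgetful composite $\Cs\xrightarrow{S}\Ds\boxtimes\Rep_\Bbbk C_p\to\Ds\boxtimes\vec\xrightarrow{\sim}\Ds$. Both are algebra maps (when applicable) out of $\overline{K_0}(\Cs)$ and both are additive, so it is enough to compare them on classes $[X]$, hence on the simple constituents of $S(X)$, hence on objects $L\boxtimes M_j$. On such an object the forgetful functor gives $[L]\cdot[M_j]=j\,[L]$ in $\overline{K_0}(\Ds)$, while $(1\boxtimes\varphi)(L\boxtimes M_j)=L\boxtimes L_j$, which maps to $\dim_p(L_j)\,[L]=j\,[L]$ in $\overline{K_0}(\Ds)$ by definition of $\dim_p$. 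The two agree, completing the proof. (The case recorded in~Proposition~\ref{lemma red groth ring finite gps} for finite groups is recovered by taking $S=\mathrm{Res}_H$ together with the identification of $\overline{K_0}(C_G(H))$ with $\overline{K_0}(C_G(H)/H)$, since restriction of a $C_G(H)$-module to $H$-trivial summands and then reading off $\dim_p$ is the same as restricting to $C_G(H)$.)
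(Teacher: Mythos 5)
Your proof plan contains a genuine gap at its central step. You assert that ``an arbitrary object of $\Ds\boxtimes\Rep_\Bbbk C_p$ decomposes, after picking a decomposition of the coalgebra side, as a finite sum of objects $N\boxtimes M_j$ with $N\in\Ds$.'' This is false in general. Take $\Ds=\Rep_\Bbbk C_p$, so that $\Ds\boxtimes\Rep_\Bbbk C_p\simeq \Rep_\Bbbk(C_p\times C_p)$, and consider the $2$-dimensional representation $Y$ whose underlying $\Ds$-object is $M_2=\Bbbk[x]/x^2$ with the second $C_p$ acting by $\sigma\mapsto 1+x$. Then $Y$ is indecomposable (its socle is one-dimensional) but is not of the form $N\boxtimes M_j$ for any $N$ and $j$; dimension counting rules out both $j=1$ and $j=2$. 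In general an object of $\Ds\boxtimes\Rep_\Bbbk C_p$ is an object of $\Ds$ equipped with a nilpotent endomorphism $x$ with $x^p=0$, and there is no reason for this to split compatibly. Your subsequent appeal to the Tate-cohomology long exact sequence of the two-periodic complex is also only a sketch: a two-periodic complex has no convergent Euler characteristic, and the filtration whose associated graded gives the $\varphi_i$ is not visibly compatible with the connecting maps, so the claimed identity $[\varphi(B)]-[\varphi(A)]-[\varphi(C)]=[\ker]-[\mathrm{coker}]$ would require a real argument. Finally, comparing the two morphisms ``on objects $L\boxtimes M_j$'' is meaningless before well-definedness is established, since $\varphi$ is not exact and $L\boxtimes M_j$ with $j>1$ is not simple.

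The paper's proof sidesteps all of this. One works directly with an object $X\in\Ds$ equipped with an endomorphism $x$ satisfying $x^p=0$, and proves the identity
\[
[X]\;\equiv\;\sum_{i=1}^{p-1} i\cdot\Bigl[\tfrac{\ker(x)\cap\im(x^{i-1})}{\ker(x)\cap\im(x^i)}\Bigr]\pmod{p}
\]
in $\overline{K_0}(\Ds)$, using the short exact sequences $0\to\ker(x)\cap\im(x^{i-1})\to\im(x^{i-1})\to\im(x^i)\to 0$ and a telescoping computation. The right-hand side is exactly the image of $[(1\boxtimes\varphi)(X)]$ under the identification with $\overline{K_0}(\Ds)$, while the left-hand side is the image under the forgetful functor. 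Because the forgetful composite is exact, this single-object identity immediately shows both that the $\varphi$-composite descends to $\overline{K_0}$ and that it agrees with the forgetful one — no separate additivity argument and no decomposition of objects of $\Ds\boxtimes\Rep_\Bbbk C_p$ is required.
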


\begin{proof}
In view of the explicit formula for $\varphi$ (see~\eqref{eqn varphi}), it suffices to prove the following claim: for any object $X\in \Ds$ with endomorphism $x$ satisfying $x^p=0$, we have that 
\[[X]\equiv \sum_{i=1}^{p-1} i\cdot \bigg[\frac{\ker(x)\cap \im(x^{i-1})}{\ker(x)\cap \im(x^i)}\bigg] \hspace{1em} {(\op{mod} p)}. \]
in the mod $p$ Grothendieck group of $\Ds$.

The short exact sequence \[0\to \ker(x)\cap \im(x^{i-1})\to \im(x^{i-1})\to \im(x^i)\to 0\] valid for $1\leq i\leq p-1$ gives
\begin{align*}
    \sum_{i=1}^{p-1} i\cdot \bigg[\frac{\ker(x)\cap \im(x^{i-1})}{\ker(x)\cap \im(x^i)}\bigg]&=\sum _{i=1}^{p-1}i\cdot [\ker(x)\cap \im(x^{i-1})]-i\cdot [\ker(x)\cap \im(x^i)]\\
    &=\sum _{i=1}^{p-1}i\cdot \bigg( [\im(x^{i-1})]-2[\im(x^i)] +  [\im(x^{i+1})]\bigg).
\end{align*}
The coefficient of $[\im(x^i)]$ in this sum is zero if $1\leq i\leq p-2$, so this sum telescopes to the desired equality mod $p$: \[[\im(x^0)]+(p-2)[\im(x^{p-1})]-2(p-1)[\im(x^{p-1})]\equiv [X] \hspace{1em} {(\op{mod} p)}. \]
\end{proof}

Suppose now $\Cs$ admits a degenerate categorical Heisenberg action. We write $e:\overline{K_0}(\Cs)\to \overline{K_0}(\Cs)$ for the morphism on mod $p$ Grothendieck groups induced by $E$. The following corollary follows immediately from the proposition.

\begin{cor}\label{cor mod p e^p}
Let $\varphi: \Rep_{\Bbbk} C_p\to\Ver_p$ be given by semisimplification.
The OTI functor $\Phi_{\varphi}:\Cs\to \Cs\boxtimes \Ver_p$ 
induces a morphism 
$$\overline{K_0}(\Cs)\to \overline{K_0}(\Cs)\cong K_0(\Cs\boxtimes \Ver_p)\otimes_{K_0(\Ver_p)} \mathbb{F}_p:\quad [X]\mapsto [ \Phi_{\varphi}(X)],$$
that agrees with $e^p$.
\end{cor}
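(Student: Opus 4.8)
The plan is to deduce this immediately from Proposition~\ref{prop:mod_p_groth_ring}, applied with $\Ds = \Cs$ and with $S = E^p$ regarded as a functor $\Cs \to \Cs \boxtimes \Rep_\Bbbk(C_p)$ via the $C_p$-action on $E^p$ obtained by restricting, along $C_p \subset S_p$, the $S_p$-action generated by $T$. The only hypothesis to verify is that this $S$ is exact: $E$ is exact by the definition of a degenerate categorical Heisenberg action, so the underlying $p$-fold composite $E^p : \Cs \to \Cs$ is exact, and exactness of the lift to $\Cs \boxtimes \Rep_\Bbbk(C_p)$ may be checked after the exact, conservative forgetful functor $\Cs \boxtimes \Rep_\Bbbk(C_p) \to \Cs$ (whose exactness is an instance of Lemma~\ref{lemma additive functors extend}(1)).

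Since here $H = C_p \subset S_p$, so that $p^r = p$, the definition of the OTI functor gives $\Phi_{\varphi} = (1 \boxtimes \varphi) \circ E^p = (1 \boxtimes \varphi) \circ S$. Proposition~\ref{prop:mod_p_groth_ring} then asserts that $[X] \mapsto [\Phi_{\varphi}(X)]$ is a well-defined morphism $\overline{K_0}(\Cs) \to \overline{K_0}(\Cs)$ agreeing with the morphism induced by the exact functor $\Cs \xrightarrow{S} \Cs \boxtimes \Rep_\Bbbk(C_p) \to \Cs \boxtimes \vec \xrightarrow{\sim} \Cs$. Forgetting the $C_p$-structure from $S$ recovers precisely the honest endofunctor $E^p$ of $\Cs$, and on the integral Grothendieck group $E^p$ induces $[M] \mapsto [E^p(M)] = e^p[M]$; reducing mod $p$ yields exactly $e^p$ on $\overline{K_0}(\Cs)$, which is the claim.

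There is no genuine obstacle, since all the substance sits in Proposition~\ref{prop:mod_p_groth_ring}; the only points needing a word of care are the identification of the forgetful composite $\Cs \to \Cs \boxtimes \vec \xrightarrow{\sim} \Cs$ with the honest functor $E^p$, and the observation that $E^p$ already decategorifies to the $p$-th power of $e = [E]$ over $\Z$, so that its mod-$p$ reduction is $e^p$.
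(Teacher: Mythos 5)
Your proof is correct and is exactly the route the paper has in mind: the paper states only that the corollary ``follows immediately from the proposition,'' and your write-up simply supplies the details of that deduction (taking $\Ds=\Cs$, $S=E^p$, checking exactness of $S$, and identifying the forgetful composite with the honest endofunctor $E^p$ which decategorifies to $e^p$).
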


\begin{example}
    Returning to $\mathcal{S}ym$ from Examples \ref{example rep S_n 1}, \ref{example rep S_n2}, and \ref{example rep S_n 3}, recall that $K_0(\mathcal{S}ym)$ is an integral form of the basic representation for $\widehat{\mathfrak{sl}_p}$.  Thus $K_0(\mathcal{S}ym)\otimes\mathbb{F}_p$ is the mod $p$ basic representation.  Let $\varphi:\rep C_p\to\operatorname{Ver}_p$ be the CF functor given by semisimplification.  Then it is clear that $\Phi_{\varphi}(\mathbf{1}_{S_n})=\mathbf{1}_{S_{n-p}}$.  By Corollary \ref{cor mod p e^p}, this implies $e^p$ defines an endomorphism of the mod $p$ basic representation which is locally nilpotent but for which no power is zero.  It follows that this representation is reducible and has an infinite composition series, which in particular reproves the reducibility result from \cite{DKK}.
\end{example}

\section{OTI functors on Symmetric groups}\label{section:symmetric_groups}

In this section, we consider symmetric groups and show that OTI functors correspond to taking fixed points on the underlying set of the permutation modules $M^\lambda$.  We introduce a stable range of partitions for which OTI functors are an equivalence on the corresponding subcategories of permutations modules.  This result was essentially proven in \cite{HK} and \cite{Har}, but we recast the proof in terms of OTI functors.  We will extend this equivalence to certain abelian subcategories of $\rep S_n$ in Section \ref{section:periodicity_result}.

Throughout this section, we will fix a prime power $p^r>1$ and consider natural numbers $n$ such that $n \geq p^r$. We will view $S_{p^r}$ embedded in $S_n$ as the subgroup fixing $\{1,\dots,n-p^r\}$ pointwise, and consider the subgroup $S_{n-p^r} \subseteq S_n$ which fixes $\lbrace n-p^r+1,\dots,n-1,n \rbrace$ pointwise. Let $H\sub S_{p^r}$ be a transitive subgroup and fix a CF functor~$\varphi:\rep H\to\As$.  
We will investigate the OTI functor $\Phi_{\varphi}$ (see~Example~\ref{example rep S_n 3}),
    \[\Phi_{\varphi}:
     \rep S_n\xto{\operatorname{Res}}\rep S_{n-p^r}\boxtimes\rep H\xto{\mathbf{1}\boxtimes\varphi}\rep S_{n-p^r}\boxtimes \As.
    \]

\subsection{Partitions and permutation modules in the stable range}  
We follow the notations and conventions used in \cite{James} for partitions, the dominance order, Young diagrams, and tableaux.  In particular we write $\lambda=(\lambda_1,\lambda_2,\dots,\lambda_{\ell(\lambda}))$ for a partition $\lambda$, where $\lambda_1\geq\lambda_2\geq\dots \geq\lambda_{\ell(\lambda)}>0$.  If $\lambda$ is a partition of $n$ such that $\lambda_i\geq k$, then we write $\lambda-ke_i$ for the unique partition of $n-k$ with parts $\lambda_1,\dots,\lambda_i-k, \lambda_{i+1},\ldots,\lambda_{\ell(\lambda)}$.  If $\lambda,\mu$ are partitions, then the dominance order on partitions says that $\lambda\geq\mu$ if for all $i$ we have
\[
\sum\limits_{j\leq i}\lambda_j\geq \sum\limits_{j\leq i}\mu_j.
\]
For a partition $\lambda\vdash n$, we will write $\tab^{\lambda}:=S_n/S_{\lambda}$, where $S_{\lambda}:=S_{\lambda_1}\times\cdots\times S_{\lambda_{\ell(\lambda)}}$.  The permutation module $\Bbbk[\tab^{\lambda}]$ associated to $\tab^{\lambda}$ will be written as $M^{\lambda}$, and the Young module as $Y^{\lambda}$. We further write $S^{\lambda}$ for the Specht module corresponding to $\lambda$, and $D^{\lambda}$ for the irreducible $S_n$-module corresponding to a $p$-regular partition $\lambda$.

\begin{definition}\label{def:stability_conditions}
Let $\lambda\vdash n$. We say that
    \begin{enumerate}
        \item $\lambda$ is \textit{$p^r$-quasistable} if $\lambda_1\geq p^r$ and $\lambda_2<p^r$;
        \item $\lambda$ is \textit{$p^r$-stable} if both
            \begin{enumerate}
                \item $\sum\limits_{i = 2}^{\ell(\lambda)} \lambda_j < p^r$, and
                \item if $n < 3p^r$, then $\lambda_1 \geq \frac{n+p^r}{2}$;
            \end{enumerate}
        \item $\lambda$ is \textit{$p^r$-truncated} if $\lambda = \mu - p^r e_1$ for some $p^r$-stable partition $\mu \vdash (n + p^r)$.
    \end{enumerate}
\end{definition}

\begin{remark}\label{rem:stab}
    For $n \geq 3p^r$, any $p^r$-stable partition $\lambda \vdash n$ satisfies $\lambda_1 >n-p^r\geq \frac{n+p^r}{2}$.  In this case $p^r$-stable is equivalent to $p^r$-truncated.
\end{remark}

The following lemma is straightforward, but will be important later.

\begin{lemma}\label{lemma:p^r_stability_open}
    The subsets of $p^r$-stable and $p^r$-truncated partitions of $n$ are upwards closed with respect to the dominance order of partitions.
\end{lemma}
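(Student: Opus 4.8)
The plan is to prove both statements (for $p^r$-stable and for $p^r$-truncated partitions) by a direct combinatorial argument, reducing the $p^r$-truncated case to the $p^r$-stable case via the shift $\lambda = \mu - p^r e_1$. I would first recall that, for a fixed $n$, the dominance order is a partial order on partitions of $n$, and ``upwards closed'' means: if $\lambda$ is $p^r$-stable (resp.\ $p^r$-truncated) and $\nu \vdash n$ satisfies $\nu \geq \lambda$, then $\nu$ is $p^r$-stable (resp.\ $p^r$-truncated). So the whole argument reduces to checking that each defining inequality of Definition~\ref{def:stability_conditions}(2) is preserved under moving up in dominance.

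For the $p^r$-stable case, suppose $\lambda$ is $p^r$-stable and $\nu \geq \lambda$ with $\nu \vdash n$. The key observation is that $\nu \geq \lambda$ means $\sum_{j \leq i}\nu_j \geq \sum_{j \leq i}\lambda_j$ for all $i$; taking $i = 1$ gives $\nu_1 \geq \lambda_1$, and since $\sum_j \nu_j = \sum_j \lambda_j = n$ we get $\sum_{i \geq 2}\nu_i = n - \nu_1 \leq n - \lambda_1 = \sum_{i \geq 2}\lambda_i < p^r$, so condition (a) is inherited. For condition (b): it is only imposed when $n < 3p^r$, and in that regime we need $\nu_1 \geq \frac{n+p^r}{2}$, which follows immediately from $\nu_1 \geq \lambda_1 \geq \frac{n+p^r}{2}$. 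Hence $\nu$ is $p^r$-stable. The only mildly delicate point is making sure the $n<3p^r$ clause is handled uniformly — but since both conditions only ever get \emph{easier} as $\nu_1$ increases, there is nothing to worry about.

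For the $p^r$-truncated case, let $\lambda \vdash n$ be $p^r$-truncated, so $\lambda = \mu - p^r e_1$ for some $p^r$-stable $\mu \vdash n + p^r$, and let $\nu \vdash n$ with $\nu \geq \lambda$. I would set $\tilde\nu := \nu + p^r e_1$, i.e.\ the partition of $n+p^r$ obtained by adding $p^r$ to the first part of $\nu$; one must check $\tilde\nu$ is genuinely a partition, which holds because $\tilde\nu_1 = \nu_1 + p^r \geq \nu_1 \geq \nu_2 = \tilde\nu_2$. Then $\tilde\nu \geq \mu$ in the dominance order on partitions of $n+p^r$: for $i=1$, $\tilde\nu_1 = \nu_1 + p^r \geq \lambda_1 + p^r = \mu_1$; for $i \geq 2$, $\sum_{j\leq i}\tilde\nu_j = p^r + \sum_{j \leq i}\nu_j \geq p^r + \sum_{j\leq i}\lambda_j = \sum_{j\leq i}\mu_j$, using $\nu \geq \lambda$ and the fact that $\mu$ and $\lambda$ agree outside the first part. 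By the already-established $p^r$-stable case, $\tilde\nu$ is $p^r$-stable, and since $\nu = \tilde\nu - p^r e_1$ this shows $\nu$ is $p^r$-truncated.

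The argument is genuinely routine; there is no real obstacle, and the most error-prone step is simply the bookkeeping in the dominance-order comparison $\tilde\nu \geq \mu$ — specifically, remembering that passing from $\lambda$ to $\mu$ only alters the first partial sum (it adds $p^r$ to every partial sum $\sum_{j \leq i}$ with $i \geq 1$), so the inequalities transport cleanly.
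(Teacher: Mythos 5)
Your proof is correct. The paper states the lemma without proof (prefacing it only with ``the following lemma is straightforward''), so there is no in-paper argument to compare against; your write-up simply fills in the gap. The $p^r$-stable case is the immediate observation that $\nu \geq \lambda$ forces $\nu_1 \geq \lambda_1$, which makes both condition (a) and the conditional condition (b) of Definition~\ref{def:stability_conditions}(2) at least as easy for $\nu$ as for $\lambda$. For the $p^r$-truncated case, your reduction via $\tilde\nu := \nu + p^r e_1$ is the natural route: you correctly verify that $\tilde\nu$ is a partition, that $\tilde\nu \geq \mu$ (the key point being that adding $p^r$ to the first part adds $p^r$ to every partial sum $\sum_{j\leq i}$ with $i\geq 1$, so the inequalities transport directly), and that $\nu = \tilde\nu - p^r e_1$, so the $p^r$-stable case applied to $\tilde\nu$ gives the claim. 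No gaps.
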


\begin{prop}\label{prop:oti_on_permutation_modules}
    For $\lambda \vdash n$, the following holds:
    
    \begin{enumerate}
        \item The inclusion $(\tab^\lambda)^H \hookrightarrow \tab^\lambda$ induces an isomorphism
        \[
            \Phi_\varphi(\Bbbk[\tab^\lambda]) \cong \Bbbk[(\tab^\lambda)^H]\boxtimes\varphi(\mathbf{1}).
        \]

        \item The $H$-fixed points on $\tab^\lambda$ decompose, as an $S_{n-p^r}$-set, as
        \[
            (\tab^\lambda)^{H} \cong \coprod\limits_{\lambda_i \geq p^r} \tab^{\lambda - p^re_i}.
        \]
In particular, we have an isomorphism
        \[
            \Phi_{\varphi}(M^{\lambda})\cong\bigoplus\limits_{\lambda_i\geq p^r}M^{\lambda-p^re_i}\boxtimes\varphi(\mathbf{1}).
        \]
        \item $(\tab^\lambda)^H$ has exactly one $S_{n-p^r}$-orbit if and only if $\lambda$ is $p^r$-quasistable.
    \end{enumerate}
\end{prop}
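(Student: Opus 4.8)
The plan is to reduce everything to a combinatorial analysis of the $H$-action on the set $\tab^{\lambda} = S_n/S_{\lambda}$, and then apply the structural results about $\Phi_{\varphi}$ on permutation modules.

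First I would prove (2), the key combinatorial step. A point of $\tab^{\lambda}$ is a coset, equivalently an ordered set partition of $\{1,\dots,n\}$ into blocks $B_1,\dots,B_{\ell(\lambda)}$ with $|B_j| = \lambda_j$ (a ``$\lambda$-tabloid''). Since $H\sub S_{p^r}$ acts only on the last $p^r$ letters $\{n-p^r+1,\dots,n\}$ and trivially on $\{1,\dots,n-p^r\}$, a tabloid is fixed by $H$ iff each block $B_j$ is $H$-stable, i.e. $B_j\cap\{n-p^r+1,\dots,n\}$ is a union of $H$-orbits on those letters. Because $H$ is transitive on the $p^r$ letters, the only $H$-stable subsets are the empty set and the whole set. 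Hence an $H$-fixed tabloid is obtained by choosing exactly one index $i$ with $\lambda_i\geq p^r$, placing all of $\{n-p^r+1,\dots,n\}$ into block $B_i$, and then distributing the remaining $n-p^r$ letters $\{1,\dots,n-p^r\}$ freely among blocks of sizes $\lambda_1,\dots,\lambda_i-p^r,\dots,\lambda_{\ell(\lambda)}$. The residual data is precisely a $(\lambda-p^re_i)$-tabloid on $n-p^r$ letters, and $S_{n-p^r}$ acts on it in the standard way (and transitively on each such piece), giving the claimed decomposition $(\tab^\lambda)^H\cong\coprod_{\lambda_i\geq p^r}\tab^{\lambda-p^re_i}$ of $S_{n-p^r}$-sets. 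One subtlety to handle carefully: if $\lambda$ has several equal parts all $\geq p^r$, the choices of $i$ genuinely give distinct (disjoint) orbits, so the coproduct is over indices $i$, not over distinct part-values — this is the main place to be careful, but it is bookkeeping rather than a real obstacle.

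Next, (1) follows formally. Since $\Phi_{\varphi}(M^\lambda) = (1\boxtimes\varphi)\circ E^{p^r}(M^\lambda)$ and $E^{p^r}$ is restriction to $S_{n-p^r}\times H$, the module $\operatorname{Res}_{S_{n-p^r}\times H}\Bbbk[\tab^\lambda]$ is the permutation module on the $S_{n-p^r}\times H$-set $\tab^\lambda$. Decomposing $\tab^\lambda$ into $H$-orbits and using that $H$ is a transitive subgroup of $S_{p^r}$ (hence its non-fixed orbits have size divisible by $p$, being orbits of a $p$-group of order $>1$ by Lemma~\ref{lemma CF nontriv}(3)), the permutation module on any non-fixed orbit is of the form $\Ind_{H'}^{H}(\text{something})$ for a proper, hence non-transitive, subgroup $H'$, so $\varphi$ kills it by the CF property. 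More precisely: $\Bbbk[H\text{-orbit}] = \Ind_{H_\omega}^{H}\mathbf{1}$ where $H_\omega$ is a point stabiliser; this is non-transitive in $S_{p^r}$ exactly when the orbit is not all of $\{n-p^r+1,\dots,n\}$. What survives is $\Bbbk[(\tab^\lambda)^H]\boxtimes\varphi(\mathbf{1})$, with the $S_{n-p^r}$-action on $(\tab^\lambda)^H$ as found in (2). Combining with (2) gives the stated formula $\Phi_\varphi(M^\lambda)\cong\bigoplus_{\lambda_i\geq p^r}M^{\lambda-p^re_i}\boxtimes\varphi(\mathbf{1})$. (This parallels Lemma~\ref{lemma properties of OTI finite groups}(v); the only new input is the orbit analysis.)

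Finally, (3) is immediate from the decomposition in (2): $(\tab^\lambda)^H$ has exactly one $S_{n-p^r}$-orbit iff there is exactly one index $i$ with $\lambda_i\geq p^r$, i.e. $\lambda_1\geq p^r$ and $\lambda_2<p^r$, which is precisely the definition of $p^r$-quasistable (Definition~\ref{def:stability_conditions}(1)). I expect the orbit-counting in step (2), specifically keeping the indexing by $i$ rather than by part-values straight, to be the only place demanding care; everything else is a direct application of the CF property and the already-established behaviour of $\Phi_\varphi$ on permutation modules.
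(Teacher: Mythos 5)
Your argument is essentially the paper's: for (2) you analyse tabloids and observe that a block stable under the transitive group $H$ must contain either none or all of the last $p^r$ letters; (1) you deduce from the $H$-orbit decomposition by showing $\varphi$ kills the modules on non-singleton orbits; (3) is immediate. One sentence in your step (1) is false as stated and should be repaired. You write that a point stabiliser $H_\omega$ is ``a proper, hence non-transitive, subgroup,'' but proper does not imply non-transitive in $S_{p^r}$: the elementary abelian $A$ is a proper transitive subgroup of $S_{p^r}$, so this implication fails in general. Similarly the parenthetical claiming orbits have size divisible by $p$ because $H$ is a $p$-group is off — in this section $H$ is merely a transitive subgroup of $S_{p^r}$ and need not be a $p$-group. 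What saves you, and what you partially recover in your ``more precisely'' sentence, is that the stabiliser $H_\omega$ of a tabloid is a very particular kind of subgroup: it is $H\cap S_\lambda^\sigma$, i.e.\ it preserves each block of the tabloid setwise, and hence preserves the partition of $\{n-p^r+1,\dots,n\}$ induced by those blocks. Consequently $H_\omega$ is transitive on $\{n-p^r+1,\dots,n\}$ if and only if all $p^r$ marked letters lie in a single block, which is exactly the case $H_\omega = H$. This is the argument the paper gives; you should replace ``proper, hence non-transitive'' by this block-preserving reasoning (and drop the $p$-group remark, which isn't needed).
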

\begin{proof}
   For (1), $\tab^\lambda$ decomposes into finitely many $H$-orbits $\mathcal O_i$. We thus have an isomorphism of $H$-modules
        \[
            \Bbbk[\tab^\lambda] \cong \bigoplus_i \Bbbk[\mathcal O_i].
        \]
        By the definition of a CF functor, we are done if we can show that $\varphi$ annihilates $\Bbbk[\mathcal O_i]$ if and only if $\mathcal O_i$ is not a singleton.  Since these are permutation modules, it is equivalent to show that the stabilizer of any $t\in\mathcal{O}_i$ is either all of $H$ or is not transitive.  However the stabilizer of $t$ in $S_n$ is a conjugate $S^{\sigma}_{\lambda}:=\sigma S_{\lambda}\sigma^{-1}$ of $S_{\lambda}$ in $S_n$. We have $H\cap S_{\lambda}^{\sigma}\sub S_{p^r}\cap S^{\sigma}_{\lambda}$, and the latter is transitive if and only if $S_{p^r}\cap S_{\lambda}^{\sigma}=S_{p^r}$, or equivalently $H\cap S_{\lambda}^{\sigma}=H$.   

     To prove (2), we use the explicit description of $\tab^\lambda$ as fillings of the Young diagram of $\lambda$ with the set $\{1,2..,n\}$ such that the entries in each row are increasing. From this description, we see that if $t \in \tab^\lambda$ is fixed by $H$, then by transitivity of $H$, the labels $n, n-1, \ldots, n-p^r+1$ must be contained in the same row of $t$, and hence this row must have at least $p^r$ boxes. Removing these boxes from $\lambda$ gives a tableau of shape $\lambda - p^re_i$ for some $i$. It is easy to check that this procedure yields a bijection.

    Finally, part (3) is clear from (2).
\end{proof}

\begin{remark}
By Proposition \ref{prop:oti_on_permutation_modules}, $\Phi_{\varphi}(M^{\lambda})$ is a direct sum of permutation modules $M^{\mu}$ where each $\mu$ is obtained by removing $p^r$ boxes from some row of the Young diagram of $\lambda$.  In light of this, the condition that $\lambda$ is $p^r$-quasistable is equivalent to the Young diagram of $\lambda$ having exactly one row where $p^r$ boxes can be removed.
\end{remark}

\subsection{Periodic equivalence for permutation modules}\label{section periodic equiv perm mod}

Write $\operatorname{Perm}_n$ (resp. $\operatorname{Perm}^{\st}_{p^r, n}$, $\operatorname{Perm}^{\trun}_{p^r, n}$) for the full, additive subcategory of $\rep S_n$ containing permutation modules $M^{\lambda}$ for all  (resp. $p^r$-stable, $p^r$-truncated) partitions $\lambda\vdash n$.  Similarly write $\operatorname{Young}_n$ (resp. $\operatorname{Young}_{p^r,n}^{\st}$, $\operatorname{Young}_{p^r,n}^{\trun}$) for the full, additive subcategories generated by the corresponding Young modules.  Note that the subcategories $\operatorname{Young}_{p^r,n}^{(-)}$ are equivalent to the idempotent completions of the categories $\operatorname{Perm}_{p^r,n}^{(-)}$.  

In the remainder of this section, we suppose that $\As$ is a locally finite abelian category.
Given a simple object $L$ in $\As$, write $\langle L\rangle$ for the smallest full, abelian, replete subcategory containing $L$.  It is easy to see that $\langle L\rangle\simeq \operatorname{vec}_{\Bbbk}$, and we will abuse this identification going forward, simply writing vector spaces for objects in this subcategory.  Similarly, if $\mathscr{C}$ is another abelian category such that $\mathscr{C}\boxtimes\mathscr{A}$ exists, we will identify the subcategory $\mathscr{C}\boxtimes\langle L\rangle$ with~$\mathscr{C}$.

The following is the main result of this section.

\begin{thm}\label{thm:oti_eq_on_perm}
    Suppose $\varphi(\mathbf{1})$ is simple. Then $\Phi_{\varphi}$ induces an equivalence
    \[
        \operatorname{Perm}^{\st}_{n, p^r} \xto{\sim} \operatorname{Perm}^{\trun}_{n-p^r, p^r}.
    \]
\end{thm}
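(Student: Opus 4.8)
The plan is to show that $\Phi_{\varphi}$, restricted to $\operatorname{Perm}^{\st}_{n,p^r}$, is fully faithful and essentially surjective onto $\operatorname{Perm}^{\trun}_{n-p^r,p^r}$. First I would dispose of objects and essential surjectivity. Since $\varphi(\mathbf 1)$ is simple, $\langle\varphi(\mathbf 1)\rangle\simeq\operatorname{vec}_{\Bbbk}$, and we use the identification $\rep S_{n-p^r}\boxtimes\langle\varphi(\mathbf 1)\rangle\simeq\rep S_{n-p^r}$ fixed before the theorem. A $p^r$-stable partition $\lambda\vdash n$ is in particular $p^r$-quasistable (one checks $\lambda_2\leq\sum_{i\geq 2}\lambda_i<p^r$ and $\lambda_1\geq p^r$ from the stability conditions), so its first row is the unique one from which $p^r$ boxes may be removed; Proposition~\ref{prop:oti_on_permutation_modules}(2)--(3) then gives $\Phi_{\varphi}(M^\lambda)\cong M^{\lambda-p^re_1}\boxtimes\varphi(\mathbf 1)\cong M^{\lambda-p^re_1}$, and $\lambda-p^re_1$ is $p^r$-truncated by definition. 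As $\lambda\mapsto\lambda-p^re_1$ is a bijection between $p^r$-stable partitions of $n$ and $p^r$-truncated partitions of $n-p^r$ and $\Phi_{\varphi}$ is additive, this already yields essential surjectivity, and the whole problem reduces to showing that for $p^r$-stable $\lambda,\mu\vdash n$ the map
\[
\operatorname{Hom}_{S_n}(M^\lambda,M^\mu)\longrightarrow\operatorname{Hom}_{S_{n-p^r}}(M^{\lambda-p^re_1},M^{\mu-p^re_1})
\]
induced by $\Phi_{\varphi}$ is an isomorphism.

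Next I would make this map concrete. By Proposition~\ref{prop:oti_on_permutation_modules}(1), the inclusion $\Bbbk[(\tab^\lambda)^H]\hookrightarrow\operatorname{Res}M^\lambda$ becomes an isomorphism after applying $1\boxtimes\varphi$, the complementary summand spanned by the nontrivial $H$-orbits being killed by $\varphi$; chasing this through shows $\Phi_{\varphi}(f)$ corresponds to the $\Bbbk[(\tab^\mu)^H]$-component of $\operatorname{Res}(f)|_{\Bbbk[(\tab^\lambda)^H]}$. Under the standard identification $\operatorname{Hom}_{S_n}(\Bbbk[\tab^\lambda],\Bbbk[\tab^\mu])\cong\Bbbk[\tab^\lambda\times\tab^\mu]^{S_n}$ of a homomorphism with its matrix of coefficients — whose orbit basis is indexed by the $S_n$-orbits on $\tab^\lambda\times\tab^\mu$, equivalently by $\mathbb Z_{\geq 0}$-matrices with row sums $\lambda$ and column sums $\mu$ — this component is exactly restriction of functions along $(\tab^\lambda)^H\times(\tab^\mu)^H\hookrightarrow\tab^\lambda\times\tab^\mu$, with values landing in $\Bbbk[(\tab^\lambda)^H\times(\tab^\mu)^H]^{S_{n-p^r}}$. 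So full faithfulness becomes the combinatorial statement that this inclusion induces a bijection from the set of $S_{n-p^r}$-orbits on $(\tab^\lambda)^H\times(\tab^\mu)^H$ to the set of $S_n$-orbits on $\tab^\lambda\times\tab^\mu$.

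Finally I would prove this orbit bijection. An $S_n$-orbit on $\tab^\lambda\times\tab^\mu$ is classified by the intersection matrix $A$, $A_{ij}=|\mathrm{row}_i(t)\cap\mathrm{row}_j(s)|$, with margins $(\lambda,\mu)$. Since $\lambda$ is $p^r$-quasistable, every tableau in $(\tab^\lambda)^H$ has the block $\{n-p^r+1,\dots,n\}$ in its first row and $(\tab^\lambda)^H\cong\tab^{\lambda-p^re_1}$ as $S_{n-p^r}$-sets (Proposition~\ref{prop:oti_on_permutation_modules}), and similarly for $\mu$; hence $S_{n-p^r}$-orbits on $(\tab^\lambda)^H\times(\tab^\mu)^H$ are classified by matrices $A'$ with margins $(\lambda-p^re_1,\mu-p^re_1)$, and under the inclusion the orbit of $(t,s)$ maps to the orbit with matrix $A'+p^rE_{11}$, the frozen block contributing $p^r$ to the $(1,1)$-intersection and nothing else. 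The crux — and the only place the stability hypotheses are used — is that every such matrix $A$ satisfies $A_{11}\geq p^r$: indeed $A_{11}\geq\lambda_1-\sum_{j\geq 2}A_{1j}\geq\lambda_1-\sum_{j\geq 2}\mu_j=\lambda_1+\mu_1-n$, and $p^r$-stability forces $\lambda_1+\mu_1\geq n+p^r$ (via $\lambda_1,\mu_1\geq n-p^r+1$ when $n\geq 3p^r$, and via $\lambda_1,\mu_1\geq\tfrac{n+p^r}{2}$ when $p^r\leq n<3p^r$). Hence $A\mapsto A-p^rE_{11}$ is a well-defined inverse and every $S_n$-orbit meets the fixed-point locus, which establishes the bijection and completes the proof.

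I expect the categorical bookkeeping of the second paragraph — matching $\Phi_{\varphi}$ on morphisms with restriction of matrix coefficients, keeping track of the simplicity of $\varphi(\mathbf 1)$ and of the Deligne-tensor-product identifications — to be routine given Proposition~\ref{prop:oti_on_permutation_modules}. The real content, and the step where the hypothesis actually bites, is the inequality $A_{11}\geq p^r$ together with the resulting orbit bijection; this is precisely the incarnation, in the language of OTI functors, of the ``stable range'' underlying the equivalences of Henke--Koenig and Harman.
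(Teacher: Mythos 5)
Your proof is correct and takes essentially the same route as the paper: both reduce full faithfulness to the statement that the inclusion $(\tab^\lambda\times\tab^\mu)^H\hookrightarrow\tab^\lambda\times\tab^\mu$ induces a bijection on orbits, and the inequality you establish, $A_{11}\geq\lambda_1+\mu_1-n\geq p^r$, is precisely the inclusion-exclusion bound appearing in the paper's Lemma~\ref{prop:components_of_hom_are_quasistable}. The only cosmetic difference is that you count orbits directly via intersection matrices with prescribed margins, whereas the paper first decomposes $\tab^\lambda\times\tab^\mu$ into quasistable pieces via the Mackey formula and then invokes Proposition~\ref{prop:oti_on_permutation_modules}(3); the underlying combinatorics and the role of the stability hypothesis are identical.
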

Passing to idempotent completions, we obtain:

\begin{cor}\label{cor:oti_eq_on_young}
    Suppose $\varphi(\mathbf{1})$ is simple.
    Then $\Phi_\varphi$ induces an equivalence
    \[
        \operatorname{Young}^{\st}_{n, p^r} \xto{\sim} \operatorname{Young}^{\trun}_{n-p^r, p^r}.
    \]
\end{cor}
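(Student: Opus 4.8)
The plan is to deduce the corollary from Theorem~\ref{thm:oti_eq_on_perm} by applying the idempotent-completion (Karoubi envelope) construction, which is a $2$-functor on additive categories and which sends equivalences to equivalences. Since $\varphi(\mathbf 1)$ is assumed simple, I will keep the convention identifying $\mathscr C\boxtimes\langle\varphi(\mathbf 1)\rangle$ with $\mathscr C$, so that Theorem~\ref{thm:oti_eq_on_perm} reads: $\Phi_\varphi$ restricts to an equivalence $\operatorname{Perm}^{\st}_{n,p^r}\xrightarrow{\sim}\operatorname{Perm}^{\trun}_{n-p^r,p^r}$.

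First I would record that $\Phi_\varphi\colon\rep S_n\to\rep S_{n-p^r}$ is additive, being the composite of the exact functor $E^{p^r}$ with $\mathbf 1\boxtimes\varphi$; in particular it commutes with the splitting of idempotents and hence with passage to idempotent completions. Next I would note that, inside the (already idempotent complete) ambient abelian categories $\rep S_n$ and $\rep S_{n-p^r}$, the categories $\operatorname{Young}^{\st}_{n,p^r}$ and $\operatorname{Young}^{\trun}_{n-p^r,p^r}$ are precisely the idempotent completions of $\operatorname{Perm}^{\st}_{n,p^r}$ and $\operatorname{Perm}^{\trun}_{n-p^r,p^r}$. The one substantive input here is that every indecomposable summand of a permutation module $M^\lambda$ with $\lambda$ a $p^r$-stable (resp.\ $p^r$-truncated) partition is a Young module $Y^\mu$ with $\mu$ again $p^r$-stable (resp.\ $p^r$-truncated): $Y^\mu$ occurs in $M^\lambda$ only when $\mu\trianglerighteq\lambda$ in the dominance order, and $p^r$-stability and $p^r$-truncation are upward closed by Lemma~\ref{lemma:p^r_stability_open}.

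With these observations in hand, applying the idempotent-completion $2$-functor to the equivalence of Theorem~\ref{thm:oti_eq_on_perm} produces an equivalence $\operatorname{Young}^{\st}_{n,p^r}\xrightarrow{\sim}\operatorname{Young}^{\trun}_{n-p^r,p^r}$; and because $\Phi_\varphi$ is additive, the Karoubi extension of $\Phi_\varphi|_{\operatorname{Perm}^{\st}}$ sends a split summand $eM^\lambda$ to $\Phi_\varphi(e)\Phi_\varphi(M^\lambda)=\Phi_\varphi(eM^\lambda)$, i.e.\ it agrees with $\Phi_\varphi$ on Young modules, so the equivalence is genuinely induced by $\Phi_\varphi$. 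If one wants this made explicit on objects: for $p^r$-stable $\lambda$, Proposition~\ref{prop:oti_on_permutation_modules}(2) together with $p^r$-quasistability gives $\Phi_\varphi(M^\lambda)\cong M^{\lambda-p^re_1}$, and since an equivalence carries the unique copy of $Y^\lambda$ in $M^\lambda$ to the unique copy of $Y^{\lambda-p^re_1}$ in $M^{\lambda-p^re_1}$, one obtains $\Phi_\varphi(Y^\lambda)\cong Y^{\lambda-p^re_1}$.

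There is no real obstacle here: the content is entirely contained in Theorem~\ref{thm:oti_eq_on_perm} together with the formal behaviour of Karoubi envelopes. The only point requiring a moment of care is the identification of the Young categories as idempotent completions of the permutation categories, which rests on the dominance-order closedness of $p^r$-stability and $p^r$-truncation recorded in Lemma~\ref{lemma:p^r_stability_open} — which is exactly why that lemma was flagged as being important later.
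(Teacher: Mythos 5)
Your proposal is correct and takes essentially the same route as the paper, whose proof is simply ``passing to idempotent completions.'' The only additional content you supply is the justification (via the fact that $Y^\mu$ occurs in $M^\lambda$ only if $\mu\trianglerighteq\lambda$, together with Lemma~\ref{lemma:p^r_stability_open}) for the paper's earlier unproved note that $\operatorname{Young}^{(-)}_{p^r,n}$ is the idempotent completion of $\operatorname{Perm}^{(-)}_{p^r,n}$, which is precisely the implicit ingredient the paper's one-line proof relies on.
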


\begin{remark}
    A statement equivalent to Theorem \ref{thm:oti_eq_on_perm} can be found in \cite{HK}, Theorem 3.3, albeit in a different language (see also \cite{MW}). A (slightly weaker) formulation which is more in the spirit of our work appears in \cite{Har}.
\end{remark}

We will denote the category of signed Young modules by $\operatorname{sgnYoung}_n:=\Young_n\otimes \sgn$. 
The proofs of this section can be readily adapted to prove the following Corollary.   

\begin{cor}\label{rem sgnyoung}
    Suppose $\varphi(\sgn_H)$ is simple. Then $\Phi_\varphi$ induces an equivalence
    \[
     \operatorname{sgnYoung}^{\st}_{n, p^r} \xto{\sim} \operatorname{sgnYoung}^{\trun}_{n-p^r, p^r}.
    \]
\end{cor}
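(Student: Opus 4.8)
The plan is to mirror the proof of Theorem \ref{thm:oti_eq_on_perm} verbatim, replacing the trivial representation $\mathbf{1}$ of $H$ by the sign representation $\sgn_H$ wherever it appears. The key point is that the whole argument for $\operatorname{Perm}^{\st}\to\operatorname{Perm}^{\trun}$ factors through two inputs: (a) the combinatorial description of $\Phi_\varphi$ on permutation modules in Proposition \ref{prop:oti_on_permutation_modules}, which says $\Phi_\varphi(M^\lambda)\cong\bigoplus_{\lambda_i\geq p^r}M^{\lambda-p^re_i}\boxtimes\varphi(\mathbf{1})$; and (b) the observation that once $\varphi(\mathbf{1})$ is simple, the associated map on Young/permutation modules in the stable range is triangular with invertible diagonal with respect to the dominance order, hence an equivalence (this is where Lemma \ref{lemma:p^r_stability_open} enters). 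So the first step is to establish the sign-twisted analogue of Proposition \ref{prop:oti_on_permutation_modules}, namely that for the sign-twisted permutation module $M^\lambda\otimes\sgn$ we have $\Phi_\varphi(M^\lambda\otimes\sgn)\cong\bigl(\bigoplus_{\lambda_i\geq p^r}M^{\lambda-p^re_i}\otimes\sgn\bigr)\boxtimes\varphi(\sgn_H)$.

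First I would verify (a) in the twisted setting. Restricting $M^\lambda\otimes\sgn$ to $S_{n-p^r}\times H$ decomposes, as before, over $H$-orbits on $\tab^\lambda$; on each orbit $\mathcal O$ the summand is $\Bbbk[\mathcal O]\otimes\sgn|_H$, and since $\sgn|_H$ is one-dimensional, $\Bbbk[\mathcal O]\otimes\sgn|_H\cong\Ind_{\operatorname{Stab}(t)}^H(\sgn_{\operatorname{Stab}(t)})$; a CF functor kills induction from a non-transitive subgroup regardless of the module induced, so exactly the same orbit bookkeeping goes through, and the surviving (singleton) orbits contribute $\varphi(\sgn_H)$ rather than $\varphi(\mathbf{1})$. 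The $S_{n-p^r}$-set structure on the $H$-fixed points is unchanged — it is Proposition \ref{prop:oti_on_permutation_modules}(2) — but now the permutation modules $M^{\lambda-p^re_i}$ are twisted by $\sgn$ because $\sgn_{S_n}$ restricts to $\sgn_{S_{n-p^r}}$ on the complementary factor. This is the one place requiring a small check: that the $\sgn$-twist of the ambient $S_n$-module restricts to the $\sgn$-twist of $M^{\lambda-p^re_i}$ as an $S_{n-p^r}$-module, which is immediate since $\sgn_{S_n}|_{S_{n-p^r}}=\sgn_{S_{n-p^r}}$.

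Next I would run the triangularity argument. Because twisting by $\sgn$ is an exact autoequivalence of $\rep S_n$ that sends $M^\lambda$ to $M^\lambda\otimes\sgn$ and permutes Young modules (up to the well-known relabelling $Y^\lambda\otimes\sgn\cong Y^{\lambda'}$-type statements — but we don't even need this, only that $\operatorname{sgnYoung}_n=\Young_n\otimes\sgn$ as a subcategory), the subcategories $\operatorname{sgnYoung}^{\st}_{n,p^r}$ and $\operatorname{sgnYoung}^{\trun}_{n-p^r,p^r}$ are just the images of $\operatorname{Young}^{\st}_{n,p^r}$ and $\operatorname{Young}^{\trun}_{n-p^r,p^r}$ under $-\otimes\sgn$. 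The functor $\Phi_\varphi$ on these twisted categories, by the computation above, is intertwined with $\Phi_\varphi$ on the untwisted categories via these $\sgn$-twists (the only difference being which simple object of $\As$ one lands in, and we have identified $\langle\varphi(\sgn_H)\rangle\simeq\vec$ just as we did for $\langle\varphi(\mathbf{1})\rangle$). Hence the equivalence of Corollary \ref{cor:oti_eq_on_young} transports directly.

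The main obstacle — and it is a mild one — is bookkeeping the $\sgn$-twist consistently through the Deligne-tensor-product identifications, in particular confirming that when $t\in\tab^\lambda$ is $H$-fixed and we delete the block of $p^r$ boxes carrying $\{n-p^r+1,\dots,n\}$, the residual $\sgn$-character is exactly $\sgn_{S_{n-p^r}}$ with no stray sign from the deleted block; this is true because the deleted entries form a single set on which $H$ (and more generally $S_{p^r}$) acts, and $\sgn_{S_n}$ factors as $\sgn_{S_{n-p^r}}\boxtimes\sgn_{S_{p^r}}$ on $S_{n-p^r}\times S_{p^r}$, so the $S_{p^r}$-part is absorbed into $\varphi(\sgn_H)$. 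Everything else — upward-closedness in dominance order, the unitriangularity yielding full faithfulness and essential surjectivity, passage to idempotent completion — is identical to Sections \ref{section periodic equiv perm mod} and is quoted rather than reproved.
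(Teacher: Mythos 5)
Your proposal is correct and follows exactly the adaptation the paper has in mind when it says ``The proofs of this section can be readily adapted.'' The essential new content — the identity $\Bbbk[\mathcal O]\otimes\sgn_H\cong\Ind_{\operatorname{Stab}(t)}^H(\sgn_{\operatorname{Stab}(t)})$, the observation that CF functors kill induction from non-transitive subgroups regardless of which module is induced, and the factorisation $\sgn_{S_n}|_{S_{n-p^r}\times S_{p^r}}=\sgn_{S_{n-p^r}}\boxtimes\sgn_{S_{p^r}}$ — is precisely what is needed, and the resulting formula $\Phi_\varphi(M^\lambda\otimes\sgn)\cong\bigl(\bigoplus_{\lambda_i\geq p^r}M^{\lambda-p^re_i}\otimes\sgn\bigr)\boxtimes\varphi(\sgn_H)$ is the right twisted analogue of Proposition~\ref{prop:oti_on_permutation_modules}. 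One small inaccuracy in your framing (harmless, since you end up quoting rather than reusing it): the proof of Theorem~\ref{thm:oti_eq_on_perm} is not a dominance-order triangularity argument and does not invoke Lemma~\ref{lemma:p^r_stability_open}; it proves full faithfulness via Mackey and the orbit bijection $S_{n-p^r}\backslash(\tab^\lambda\times\tab^\mu)^H\xrightarrow{\sim}S_n\backslash(\tab^\lambda\times\tab^\mu)$, which transports unchanged under $-\otimes\sgn$ once $\varphi(\sgn_H)$ is simple.
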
   
For the proof of Theorem \ref{thm:oti_eq_on_perm}, we begin with a lemma.

\begin{lemma}\label{prop:components_of_hom_are_quasistable}
    Let $\lambda, \mu \vdash n$ be $p^r$-stable. Then we have an isomorphism of $S_n$-sets
    \[ 
        \tab^\lambda \times \tab^\mu \cong \coprod_i \tab^{\nu_i},
    \]
    where each $\nu_i$ that appears is $p^r$-quasistable.
\end{lemma}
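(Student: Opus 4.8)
The plan is to reduce this to the standard contingency-table description of $\tab^\lambda\times\tab^\mu$ as an $S_n$-set and then estimate the sizes of the resulting blocks. First I would recall that an element of $\tab^\lambda$ may be viewed as an ordered tuple $(P_1,\dots,P_{\ell(\lambda)})$ of pairwise disjoint subsets of $\{1,\dots,n\}$ with $|P_i|=\lambda_i$ whose union is $\{1,\dots,n\}$, and similarly for $\tab^\mu$; two pairs $((P_i),(Q_j))$ lie in the same diagonal $S_n$-orbit if and only if the matrices $a_{ij}=|P_i\cap Q_j|$ agree, and the stabiliser of such a pair is the Young subgroup $\prod_{i,j}\Sym(P_i\cap Q_j)$. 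This gives
\[
\tab^\lambda\times\tab^\mu\;\cong\;\coprod_A\tab^{\nu(A)},
\]
where $A=(a_{ij})$ runs over non-negative integer matrices with row sums $\lambda_1,\dots,\lambda_{\ell(\lambda)}$ and column sums $\mu_1,\dots,\mu_{\ell(\mu)}$, and $\nu(A)$ is the partition obtained by listing the non-zero entries of $A$ in weakly decreasing order. So it remains to check that every such $\nu(A)$ is $p^r$-quasistable, i.e. that $a_{11}\geq p^r$ while every other entry of $A$ is strictly less than $p^r$.

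For the lower bound on $a_{11}$, I would use that clause (a) of $p^r$-stability gives $\lambda_1=n-\sum_{i\geq 2}\lambda_i>n-p^r$ and likewise $\mu_1>n-p^r$; combining the row-sum identity $\lambda_1=\sum_j a_{1j}$ with $\sum_{j\geq 2}a_{1j}\leq\sum_{j\geq 2}\mu_j=n-\mu_1$ yields $a_{11}\geq\lambda_1+\mu_1-n$. When $n\geq 3p^r$ this already forces $a_{11}>n-2p^r\geq p^r$; when $n<3p^r$ one invokes clause (b) for both $\lambda$ and $\mu$ to get $\lambda_1,\mu_1\geq(n+p^r)/2$, hence $a_{11}\geq\lambda_1+\mu_1-n\geq p^r$ again. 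For the upper bound on the remaining entries, for any $(i,j)\neq(1,1)$ either $i\geq 2$, in which case $a_{ij}\leq\lambda_i\leq\sum_{i'\geq 2}\lambda_{i'}<p^r$, or $j\geq 2$, in which case $a_{ij}\leq\mu_j\leq\sum_{j'\geq 2}\mu_{j'}<p^r$; in either case clause (a) does the job. Thus $a_{11}$ is the unique entry of $A$ that is $\geq p^r$, so $\nu(A)_1=a_{11}\geq p^r$ and $\nu(A)_2<p^r$, which is exactly $p^r$-quasistability.

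The one point to be careful about is the small-$n$ regime $n<3p^r$: there the bound $\lambda_1,\mu_1>n-p^r$ coming from clause (a) alone is not enough to push $a_{11}$ past $p^r$, and one genuinely needs clause (b) of the definition of $p^r$-stable. The remainder is a routine estimate on the entries of a contingency table, so I do not anticipate any real obstacle beyond keeping the two stability clauses straight.
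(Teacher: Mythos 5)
Your proof is correct and follows essentially the same route as the paper's: your contingency-table description of $\tab^\lambda\times\tab^\mu$ is the set-theoretic form of the Mackey decomposition the paper invokes, and the estimates on $a_{11}$ (via $\lambda_1+\mu_1-n\geq p^r$, using clause (b) of stability or Remark~\ref{rem:stab} when $n\geq 3p^r$) and on the remaining entries (via $\min(\lambda_i,\mu_j)<p^r$) are identical. The only cosmetic difference is that you make the $n\geq 3p^r$ vs.\ $n<3p^r$ case split explicit, whereas the paper absorbs it into the phrase ``by the stability assumptions'' together with Remark~\ref{rem:stab}.
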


\begin{proof}
By the Mackey formula, the $\nu_i$ that appear correspond to Young subgroups $S_\lambda\cap S_\mu^g$ for some $g\in S_n$. 
Hence it suffices to show that $|S_{\lambda_i}\cap S^g_{\lambda_j}| \geq p^r$ if and only if $i = j = 1$.
Suppose $(i, j) \neq (1, 1)$. 
Then 
$|S_{\lambda_i}\cap S^g_{\lambda_j}|\leq \min\{|S_{\lambda_i}|, |S^g_{\lambda_j}| \}<p^r$ follows immediately by the stability assumption.

It remains to show $|S_{\lambda_1}\cap S^g_{\mu_1}| \geq p^r$. This follows from applying the inclusion-exclusion principle to the union of $S_{\lambda_1}$ and $S^g_{\mu_1}$. 
By the stability assumptions,
    \begin{align*}
        n\geq|S_{\lambda_1} \cup S^g_{\mu_1}| &= |S_{\lambda_1}| + |S^g_{\mu_1}| - |S_{\lambda_1} \cap S^g_{\mu_1}| \\
        &\geq 2 \cdot \left(\frac{n+p^r}{2}\right) - | S_{\lambda_1} \cap S^g_{\mu_1}| \\
        &= n+p^r - |S_{\lambda_1} \cap S^g_{\mu_1}|,
    \end{align*}
hence $|S_{\lambda_1} \cap S^g_{\mu_1}| \geq p^r$. 
\end{proof}

\begin{proof}[Proof of Theorem 6.2.1]
By~Proposition~\ref{prop:oti_on_permutation_modules}(2), the essential image of the functor
\[\operatorname{Perm}^{\st}_{n, p^r} \xto{\Phi_{\varphi}} \rep S_{n-p^r}.\]
is exactly $\operatorname{Perm}^{\trun}_{n-p^r, p^r}$. Hence it suffices to show that if $\lambda, \mu$ are $p^r$-stable, then $\Phi_{\varphi}$ induces an isomorphism
    \[\Hom_{S_n}(M^\lambda, M^\mu) \xto{\sim} \Hom_{S_{n-p^r}}(\Phi_{\varphi}(M^\lambda), \Phi_{\varphi}(M^\mu)).\]
Proposition \ref{prop:oti_on_permutation_modules}(1) shows this is equivalent to
    \[
        \Hom_{S_n}(\Bbbk[\tab^\lambda], \Bbbk[\tab^\mu]) \xto{\sim} \Hom_{S_{n-p^r}}(\Bbbk[(\tab^\lambda)^H], \Bbbk[(\tab^\mu)^H]).
    \]
Permutation modules are self-dual, so we need to check that
    \[
        \Hom_{S_n}(\mathbf{1}, \Bbbk[\tab^\lambda \times \tab^\mu]) \xto{\sim} \Hom_{S_{n-p^r}}(\mathbf{1}, \Bbbk[(\tab^\lambda \times \tab^\mu)^H]).
    \]
This follows because the inclusion $(\tab^\lambda \times \tab^\mu)^H\sub (\tab^\lambda \times \tab^\mu)$ induces a bijection of orbits 
    \[S_{n-p^r} \backslash (\tab^\lambda \times \tab^\mu)^H
    \xto{\sim} S_{n}\backslash (\tab^\lambda \times \tab^\mu).
    \]
Indeed, Lemma~\ref{prop:components_of_hom_are_quasistable} shows that $\tab^\lambda \times \tab^\mu$ decomposes as a disjoint union $\coprod_i \tab^{\nu_i}$ for $p^r$-quasistable partitions $\nu_i$. 
By Proposition~\ref{prop:oti_on_permutation_modules}(3), each inclusion $(\tab^{\nu_i})^H\sub \tab^{\nu_i}$ induces a bijection of orbits $S_{n-p^r} \backslash (\tab^{\nu_i})^H
\xto{\sim} S_n \backslash \tab^{\nu_i}$. 
\end{proof}

\section{OTI on Polynomial functors}\label{section:polynomial_functors}

One of the basic examples of a category admitting an action of the degenerate Heisenberg algebra is the category of strict polynomial functors. In this section we recall this example (Section \ref{section poly functors defn}), and study the OTI functor in this setting (Section \ref{section OTI on pol}). This example is related to representations of symmetric groups via Schur--Weyl duality, and in Section \ref{section SW duality} we prove the compatibility of Schur--Weyl duality with the OTI functor. This will be used in Section \ref{section:periodicity_result} to prove periodicity results for symmetric groups.

    \subsection{Polynomial functors}\label{section poly functors defn}
    
     The following  is due to Friedlander and Suslin \cite{FS97}. 
    \begin{definition}
    The category $\PP ol$  of strict polynomial functors consists of functors $M:\operatorname{vec}_{\Bbbk}\to\operatorname{vec}_{\Bbbk}$,
    such that for any $V,W \in \operatorname{vec}_{\Bbbk}$:
    \begin{enumerate}
        \item The morphism $\Hom_{\Bbbk}(V,W)\to\Hom_{\Bbbk}(M(V),M(W))$ is polynomial for all $V,W$.
        \item The degree of the map $\End(V)\to\End(M(V))$ is uniformly bounded over all $V \in \operatorname{vec}_{\Bbbk}$.
    \end{enumerate}  
    Morphisms in $\PP ol$ are given by natural transformations. 
    We say that $M\in\PP ol$ has \emph{degree} $d\in\N$ if for any vector spaces $V,W$, the morphism $$\Hom_{\Bbbk}(V,W)\to\Hom_{\Bbbk}(M(V),M(W)),$$ is homogeneous of degree $d$. We write $\PP ol_d$ for the full subcategory of $\PP ol$ of degree $d$ polynomial functors.
    \end{definition}
    \begin{example}
    The following are objects in $\PP ol_d$:
    \begin{enumerate}
        \item The $d$-th tensor power functor $V\mapsto \bigotimes^dV$.
        \item The $d$-th symmetric power functor $V\mapsto S^d(V)$.
        \item The $d$-th divided power functor $V\mapsto \Gamma^d(V)\coloneqq (\bigotimes^d V)^{S_d}$.
        \item The $d$-th exterior power functor $V\mapsto \Lambda^d(V)$.
        \item Any Schur functor corresponding to a partition of $d$.
    \end{enumerate}
    \end{example}
    The category $\PP ol$ is $\Bbbk$-linear and abelian, and has a natural decomposition:
    \[
    \PP ol=\bigoplus\limits_{d\in\N}\PP ol_d.
    \]
    There is a  monoidal product on $\PP ol$ given by
    $
    (M\otimes N)(V):=M(V)\otimes N(V).
    $
    Observe that $\otimes$ is an exact bifunctor on $\PP ol$, and $\otimes:\PP ol_{d_1}\times \PP ol_{d_2}\to \PP ol_{d_1+d_2}$.

     Recall that a rational representation $M \in \rep GL(V)$ is polynomial of degree $d$ if its matrix coefficients are homogeneous polynomials of degree $d$.    
    \begin{lemma}\label{lemma poly fun poly rep}
    Write $\operatorname{Pol}_dGL(V)$ for the full subcategory of $\rep GL(V)$ consisting of polynomial representations of degree $d$.
    \begin{enumerate}
        \item  \cite{FS97} Suppose that $\dim V\geq d$.  Then the functor:
        \[
        M\mapsto M(V)
        \]
        gives an equivalence of categories $\PP ol_{d} \to \operatorname{Pol}_dGL(V)$.
        \item \cite{HY1} If $n>d$, then the functor $R_0:\operatorname{Pol}_dGL(\Bbbk^n)\to\operatorname{Pol}_dGL(\Bbbk^{n-1})$, $$M \mapsto M^{GL_1},$$ given by restriction to $GL_{n-1}\times GL_1$ and taking $GL_1$ invariants, is an equivalence such that the following diagram commutes:
        \[
        \xymatrix{
        \PP ol_d\ar[r]^{\sim} \ar[dr]_{\sim} & \operatorname{Pol}_dGL_n\ar[d]^{R_0} \\
        & \operatorname{Pol}_{d}GL_{n-1}
        }
        \]
    \end{enumerate}
    \end{lemma}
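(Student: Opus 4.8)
The plan is to derive both statements from the classical theory of strict polynomial functors, reducing each to a statement about modules over a Schur algebra. For (1), write $n=\dim V$ and, for $W\in\vec$, consider the functor $\Gamma^{d,V}\in\PP ol_d$ defined by $W\mapsto\Gamma^d(\Hom_\Bbbk(V,W))$. The key input from Friedlander--Suslin is the Yoneda-type isomorphism $\Hom_{\PP ol_d}(\Gamma^{d,V},M)\cong M(V)$, natural in $M\in\PP ol_d$; this exhibits $\Gamma^{d,V}$ as a projective object and identifies $\End_{\PP ol_d}(\Gamma^{d,V})\cong\Gamma^d(\End_\Bbbk V)$ with the Schur algebra $S(n,d)$. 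The hypothesis $n\geq d$ enters precisely because in that range $\Gamma^{d,V}$ is in addition a \emph{generator} of $\PP ol_d$, so that $M\mapsto\Hom_{\PP ol_d}(\Gamma^{d,V},M)\cong M(V)$ is an equivalence $\PP ol_d\xto{\sim}S(n,d)\text{-}\mathrm{mod}$. Since the classical equivalence $\Pol_dGL(V)\simeq S(n,d)\text{-}\mathrm{mod}$ is realised by the very same functor $M\mapsto M(V)$ with its natural $GL(V)$-action, (1) follows. (Equivalently, and avoiding Schur algebras, one checks directly that evaluation at $V$ is exact, full and faithful by the polarisation/restitution lemma for degree-$d$ functors, and essentially surjective because every degree-$d$ polynomial $GL(V)$-module is a summand of a sum of copies of $V^{\otimes d}$.)

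For (2), the plan is to identify the composite $R_0\circ\operatorname{ev}_{\Bbbk^n}$ with $\operatorname{ev}_{\Bbbk^{n-1}}$, where $\operatorname{ev}_U$ denotes evaluation at $U$. Fix the $GL_{n-1}\times GL_1$-module decomposition $\Bbbk^n=\Bbbk^{n-1}\oplus\Bbbk$. Polynomiality of $M\in\PP ol_d$ gives a natural cross-effect decomposition $M(A\oplus B)\cong\bigoplus_{a+b=d}M^{a,b}(A,B)$ with $M^{a,b}$ homogeneous of degree $a$ in $A$ and degree $b$ in $B$; hence $GL(B)$ acts on the summand $M^{a,b}(A,B)$ through its $b$-th power. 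Taking $B=\Bbbk$ and passing to $GL_1$-invariants therefore isolates the summand of bidegree $(d,0)$, which is canonically $M(A)=M(\Bbbk^{n-1})$; this isomorphism is $GL_{n-1}$-equivariant and natural in $M$, so the triangle in the statement commutes. Finally $n>d$ forces both $\dim\Bbbk^n\geq d$ and $\dim\Bbbk^{n-1}\geq d$, so part (1) makes the horizontal and diagonal arrows of the triangle equivalences, whence $R_0$ agrees up to natural isomorphism with $\operatorname{ev}_{\Bbbk^{n-1}}\circ\operatorname{ev}_{\Bbbk^n}^{-1}$ and is itself an equivalence.

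The real content sits entirely in part (1): establishing the Yoneda isomorphism for strict polynomial functors and, above all, that $\Gamma^{d,V}$ generates $\PP ol_d$ exactly when $\dim V\geq d$ (equivalently, that $\PP ol_d$ stabilises to $S(n,d)\text{-}\mathrm{mod}$ for $n\geq d$). I would not reprove this, citing \cite{FS97} instead; granting it, the cross-effect decomposition used in (2) is a routine consequence of polynomiality, and (2) is \cite{HY1}.
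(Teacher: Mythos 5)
The paper states this lemma without proof, simply citing \cite{FS97} for part (1) and \cite{HY1} for part (2); there is therefore no internal argument to compare against, and your task is really to reconstruct the standard proofs. Your main line of argument does this correctly. For (1), the reduction to Schur algebras via the Yoneda isomorphism $\Hom_{\PP ol_d}(\Gamma^{d,V},M)\cong M(V)$ and the identification $\End_{\PP ol_d}(\Gamma^{d,V})\cong S(n,d)$ is exactly the Friedlander--Suslin mechanism, and the role of $\dim V\geq d$ (making $\Gamma^{d,V}$ a projective generator of $\PP ol_d$) is correctly isolated. For (2), the bidegree decomposition of $M(A\oplus B)$ under the $\mathbb G_m\times\mathbb G_m$ action and the observation that $GL_1$-invariants pick out the $(d,0)$-piece $M^{d,0}(A,B)\cong M(A)$ is clean, natural in $M$, and $GL_{n-1}$-equivariant, which is all that is needed for the triangle to commute; combined with two applications of (1) this gives the equivalence.

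One small caveat: the parenthetical alternative argument for (1) is not quite right as stated. In positive characteristic it is \emph{not} true that every degree-$d$ polynomial $GL(V)$-module is a direct summand of a sum of copies of $V^{\otimes d}$; such summands are precisely the projective $S(n,d)$-modules (when $\dim V\geq d$), so this would only prove essential surjectivity onto projectives. The correct statement is that every such module is a \emph{quotient} (or subquotient) of a sum of copies of $V^{\otimes d}$, or, more directly, that $\Gamma^{d,V}(V)=\Gamma^d(\End V)=S(n,d)$ is the regular module, so the image is closed under quotients and contains a projective generator. Since this was offered as an aside and your primary argument via the Schur algebra is sound, the proposal stands; I would simply drop or correct that remark.
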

We recall that $\PP ol_d$ is a highest weight category~\cite{Don93}, where the standard objects are Weyl functors $\Delta_\lambda$ and costandard objects are Schur functors $\nabla_\lambda$.  
The tilting objects for this highest weight structure are summands of tensor products $\Lambda^{k_1}\otimes \cdots \otimes \Lambda^{k_j}$.

For a partition $\lambda$ of $d$, we write $L_\lambda \in \PP ol_d$ for the simple object 
and $T_\lambda \in \PP ol_d$ for the indecomposable tilting object of highest weight $\lambda$.
We write $\Tilt_d$ for the full additive subcategory of $\PP ol_d$ consisting of tilting objects.

We also recall that $\PP ol$ admits a categorical Heisenberg action in the sense of~Section~\ref{sec:Heis-act}, see~\cite{HY}. Here the functor $F$ is given by  
    \[
    F(M)(V):=M(V)\otimes V,
    \]
    while $E$ is given by
    \begin{equation*}\label{eqn heis act poly}
    E(M)(V):=M(V\oplus\Bbbk)^{1}. 
    \end{equation*}
    In the above, we consider the embedding $GL(\Bbbk)\sub GL(V\oplus\Bbbk)$ induced by the natural map $\Bbbk \to V\oplus\Bbbk$, and we take the eigenspace of weight $1$ with respect to this copy of $GL(\Bbbk)\cong\mathbb{G}_m$.  Observe that $F$ increases the degree of a polynomial functor by one, while $E$ decreases the degree by one.  
    
    The natural transformation $T:E^2\to E^2$ is induced by the natural map $V\oplus\Bbbk\oplus\Bbbk\to V\oplus\Bbbk\oplus\Bbbk$ which permutes the last two entries.
    
    \subsection{OTI functors} \label{section OTI on pol}
    Choose a CF functor $\varphi:\rep H\to\As$ for $H\sub S_{p^r}$.  In this section we will study the corresponding OTI functor 
    $$
    \Phi_\varphi:\PP ol_d \to \PP ol_{d-p^r}\boxtimes \As.
    $$
    By definition, we have
    \begin{align*}
        \Phi_\varphi(M)(V)&= \varphi(E^{p^r}(M)(V)) 
        =\varphi(M(V\oplus \Bbbk^{p^r})^{\overbrace{\scriptsize \scriptstyle 1,1,\dots,1}^{p^r}}),
    \end{align*}
    where 
    we are taking the $(1,\hdots,1)$ weight space with respect to the action of $\underbrace{\mathbb{G}_m\times\cdots\times\mathbb{G}_m}_{p^r \text{copies}}$. 
    
    \begin{remark}
        The OTI functor induced by $F$ instead of $E$ is given by 
        \[
        M\mapsto (V\mapsto M(V)\otimes V^{(r)})\boxtimes \varphi(\mathbf{1}),
        \]
        as in~Example~\ref{example glv}.   
        Notice that this functor only depends on $\varphi(\mathbf{1})$, which is not the case for the functor $\Phi_\varphi$ above.
    \end{remark}

   \begin{example}
   We evaluate the OTI functor on some basic examples of polynomial functors.   In the following we assume that $d\geq p^r$.
\begin{enumerate}
       \item  First, we show that $\Phi_{\varphi}(\otimes^d)=0$.  Set $M=\otimes^d$. Then we have that 
        \begin{align}\label{eq:tensex1}
       E^{p^r}(M)(V) \cong \Ind^{S_d}_{S_{d-p^r}}(V^{\otimes (d-p^r)}).
        \end{align}
        Note that the $S_d$-module structure in \ref{eq:tensex1} comes from the natural action of $S_d$ on~$M$.  On the other hand, there is also an $S_{p^r}$-action on $E^{p^r}$, 
        and via \eqref{eq:tensex1} this corresponds to right multiplication by $S_{p^r}$ (embedded in $S_d$ as the permutations of $d-p^r+1,\hdots,d$). More precisely, for $g \in S_d$, $\xi \in V^{\otimes (d-p^r)}$ and $h \in S_{p^r}$,
        $$
        (g\otimes \xi)\cdot h = gh \otimes \xi.
        $$
        Note this action is free (indeed if $K \times H \leq G$ then right multiplication by $H$ on
        $\Ind_K^G(U)$ is free for any $K$-module $U$).  
        Therefore the CF functor $\varphi$ must kill  $E^{p^r}(M)(V)$.
       \item For symmetric powers $S^d$, we show that $\Phi_{\varphi}(S^d)=S^{d-p^r}\boxtimes \varphi(\mathbf{1})$. 
       We know
       \begin{align*}\label{eq:tensex}
       E(S^d)(V) = S^d(V \oplus \Bbbk)^1 \cong \bigoplus_{a+b=d}S^a(V)\otimes S^b(\Bbbk)^1
        \end{align*}
        Observe that $S^b(\Bbbk)^1=0$ unless $b=1$. Hence we have $E(S^d)=S^{d-1}$, and iterating we get that $E^{p^r}(S^d)=S^{d-p^r}$. Since $S_{p^r}$ acts trivially, the result follows.
        
       \item Similarly, we have that $\Phi_{\varphi}(\Gamma^d)=\Gamma^{d-p^r}\boxtimes \varphi(\mathbf{1})$. Indeed, as in (2) we have  $\Gamma^d(V\oplus \Bbbk)\cong \bigoplus_{a+b=d}\Gamma^a(V)\otimes \Gamma^b(\Bbbk)$ and $\Gamma^b(\Bbbk)^1=0$ unless $b=1$. Therefore $E^{p^r}(\Gamma^d)=\Gamma^{d-p^r}$, and $S_{p^r}$ acts trivially. 
       
       \item Finally, $\Phi_{\varphi}(\Lambda^d)=\Lambda^{d-p^r}\boxtimes\varphi(\operatorname{sgn}_H)$.  Indeed, $\Lambda$ satisfies the same compatibility with $\oplus,\otimes$ as in (2) and (3), and $\Lambda^b(\Bbbk)=0$ for $b>1$.  It follows that $E^{p^r}(\Lambda^{d})=\Lambda^{d-p^r}$, and $S_{p^r}$ acts by the sign representation.
   \end{enumerate}
   \end{example}

For the remainder of~Section~\ref{section:polynomial_functors}, we assume that $\As$ is a locally finite abelian category. 
Recall that given a simple object $L$ in $\As$, we identify the subcategory $\mathscr{C}\boxtimes\langle L\rangle$ of $\Cs\boxtimes\As$ with~$\mathscr{C}$ (see Section \ref{section periodic equiv perm mod}).

\begin{lemma}\label{lemma tilts to tilts}
The OTI functor $\Phi_{\varphi}:\PP ol_d \to \PP ol_{d-p^r}\boxtimes\As$ satisfies $$\Phi_{\varphi}(\Tilt_d) \subseteq \Tilt_{d-p^r}\boxtimes \langle\varphi(\sgn_H)\rangle.$$
Thus if $\varphi(\sgn_H)$ is simple, $\Phi_{\varphi}$ induces a functor $\Tilt_d \to \Tilt_{d-p^r}$, which is independent of the choice of  $\varphi$. 
\end{lemma}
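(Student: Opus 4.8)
The plan is to reduce everything to the behaviour of $\Phi_{\varphi}$ on tensor products of exterior powers, since every object of $\Tilt_d$ is a direct summand of some $\Lambda^{k_1}\otimes\cdots\otimes\Lambda^{k_j}$ and $\Phi_{\varphi}$ is additive. The first and key step is a Leibniz rule for $E^{p^r}$ on tensor products. Iterating the canonical isomorphism $E(M\otimes N)\cong(E(M)\otimes N)\oplus(M\otimes E(N))$, which follows directly from $E(M)(V)=M(V\oplus\Bbbk)^{1}$ and the weight decomposition $P(V\oplus\Bbbk)=\bigoplus_m P(V\oplus\Bbbk)^{m}$ (with $P(V\oplus\Bbbk)^{0}=P(V)$), one gets an $S_{p^r}$-equivariant decomposition
\[
E^{p^r}(M_1\otimes\cdots\otimes M_j)\;\cong\;\bigoplus_{\substack{(a_1,\dots,a_j)\in\N^{j}\\ a_1+\cdots+a_j=p^r}}\Ind_{S_{a_1}\times\cdots\times S_{a_j}}^{S_{p^r}}\bigl(E^{a_1}(M_1)\otimes\cdots\otimes E^{a_j}(M_j)\bigr),
\]
where each factor $E^{a_i}(M_i)$ carries the $S_{a_i}$-action coming from the categorical Heisenberg structure (this is the compatibility of the action with $\otimes$ on $\PP ol$ recorded in~\cite{HY}). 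Restricting the $S_{p^r}$-action to $H$ and applying $\varphi$, the Mackey formula together with the CF property annihilate every summand for which $S_{a_1}\times\cdots\times S_{a_j}$ is not transitive in $S_{p^r}$: such a Young subgroup preserves a set partition of $\{1,\dots,p^r\}$ into at least two nonempty blocks, so it, and each $H$-conjugate intersection of it, is non-transitive. The only surviving terms are those with $(a_1,\dots,a_j)=p^r e_i$, for which the induction is trivial, so
\[
\Phi_{\varphi}(M_1\otimes\cdots\otimes M_j)\;\cong\;\bigoplus_{i=1}^{j} M_1\otimes\cdots\otimes \Phi_{\varphi}(M_i)\otimes\cdots\otimes M_j .
\]

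Applying this with $M_i=\Lambda^{k_i}$ and using $\Phi_{\varphi}(\Lambda^{k})\cong\Lambda^{k-p^r}\boxtimes\varphi(\sgn_H)$, which is $0$ when $k<p^r$ (computed in the example preceding this lemma), I obtain
\[
\Phi_{\varphi}(\Lambda^{k_1}\otimes\cdots\otimes\Lambda^{k_j})\;\cong\;\Bigl(\,\bigoplus_{i:\,k_i\geq p^r}\Lambda^{k_1}\otimes\cdots\otimes\Lambda^{k_i-p^r}\otimes\cdots\otimes\Lambda^{k_j}\,\Bigr)\boxtimes\varphi(\sgn_H),
\]
and each summand on the right is a tensor product of exterior powers, hence a tilting object of $\PP ol_{d-p^r}$. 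Thus this object lies in $\Tilt_{d-p^r}\boxtimes\langle\varphi(\sgn_H)\rangle$, and since $\Phi_{\varphi}$ is additive and both $\Tilt_{d-p^r}$ and $\langle\varphi(\sgn_H)\rangle$ are closed under direct summands, the same holds for $\Phi_{\varphi}(T)$ for any $T\in\Tilt_d$; this gives the claimed containment.

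For the independence statement, suppose $\varphi(\sgn_H)$ is simple, so that $\Tilt_{d-p^r}\boxtimes\langle\varphi(\sgn_H)\rangle\simeq\Tilt_{d-p^r}$ and $\Phi_{\varphi}$ induces a functor $\Psi_{\varphi}\colon\Tilt_d\to\Tilt_{d-p^r}$. On objects it sends $\Lambda^{k_1}\otimes\cdots\otimes\Lambda^{k_j}$ to $\bigoplus_{i:\,k_i\geq p^r}\Lambda^{k_1}\otimes\cdots\otimes\Lambda^{k_i-p^r}\otimes\cdots\otimes\Lambda^{k_j}$, visibly independent of $\varphi$. On morphisms, write $\Phi_{\varphi}=(1\boxtimes\varphi)\circ E^{p^r}$ with $E^{p^r}\colon\PP ol_d\to\PP ol_{d-p^r}\boxtimes\rep H$ independent of $\varphi$. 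The decomposition above is canonical, so $E^{p^r}(T)$ splits canonically as a subobject $U_T$ — the sum of the summands indexed by $(a_i)=p^r e_i$ — plus a complement which $\varphi$ kills as an object; hence $\Psi_{\varphi}$ of a morphism $f$ is determined by $(1\boxtimes\varphi)$ applied to the $U_T\to U_{T'}$ component of $E^{p^r}(f)$. Now $U_T$ lies in $\PP ol_{d-p^r}\boxtimes\langle\sgn_H\rangle$ (using $\operatorname{Res}_H\sgn_{S_{p^r}}=\sgn_H$), and on this subcategory $1\boxtimes\varphi$ is, after the canonical identification with $\PP ol_{d-p^r}$, the identity functor, because $\varphi$ is $\Bbbk$-linear and $\varphi(\sgn_H)$ is simple, so $\varphi$ sends $\End(\sgn_H)=\Bbbk$ to $\End(\varphi(\sgn_H))=\Bbbk$ as the identity map of $\Bbbk$. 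Therefore $\Psi_{\varphi}$, viewed inside $\Tilt_{d-p^r}$, agrees on the additive subcategory generated by the $\Lambda^{k_1}\otimes\cdots\otimes\Lambda^{k_j}$, hence on its idempotent completion $\Tilt_d$, with a description not involving $\varphi$.

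The one genuinely nontrivial ingredient is the very first step: establishing the $S_{p^r}$-equivariant Leibniz decomposition of $E^{p^r}$ on a tensor product and checking that the induced symmetric-group actions are exactly the claimed ones. Once that is in place, the CF property does the rest, and the bookkeeping needed for the independence statement, while somewhat fiddly, is routine.
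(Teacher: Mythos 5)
Your proof follows essentially the same route as the paper: reduce to $\Lambda^{k_1}\otimes\cdots\otimes\Lambda^{k_j}$, establish the derivation/Leibniz decomposition of $E^{p^r}$ on tensor products, and then invoke the CF property to kill all summands indexed by non-transitive Young subgroups, leaving exactly the terms $\Lambda^{k_1}\otimes\cdots\otimes\Lambda^{k_i-p^r}\otimes\cdots\otimes\Lambda^{k_j}$ tensored with $\varphi(\sgn_H)$. The paper leaves the Mackey-style step implicit (it is absorbed in~Remark~\ref{remark CF general subgroup}) and does not spell out why the induced functor on $\Tilt_d$ is independent of $\varphi$ at the level of morphisms, whereas you make both points explicit; your independence argument—that $1\boxtimes\varphi$ acts as the identity of $\Bbbk$ on $\End(\sgn_H)$ because $\varphi$ is $\Bbbk$-linear and $\varphi(\sgn_H)$ is simple—is correct and a useful addition.
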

\begin{proof}
Since $\Phi_\varphi$ is additive and any tilting object in $\PP ol_d$ is a summand of a tensor power of the form $\Lambda^{k_1}\otimes \cdots \otimes \Lambda^{k_j}$, the result follows if we prove 
\begin{equation}\label{eq:tilt_lemma}
\Phi_\varphi(\Lambda^{k_1}\otimes\cdots \otimes \Lambda^{k_j}) \cong \left(\bigoplus_{k_i\geq p^r} \Lambda^{k_1}\otimes\cdots \otimes\Lambda^{k_i-p^r} \cdots \otimes \Lambda^{k_j}\right)\boxtimes\varphi(\sgn_H).
\end{equation}
First, we show that $E$ is a derivation, i.e. $E(M\otimes N) \cong (E(M)\otimes N) \oplus (M\otimes E(N))$. Indeed, observe that 
    \begin{align*}
        E(M\otimes N)(V) \cong \left(M(V\oplus \Bbbk)^1\otimes N(V\oplus \Bbbk)^0 \right) \oplus
        \left(M(V\oplus \Bbbk)^0\otimes N(V\oplus \Bbbk)^1 \right), 
    \end{align*}
    and by part (2) of Lemma \ref{lemma poly fun poly rep}, we have $M(V\oplus \Bbbk)^0 \cong M(V)$. 

    Next, consider $M=M_1\otimes \cdots \otimes M_j$. By the derivation property we have 
    \begin{align*}
       E^n(M) \cong \bigoplus_{n_1+\cdots+n_j=n}(E^{n_1}(M_1)\otimes \cdots \otimes E^{n_j}(M_j))^{\binom{n}{n_1,...,n_j}}.
    \end{align*}
    Specialising to the case $M=\Lambda^{k_1}\otimes \cdots \otimes \Lambda^{k_j}$ and $n=p^r$, summands in $E^{p^r}(M)$ look like
    \[(\Lambda^{k_1-n_1}\otimes \cdots \otimes \Lambda^{k_j-n_j})^{\binom{p^r}{n_1,...,n_j}}.
    \]
    Letting $S_{n_i}$ act by sign on $\Lambda^{k_i-n_i}$, we have that this is isomorphic to 
    \begin{align}\label{eqn lambdas}
    \Ind_{S_{n_1}\times\cdots\times S_{n_j}}^{S_{p^r}}(\Lambda^{k_1-n_1}\otimes \cdots \otimes \Lambda^{k_j-n_j}).\end{align}
    The terms where $n_i \neq p^r$ for all $i$ will be killed by $\varphi$, because $S_{n_1}\times\cdots\times S_{n_j}$ is a non-transitive subgroup.  On the remaining terms, $S_{p^r}$ acts by sign, so we obtain~\eqref{eq:tilt_lemma}.
\end{proof}

\subsection{Schur--Weyl duality}\label{section SW duality}
We consider the Schur--Weyl duality functor
\[
\FF:\PP ol_d\to\operatorname{Rep}S_d,\hspace{1em} M\mapsto \Hom_{\PP ol_d}(\otimes^d,M).
\]
We recommend \cite{Hemm} for basic properties of Schur--Weyl duality, including the following.
The functor $\FF$ is exact and admits a right adjoint which we denote by $\GG$. Explicitly, we have $\GG(N):V \mapsto \Hom_{S_d}(V^{\otimes d},N)$.
Note that $\FF\GG\cong \id$. 

Let $\lambda^t$ denote the dual of the partition $\lambda$. 
On tilting objects we have $\FF(T_\lambda) \cong Y^{\lambda^t}\otimes \sgn$, and $\FF$ is fully faithful on $\Tilt_d$.
We also have that $\FF(\Lambda^{\lambda_1}\otimes\cdots\otimes\Lambda^{\lambda_j}) \cong M^{\lambda^t}\otimes\sgn$, and $\FF$ maps simple objects to simple objects (or zero). 

It follows that $\FF$ induces an equivalence
\begin{align}\label{eq:tilt_to_young}
\FF:\Tilt_d \xto{\sim} \operatorname{sgnYoung}_d.
\end{align}

\begin{lemma}\label{lem:OTI-SWdual}
    The OTI functor commutes with Schur--Weyl duality, i.e. the following diagram commutes up to natural isomorphism:
 \[\begin{tikzcd}
\PP ol_d \arrow[r,"E^{p^r}"] \arrow[d, "\FF"] & \PP ol_{d-p^r}\boxtimes \rep S_{p^r} \arrow[r,"\mathbf{1}\boxtimes \varphi"] \arrow[d,"\FF\boxtimes \mathbf{1}"] & \PP ol_{d-p^r} \boxtimes \As \arrow[d,"\FF\boxtimes \mathbf{1}"] \\
\operatorname{Rep}S_d \arrow[r,"E^{p^r}"] & \operatorname{Rep}S_{d-p^r} \boxtimes \rep S_{p^r} \arrow[r,"\mathbf{1}\boxtimes\varphi"]  &\operatorname{Rep}S_{d-p^r} \boxtimes \As
\end{tikzcd}\] 
In particular, we have a natural isomorphism $(\FF\boxtimes 1)\Phi_{\varphi}\simeq \Phi_{\varphi}\FF$.
\end{lemma}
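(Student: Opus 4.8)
The plan is to split the displayed diagram into its two squares and treat them independently. The right-hand square is formal: the functor $\FF:\PP ol_{d-p^r}\to\rep S_{d-p^r}$ is exact, hence a $1$-morphism in $\Tak^{\text{ex}}$, while $\varphi:\rep S_{p^r}\to\As$ is a $1$-morphism in $\AbCat$; so Lemma~\ref{lemma additive functors extend}(2) directly supplies a natural $2$-isomorphism $(\FF\boxtimes\mathbf{1})\circ(\mathbf{1}\boxtimes\varphi)\simeq(\mathbf{1}\boxtimes\varphi)\circ(\FF\boxtimes\mathbf{1})$, which is exactly the commutativity of the right square.

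All the content sits in the left square, and for it I would use that $\FF:\PP ol\to\SS ym$ is a morphism of degenerate categorical Heisenberg actions, as proven in \cite{HY}; write $E,T$ (resp. $E',T'$) for the relevant data on $\PP ol$ (resp. $\SS ym$). This provides a natural isomorphism $\zeta_E:\FF E\xto{\sim}E'\FF$ satisfying the compatibility diagrams of Definition~\ref{defn categorical commutation}, in particular compatibility with the braidings (diagram (2)). First I would iterate $\zeta_E$ in the usual way to produce a natural isomorphism $\FF E^{p^r}\xto{\sim}(E')^{p^r}\FF$ of functors $\PP ol_d\to\rep S_{d-p^r}$. Then the crucial step is to verify that this iterated isomorphism intertwines the two $S_{p^r}$-actions — the one on $E^{p^r}$ defined by $s_i\mapsto E^{p^r-i-1}TE^{i-1}$, transported through $\FF$, and the analogous one on $(E')^{p^r}$ built from $T'$. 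This reduces, one generator $s_i$ at a time, to the braiding-compatibility square of Definition~\ref{defn categorical commutation}(2) whiskered on the left and right by powers of $E$ and $E'$; the braid/symmetric-group relations among the $s_i$ are then automatic, since they already hold in both $E^{p^r}$ and $(E')^{p^r}$. Once the iterated $\zeta_E$ is $S_{p^r}$-equivariant, it repackages (via the identification $\Cs\boxtimes\rep S_{p^r}\simeq\Rep_{\Cs}S_{p^r}$ used to define $E^{p^r}$ as a functor to $\Cs\boxtimes\rep S_{p^r}$) into a natural isomorphism between $E^{p^r}\circ\FF$ and $(\FF\boxtimes\mathbf{1})\circ E^{p^r}$ valued in $\rep S_{d-p^r}\boxtimes\rep S_{p^r}$, which is the commutativity of the left square.

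Pasting the two squares together yields the natural isomorphism $(\FF\boxtimes\mathbf{1})\Phi_{\varphi}\simeq\Phi_{\varphi}\FF$. I expect the main obstacle to be precisely the $S_{p^r}$-equivariance bookkeeping in the left square: one must check carefully that the single-braiding compatibility coming from \cite{HY} propagates to equivariance for the whole symmetric group acting on $E^{p^r}$, and that the ``morphism of categorical actions'' structure established in \cite{HY} is phrased in (or easily translated into) the form of Definition~\ref{defn categorical commutation}. Everything else — exactness of $\FF$, the repackaging of the $S_{p^r}$-action, and the invocation of Lemma~\ref{lemma additive functors extend}(2) — is routine.
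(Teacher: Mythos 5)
Your proposal is correct and follows the same route as the paper's own proof: the right square is handled by exactness of $\FF$ together with Lemma~\ref{lemma additive functors extend}(2), and the left square is deduced from the fact that $\FF$ is a morphism of categorical actions (from \cite{HY}). The only difference is that you spell out the $S_{p^r}$-equivariance of the iterated $\zeta_E$ via Definition~\ref{defn categorical commutation}(2), which the paper compresses into ``from which it easily follows.''
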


\begin{proof}
Recall that the categories $\PP ol$ and $\mathcal{S}ym=\bigoplus\limits_{n\in\N}\rep S_n$ carry categorical Heisenberg actions. In \cite{HY} it is shown that $\FF$ is a morphism of categorical actions, from which it easily follows that the left square commutes. 
On the other hand, because $\FF$ is exact, by Lemma~\ref{lemma additive functors extend}(2) we have $(\FF\boxtimes 1)(1\boxtimes\varphi)\cong(1\boxtimes\varphi)(\FF\boxtimes 1)$, and hence the right square commutes as well. 
\end{proof}

Write $\Tilt_{d,p^r}^{\st}$ (resp.~$\Tilt_{d,p^r}^{\trun}$) for the full additive subcategory of $\Tilt_d$ generated by indecomposable tilting modules $T(\lambda)$, where $\lambda^t$ is $p^r$-stable (resp.~$p^r$-truncated).  

\begin{prop}\label{lemma oti tilts equiv stable poly} If $\varphi(\sgn_H)$ is simple,
$\Phi_{\varphi}$ induces an equivalence $$\Tilt^{\st}_{d,p^r}\longrightarrow\Tilt^{\trun}_{d-p^r,p^r}.$$
\end{prop}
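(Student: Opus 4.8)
The plan is to reduce Proposition~\ref{lemma oti tilts equiv stable poly} to its symmetric-group counterpart, Corollary~\ref{rem sgnyoung}, by transporting everything across Schur--Weyl duality. Recall from~\eqref{eq:tilt_to_young} that $\FF$ restricts to an equivalence $\Tilt_d\xrightarrow{\sim}\operatorname{sgnYoung}_d$ with $\FF(T_\lambda)\cong Y^{\lambda^t}\otimes\sgn$. The first (bookkeeping) step is to observe that, since $\lambda\mapsto\lambda^t$ is an involution on partitions of a fixed size, this equivalence carries $\Tilt^{\st}_{d,p^r}$ onto $\operatorname{sgnYoung}^{\st}_{d,p^r}$ and, applied in degree $d-p^r$, carries $\Tilt^{\trun}_{d-p^r,p^r}$ onto $\operatorname{sgnYoung}^{\trun}_{d-p^r,p^r}$. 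Here one merely unwinds the definitions of the four subcategories.

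Second, I would assemble the commuting square comparing the OTI functor on $\PP ol$ with the one on $\mathcal{S}ym$. By Lemma~\ref{lemma tilts to tilts}, $\Phi_\varphi$ sends $\Tilt_d$ into $\Tilt_{d-p^r}\boxtimes\langle\varphi(\sgn_H)\rangle$; as $\varphi(\sgn_H)$ is assumed simple, $\langle\varphi(\sgn_H)\rangle\simeq\vec$, so $\Phi_\varphi$ may honestly be viewed as a functor $\Tilt_d\to\Tilt_{d-p^r}$ (and similarly on the signed-Young side). With these identifications in place, Lemma~\ref{lem:OTI-SWdual} provides a natural isomorphism $\FF\circ\Phi_\varphi\cong\Phi_\varphi\circ\FF$, so that the square
\[
\begin{tikzcd}
\Tilt_d \arrow[r, "\Phi_\varphi"] \arrow[d, "\FF"'] & \Tilt_{d-p^r} \arrow[d, "\FF"] \\
\operatorname{sgnYoung}_d \arrow[r, "\Phi_\varphi"] & \operatorname{sgnYoung}_{d-p^r}
\end{tikzcd}
\]
commutes up to natural isomorphism, with both vertical maps equivalences.

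Finally, I would invoke Corollary~\ref{rem sgnyoung} (available because $\varphi(\sgn_H)$ is simple), which says the bottom arrow restricts to an equivalence $\operatorname{sgnYoung}^{\st}_{d,p^r}\xrightarrow{\sim}\operatorname{sgnYoung}^{\trun}_{d-p^r,p^r}$. Conjugating by the vertical equivalences and using the first step, $\Phi_\varphi$ restricts to an equivalence $\Tilt^{\st}_{d,p^r}\xrightarrow{\sim}\Tilt^{\trun}_{d-p^r,p^r}$; note that this is also what pins down the essential image inside $\Tilt^{\trun}_{d-p^r,p^r}$, which is not apparent from Lemma~\ref{lemma tilts to tilts} by itself.

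I do not anticipate a real obstacle, since the argument is formal once the ingredients are lined up. The only points that require care are (i) checking that transposition correctly matches the $p^r$-stable and $p^r$-truncated conditions on the polynomial-functor and symmetric-group sides, and (ii) the bookkeeping around the Deligne factor $\langle\varphi(\sgn_H)\rangle$: one genuinely needs the fact from Lemma~\ref{lemma tilts to tilts} that on \emph{tilting} objects (as opposed to arbitrary objects) the OTI functor is supported on the single simple $\varphi(\sgn_H)$, so that all four corners of the square can be taken in $\Tilt$ (resp.~$\operatorname{sgnYoung}$) rather than in their Deligne products with $\As$. Alternatively one could replace Corollary~\ref{rem sgnyoung} by Corollary~\ref{cor:oti_eq_on_young} at the cost of an extra twist by $\sgn$, which is harmless.
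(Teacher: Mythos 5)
Your proposal is correct and follows essentially the same route as the paper's own argument: use the equivalence $\FF\colon\Tilt_d\xrightarrow{\sim}\operatorname{sgnYoung}_d$ (restricting to the $\st$/$\trun$ subcategories) from~\eqref{eq:tilt_to_young}, the commutativity of $\Phi_\varphi$ with $\FF$ from Lemma~\ref{lem:OTI-SWdual}, the image constraint on tilting objects from Lemma~\ref{lemma tilts to tilts}, and Corollary~\ref{rem sgnyoung} on the symmetric-group side. The points of care you flag (transposition bookkeeping and the collapse of the Deligne factor $\langle\varphi(\sgn_H)\rangle$ when $\varphi(\sgn_H)$ is simple) are exactly what the paper's terse proof implicitly relies on.
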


\begin{proof}
By \eqref{eq:tilt_to_young}, $\FF$ induces equivalences 
\[\Tilt^{\st}_{d,p^r} \xto{\sim} \operatorname{sgnYoung}^{\st}_{d,p^r}, \quad \text{and} \quad
\Tilt^{\trun}_{d-p^r,p^r} \xto{\sim} \operatorname{sgnYoung}^{\trun}_{d-p^r,p^r}.\]
Hence by Lemma \ref{lem:OTI-SWdual}, the result follows from~Corollary~\ref{rem sgnyoung} and~Lemma~\ref{lemma tilts to tilts} .
\end{proof}

We write $\PP ol_{d,p^r}^{\st}$ (resp.~$\PP ol_{d,p^r}^{\trun}$) for the Serre subcategory of $\PP ol_d$ generated by the simple objects $L_\lambda$, where $\lambda^t$ is $p^r$-stable (resp.~$p^r$-truncated).  By~\cite[Theorem 3.17]{BSW} and~Lemma~\ref{lemma:p^r_stability_open}, these define highest weight subcategories of $\PP ol_{d}$.

\begin{lemma}\label{lemma oti st to tr}
    We have $E^{p^r}(\PP ol_{d,p^r}^{\st})\sub \PP ol_{d-p^r,p^r}^{\trun}\boxtimes \rep S_{p^r}$.  Hence $\Phi_{\varphi}(\PP ol_{d,p^r}^{\st})\sub \PP ol_{d-p^r,p^r}^{\trun}\boxtimes\As$.
\end{lemma}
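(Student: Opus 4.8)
The plan is to leverage exactness of $E^{p^r}$ to reduce the claim to a generating family of objects on which $E^{p^r}$ is computable, and then combine the classical branching rule for $E$ with a short combinatorial estimate on Young diagrams. First I would record that $\PP ol_{d-p^r,p^r}^{\trun}\boxtimes\rep S_{p^r}$ is a Serre subcategory of $\PP ol_{d-p^r}\boxtimes\rep S_{p^r}$: by Remark~\ref{k0} its simple objects are exactly the $L_\sigma\boxtimes D$ with $\sigma^t$ being $p^r$-truncated and $D$ a simple $\Bbbk S_{p^r}$-module, and it is closed under subquotients and extensions since $\PP ol_{d-p^r,p^r}^{\trun}$ is Serre in $\PP ol_{d-p^r}$. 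Because $E^{p^r}$ is exact and $\PP ol_{d,p^r}^{\st}$ is by definition the Serre subcategory of $\PP ol_d$ generated by the $L_\lambda$ with $\lambda^t$ being $p^r$-stable, it suffices to prove $E^{p^r}(X)\in\PP ol_{d-p^r,p^r}^{\trun}\boxtimes\rep S_{p^r}$ for $X$ in a generating family; I would take $X=\nabla_\lambda$ with $\lambda^t$ being $p^r$-stable, which still lies in $\PP ol_{d,p^r}^{\st}$ (its composition factors $L_\tau$ have $\tau\trianglelefteq\lambda$, hence $\tau^t\trianglerighteq\lambda^t$, so $\tau^t$ is $p^r$-stable by Lemma~\ref{lemma:p^r_stability_open}) and contains $L_\lambda$ as its socle, so these objects generate $\PP ol_{d,p^r}^{\st}$. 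Finally, composing $E^{p^r}$ with the exact functor $1\boxtimes\omega\colon\PP ol_{d-p^r}\boxtimes\rep S_{p^r}\to\PP ol_{d-p^r}$ induced by the forgetful functor $\omega\colon\rep S_{p^r}\to\vec$ gives back the ordinary $p^r$-fold composite of $E\colon\PP ol\to\PP ol$; since $1\boxtimes\omega$ sends $L_\sigma\boxtimes D$ to $L_\sigma^{\oplus\dim D}\neq 0$, an object of $\PP ol_{d-p^r}\boxtimes\rep S_{p^r}$ lies in $\PP ol_{d-p^r,p^r}^{\trun}\boxtimes\rep S_{p^r}$ as soon as every composition factor of its image under $1\boxtimes\omega$ is an $L_\sigma$ with $\sigma^t$ being $p^r$-truncated. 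Thus I have reduced to a statement about the ordinary endofunctor $E$ of $\PP ol$.

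Next I would invoke the branching rule: on $K_0(\PP ol)$, identified with symmetric functions via $[\nabla_\mu]=s_\mu$, the endofunctor $E$ decategorifies to the down operator $[E]s_\mu=\sum_\nu s_\nu$, summed over all $\nu$ obtained from $\mu$ by deleting one box (the classical branching rule for Schur functors, valid in all characteristics; see~\cite{HY} for this in the present context). Iterating, $[E^{p^r}(\nabla_\lambda)]$ lies in the $\Z$-span of $\{[\nabla_\mu]:\mu\subseteq\lambda,\ |\lambda/\mu|=p^r\}$, and since the composition factors of $\nabla_\mu$ are exactly the $L_\sigma$ with $\sigma\trianglelefteq\mu$, every composition factor $L_\sigma$ of $E^{p^r}(\nabla_\lambda)$ satisfies $\sigma\trianglelefteq\mu$ for some $\mu\subseteq\lambda$ with $|\lambda/\mu|=p^r$.

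It then remains to prove the combinatorial claim: if $\lambda\vdash d$ with $\lambda^t$ being $p^r$-stable, $\mu\subseteq\lambda$ with $|\lambda/\mu|=p^r$, and $\sigma\vdash d-p^r$ with $\sigma\trianglelefteq\mu$, then $\sigma^t$ is $p^r$-truncated. Writing $\ell(\nu)=(\nu^t)_1$, the hypothesis ``$\lambda^t$ is $p^r$-stable'' (Definition~\ref{def:stability_conditions}) says $d-\ell(\lambda)<p^r$, together with $\ell(\lambda)\ge\tfrac{d+p^r}{2}$ when $d<3p^r$. Deleting $p^r$ boxes from $\lambda$ destroys at most $p^r$ rows, so $\ell(\mu)\ge\ell(\lambda)-p^r$, and $\sigma\trianglelefteq\mu$ gives $\sigma^t\trianglerighteq\mu^t$, hence $\ell(\sigma)\ge\ell(\mu)\ge\ell(\lambda)-p^r$. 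Now $\tau:=\sigma^t+p^r e_1$ is a partition of $d$ with $\tau-p^r e_1=\sigma^t$, and it is $p^r$-stable: indeed $\sum_{i\ge 2}\tau_i=(d-p^r)-\ell(\sigma)\le d-\ell(\lambda)<p^r$, and if $d<3p^r$ then $\tau_1=\ell(\sigma)+p^r\ge\ell(\lambda)\ge\tfrac{d+p^r}{2}$; so $\sigma^t$ is $p^r$-truncated. This establishes $E^{p^r}(\PP ol_{d,p^r}^{\st})\subseteq\PP ol_{d-p^r,p^r}^{\trun}\boxtimes\rep S_{p^r}$, and the remaining inclusion $\Phi_\varphi(\PP ol_{d,p^r}^{\st})\subseteq\PP ol_{d-p^r,p^r}^{\trun}\boxtimes\As$ follows by post-composing with $1\boxtimes\varphi$ and applying Lemma~\ref{lemma additive functors extend}(2) to the exact inclusion $\PP ol_{d-p^r,p^r}^{\trun}\hookrightarrow\PP ol_{d-p^r}$, which shows $1\boxtimes\varphi$ carries $\PP ol_{d-p^r,p^r}^{\trun}\boxtimes\rep S_{p^r}$ into $\PP ol_{d-p^r,p^r}^{\trun}\boxtimes\As$.

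The genuinely new input is the combinatorial estimate of the last paragraph, in particular its interplay with the small-degree clause $d<3p^r$; everything else is bookkeeping. The step I expect to be the main obstacle is handling the $\rep S_{p^r}$-factor of the Deligne product cleanly, which is why I would dispose of it at the outset via the forgetful functor and work with the ordinary endofunctor $E$; the branching rule is classical, and the reductions via the highest weight / Serre structure are routine.
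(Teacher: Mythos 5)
Your argument is correct, but it takes a genuinely different route from the paper's. The paper reduces, via exactness of $E^{p^r}$ and the fact that tiltings generate $\PP ol_{d,p^r}^{\st}$, to the tensor products $\Lambda^{\lambda_1}\otimes\cdots\otimes\Lambda^{\lambda_j}$, and then simply reads off $E^{p^r}$ of these from Equation~\eqref{eqn lambdas}, already computed in the proof of Lemma~\ref{lemma tilts to tilts}: the result is a direct sum of tiltings $T_{\mu^t}$ with $\mu\leq\lambda-p^r e_1$, and since $\lambda-p^r e_1$ is $p^r$-truncated by definition and the $p^r$-truncated condition is upwards closed by Lemma~\ref{lemma:p^r_stability_open}, one is done. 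You instead reduce to the costandard objects $\nabla_\lambda$, pass to $K_0$ via the classical box-removal branching rule, and supply a new combinatorial lemma (if $\lambda^t$ is $p^r$-stable, $\mu\subseteq\lambda$ with $|\lambda/\mu|=p^r$, and $\sigma\leq\mu$, then $\sigma^t$ is $p^r$-truncated) whose proof, via $\tau=\sigma^t+p^r e_1$ together with $\ell(\sigma)\geq\ell(\mu)\geq\ell(\lambda)-p^r$, I checked against both clauses of Definition~\ref{def:stability_conditions} and is correct. The paper's route is shorter because it reuses a computation at hand; yours is self-contained and makes the partition combinatorics driving the statement explicit.

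Two minor points. First, you say the composition factors of $\nabla_\mu$ are ``exactly the $L_\sigma$ with $\sigma\leq\mu$''; this is only a containment, but you use only the containment, so the argument stands. Second, the step from ``$[E^{p^r}\nabla_\lambda]$ lies in the span of the $[\nabla_\mu]$'' to a statement about composition factors of $E^{p^r}\nabla_\lambda$ silently uses that the coefficients $c_\mu$ in $[E^{p^r}\nabla_\lambda]=\sum_\mu c_\mu[\nabla_\mu]$ are nonnegative (they are: iterating the branching rule yields $c_\mu$ equal to the number of standard Young tableaux of shape $\lambda/\mu$); without nonnegativity the $K_0$-class alone would not determine which $L_\sigma$ actually occur. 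It would be cleaner to bypass $K_0$ entirely by noting that $E$ preserves $\nabla$-filtrations (it is exact and biadjoint to $F$, which preserves $\Delta$-filtrations), so that $E^{p^r}\nabla_\lambda$ genuinely lies in $\Filt(\nabla_{\{\mu:\ \mu\subseteq\lambda,\ |\lambda/\mu|=p^r\}})$.
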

\begin{proof}
    Recall that $E$ is exact and tilting objects in $\PP ol_{d,p^r}^{\st}$ generate $\PP ol_{d,p^r}^{\st}$ as an abelian category. 
    Therefore it suffices to show that $E^{p^r}(\Lambda^{\lambda_1}\otimes \cdots \otimes \Lambda^{\lambda_j})$ lies in $\PP ol_{d,p^r}^{\trun}\boxtimes \rep S_{p^r}$ whenever $\lambda$ is $p^r$-stable.  By~\eqref{eqn lambdas}, we see that $E^{p^r}(\Lambda^{\lambda_1}\otimes \cdots \otimes \Lambda^{\lambda_j})$ is a direct sum of tiltings $T_{\mu^t}$, where $\mu\leq \lambda-p^re_1$.  Since $\lambda-p^re_1$ is $p^r$-truncated by definition, we obtain that $\mu$ must also be $p^r$-truncated.  
\end{proof}
\section{Periodicity}\label{section:periodicity_result}
In this section, we prove Theorems \ref{thm A intro} and \ref{thm intro C}. We first prove analogous equivalences for polynomial functors, exploiting the highest weight structure.  The equivalence for representations of symmetric groups is deduced using Schur--Weyl duality.  Theorem \ref{thm A intro} is proven by extending the equivalence on tiltings (Proposition \ref{lemma oti tilts equiv stable poly}).  We use the exactness property (Lemma \ref{lemma exactness property}) for CF functors given by semisimplification with respect to (generic) shifted cyclic subgroups.   For more general CF functors, we deduce the result using a technical lemma on highest weight categories.

\subsection{Highest weight categories}
We begin by recalling some background on highest weight categories, for more detail see ~\cite[Section 3]{K} (together with~\cite[Remark 8.1.7]{Kbook}), or more generally~\cite[Section 8.1]{Kbook}.

Let $(\Cs,\Lambda)$ be a highest weight category. Note that we always assume the poset $\Lambda$ is finite.
For $\lambda\in \Lambda$, we will denote the corresponding irreducible, standard and costandard object by $L_\lambda$, $\De_\lambda$ and $\Na_\lambda$, respectively.
If $\Lambda_0\subset\Lambda$, we write $\Cs_{\Lambda_0}$ for the Serre subcategory of $\Cs$ generated by the irreducibles $L_\mu$ with $\mu\in \Lambda_0$. 

Similarly, we
write $\Filt(\De_{\Lambda_0})$ for the full subcategory of objects
$X$ in $\Cs$ that admit a finite filtration
$0=X_0\subseteq X_1\subseteq\ldots\subseteq X_t=X$ such that each
factor $X_i/X_{i-1}$ is isomorphic to some $\De_\mu$ with $\mu\in \Lambda_0$.
In particular, $\Filt(\De):=\Filt(\De_\Lambda)$.

Now fix a maximal element $\lambda\in\Lambda$ and set $\Lambda_0=\Lambda\setminus \{\lambda\}$.
Recall that $\Cs_{\Lambda_0}$ is again a highest weight category, with standard objects $\{\De_\mu
\mid \mu\in\Lambda_0\}$ and costandard objects $\{\Na_\mu
\mid \mu\in\Lambda_0\}$. The inclusion functor $i_*:\Cs_{\Lambda_0}\to \Cs$ has a left (resp., right) adjoint sending an object of $\Cs$ to the largest quotient (resp., subobject) that belongs to~$\Cs_{\Lambda_0}$. 
This results in a recollement of abelian catetories,
$$\begin{tikzcd}
      \Cs_{\Lambda_0}
      \arrow[hook, r,"i_*" ] 
      &\Cs
      \arrow[l,shift left = 4.3, "i^{!}"'] \arrow[l,shift right= 4.3, "i^{*}"'] \arrow[twoheadrightarrow, r, "j^*"] 
      &\Cs_{\lambda}
      \arrow[l,shift left = 4.3, "j_{*}"'] \arrow[l,shift right= 4.3, "j_{!}"']
    \end{tikzcd}$$
with $\Cs_\lambda:=\Cs_{\{\lambda\}}$ equivalent to $\vec$. Note that the left (resp., right) adjoint of $j^*$ sends $\Bbbk$ to the projective (resp., injective) object $\De_\lambda$ (resp., $\Na_\lambda$).

Restriction to $\Filt(\De)$ gives a diagram of exact functors
\[
\begin{tikzcd}
  \Filt(\De_{\Lambda_0}) \arrow[hook,
  yshift=-0.75ex, "i_*"']{rr}&&\Filt(\De)
  \arrow[twoheadrightarrow,yshift=0.75ex, "i^*"']{ll}
  \arrow[twoheadrightarrow, yshift=-0.75ex, "j^*"']{rr} &&\Filt(\De_\lambda)\simeq \vec\, .
  \arrow[hook,yshift=0.75ex, "j_!"']{ll}
\end{tikzcd}
\]
such that for every $M$ in $\Filt(\De)$, the unit and counit fit into a short exact sequence  
\[
0\to j_!j^*M\to M\to i_*i^*M\to 0.
\]
Of course, dual statements hold for $\Filt(\Na)$.

\subsection{Additive functors on highest weight categories}\label{sec:add on hwc}
In this section, we will consider an additive functor $\phi:\Cs\to\Cs'$ from a highest weight category $(\Cs, \Lambda)$ to an abelian category~$\Cs'$.
Note that highest weight categories have finite global dimension.  From $\phi$ we will derive three triangulated functors $\D^b(\Cs)\to \D^b(\Cs')$ on the bounded derived categories:
\begin{itemize}
    \item The \emph{tilting derived functor} $\tilde\phi$ is the composition
        \[
        \D^b(\Cs)\simeq \K^b(\Tilt(\Cs))\xrightarrow{\phi}
        \K^b(\Cs')\to \D^b(\Cs'),
        \]
        where $\K^b(-)$ denotes the bounded homotopy category.  To compute $\tilde\phi(M)$ for $M\in \Cs$, apply $\phi$ degreewise to a finite complex of tilting modules quasi-isomorphic to~$M$.

    \item The \emph{left derived functor} $\L\phi$ is defined as the left Kan extension of 
        $\Cs\xrightarrow{\phi} \Cs'\to \D^b(\Cs')$ along $\Cs\to \D^b(\Cs)$. 
        In particular, $\L\phi$ is the composition 
        \[
        \D^b(\Cs)\simeq \K^b(\proj(\Cs))\xrightarrow {\phi}\K^b(\Cs')\to \D^b(\Cs').
        \]
        To compute $\L\phi (M)$ for $M\in \Cs$, replace $M$ by a projective resolution and apply $\phi$ degreewise.

    \item The \emph{right derived functor} $\R\phi$ is the right Kan extension of 
        $\Cs\xrightarrow{\phi} \Cs'\to \D^b(\Cs')$ along $\Cs\to \D^b(\Cs)$. 
        In particular, $\R\phi$ is the composition 
        \[
        \D^b(\Cs)\simeq \K^b(\inj(\Cs))\xrightarrow {\phi}\K^b(\Cs')\to \D^b(\Cs').
        \]
        To compute $\R\phi (M)$ for $M\in \Cs$, replace $M$ by an injective resolution and apply $\phi$ degreewise.
\end{itemize}

Note that we get natural transformations $\L\phi\Rightarrow \phi \Rightarrow \R\phi$ of functors $\Cs\to \D^b(\Cs')$ as usual, where we view $\phi$ as a functor $\Cs\to\D^b(\Cs')$ via composition with the embedding $\Cs'\hookrightarrow\D^b(\Cs')$.
Since objects in $\Filt(\De)$ have right resolutions by tilting objects, we furthermore have a natural transformation $\phi\Rightarrow \tilde \phi$ of functors $\Filt(\De)\to\D^b(\Cs')$.
Dually, there is a natural transformation $\tilde \phi\Rightarrow\phi$ of functors $\Filt(\Na)\to\D^b(\Cs')$.

\begin{lemma}\label{lemma hw cat nonsense}
Let $(\Cs, \Lambda)$ be a highest weight category and $\Cs'$ an abelian category.
Let $\lambda\in \Lambda$ be maximal and set $\Lambda_0=\Lambda\setminus\{\lambda\}$.
Consider an additive functor $\phi:\Cs \to \Cs'$ and suppose
\begin{enumerate}
\item $\phi$ is exact when restricted to $\Cs_{\Lambda_0}$,
\item the natural maps $\phi(\De_\lambda)\rightarrow\tilde \phi(\De_\lambda)$ and $\tilde\phi(\Na_\lambda)\rightarrow \phi(\Na_\lambda)$ are isomorphisms.
\end{enumerate}
Then the functors $\L\phi\simeq\tilde\phi\simeq\R\phi:\D^b(\Cs)\to \D^b(\Cs')$ are naturally isomorphic and 
the restriction $\tilde \phi:\Cs\to \D^b(\Cs')$ is a direct summand of $\phi$.
\end{lemma}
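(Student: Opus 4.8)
The plan is to deduce everything from one claim: the natural transformation $\phi\Rightarrow\tilde\phi$ of functors $\Filt(\De)\to\D^b(\Cs')$ is an isomorphism, and dually $\tilde\phi\Rightarrow\phi$ on $\Filt(\Na)$ is an isomorphism; in particular $\tilde\phi$ restricted to $\Filt(\De)$ or $\Filt(\Na)$ takes values in $\Cs'$. Granting this, the rest is formal. Since $\proj(\Cs)\subseteq\Filt(\De)$ and $\D^b(\Cs)\simeq\K^b(\proj(\Cs))$, the induced transformation $\L\phi\Rightarrow\tilde\phi$ of triangulated functors is an isomorphism on every projective, hence on all of $\D^b(\Cs)$; dually $\tilde\phi\Rightarrow\R\phi$ is an isomorphism. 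The composite $\L\phi\Rightarrow\phi\Rightarrow\R\phi$ of the canonical transformations agrees, on each tilting object $T$, with the composite of these two isomorphisms (for $T$ tilting the tilting coresolution is trivial, so all three comparison maps at $T$ are the canonical ones), hence is an isomorphism on $\Tilt(\Cs)$ and therefore on $\D^b(\Cs)\simeq\K^b(\Tilt(\Cs))$. Thus $\L\phi\Rightarrow\phi$ is a split monomorphism of functors $\Cs\to\D^b(\Cs')$, and as $\D^b(\Cs')$ is idempotent complete, $\tilde\phi|_\Cs\simeq\L\phi|_\Cs$ is a direct summand of $\phi|_\Cs$.

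To prove the claim I would first extract the structural input from the recollement. Every $M\in\Filt(\De)$ sits in a short exact sequence $0\to j_!j^*M\to M\to i_*i^*M\to 0$ in which $j_!j^*M\cong\De_\lambda^{\oplus m}$ — a sum of copies of $\De_\lambda=P_\lambda$, which is projective since $\lambda$ is maximal — and $i_*i^*M\in\Filt(\De_{\Lambda_0})$, the latter a highest weight category on the smaller poset $\Lambda_0$. Applying the triangulated functor $\tilde\phi$ and using (i) $\tilde\phi(\De_\lambda)\cong\phi(\De_\lambda)$, which is hypothesis (2); and (ii) $\tilde\phi(i_*i^*M)\cong\phi(i_*i^*M)$ — because tilting objects of $\Cs_{\Lambda_0}$ are tilting in $\Cs$, so a tilting coresolution of $i_*i^*M$ computed inside $\Cs_{\Lambda_0}$ still computes $\tilde\phi$, and $\phi|_{\Cs_{\Lambda_0}}$ is exact by hypothesis (1) — one sees that $\tilde\phi(M)$ is the extension of $\phi(i_*i^*M)$ by $\phi(\De_\lambda)^{\oplus m}$ obtained by applying $\tilde\phi$ to the extension class; in particular $\tilde\phi(M)$ lies in $\Cs'$, and comes equipped with a natural map $\phi(M)\to\tilde\phi(M)$.

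The remaining, and genuinely delicate, point is that this map $\phi(M)\to\tilde\phi(M)$ is an \emph{isomorphism}, equivalently that $\phi$ carries the sequence $0\to j_!j^*M\to M\to i_*i^*M\to 0$ to a short exact sequence. This sequence is in general \emph{not} split, so it does not follow from additivity of $\phi$; this is where the real work lies. I would prove it by induction on the length of a standard filtration of $M$, run simultaneously with the dual statement for $\Filt(\Na)$: the base cases are $\De_\lambda$ (hypothesis (2)), the $\De_\mu$ and $\Na_\mu$ with $\mu\in\Lambda_0$ (exactness of $\phi|_{\Cs_{\Lambda_0}}$), and $\Na_\lambda$ (hypothesis (2)), and the inductive step uses the short exact sequence peeling off a bottom copy of $\De_\lambda$ (respectively a top copy of $\Na_\lambda$) together with the vanishing $\Ext^{>0}_\Cs(\Filt(\De),\Filt(\Na))=0$, which lets one map such an extension compatibly into a tilting (co)resolution of $\De_\lambda$ (respectively $\Na_\lambda$) and transport the exactness supplied by hypothesis (2) across. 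Proving this exactness statement is the main obstacle; with it in hand, the three displayed isomorphisms and the summand statement follow as above.
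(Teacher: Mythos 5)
Your plan is built on the claim that the canonical map $\phi(M)\to\tilde\phi(M)$ is an isomorphism for every $M\in\Filt(\De)$ (and dually on $\Filt(\Na)$), but this claim is false in general, and indeed strictly stronger than the lemma's conclusion. The lemma only asserts that $\tilde\phi$ is a \emph{direct summand} of $\phi$ on $\Cs$, which leaves room for $\phi(M)$ to have extra summands even when $M\in\Filt(\De)$: for instance, if $\psi$ is any additive functor vanishing on $\De_\lambda$, $\Na_\lambda$, and all of $\Cs_{\Lambda_0}$, then $\phi\oplus\psi$ still satisfies hypotheses (1) and (2), yet $\psi$ can be nonzero on an indecomposable object of $\Filt(\De)$ not lying in $\Cs_{\Lambda_0}$. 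Your proposed induction cannot repair this: the $\Ext$-vanishing $\Ext^{>0}(\Filt(\De),\Filt(\Na))=0$ lets you lift maps between objects with standard/costandard filtrations, but it gives no control over whether the additive (non-exact) $\phi$ carries a non-split short exact sequence $0\to\De_\lambda^{\oplus m}\to M\to M'\to 0$ to a short exact sequence, which is precisely what you would need to run the inductive step. You correctly flag this as ``the main obstacle,'' but you do not overcome it.

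The paper's proof sidesteps the issue by never asking whether $\phi\Rightarrow\tilde\phi$ is an isomorphism on $\Filt(\De)$. Instead it shows $\L\phi\Rightarrow\tilde\phi$ is an isomorphism there, after first establishing that $(\L\phi)i_*\Rightarrow\phi i_*$ is an isomorphism. The device is the observation that if $P_\bullet\twoheadrightarrow i_*M$ is a projective resolution, the short exact sequence of complexes $0\to j_!j^*P_\bullet\to P_\bullet\to i_*i^*P_\bullet\to 0$ has all three rows exact (using~\cite[Lemma 2.1.1]{los}), and the kernel complex $j_!j^*P_\bullet$ is a bounded acyclic complex of projectives, hence null-homotopic; so $P_\bullet\to i_*i^*P_\bullet$ is a \emph{homotopy equivalence}, which any additive functor preserves. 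This gives $\L\phi(i_*M)=\phi(P_\bullet)\simeq\phi(i_*i^*P_\bullet)=\phi(i_*M)$ without requiring any exactness of $\phi$ at $i_*M$. Combined with $(\L\phi)j_!\Rightarrow\tilde\phi j_!$ being an isomorphism (hypothesis (2) at $\De_\lambda$, which is projective) and $\phi i_*\Rightarrow\tilde\phi i_*$ being an isomorphism (two exact functors agreeing on tilting objects), the morphism of triangles arising from $0\to j_!j^*M\to M\to i_*i^*M\to 0$ gives $\L\phi\xrightarrow{\sim}\tilde\phi$ on $\Filt(\De)$, and the rest of your formal deductions then go through. The split monomorphism $\L\phi\Rightarrow\phi$ on $\Cs$ is extracted exactly as you outline, via the composite $\L\phi\Rightarrow\phi\Rightarrow\R\phi$ being an isomorphism on tilting objects.
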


\begin{proof}
Consider the following diagram of exact functors
\[
\begin{tikzcd}
  \Filt(\De_{\Lambda_0}) \arrow[hook,
  yshift=-0.75ex, "i_*"']{rr}&&\Filt(\De)
  \arrow[twoheadrightarrow,yshift=0.75ex, "i^*"']{ll}
  \arrow[twoheadrightarrow, yshift=-0.75ex, "j^*"']{rr} &&\vec\, .
  \arrow[hook,yshift=0.75ex, "j_!"']{ll}
\end{tikzcd}
\]
We investigate the sequence of natural transformations $\L\phi\Rightarrow \phi\Rightarrow \tilde\phi$ on $\Filt(\De)$.
First note that $(\L\phi)j_!\Rightarrow\phi j_!$ and $\phi j_!\Rightarrow \tilde\phi j_!$ are both natural isomorphisms. Indeed, it suffices to check this on $j_!(\Bbbk)=\De_{\lambda}$. 
Since $\De_{\lambda}$ is projective in $\Cs$, clearly $\L\phi(\De_{\lambda})\rightarrow\phi(\De_{\lambda})$ is an isomorphism, whereas $\phi(\De_{\lambda})\rightarrow \tilde\phi(\De_{\lambda})$ is an isomorphism by assumption (2).

Next, we turn to the natural transformations $(\L\phi) i_*\Rightarrow\phi i_*$ and $\phi i_*\Rightarrow \tilde\phi i_*$.
The latter is an isomorphism, since $\phi i_*$ and $\tilde\phi i_*$ are both exact functors which agree on tilting objects in $\Cs_{\Lambda_0}$.
To compute $(\L\phi) i_*$, let $M\in\Filt(\De_{\Lambda_0})$ and write down a projective resolution $P_\bullet \twoheadrightarrow i_* M$ in $\Cs$.
Since projectives are in $\Filt(\De)$, we get a short exact sequence of complexes
\[\begin{tikzcd}
\cdots\arrow{r}
&j_!j^*P_1\arrow[hook]{d}\arrow{r}
&j_!j^*P_0\arrow{r}\arrow[hook]{d}
&0\arrow{r}\arrow{d}
&0\\
\cdots
\arrow{r}
&P_1\arrow[two heads]{d}\arrow{r}
&P_0\arrow{r}\arrow[two heads]{d}
&i_* M\arrow[equal]{d}\arrow{r}
&0\\
\cdots
\arrow{r}
&i_*i^*P_1\arrow{r}
&i_*i^*P_0\arrow{r}
&i_*M\arrow{r}
&0.
\end{tikzcd}\]
In fact, the rows are exact as well, since all syzygys of $P_\bullet \twoheadrightarrow i_* M$ remain in $\Filt(\De)$ by~\cite[Lemma 2.1.1]{los}, and $i_*i^*$ and $j_!j^*$ are exact functors there. Since the top row is a long exact sequence of projectives, it is null homotopic, hence $P_\bullet \to i_*i^* P_{\bullet}$ is a homotopy equivalence.
We can now compute 
$$\L\phi (i_*M)=\phi(P_\bullet)\simeq\phi(i_*i^*P_\bullet) = \phi(i_*M),$$
where the last equality follows because $\phi$ is exact on the subcategory $\Cs_{\Lambda_0}$.
We conclude that  $(\L\phi) i_*\Rightarrow\phi i_*$ is an isomorphism as well.

The composition $\L\phi\Rightarrow \phi\Rightarrow\tilde \phi$ applied to the exact sequence
$$0\to j_!j^*M\to M\to i_*i^*M\to 0$$
with $M$ in $\Filt(\De)$
gives us a morphism of triangles in $\D^b(\Cs')$,
\[\begin{tikzcd}
\L\phi(j_!j^*M)\arrow["\simeq"]{d}\arrow{r}
&\L\phi(M)\arrow{r}\arrow{d}
&\L\phi(i_*i^* M)\arrow["\simeq"]{d}\\
\tilde\phi(j_!j^*M)\arrow{r}
&\tilde\phi(M)\arrow{r}
&\tilde\phi(i_*i^*M)\ ,
\end{tikzcd}\]
hence $\L\phi\Rightarrow \tilde \phi$ is a natural isomorphism on $\Filt(\De)$.
In particular, the exact functors $\L\phi$ and $\tilde\phi$ are naturally isomorphic on $\proj(\Cs)$, so they are isomorphic on $\D^b(\Cs)$.

Dually, one can prove that the composition $\tilde \phi\Rightarrow\phi\Rightarrow\R\phi$ is an equivalence on $\Filt(\Na)$, hence $\R\phi$ and $\tilde\phi$ are isomorphic on $\D^b(\Cs)$.

Finally, we consider the composition $ \L\phi \Rightarrow\phi\Rightarrow \R\phi$ of functors $\Cs\to \D^b(\Cs')$.
On tilting objects, it factors as the composition of the isomorphisms $\L\phi \Rightarrow \tilde\phi$ and $\tilde\phi \Rightarrow \R\phi$. It follows that $\L\phi \Rightarrow\phi\Rightarrow \R\phi$ is an isomorphism of exact functors on tilting objects, and hence an isomorphism on~$\Cs$. We conclude that $\tilde\phi\simeq \L\phi \simeq \R\phi$ is a direct summand of~$\phi$.
\end{proof}

\begin{remark}\label{rem on tilting}
   Recall that an exact functor on a highest weight category $\Cs$ is entirely determined by its restriction to the subcategory of tilting objects in $\Cs$. In particular, if an exact functor $\phi_1:\Cs\to\Cs'$ and additive functor $\phi_2:\Cs\to\Cs'$ agree on tilting objects, it follows that $\phi_1$ agrees with the tilting derived functor $\t\phi_2:\D^b(\Cs)\to\D^b(\Cs')$ of $\phi_2$.
   We will use this freely throughout the remainder of~Section~\ref{section:periodicity_result}, and will often identify $\t\phi_2$ with its (co)restriction~$\phi_1$. 
\end{remark}

\subsection{The OTI functor associated to a shifted cyclic subgroup} 

Let $A\sub S_{p^r}$ denote a transitive elementary abelian $p$-group of order $p^r$. In other words, $A\cong C_p^r$ and acts transitively on $\{1,\dots,p^r\}$. Such a subgroup is unique up to conjugacy, and may be realised for instance by letting $C_p^r$ act by left multiplication on itself.

We will fix $\mathbf{a}=(a_1,a_2,\dots,a_n)\in\Bbbk^n$ with $\mathbf{a}\neq0$, and set $z_{\mathbf{a}}:=a_1x_1+\dots+a_nx_n\in \Bbbk A$, where $1-x_1,\dots,1-x_r$  are generators for $A$.  Then the subalgebra $R_{\mathbf{a}}\sub \Bbbk A$ generated by $z_{\mathbf{a}}$ is a shifted cyclic subgroup~(see~Section \ref{section OTI shifted cyclic}). 
We now impose the following genericity condition on $\mathbf{a}$: we assume that $R_{\mathbf{a}}$ does not lie in a group algebra $\Bbbk A'$ for any proper subgroup $A'\subset A$.  This will hold, for example,~if $a_1,\dots,a_n$ are linearly independent over~$\mathbb{F}_p$.  

\begin{definition}
    We will consider the composition $$\varphi_{\mathbf{a}}:\rep A\to R_{\mathbf{a}}\operatorname{--}\mod\to \Ver_p,$$ where the first functor is restriction along $R_{\mathbf{a}}\sub \Bbbk A$ and the second functor is defined as in~Equations~\eqref{formulaOTI} and~\eqref{eqn varphi}.
\end{definition}

We caution that our genericity assumption implies that $\varphi_{\mathbf{a}}$ is \emph{not} a monoidal functor.

\begin{lemma}
    If $\mathbf{a}$ is generic, then $\varphi_{\mathbf{a}}$ is a CF functor for $A\sub S_{p^r}$.
\end{lemma}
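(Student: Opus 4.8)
We need to show that for every non-transitive subgroup $B \subseteq A$ and every $B$-module $M$, we have $\varphi_{\mathbf{a}}(\Ind_B^A M) = 0$. Recall from Lemma \ref{lemma CF nontriv} (or more directly from the definition) that $\varphi_{\mathbf{a}}$ vanishes on $R_{\mathbf{a}}$-free (equivalently $R_{\mathbf{a}}$-projective) modules, since $\Ver_p$ is the semisimplification of $\rep C_p$ and $M_p = R_{\mathbf{a}}$ is the projective indecomposable. So the whole lemma reduces to the following claim: \emph{if $B \subsetneq A$ is a proper subgroup of the elementary abelian $p$-group $A$, then $\Ind_B^A M$ is free as an $R_{\mathbf{a}}$-module, i.e. free over the restriction along $R_{\mathbf{a}} \hookrightarrow \Bbbk A$.} Here I am using that $B$ is non-transitive in $S_{p^r}$ if and only if $B$ is a proper subgroup of $A$ (since $A$ itself is a regular/transitive abelian group of order $p^r$, any transitive subgroup must be all of $A$).

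So the key step is a purely module-theoretic statement about shifted cyclic subgroups. I would argue as follows. Since $A \cong C_p^r$, after a linear change of the generators $x_1,\dots,x_r$ we may assume the proper subgroup $B$ is generated by $1-x_1,\dots,1-x_{s}$ with $s < r$; then $\Ind_B^A M \cong \Bbbk A \otimes_{\Bbbk B} M$, and as a $\Bbbk B$-module this is $(\Bbbk B)^{\oplus p^{r-s}} \otimes_{\Bbbk B} M \cong M^{\oplus p^{r-s}}$... wait, more carefully: $\Bbbk A$ is free of rank $p^{r-s}$ over $\Bbbk B$, so $\Ind_B^A M \cong M^{\oplus p^{r-s}}$ \emph{as a $\Bbbk B$-module}, but the point is the $\Bbbk A$-module (hence $R_{\mathbf{a}}$-module) structure. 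The cleanest route: $\Ind_B^A M \cong \Bbbk A \otimes_{\Bbbk B} M$ and I restrict this along $R_{\mathbf{a}} \hookrightarrow \Bbbk A$. Now $\Bbbk A$, viewed as an $R_{\mathbf{a}}$-$\Bbbk B$-bimodule via left mult.\ by $R_{\mathbf{a}}$ and right mult.\ by $\Bbbk B$, is free as a left $R_{\mathbf{a}}$-module: indeed, the genericity hypothesis says precisely that $z_{\mathbf a}\notin\Bbbk A'$ for any proper $A'$, and one checks that $z_{\mathbf a}$ acts on $\Bbbk A$ with $z_{\mathbf a}^{p-1}\cdot(\text{something})\neq 0$ everywhere — equivalently, $\Bbbk A$ is $R_{\mathbf a}$-free because $\Bbbk A$ is free as a module over the subalgebra $R_{\mathbf a}\otimes\Bbbk B$ (a polynomial truncation) whenever $z_{\mathbf a}$ together with $1-x_1,\dots,1-x_s$ generate a regular sequence, which holds iff the linear form $z_{\mathbf a}$ is not in the span of $x_1,\dots,x_s$ modulo... — this is the genericity condition, exactly. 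Then $\Bbbk A \otimes_{\Bbbk B} M$ is a direct sum of copies of $\Bbbk A$ over $\Bbbk B$... I'll instead use: $\Bbbk A$ free over $R_{\mathbf a}$ as a left module implies, after tensoring over $\Bbbk B$ on the right with any $M$, we still get something $R_{\mathbf a}$-free, since $\Bbbk A \cong R_{\mathbf a}\otimes_{\Bbbk} (\text{complement})$ as left $R_{\mathbf a}$-module compatibly with enough of the right structure; tensoring over $\Bbbk B$ then yields $R_{\mathbf a}\otimes_\Bbbk(\text{complement}\otimes_{\Bbbk B}M)$, which is $R_{\mathbf a}$-free. Hence $\varphi_{\mathbf a}$ kills it.

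The honest cleanest statement to isolate: \textbf{$\Bbbk A$ is free as a module over the subalgebra $R_{\mathbf a}\cdot \Bbbk B \subseteq \Bbbk A$ generated by $z_{\mathbf a}$ and $1-x_1,\dots,1-x_s$}, provided $z_{\mathbf a}$ is not a $\Bbbk$-linear combination of $x_1,\dots,x_s$ (guaranteed by genericity, since e.g.\ $\mathbb F_p$-linear independence of the $a_i$ forces $z_{\mathbf a}$ to involve $x_j$ with $j>s$ nontrivially for \emph{every} proper $B$). This follows because $R_{\mathbf a}\cdot\Bbbk B$ is itself a truncated polynomial algebra $\Bbbk[y_0,y_1,\dots,y_s]/(y_i^p)$ with $y_0 = z_{\mathbf a}$ and the change of coordinates extends to a regular system of parameters on $\Bbbk A=\Bbbk[x_1,\dots,x_r]/(x_i^p)$; a truncated polynomial algebra is always free over such a "coordinate subalgebra." Then the induced module $\Ind_B^A M = \Bbbk A\otimes_{\Bbbk B}M$ restricted to $R_{\mathbf a}$ is $R_{\mathbf a}$-free (extend scalars and count), so it lies in the kernel of semisimplification, i.e.\ $\varphi_{\mathbf a}(\Ind_B^A M)=0$.

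\textbf{Main obstacle.} The only real content — and the step I expect to require the most care — is verifying that the genericity condition ``$R_{\mathbf a}\not\subseteq \Bbbk A'$ for all proper $A'$'' is exactly what makes $\Bbbk A$ free over the subalgebra generated by $z_{\mathbf a}$ and the defining parameters of \emph{each} proper subgroup $B$ simultaneously. One must be careful that ``proper subgroup $B\subsetneq A$'' ranges over all of them, and for each, after re-choosing coordinates adapted to $B$, genericity still guarantees $z_{\mathbf a}$ contributes a fresh coordinate; the linear-independence-over-$\mathbb F_p$ sufficient condition handles this uniformly. Everything else (reduction of CF-vanishing to $R_{\mathbf a}$-freeness, the identification of transitive subgroups of $A$ with $A$ itself, and the freeness of a truncated polynomial ring over a coordinate subalgebra) is standard.
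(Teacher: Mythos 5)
Your reduction to showing $\operatorname{Ind}_B^A M$ is $R_{\mathbf{a}}$-free, and your final tensoring step once $\Bbbk A$ is known to be free over $S := R_{\mathbf{a}}\cdot\Bbbk B$, are both correct, and the route is genuinely more elementary than the paper's citation of Evens. But the step you flag as ``the only real content'' has a gap. You pass to coordinates in which $B = \langle 1-x_1,\dots,1-x_s\rangle$ and then argue that genericity makes $z_{\mathbf{a}}$ avoid the $\Bbbk$-span of $x_1,\dots,x_s$. This conflates two incompatible changes of variables: realising $B$ as $\langle 1-x_1,\dots,1-x_s\rangle$ requires an $\mathbb{F}_p$-linear change of the group generators $g_i = 1-x_i$, which transforms the $x_i$ \emph{nonlinearly} (e.g.\ $1-g_1g_2 = x_1+x_2-x_1x_2$), so $z_{\mathbf{a}}$ does not remain linear in the new coordinates; conversely, a genuinely $\Bbbk$-linear change of the $x_i$ keeps $z_{\mathbf{a}}$ linear, but then $1-x_1',\dots,1-x_s'$ are not group elements and $\Bbbk[x_1',\dots,x_s']/((x_i')^p)$ need not equal $\Bbbk B$.

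The honest criterion for $\Bbbk A$ to be free over $S$ is that the image of $z_{\mathbf{a}}$ in $\mathfrak{m}/\mathfrak{m}^2$ (with $\mathfrak{m}\subset\Bbbk A$ the augmentation ideal) avoid the image of the augmentation ideal of $\Bbbk B$; since $1-h\equiv\sum_i c_ix_i\pmod{\mathfrak{m}^2}$ when $h=\prod_i g_i^{c_i}$, this says exactly $\mathbf{a}\notin B\otimes_{\mathbb{F}_p}\Bbbk$. That condition is \emph{strictly stronger} than $z_{\mathbf{a}}\notin\Bbbk B$. For instance, take $p=3$, $r=2$, $\mathbf{a}=(1,1)$, $B=\langle g_1g_2\rangle$: a direct check shows $z_{\mathbf{a}}=x_1+x_2\notin\Bbbk B$ (so the paper's stated genericity holds), yet $\mathbf{a}\in B\otimes_{\mathbb{F}_3}\Bbbk$, and indeed $\operatorname{Ind}_B^A\mathbf{1}\cong\Bbbk[C_3]=\Bbbk[u]/(u^3)$ with $z_{\mathbf{a}}$ acting as multiplication by $-u^2$, of Jordan type $(2,1)$, so $\varphi_{\mathbf{a}}$ does \emph{not} annihilate it. Thus the equivalence you anticipated would require care is actually false: the genericity condition as literally stated in the paper does not suffice, and the hypothesis one must use is the $\mathbb{F}_p$-linear independence of $a_1,\dots,a_r$ (which Theorem A imposes, and which is equivalent to $\mathbf{a}\notin B\otimes_{\mathbb{F}_p}\Bbbk$ for every proper $B$). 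Under that hypothesis, the unitriangular monomial-basis argument does give $\Bbbk A$ free over $S\cong R_{\mathbf{a}}\otimes_\Bbbk\Bbbk B$, and the rest of your proof goes through.
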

\begin{proof}
    It suffices to check that $\varphi_{\mathbf{a}}$ vanishes on induced modules $\Ind_{A'}^{A}(M)$ for proper subgroups $A'\subset A$.  This is equivalent to showing that $\Ind_{A'}^{A}(M)$ is projective over $R_{\mathbf{a}}$.  In the language of support theory, this means we want to show that $\mathbf{a}$ does not lie in the support of $\Ind_{A'}^{A}(M)$. However this follows from~\cite[Prop.~8.2.4]{Evens}.
\end{proof}

We write $$\Phi_{\mathbf{a}}:=\Phi_{\varphi_{\mathbf{a}}}:\PP ol_d\to\PP ol_{d-p^r}\boxtimes\Ver_p$$ for the OTI functor on polynomial functors corresponding to $\varphi_{\mathbf{a}}$ .  In~Section \ref{section SW duality} we introduced the highest weight subcategories $\PP ol^{\st}_{d,p^r}$ and $\PP ol^{\trun}_{d,p^r}$ of $\PP ol_d$, and we recall that $\Phi_{\mathbf{a}}(\PP ol^{\st}_{d,p^r})\sub \PP ol^{\trun}_{d-p^r,p^r}\boxtimes\Ver_p$ by~Lemma~\ref{lemma oti st to tr}.

\begin{thm}\label{thm periodicity for scs}
    The functor $\Phi_{\mathbf{a}}:\PP ol^{\st}_{d,p^r}\to \PP ol^{\trun}_{d-p^r,p^r}\boxtimes\Ver_p$ corestricts to an equivalence  $\Phi_{\mathbf{a}}:\PP ol^{\st}_{d,p^r}\xto{\sim}\PP ol^{\trun}_{d-p^r,p^r}$ of highest weight categories.
\end{thm}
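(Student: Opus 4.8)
The plan is to upgrade the equivalence on tilting subcategories from Proposition~\ref{lemma oti tilts equiv stable poly} to an equivalence of highest weight categories, the whole subtlety being $t$-exactness of the induced functor on bounded derived categories. Write $\Cs=\PP ol^{\st}_{d,p^r}$ and $\Cs'=\PP ol^{\trun}_{d-p^r,p^r}$. First I would observe that $\sgn_A$ is one dimensional, so its restriction to $R_{\mathbf{a}}$ is the trivial module and $\varphi_{\mathbf{a}}(\sgn_A)\cong L_1$ is simple; hence Proposition~\ref{lemma oti tilts equiv stable poly} applies and $\Phi_{\mathbf{a}}$ restricts to an equivalence $\Tilt^{\st}_{d,p^r}\xrightarrow{\sim}\Tilt^{\trun}_{d-p^r,p^r}$, which by Lemma~\ref{lemma tilts to tilts} lands in the $L_1$-component, identified with $\Cs'$. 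Both $\Cs$ and $\Cs'$ are highest weight with finite poset (Lemma~\ref{lemma:p^r_stability_open} and~\cite[Theorem~3.17]{BSW}), so $\D^b$ of each is the bounded homotopy category of its tilting subcategory; therefore the tilting derived functor $\t\Phi_{\mathbf{a}}\colon \D^b(\Cs)\to \D^b(\Cs')$ is a triangulated equivalence. The theorem then follows once I show that $\Phi_{\mathbf{a}}\colon\Cs\to\Cs'\boxtimes\Ver_p$ is \emph{exact}: by Remark~\ref{rem on tilting} an exact $\Phi_{\mathbf{a}}$ (which lands in the $L_1$-component since its values on tiltings do) coincides with $\t\Phi_{\mathbf{a}}$ on $\D^b$, so it is a $t$-exact triangulated equivalence and hence an exact equivalence of abelian categories, and since it matches tilting objects according to a fixed bijection of weight posets it carries standard and costandard objects to standard and costandard objects, as required.

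To prove exactness, by Lemma~\ref{lemma exactness property} (applied after restriction along $R_{\mathbf{a}}\subset\Bbbk A$ and evaluation on a large vector space), $\Phi_{\mathbf{a}}=\varphi_{\mathbf{a}}\circ E^{p^r}$ is exact on a given short exact sequence in $\Cs$ precisely when the image under the exact functor $E^{p^r}$ splits after restriction to $R_{\mathbf{a}}$. I would argue by induction on the finite poset $\Lambda$ of $\Cs$ using Lemma~\ref{lemma hw cat nonsense}, with $\phi=\Phi_{\mathbf{a}}$ (corestricted) and $\lambda\in\Lambda$ maximal, $\Lambda_0=\Lambda\setminus\{\lambda\}$. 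Hypothesis~(1) of that lemma --- exactness of $\Phi_{\mathbf{a}}$ on $\Cs_{\Lambda_0}$ --- is the inductive hypothesis, valid because $\Cs_{\Lambda_0}$ is again highest weight with all weights having $p^r$-stable transpose, so Proposition~\ref{lemma oti tilts equiv stable poly} and Lemmas~\ref{lemma tilts to tilts},~\ref{lemma oti st to tr} still apply there. Hypothesis~(2) asks that $\Phi_{\mathbf{a}}$ be exact on a finite tilting coresolution of $\De_\lambda$ and a finite tilting resolution of $\Na_\lambda$; by the splitting criterion this reduces to controlling $E^{p^r}(\De_\lambda)$ and $E^{p^r}(\Na_\lambda)$ over $R_{\mathbf{a}}$. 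Iterated branching of a Weyl functor along $V\mapsto V\oplus\Bbbk^{p^r}$ together with extraction of the multilinear weight space exhibits $E^{p^r}(\De_\lambda)$, as an $A$-module, as a genuine direct sum $\bigoplus_{\nu}\De_\nu(-)\otimes S^{\lambda/\nu}$ over skew shapes $\lambda/\nu$ of size $p^r$, with $A$ acting through $S_{p^r}$ on the skew Specht factor. Now $p^r$-stability of $\lambda^t$ forces each such skew shape to be either a single column --- contributing the sign, hence the trivial, $A$-module --- or a shape whose skew Specht module has $A$-support missing the generic point $\mathbf{a}$, hence is free over $R_{\mathbf{a}}$ (by genericity of $\mathbf{a}$ and~\cite[Proposition~8.2.4]{Evens}); so $E^{p^r}(\De_\lambda)|_{R_{\mathbf{a}}}$, and similarly $E^{p^r}(\Na_\lambda)|_{R_{\mathbf{a}}}$, is a sum of trivial and free modules. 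Combined with the inductive hypothesis (which forces $E^{p^r}|_{R_{\mathbf{a}}}$ of every cosyzygy occurring in the coresolution to be trivial-plus-free as well) one checks that the sequences in the coresolution split, verifying~(2). Lemma~\ref{lemma hw cat nonsense} then produces $\t\Phi_{\mathbf{a}}|_{\Cs}$ as an exact functor and a direct summand of $\Phi_{\mathbf{a}}$; the complementary summand is exact on $\Cs_{\Lambda_0}$ and zero on tiltings, hence zero on $\Cs_{\Lambda_0}$, and it vanishes on $\De_\lambda$ and $\Na_\lambda$, so the recollement short exact sequences on $\Filt(\De)$ and $\Filt(\Na)$ --- using that $\De_\lambda$ is the only maximal-weight standard appearing in a $\De$-filtration of a projective --- force it to vanish on all projectives and injectives, hence identically. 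Thus $\Phi_{\mathbf{a}}=\t\Phi_{\mathbf{a}}|_{\Cs}$ is exact, completing the proof.

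The hard part will be the verification of hypothesis~(2): understanding $E^{p^r}$ on standard and costandard objects precisely enough to see that $p^r$-stability collapses every skew shape of size $p^r$ to the single-column case modulo the negligible ideal, and --- more delicately --- showing that the resulting short exact sequences of $R_{\mathbf{a}}$-modules genuinely \emph{split}, rather than merely matching up numerically (it is not true in general that an extension with trivial-plus-free outer and middle terms splits over $\Bbbk[z]/z^p$). Everything else is formal manipulation of the highest weight recollement and of the tilting equivalence already in hand.
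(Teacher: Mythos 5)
Your overall framework is the same as the paper's: use the tilting equivalence of Proposition~\ref{lemma oti tilts equiv stable poly} (your observation $\varphi_{\mathbf{a}}(\sgn_A)\cong L_1$ is correct, since $\sgn_A=\mathbf{1}$ for a $p$-group), pass to the tilting derived functor, and then try to show that the induced exact equivalence $\t\Phi_{\mathbf{a}}$ agrees with $\Phi_{\mathbf{a}}$ on $\PP ol^{\st}_{d,p^r}$ by iterating Lemma~\ref{lemma hw cat nonsense}. Where your proposal and the paper part ways is in the verification of hypothesis~(2) of Lemma~\ref{lemma hw cat nonsense}, and this is exactly where your argument breaks.

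Your route attempts to control $E^{p^r}(\De_\lambda)$ and $E^{p^r}(\Na_\lambda)$ as $A$-modules explicitly. Two concrete problems. First, the asserted direct sum $E^{p^r}(\De_\lambda)\cong\bigoplus_\nu\De_\nu\otimes S^{\lambda/\nu}$ is not available in characteristic $p$: branching of Weyl functors along $V\mapsto V\oplus\Bbbk^{p^r}$ produces a $\De$-filtration with the predicted subquotients, not a direct sum, and the filtration need not split $A$-equivariantly, so the support-theoretic reduction to single-column skew shapes does not go through as stated. Second, and as you yourself acknowledge, even if one knew that $E^{p^r}$ of $\De_\lambda$ and of each cosyzygy were trivial-plus-free over $R_{\mathbf{a}}$, this would not force the short exact sequences in a tilting coresolution to split over $R_{\mathbf{a}}$ --- and it is the splitting, not a numerical match, that Lemma~\ref{lemma exactness property} requires. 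The claim that the inductive hypothesis ``forces $E^{p^r}|_{R_{\mathbf{a}}}$ of every cosyzygy occurring in the coresolution to be trivial-plus-free'' also does not follow: the cosyzygies of $\De_\lambda$ live in $\Cs$, not in $\Cs_{\Lambda_0}$, so the inductive hypothesis does not speak to them.

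The paper avoids all of this by a bootstrap that makes no reference to the internal structure of $E^{p^r}(\De_\lambda)$. Take a tilting coresolution $0\to\De_\lambda\to T_0\to\cdots\to T_r\to 0$ and argue by induction on $r$. By induction $\Phi_{\mathbf{a}}(Z_1\to T_1\to\cdots)$ is exact, so $H^{\geq 2}(\Phi_{\mathbf{a}}(T_\bullet))=0$ and $H^1(\Phi_{\mathbf{a}}(T_\bullet))$ is the cokernel of $\Phi_{\mathbf{a}}(T_0)\to\Phi_{\mathbf{a}}(Z_1)$. But the full complex computes $\t\Phi_{\mathbf{a}}(\De_\lambda)$, which lies in degree $0$ because $\t\Phi_{\mathbf{a}}$ is already known to be exact; hence $\Phi_{\mathbf{a}}(T_0)\to\Phi_{\mathbf{a}}(Z_1)$ is surjective, so by Lemma~\ref{lemma exactness property}(2)$\Rightarrow$(4) the sequence $0\to E^{p^r}\De_\lambda\to E^{p^r}T_0\to E^{p^r}Z_1\to 0$ splits over $R_{\mathbf{a}}$, and then (4)$\Rightarrow$(3) gives $\Phi_{\mathbf{a}}(\De_\lambda)\xto{\sim}\ker(\Phi_{\mathbf{a}}(T_0)\to\Phi_{\mathbf{a}}(Z_1))=\t\Phi_{\mathbf{a}}(\De_\lambda)$. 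In other words, the a priori exactness of $\t\Phi_{\mathbf{a}}$ is what produces the splitting, so the delicate combinatorics you flag as ``the hard part'' is never needed. To close your proof you should replace the skew-shape computation by this argument (or something of equivalent strength). Your final step of showing the natural transformation $\t\Phi_{\mathbf{a}}\Rightarrow\Phi_{\mathbf{a}}$ is an isomorphism everywhere can also be simplified: the paper observes that the locus where it is an isomorphism contains all tiltings and, by Lemma~\ref{lemma exactness property}, is closed under kernels of epimorphisms and cokernels of monomorphisms, hence is all of $\PP ol^{\st}_{d,p^r}$.
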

\begin{proof}
   By Proposition \ref{lemma oti tilts equiv stable poly}, $\Phi_{\mathbf{a}}$ defines an equivalence between the corresponding categories of tilting modules (because $\sgn_A\cong\mathbf{1}$).  Recall that such an equivalence of tilting modules extends uniquely to an equivalence $\t\Phi_{\mathbf{a}}:\PP ol^{\st}_{d,p^r}\to \PP ol^{\trun}_{d-p^r,p^r}$ of highest weight categories (see~\cite[Corollary 1.6]{Har}, for instance).   By~Remark~\ref{rem on tilting}, $\t\Phi_{\mathbf{a}}$ is the (co)restriction of the tilting derived functor of $\Phi_{\mathbf{a}}$.

   We claim that if $\lambda^t$ is $p^r$-stable, the natural maps $\Phi_{\mathbf{a}}(\De_\lambda)\rightarrow\tilde \Phi_{\mathbf{a}}(\De_\lambda)$ and $\tilde\Phi_{\mathbf{a}}(\Na_\lambda)\rightarrow \Phi_{\mathbf{a}}(\Na_\lambda)$ are isomorphisms.
   Indeed, let 
   $$0\to \De_\lambda\to T_0\to T_1\to\cdots\to T_r\to 0,$$
   be a resolution of $\De_\lambda$ by tilting objects. By induction on $r$, repeatedly applying~Lemma~\ref{lemma exactness property} shows that $\Phi_{\mathbf{a}}(\De_\lambda)\to \Phi_{\mathbf{a}}(T_\bullet)$ is exact and that $\Phi_{\mathbf{a}}(\De_\lambda)\to\tilde \Phi_{\mathbf{a}}(\De_\lambda)$ is an isomorphism. 
   A similar argument shows $\tilde\Phi_{\mathbf{a}}(\Na_\lambda)\xto{\sim} \Phi_{\mathbf{a}}(\Na_\lambda)$. 
   
   By iteratively applying~Lemma~\ref{lemma hw cat nonsense}, we find a natural (split) transformation $\t\Phi_{\mathbf{a}}\Rightarrow \Phi_{\mathbf{a}}$ on $\PP ol^{\st}_{d,p^r}$. Consider the full subcategory of objects in $\PP ol^{\st}_{d,p^r}$ where this is an isomorphism. It contains all tilting modules and by Lemma \ref{lemma exactness property} it is closed under kernels of epimorphisms and cokernels of monomorphisms.  Hence it must be all of $\PP ol^{\st}_{d,p^r}$, which completes the proof.
\end{proof}

\subsection{Periodicity for polynomial functors} 
Now choose a subgroup $A\sub H\sub S_{p^r}$, so that in particular $z_{\mathbf{a}}\in\Bbbk H$.   For an $H$-module $V$, write 
\[
V^{\sgn_H}:=\{m\in V:h\cdot m=\sgn_H(h)m\text{ for all }h\in H\},
\]
and $V_{\sgn_H}$ for the $H$-module:
\[
V_{\sgn_H}=\frac{V^{\sgn_H}}{V^{\sgn_H}\cap \sum\limits_{h\in H}\im(h-\sgn_H(h))}.
\]
Note that $V_{\sgn_H}$ is isomorphic to a direct sum of copies of $\sgn_H$, and we have a decomposition $V\cong V_{\sgn_H}\oplus \bigoplus\limits_{\alpha}N_{\alpha}$, where each $N_{\alpha}$ is an indecomposable $H$-module not isomorphic to~$\sgn_H$.  The following lemma is easy.

\begin{lemma}
    The functor $(-)_{\sgn_H}:\rep H\to\rep H$ is a CF functor.
\end{lemma}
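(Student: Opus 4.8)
The plan is to reduce the assertion to the already-established fact (Example~\ref{cf varphi 1}) that $\varphi_{\mathbf{1}}:\rep H\to\vec$ is a CF functor, by conjugating with the sign character. The first step is to identify $(-)_{\sgn_H}$ with a twist of $\varphi_{\mathbf{1}}$: I claim there is a natural isomorphism
\[
V_{\sgn_H}\;\cong\;\varphi_{\mathbf{1}}(V\otimes\sgn_H)\otimes\sgn_H ,
\]
where on the right the vector space $\varphi_{\mathbf{1}}(V\otimes\sgn_H)$ is first regarded as a trivial $H$-module. To see this, write $W=V\otimes\sgn_H$ and use that $\sgn_H$ takes values in $\{\pm1\}$, so $\sgn_H^{-1}=\sgn_H$: then $W^{H}=V^{\sgn_H}$, and the operator $1-h$ on $W$ is the operator $1-\sgn_H(h)h$ on $V$, whose image coincides with that of $h-\sgn_H(h)$; hence the subspaces quotiented out in $\varphi_{\mathbf{1}}(W)$ and in $V_{\sgn_H}$ agree. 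Since $H$ acts trivially on $\varphi_{\mathbf{1}}(W)$ but by $\sgn_H$ on $V_{\sgn_H}$, re-twisting by $\sgn_H$ produces the stated isomorphism, which is visibly natural in $V$.

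With this in hand, I would check the CF vanishing condition. Let $H'\subseteq H$ be a subgroup that is not transitive on $\{1,\dots,p^r\}$ and let $M$ be an $H'$-module. By the projection formula,
\[
\big(\Ind_{H'}^{H}M\big)\otimes\sgn_H\;\cong\;\Ind_{H'}^{H}\!\big(M\otimes\operatorname{Res}_{H'}\sgn_H\big).
\]
Applying $\varphi_{\mathbf{1}}$, which kills inductions from non-transitive subgroups by Example~\ref{cf varphi 1}, the right-hand side is sent to $0$; twisting back by $\sgn_H$ and using the isomorphism of the first step gives $\big(\Ind_{H'}^{H}M\big)_{\sgn_H}=0$, which is exactly the CF condition. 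Functoriality and $\Bbbk$-linearity of $(-)_{\sgn_H}$ are formal, since $V\mapsto V^{\sgn_H}$ and $V\mapsto\sum_{h\in H}\im(h-\sgn_H(h))$ are subfunctors of the identity on $\rep H$, and hence so is their intersection; the quotient is therefore an additive $\Bbbk$-linear functor.

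I do not expect a serious obstacle. The only point that requires a little care is the bookkeeping in the first step: one must check carefully that twisting by $\sgn_H$ identifies the denominator subspace $V^{\sgn_H}\cap\sum_h\im(h-\sgn_H(h))$ with the denominator appearing in $\varphi_{\mathbf{1}}(V\otimes\sgn_H)$, i.e.\ that $\im(h-\sgn_H(h))=\im(1-\sgn_H(h)h)$, and that the $H$-action is tracked correctly when one re-twists. Alternatively, one could bypass Example~\ref{cf varphi 1} and argue directly from the Krull--Schmidt decomposition $V\cong V_{\sgn_H}\oplus\bigoplus_\alpha N_\alpha$ recorded above: $(-)_{\sgn_H}$ extracts the largest summand of $V$ that is a multiple of $\sgn_H$, so one must show $\Ind_{H'}^{H}M$ has no summand isomorphic to $\sgn_H$, which after twisting by $\sgn_H$ is the statement that $\Ind_{H'}^{H}(M\otimes\operatorname{Res}_{H'}\sgn_H)$ has no trivial summand --- again a consequence of $\varphi_{\mathbf{1}}$ being a CF functor. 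Either route yields a short proof.
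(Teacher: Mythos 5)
The paper gives no proof of this lemma — it merely remarks that the statement is easy — so there is no argument to compare yours against; your proposal is correct and is surely the intended one. The key identification $V_{\sgn_H}\cong\varphi_{\mathbf{1}}(V\otimes\sgn_H)\otimes\sgn_H$ is verified exactly as you say: $\sgn_H(h)^{-1}=\sgn_H(h)$ gives $(V\otimes\sgn_H)^H=V^{\sgn_H}$, and $h-\sgn_H(h)=-\sgn_H(h)\bigl(1-\sgn_H(h)h\bigr)$ shows the denominator subspaces match up to a unit; then the projection formula $\bigl(\Ind_{H'}^H M\bigr)\otimes\sgn_H\cong\Ind_{H'}^H\bigl(M\otimes\operatorname{Res}_{H'}\sgn_H\bigr)$ and the CF property of $\varphi_{\mathbf{1}}$ recorded in Example~\ref{cf varphi 1} finish the argument. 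Your alternative route, viewing $(-)_{\sgn_H}$ as extraction of the $\sgn_H$-isotypic direct summand and reducing to the absence of trivial summands in $\Ind_{H'}^H(M\otimes\operatorname{Res}_{H'}\sgn_H)$, is an equivalent reformulation of the same twist argument and is likewise valid. Note only that, as with Example~\ref{cf varphi 1} itself, the claim is only sensible when $H$ is a transitive subgroup of $S_{p^r}$, which the paper guarantees in context by the standing assumption $A\subseteq H$.
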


We will fix an exact and conservative forgetful functor $U:\PP ol_d\boxtimes\rep H\to \rep H$, which is constructed either by using~Lemma~\ref{lemma poly fun poly rep}(1) or by choosing a $\Bbbk$-coalgebra $C$ and an equivalence $\PP ol_d\boxtimes\rep H\xrightarrow{\simeq} C\operatorname{--}\operatorname{comod}_{\rep H}$ as in Section~\ref{deligne tensor}. 
We then have a commuting diagram 
\[\xymatrix{
   \PP ol_d\boxtimes\rep H
   \ar[rr]^{1\boxtimes (-)_{\sgn_H}} 
   \ar[d]^{U} 
   && \PP ol_d\boxtimes\rep H
   \ar[d]^{U}\\
   \rep H
   \ar[rr]^{(-)_{\sgn_H}}  
    && \rep H.}
   \]
We will look at the OTI functor
$$\Phi_{\sgn_H}:\quad \PP ol^{\st}_{d,p^r}\to\PP ol_{d-p^r,p^r}^{\trun}\boxtimes\rep H$$
given by $M\mapsto (1\boxtimes (-)_{\sgn_H})(E^{p^r}M)$.
By~Equations~\eqref{eq:tilt_lemma} and~\eqref{eqn lambdas}, the exact functor 
$$\Phi_{\mathbf{a}}\boxtimes\sgn_H:\ \ \PP ol^{\st}_{d,p^r}\xto{\Phi_{\mathbf{a}}} 
\PP ol^{\trun}_{d-p^r,p^r}
\xto{-\boxtimes\sgn_H}
\PP ol^{\trun}_{d-p^r,p^r}\boxtimes \langle\sgn_H\rangle$$ coincides with $\Phi_{\sgn_H}$ on the subcategory $\Tilt^{\st}_{d,p^r}$.

By~Remark~\ref{rem on tilting} and Lemma \ref{lemma tilts to tilts}, we may identify $\Phi_{\mathbf{a}}\boxtimes\sgn_H$ with the tilting derived functor of $\Phi_{\sgn_H}$. In particular, we have a natural transformation $\Phi_{\sgn_H}\Rightarrow \Phi_{\mathbf{a}}\boxtimes\sgn_H$ of functors $\Filt(\De_{\{\lambda^t\mid\lambda \text{ is } p^r-\text{ stable}\}})\to\PP ol^{\trun}_{d-p^r,p^r}\boxtimes \rep H$ and a natural transformation $\Phi_{\mathbf{a}}\boxtimes\sgn_H\Rightarrow \Phi_{\sgn_H}$ of functors $\Filt(\Na_{\{\lambda^t\mid\lambda \text{ is } p^r-\text{ stable}\}})\to\PP ol^{\trun}_{d-p^r,p^r}\boxtimes \rep H$.

\begin{remark}\label{rem sgn split} 
Note that $$U\Phi_{\sgn_H}(M)\cong (UE^{p^r}(M))_{\sgn_H}\cong\varphi_{\mathbf{a}}(UE^{p^r}(M))_{\sgn_H}\otimes \sgn_H$$ 
is a direct summand of 
$$U(\Phi_{\mathbf{a}}\boxtimes\sgn_H)(M)\cong \varphi_{\mathbf{a}}U E^{p^r}(M)\otimes \sgn_H$$ 
for every $M\in\PP ol^{\trun}_{d-p^r,p^r}$.
\end{remark}

\begin{lemma}\label{lem iso on stand}
Suppose that $\lambda$ is a $p^r$-stable partition.  
Then the natural maps $\Phi_{\sgn_H}(\De_{\lambda^t})\rightarrow\Phi_{\mathbf{a}}(\De_{\lambda^t})\boxtimes\sgn_H$ and $\Phi_{\mathbf{a}}(\Na_{\lambda^t})\boxtimes\sgn_H\rightarrow \Phi_{\sgn_H}(\Na_{\lambda^t})$  are isomorphisms.
\end{lemma}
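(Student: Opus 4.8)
The plan is to reduce the statement to the case $\varphi=\varphi_{\mathbf a}$, which is settled by Theorem~\ref{thm periodicity for scs}, using the direct summand observation of Remark~\ref{rem sgn split} together with a dimension count. Fix a finite tilting coresolution $0\to\De_{\lambda^t}\to T^0\to\cdots\to T^s\to0$ in the highest weight category $\PP ol^{\st}_{d,p^r}$. Since $\Phi_{\sgn_H}$ and the exact functor $\Phi_{\mathbf a}\boxtimes\sgn_H$ coincide on $\Tilt^{\st}_{d,p^r}$, applying the latter to the coresolution identifies $\Phi_{\mathbf a}(\De_{\lambda^t})\boxtimes\sgn_H$ with $\ker\!\big(\Phi_{\sgn_H}(T^0)\to\Phi_{\sgn_H}(T^1)\big)$, and under this identification the map $\alpha$ of the statement becomes the canonical comparison induced by $\De_{\lambda^t}\hookrightarrow T^0$. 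By Remark~\ref{rem sgn split}, $U\alpha$ is a split monomorphism; as $U$ is exact and conservative, it therefore suffices to prove
\[
\dim_\Bbbk U\Phi_{\sgn_H}(\De_{\lambda^t})=\dim_\Bbbk U\!\big(\Phi_{\mathbf a}(\De_{\lambda^t})\boxtimes\sgn_H\big).
\]
The assertion for $\beta$ is entirely dual: one takes a tilting \emph{resolution} of $\Na_{\lambda^t}$ and the split epimorphism of Remark~\ref{rem sgn split}, and reduces to the same dimension equality with $\Na_{\lambda^t}$ in place of $\De_{\lambda^t}$.

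Put $W:=U E^{p^r}(\De_{\lambda^t})$, a finite-dimensional $\Bbbk H$-module. By Remark~\ref{rem sgn split}, $U\Phi_{\sgn_H}(\De_{\lambda^t})\cong W_{\sgn_H}$, whose dimension is the multiplicity of $\sgn_H$ as a direct summand of $W$. On the other hand, Theorem~\ref{thm periodicity for scs} gives $\Phi_{\mathbf a}(\De_{\lambda^t})\cong\widetilde\Phi_{\mathbf a}(\De_{\lambda^t})$, which lies in the $L_1$-component; so by Lemma~\ref{prop OTI on indecomps} the dimension on the right equals the number of trivial ($M_1$) summands of $\operatorname{Res}_{R_{\mathbf a}}W$. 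Hence the displayed equality holds as soon as
\[
W\;\cong\;\sgn_H^{\oplus m}\ \oplus\ W'\qquad\text{as a $\Bbbk H$-module, with }\operatorname{Res}_{R_{\mathbf a}}W'\text{ free over }R_{\mathbf a};
\]
for then $\operatorname{Res}_{R_{\mathbf a}}W\cong M_1^{\oplus m}\oplus\operatorname{Res}_{R_{\mathbf a}}W'$ has exactly $m$ trivial summands, which is also the $\sgn_H$-multiplicity of $W$. Proving this decomposition is the heart of the argument.

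To obtain it, use the branching (Pieri) rule for the $E$-action on Weyl functors---equivalently, Schur--Weyl duality (Lemma~\ref{lem:OTI-SWdual}), which identifies $W$, up to $\Bbbk H$-duality, with the restriction to $H$ (acting on the last $p^r$ letters) of a sign-twisted Specht module $S^{\lambda^t}$. This yields
\[
W\;\cong\;\bigoplus_{\nu\subseteq\lambda^t,\ |\lambda^t/\nu|=p^r}c_\nu\cdot\operatorname{Res}_H\!\big(S^{\lambda^t/\nu}\big)\qquad(c_\nu\ge0),
\]
with $S^{\lambda^t/\nu}$ the (sign-twisted) skew Specht $\Bbbk S_{p^r}$-module. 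Now $p^r$-stability of $\lambda$ forces $\lambda_1-\lambda_2\ge p^r$ and $\sum_{i\ge2}\lambda_i<p^r$, so the diagram of $\lambda^t$ consists of a single column of $\lambda_1$ boxes together with a ``top part'' confined to rows $1,\dots,\lambda_2$ and columns $\ge2$ of total size $<p^r$. Since $\nu$ is a partition, the boxes of $\lambda^t/\nu$ in the first column form a vertical strip of some length $k$ at the bottom, with $1\le k\le p^r$; if $k=p^r$ then $\lambda^t/\nu$ is a single column, $S^{\lambda^t/\nu}\cong\sgn_{S_{p^r}}$ and $\operatorname{Res}_H S^{\lambda^t/\nu}\cong\sgn_H$, while if $k<p^r$ then $\lambda^t/\nu$ is the disjoint union of that vertical strip with a skew shape of size $p^r-k\ge1$ inside the top part, so
\[
S^{\lambda^t/\nu}\;\cong\;\Ind_{S_k\times S_{p^r-k}}^{S_{p^r}}\!\big(\sgn_{S_k}\boxtimes S^{\mathrm{top}}\big),
\]
an induction from the \emph{non-transitive} Young subgroup $S_k\times S_{p^r-k}$ of $S_{p^r}$. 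It then remains to observe that if $K\subseteq S_{p^r}$ is non-transitive then $\operatorname{Res}_H\Ind_K^{S_{p^r}}(N)$ is free over $R_{\mathbf a}$ for every $K$-module $N$: by the Mackey formula it is a sum of modules induced from the subgroups $H\cap{}^gK$, which are again non-transitive (being contained in ${}^gK$), and on restriction to $A$ each of these is a sum of modules induced from \emph{proper} subgroups $A'\subsetneq A$ (since $A$ is transitive, $A\cap{}^hK'$ cannot equal $A$ for a non-transitive $K'$); the genericity of $\mathbf a$ then guarantees---exactly as in the proof that $\varphi_{\mathbf a}$ is a CF functor, via \cite[Prop.~8.2.4]{Evens}---that $\mathbf a$ lies outside their supports, i.e.\ they are projective over $R_{\mathbf a}$. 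This establishes the decomposition of $W$; the same reasoning applies verbatim to $UE^{p^r}(\Na_{\lambda^t})$ (dual skew Specht modules, which restrict to $H$ in the same fashion up to $\Bbbk H$-duality, with $\sgn_H$ and projective modules self-dual), giving the dimension equalities needed for both $\alpha$ and $\beta$, and hence the lemma. The main obstacle is precisely this combinatorial step: determining which skew shapes $\lambda^t/\nu$ of size $p^r$ can occur and identifying the non-single-column ones as inductions from non-transitive Young subgroups; everything else is formal bookkeeping with the highest weight structure and the results already proved.
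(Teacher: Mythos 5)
Your strategy---use the split monomorphism of Remark~\ref{rem sgn split} together with conservativity of $U$ to reduce the lemma to a dimension count, and then compute both sides via combinatorics of skew Specht modules---is a reasonable alternative to what the paper does, but its central computation contains two genuine gaps.

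First, the identification of $W:=UE^{p^r}(\De_{\lambda^t})$ with $\operatorname{Res}_H$ of a (sign--twisted) Specht module $S^{\lambda^t}$ is incorrect. The functor $U\colon \PP ol_{d-p^r}\boxtimes\rep H\to\rep H$ is an \emph{exact conservative forgetful} functor, e.g.\ evaluation at a vector space $V$ with $\dim V\geq d-p^r$; it is not the Schur--Weyl functor $\mathcal{F}$, which is exact but not conservative. In particular $\dim_\Bbbk W$ equals the number of semistandard tableaux of shape $\lambda^t$ with each of the last $p^r$ letters occurring exactly once and the remaining boxes filled from an alphabet of size $\dim V$; this grows with $\dim V$ once $d>p^r$, whereas $\dim_\Bbbk S^{\lambda^t}=f^{\lambda^t}$ is fixed. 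So $W$ is \emph{not} a restriction of a Specht module, and what you actually have is the $(1,\dots,1)$-weight space of the Weyl module $\De_{\lambda^t}(V\oplus\Bbbk^{p^r})$, which carries a \emph{filtration} by $\De_\nu(V)\otimes S^{\lambda^t/\nu}$, with $S^{\lambda^t/\nu}$ a skew Specht $\Bbbk S_{p^r}$-module and multiplicity $\dim\De_\nu(V)$, not a direct sum.

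Second, even granting the corrected identification, the asserted isomorphism $W\cong\sgn_H^{\oplus m}\oplus W'$ with $\operatorname{Res}_{R_{\mathbf a}}W'$ free does not follow from the filtration alone, and the dimension equality $\dim W_{\sgn_H}=\dim\varphi_1(\operatorname{Res}_{R_{\mathbf a}}W)$ genuinely fails for filtered modules. Concretely, take $H=A=C_p\times C_p$ and let $W$ be a non-split extension $0\to \operatorname{Ind}_{C_p\times 1}^{A}\mathbf 1\to W\to\mathbf 1\to 0$ (such extensions exist, since $\operatorname{Ext}^1_{\Bbbk A}(\mathbf 1,\operatorname{Ind}_{C_p\times 1}^{A}\mathbf 1)\cong H^1(C_p,\Bbbk)\neq 0$). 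The subquotient $\operatorname{Ind}_{C_p\times 1}^{A}\mathbf 1$ restricts freely to a generic $R_{\mathbf a}$, and the top factor is $\sgn_A=\mathbf 1$; yet $W_{\sgn_A}=0$ (the trivial quotient does not lift to a direct summand, and the $A$-invariant line of $W$ sits in the radical), while $\operatorname{Res}_{R_{\mathbf a}}W\cong (\text{free})\oplus M_1$ because the free part is injective over $R_{\mathbf a}$, so $\dim\varphi_1(\operatorname{Res}_{R_{\mathbf a}}W)=1$. The count thus goes wrong, and you need the direct sum decomposition of $W$---not merely a filtration---which is precisely the delicate point that your argument does not establish. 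The paper avoids the issue entirely: rather than computing dimensions, it produces a split \emph{epimorphism} $U\Phi_{\sgn_H}(\De_{\lambda^t})\to U(\Phi_{\mathbf a}\boxtimes\sgn_H)(\De_{\lambda^t})$ by pushing the short exact sequence $0\to\De_{\lambda^t}\to\Lambda^{\lambda_1}\otimes\cdots\otimes\Lambda^{\lambda_\ell}\to M\to 0$ through $E^{p^r}$ and isolating the $\sgn_H$-isotypic summand of $E^{p^r}(\Lambda^{\lambda_1}\otimes\cdots\otimes\Lambda^{\lambda_\ell})$ via~\eqref{eqn lambdas}, then combines this with the split monomorphism from Remark~\ref{rem sgn split} to deduce the isomorphism, with no combinatorial input about skew shapes or extensions.
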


\begin{proof}
   We prove this for standard modules, with the statement for costandard modules being dual.  
   Since $U$ is conservative, it suffices to show that 
   the natural transformation $U\Phi_{\sgn_H}\Rightarrow U(\Phi_{\mathbf{a}}\boxtimes\sgn_H)$ is an isomorphism when evaluated at $\De_{\lambda^t}$.

   Since $\Phi_{\mathbf{a}}$ is an equivalence of highest weight categories (see~Theorem~\ref{thm periodicity for scs}), it is clear that $\Phi_{\mathbf{a}}(\De_{\lambda^t})\boxtimes\sgn_H=\Delta_{(\lambda-p^r e_1)^t}\boxtimes\sgn_H$.
   Next, recall that we have a short exact sequence 
   $$0\to \Delta_{\lambda^t} \to \Lambda^{\lambda_1}\boxtimes\cdots\boxtimes\Lambda^{\lambda_\ell}\to M\to 0$$
   with $M\in \Filt(\De_{<\lambda^t})$.
   By~Equation~\eqref{eqn lambdas}, $E^{p^r}(\Lambda^{\lambda_1}\otimes\cdots\otimes\Lambda^{\lambda_\ell})$ contains
   $(\Lambda^{\lambda_1-p^r}\otimes\Lambda^{\lambda_2}\otimes\cdots\otimes\Lambda^{\lambda_\ell})\boxtimes\sgn_H$ 
   as a direct summand, with complimentary summands of the form $N\boxtimes V$ with $V\neq \sgn_H$ indecomposable and  $N\in \Filt(\De_{<(\lambda-p^r e_1)^t})$.

   We obtain the following commutative diagram: 
   \[\begin{tikzcd}
        \Delta_{(\lambda-p^r e_1)^t}\boxtimes\sgn_H
        \arrow[hook]{r}
        \arrow{d}
        &
        (\Lambda^{\lambda_1-p^r}\otimes\Lambda^{\lambda_2}\otimes\cdots\otimes\Lambda^{\lambda_\ell})\boxtimes\sgn_H
        \arrow[hook, "\oplus"]{d}
        \\
        E^{p^r}(\Delta_{\lambda^t})
        \arrow[hook]{r}
        &
        E^{p^r}(\Lambda^{\lambda_1}\otimes\Lambda^{\lambda_2}\otimes\cdots\otimes\Lambda^{\lambda_\ell})
        \ .
   \end{tikzcd}\]
   After applying $U$, the upper horizontal arrow splits, and therefore the left vertical arrow splits as well. 
   In other words, $U(\Delta_{(\lambda-p^r e_1)^t}\boxtimes\sgn_H)$ is a direct summand of 
   $UE^{p^r}(\Delta_{\lambda^t})$.
   The following commuting diagram
   \[\begin{tikzcd}
    U(\Delta_{(\lambda-p^r e_1)^t}\boxtimes\sgn_H)
    \arrow[hook, "\oplus"]{r}
    \arrow[equal]{d}
    &U\Phi_{\sgn_H}(\Delta_{\lambda^t})
    \arrow{r}
    \arrow{d}
    &U\Phi_{\sgn_H}(\Lambda^{\lambda_1}\otimes\Lambda^{\lambda_2}\otimes\cdots\otimes\Lambda^{\lambda_\ell})
    \arrow[equal]{d}
    \\
    U(\Delta_{(\lambda-p^r e_1)^t}\boxtimes\sgn_H)
    \arrow[equal]{r}
    &U(\Phi_{\mathbf{a}}\boxtimes\sgn_H)(\De_{\lambda^t})
    \arrow[hook]{r}
    &U(\Phi_{\mathbf{a}}\boxtimes\sgn_H)(\Lambda^{\lambda_1}\otimes\Lambda^{\lambda_2}\otimes\cdots\otimes\Lambda^{\lambda_\ell})
   \end{tikzcd}\]
shows that the natural map $U\Phi_{\sgn_H}(\De_{\lambda^t})\rightarrow U\Phi_{\mathbf{a}}(\De_{\lambda^t})\boxtimes\sgn_H$ is a (split) epimorphism. The lemma now follows from~Remark~\ref{rem sgn split}.
\end{proof}

\begin{lemma}\label{lem oti scs is sgn}
The OTI functor $\Phi_{\sgn_H}$ coincides with its tilting derived functor $\Phi_{\mathbf{a}}\boxtimes\sgn_H$.
\end{lemma}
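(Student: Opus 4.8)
The plan is to follow the proof of Theorem~\ref{thm periodicity for scs} almost verbatim, with Remark~\ref{rem sgn split} playing the role that Lemma~\ref{lemma exactness property} plays there.

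First note that by Theorem~\ref{thm periodicity for scs} the functor $\Phi_{\mathbf{a}}\colon\PP ol^{\st}_{d,p^r}\to\PP ol^{\trun}_{d-p^r,p^r}$ is an equivalence of highest weight categories, hence exact; consequently $\t\Phi_{\sgn_H}=\Phi_{\mathbf{a}}\boxtimes\sgn_H$ is exact as well. Set $\Cs=\PP ol^{\st}_{d,p^r}$ and let $\Lambda$ be its (finite) weight poset, consisting of the transposes $\lambda^t$ of $p^r$-stable partitions $\lambda\vdash d$. I will prove, by induction on $|\Pi|$ over ideals $\Pi\subseteq\Lambda$, that $\Phi_{\sgn_H}$ and $\t\Phi_{\sgn_H}$ agree on the Serre subcategory $\Cs_\Pi$. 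Because $\Pi$ is an ideal, $\Cs_\Pi$ is again highest weight, with standard, costandard and tilting objects the $\De_\mu,\Na_\mu,T_\mu$ for $\mu\in\Pi$; moreover $\Phi_{\sgn_H}|_{\Cs_\Pi}$ has tilting derived functor $\t\Phi_{\sgn_H}|_{\Cs_\Pi}$, since the two functors agree on the relevant tilting objects by Lemma~\ref{lemma tilts to tilts}. When $\Pi$ is small enough that $\Cs_\Pi\simeq\vec$, the claim is vacuous, since an additive functor on $\vec$ is exact and thus equal to its tilting derived functor.

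For the inductive step, let $\Pi\subsetneq\Lambda$ be an ideal for which the claim holds and pick $\lambda^t\in\Lambda\setminus\Pi$ minimal, so that $\lambda$ is $p^r$-stable, $\Pi':=\Pi\cup\{\lambda^t\}$ is an ideal, and $\lambda^t$ is a maximal element of $\Pi'$. I apply Lemma~\ref{lemma hw cat nonsense} to $\Cs_{\Pi'}$ with maximal weight $\lambda^t$ and $\phi=\Phi_{\sgn_H}$: hypothesis~(1) is the inductive hypothesis that $\Phi_{\sgn_H}$ is exact on $\Cs_\Pi$, and hypothesis~(2), that the natural maps $\Phi_{\sgn_H}(\De_{\lambda^t})\to\t\Phi_{\sgn_H}(\De_{\lambda^t})$ and $\t\Phi_{\sgn_H}(\Na_{\lambda^t})\to\Phi_{\sgn_H}(\Na_{\lambda^t})$ are isomorphisms, is precisely Lemma~\ref{lem iso on stand}. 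Lemma~\ref{lemma hw cat nonsense} then yields a natural split monomorphism $\nu\colon\t\Phi_{\sgn_H}\Rightarrow\Phi_{\sgn_H}$ on $\Cs_{\Pi'}$.

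To finish I must show $\nu$ is an isomorphism, which is where the argument departs from that of Theorem~\ref{thm periodicity for scs}. Applying the exact, conservative forgetful functor $U\colon\PP ol_{d-p^r}\boxtimes\rep H\to\rep H$, the morphism $U\nu_M\colon U\t\Phi_{\sgn_H}(M)\to U\Phi_{\sgn_H}(M)$ is a monomorphism of finite-dimensional $H$-modules, while Remark~\ref{rem sgn split} exhibits $U\Phi_{\sgn_H}(M)$ as a direct summand of $U(\Phi_{\mathbf{a}}\boxtimes\sgn_H)(M)=U\t\Phi_{\sgn_H}(M)$, so $\dim_\Bbbk U\Phi_{\sgn_H}(M)\le\dim_\Bbbk U\t\Phi_{\sgn_H}(M)$; combined with the monomorphism $U\nu_M$ this forces equality of dimensions, whence $U\nu_M$ is an isomorphism. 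As $U$ is conservative, $\nu_M$ is an isomorphism, and taking $\Pi=\Lambda$ completes the induction and proves $\Phi_{\sgn_H}=\Phi_{\mathbf{a}}\boxtimes\sgn_H$. The only delicate point is the bookkeeping in the inductive step — checking that the restriction of $\Phi_{\sgn_H}$ along ideals of $\Lambda$ keeps us in the setting of Lemma~\ref{lemma hw cat nonsense}, with Lemma~\ref{lem iso on stand} supplying hypothesis~(2) at each weight peeled off — but this is handled exactly as in the proof of Theorem~\ref{thm periodicity for scs}.
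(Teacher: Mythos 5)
Your proof is correct and takes essentially the same route as the paper. The paper's proof is terse, compressing the iteration of Lemma~\ref{lemma hw cat nonsense} into a single sentence and invoking Remark~\ref{rem sgn split} to conclude; you have unpacked exactly what that iteration has to be — a formal induction on ideals of the weight poset, with hypothesis~(1) of Lemma~\ref{lemma hw cat nonsense} supplied by the inductive hypothesis, hypothesis~(2) supplied by Lemma~\ref{lem iso on stand} at the peeled-off weight, and the split mono from Lemma~\ref{lemma hw cat nonsense} upgraded to an isomorphism at each step using the reverse split inclusion of Remark~\ref{rem sgn split} together with conservativity of~$U$. The point at which you ``depart from Theorem~\ref{thm periodicity for scs}'' (the dimension count via $U$ and Remark~\ref{rem sgn split} in place of Lemma~\ref{lemma exactness property}) is exactly the substitution the paper itself makes, so there is no real divergence.
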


\begin{proof}
By~Lemma~\ref{lem iso on stand}, the natural maps $\Phi_{\sgn_H}(\De_{\lambda^t})\rightarrow\Phi_{\mathbf{a}}(\De_{\lambda^t})\boxtimes\sgn_H$ and $\Phi_{\mathbf{a}}(\Na_{\lambda^t})\boxtimes\sgn_H\rightarrow \Phi_{\sgn_H}(\Na_{\lambda^t})$ are isomorphisms for all $p^r$-stable partitions $\lambda$. By iteratively applying~Lemma~\ref{lemma hw cat nonsense}, we find a natural transformation $\Phi_{\mathbf{a}}\boxtimes\sgn_H\Rightarrow \Phi_{\sgn_H}$ which is a split monomorphism. Remark~\ref{rem sgn split} then completes the proof. 
\end{proof}

Now let $\varphi:\operatorname{Rep}(H)\to\As$ be a CF functor, where $\As$ is a locally finite abelian category.  Consider the corresponding OTI functor 
$$\Phi_{\varphi}:\PP ol_d\to\PP ol_{d-p^r}\boxtimes\As.$$ Recall that $\Phi_{\varphi}(\PP ol^{\st}_{d,p^r})\sub \PP ol^{\trun}_{d-p^r,p^r}\boxtimes\As$ by~Lemma~\ref{lemma oti st to tr}.

\begin{thm}\label{thm periodicity poly functors}
Suppose $\varphi$ satisfies the following assumptions: 
\begin{enumerate}
    \item for all $H$-modules $V$ we have $\varphi(V)=0$ whenever $\varphi_{\mathbf{a}}(V)=0$, and
    \item $\varphi(\sgn_H)$ is a simple object of $\As$.
\end{enumerate}
Then the OTI functor $\Phi_{\varphi}:\PP ol^{\st}_{d,p^r}\to \PP ol^{\trun}_{d-p^r,p^r}\boxtimes\As$ coincides with $\Phi_{\mathbf{a}}\boxtimes \varphi(\sgn_H)$. In particular, $\Phi_{\varphi}$ corestricts to an equivalence $\PP ol^{\st}_{d,p^r}\xto{\sim}\PP ol^{\trun}_{d-p^r,p^r}$ of highest weight categories, independent of the choice of $\varphi$.
\end{thm}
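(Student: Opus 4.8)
The plan is to prove the sharper statement that $\Phi_{\varphi}$ coincides, on $\PP ol^{\st}_{d,p^r}$, with the exact functor $\Phi_{\mathbf{a}}\boxtimes\varphi(\sgn_H)$ obtained by composing the equivalence $\Phi_{\mathbf{a}}$ of Theorem~\ref{thm periodicity for scs} with $-\boxtimes\varphi(\sgn_H)$; the assertion about highest weight categories then follows at once, since $-\boxtimes\varphi(\sgn_H)$ identifies $\PP ol^{\trun}_{d-p^r,p^r}$ with the replete subcategory $\PP ol^{\trun}_{d-p^r,p^r}\boxtimes\langle\varphi(\sgn_H)\rangle$ of $\PP ol^{\trun}_{d-p^r,p^r}\boxtimes\As$.

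First I would record that $\Phi_{\varphi}$ and $\Phi_{\mathbf{a}}\boxtimes\varphi(\sgn_H)$ agree on $\Tilt^{\st}_{d,p^r}$: by~\eqref{eq:tilt_lemma} both send $\Lambda^{k_1}\otimes\cdots\otimes\Lambda^{k_j}$ to $\big(\bigoplus_{k_i\ge p^r}\Lambda^{k_1}\otimes\cdots\otimes\Lambda^{k_i-p^r}\otimes\cdots\otimes\Lambda^{k_j}\big)\boxtimes\varphi(\sgn_H)$, using that $\sgn_A\cong\mathbf{1}$ so that $\varphi_{\mathbf{a}}(\sgn_A)$ is the unit object of $\Ver_p$. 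Since $\Phi_{\mathbf{a}}$ is an equivalence of highest weight categories (hence exact) and $-\boxtimes\varphi(\sgn_H)$ is exact, the composite $\Phi_{\mathbf{a}}\boxtimes\varphi(\sgn_H)$ is exact, and by Remark~\ref{rem on tilting} it is the tilting derived functor $\tilde\Phi_{\varphi}$ of $\Phi_{\varphi}$. Consequently we obtain natural transformations $\Phi_{\varphi}\Rightarrow\tilde\Phi_{\varphi}$ on $\Filt(\De)$ and $\tilde\Phi_{\varphi}\Rightarrow\Phi_{\varphi}$ on $\Filt(\Na)$.

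Next I would verify the hypotheses of Lemma~\ref{lemma hw cat nonsense} for $\Phi_{\varphi}$, restricted to $\PP ol^{\st}_{d,p^r}$ and then inductively to the highest weight subcategories obtained by deleting maximal weights $\lambda^t$ with $\lambda$ being $p^r$-stable (all simples of $\PP ol^{\st}_{d,p^r}$ being of this form). Exactness of $\Phi_{\varphi}$ on the smaller subcategory, hypothesis~(1), follows by induction on the size of the poset, because once $\Phi_{\varphi}$ is known to coincide with the exact functor $\tilde\Phi_{\varphi}$ there it is automatically exact. The substantive point is hypothesis~(2): that $\Phi_{\varphi}(\De_{\lambda^t})\to\tilde\Phi_{\varphi}(\De_{\lambda^t})$ and $\tilde\Phi_{\varphi}(\Na_{\lambda^t})\to\Phi_{\varphi}(\Na_{\lambda^t})$ are isomorphisms. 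Here I would mimic the proof of Lemma~\ref{lem iso on stand} with $\sgn_H$ replaced throughout by $\varphi(\sgn_H)$: choose a one-step presentation $0\to\De_{\lambda^t}\to\Lambda^{\lambda_1}\otimes\cdots\otimes\Lambda^{\lambda_\ell}\to M'\to 0$ with $M'\in\Filt(\De_{<\lambda^t})$, apply the exact functor $E^{p^r}$, and use~\eqref{eqn lambdas} to split off the $\sgn_H$-isotypic summand; every non-transitive induced summand is annihilated by the CF functor $\varphi$, and assumption~(1) ensures that what survives $\varphi_{\mathbf{a}}$ (namely that $\sgn_H$-summand) also survives $\varphi$. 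Pushing everything to $\vec$ along the forgetful functor $U$ and an exact conservative functor $\omega\colon\As\to\vec$ (Example~\ref{example CF to vec general}), and invoking Lemma~\ref{lem oti scs is sgn} (which already identifies $\Phi_{\sgn_H}$ with $\Phi_{\mathbf{a}}\boxtimes\sgn_H$) together with the $\sgn_H$-analogue of the split inclusion of Example~\ref{cf varphi 1}, reduces the required isomorphism to a dimension count. Lemma~\ref{lemma hw cat nonsense} then yields that $\tilde\Phi_{\varphi}$ is a natural split direct summand of $\Phi_{\varphi}$ on all of $\PP ol^{\st}_{d,p^r}$.

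It remains to upgrade ``split direct summand'' to ``isomorphism'', which I would do as in Remark~\ref{rem sgn split}: apply an exact conservative functor $\Omega\colon\PP ol^{\trun}_{d-p^r,p^r}\boxtimes\As\to\vec$; by Lemma~\ref{lemma additive functors extend} this rewrites $\Omega\circ\Phi_{\varphi}$ as $(\omega\varphi)\circ\widetilde{E}$ for a fixed exact functor $\widetilde{E}\colon\PP ol_d\to\rep H$ and a CF functor $\omega\varphi\colon\rep H\to\vec$, and similarly $\Omega\circ\tilde\Phi_{\varphi}$ in terms of $\varphi_{\mathbf{a}}$; assumption~(1) and the already settled case of $\varphi_{\mathbf{a}}$ (Theorem~\ref{thm periodicity for scs}) then show the split inclusion has isomorphic source and target on every object, and conservativity of $\Omega$ forces $\Phi_{\varphi}\cong\tilde\Phi_{\varphi}=\Phi_{\mathbf{a}}\boxtimes\varphi(\sgn_H)$. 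The step I expect to be the main obstacle is the verification of hypothesis~(2) of Lemma~\ref{lemma hw cat nonsense}: one must track precisely which summands of $E^{p^r}$ of a $\Lambda$-tensor product are killed by $\varphi$ versus by $\varphi_{\mathbf{a}}$, manage the interplay between the $\PP ol$-factor and the $\rep H$-factor of the Deligne tensor product, and then check that condition~(1) is exactly what makes the two comparison maps agree.
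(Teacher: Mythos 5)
Your plan follows the same overall strategy as the paper's proof: identify $\Phi_{\mathbf{a}}\boxtimes \varphi(\sgn_H)$ with the tilting derived functor of $\Phi_{\varphi}$ via Lemma~\ref{lemma tilts to tilts} and Remark~\ref{rem on tilting}, verify the hypotheses of Lemma~\ref{lemma hw cat nonsense}, and then upgrade the resulting split monomorphism to an isomorphism by applying a conservative functor. The agreement on tilts, the role of the tilting derived functor, and the final conservativity argument are all present in the paper's proof.

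Where you diverge is in the verification of hypothesis~(2) of Lemma~\ref{lemma hw cat nonsense}. You propose to re-run the one-step presentation argument of Lemma~\ref{lem iso on stand} with $\sgn_H$ replaced by $\varphi(\sgn_H)$. The paper instead avoids redoing that argument: it fixes the $\sgn_H$-isotypic decomposition $UE^{p^r}M=(UE^{p^r}M)_{\sgn_H}\oplus V$ for an \emph{arbitrary} $M\in\PP ol^{\st}_{d,p^r}$, invokes Lemma~\ref{lem oti scs is sgn} (which already identifies $\Phi_{\sgn_H}$ with $\Phi_{\mathbf{a}}\boxtimes\sgn_H$) to conclude $\varphi_{\mathbf{a}}(V)=0$, uses assumption~(1) to deduce $\varphi(V)=0$, and thereby gets $U\Phi_{\varphi}(M)\cong U(\Phi_{\mathbf{a}}(M)\boxtimes\varphi(\sgn_H))$ for all $M$ at once. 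It then checks that on $\De_{\lambda^t}$ this agrees with the natural comparison map. Your route should work too, but it re-proves intermediate structure that Lemma~\ref{lem oti scs is sgn} packages once and for all, so it is longer. Also note that your ``exactness by induction'' step for hypothesis~(1) needs the split-mono-to-iso upgrade performed at \emph{each} stage of the induction, not only at the end, since Lemma~\ref{lemma hw cat nonsense} by itself only yields a split summand, not exactness of $\Phi_\varphi$.

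There is one genuine logical slip you should fix. You write that ``assumption~(1) ensures that what survives $\varphi_{\mathbf{a}}$ (namely that $\sgn_H$-summand) also survives $\varphi$.'' This is the \emph{converse} of assumption~(1), and is not true in general. Assumption~(1) is used in the opposite direction: the complement $V$ of the $\sgn_H$-isotypic piece is killed by $\varphi_{\mathbf{a}}$ (by Lemma~\ref{lem oti scs is sgn}), and assumption~(1) then forces $\varphi(V)=0$. That the $\sgn_H$-isotypic piece itself \emph{survives} $\varphi$ is instead guaranteed by assumption~(2), which makes $\varphi(\sgn_H)$ simple, hence nonzero. Both assumptions are needed at this step, and their roles are complementary; as written your sketch conflates them.
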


We note that the semisimplification functor $\varphi_H:\rep H\to(\rep H)^{ss}$ satisfies assumptions (1) and (2), as do the CF functors in~Example~\ref{example CF functors}.
The conjectural CF functor constructed in ~\cite[Sec.~5.4]{CF} does as well.

\begin{proof}
By~Lemma~\ref{lemma tilts to tilts}, the equivalence 
$\Phi_{\mathbf{a}}\boxtimes \varphi(\sgn_H)$ coincides with $\Phi_{\varphi}$ on the subcategory $\Tilt^{\st}_{d,p^r}$. 
In particular, we will identify $\Phi_{\mathbf{a}}\boxtimes \varphi(\sgn_H)$ with the tilting derived functor of~$\Phi_{\varphi}$.
Now let $M\in \PP ol^{\st}_{d,p^r}$ and choose a decomposition  $$UE^{p^r}M=(UE^{p^r}M)_{\sgn_H}\oplus V.$$
Applying $\varphi_{\mathbf{a}}$ to both sides and using Lemma~\ref{lem oti scs is sgn}, 
we find $U\Phi_{\mathbf{a}}(M)\cong U\Phi_{\mathbf{a}}(M)\oplus \varphi_{\mathbf{a}}(V),$
hence $\varphi_{\mathbf{a}}(V)=0$, which by $(1)$ implies $\varphi(V)=0$.
Hence, using~Lemma~\ref{lem oti scs is sgn} once more, 
$$U\Phi_{\varphi}(M)
\cong\varphi((UE^{p^r}M)_{\sgn_H}\oplus V)
\cong U(1\boxtimes \varphi)(E^{p^r}M)_{\sgn_H}
\cong U(\Phi_{\mathbf{a}}(M)\boxtimes\varphi(\sgn_H)).$$
It is not hard to check that for $M=\De_{\lambda^t}$, the above isomorphism corresponds to the image under $U$ of the natural map $\Phi_{\varphi}(\De_{\lambda^t})\rightarrow(\Phi_{\mathbf{a}}\boxtimes \varphi(\sgn_H))(\De_{\lambda^t})$. 
Since $U$ detects isomorphisms, $\Phi_{\varphi}(\De_{\lambda^t})\rightarrow(\Phi_{\mathbf{a}}\boxtimes \varphi(\sgn_H))(\De_{\lambda^t})$ is an isomorphism for all $p^r$-stable partitions~$\lambda$. Dually, the natural map $(\Phi_{\mathbf{a}}\boxtimes \varphi(\sgn_H))(\Na_{\lambda^t})\rightarrow \Phi_{\varphi}(\Na_{\lambda^t})$ is also an isomorphism for all such $\lambda$. By iteratively applying~Lemma~\ref{lemma hw cat nonsense}, we find a natural transformation $\Phi_{\mathbf{a}}\boxtimes \varphi(\sgn_H)\Rightarrow \Phi_{\varphi}$ on $\PP ol^{\st}_{d,p^r}$ which is a split monomorphism. Since it must be an isomorphism after applying $U$, this natural transformation is an isomorphism.
\end{proof}

\subsection{Periodicity for symmetric groups}

We would now like to translate Theorem \ref{thm periodicity poly functors} to $\rep S_d$.  For this, define $\Rep^{\st}_{p^r} S_d$ (resp.~$\Rep^{\trun}_{p^r} S_d$) to be the full subcategory of $\PP ol^{\st}_{d,p^r}$ (resp.~$\PP ol^{\trun}_{d,p^r}$) on objects in the image of $\mathcal{F}\otimes \sgn$. Let $\KK_d^{\st}$ (resp.~$\KK_{d}^{\trun}$) denote the Serre subcategory of $\PP ol_{d,p^r}^{\st}$ (resp.~$\PP ol_{d,p^r}^{\trun}$) generated by simples $L_\lambda$ where $\lambda^t$ is $p$-singular.  Let $\varphi$ be as in~Theorem~\ref{thm periodicity poly functors}. Then~Lemma~\ref{lem:OTI-SWdual} shows we have a commutative diagram:
 \begin{equation*}\label{eqn comm diag oti}
      \xymatrix{
    \KK^{\st}_{d,p^r}\ar[r] \ar[d]_{\sim}^{\Phi_{\varphi}} & \PP ol_{d,p^r}^{\st} \ar[r]^-{\FF} \ar[d]_{\sim}^{\Phi_{\varphi}} & \Rep^{\st}_{p^r} S_d\otimes\sgn \ar[d]^{\Phi_{\varphi}} 
    \\
    \KK^{\trun}_{d-p^r,p^r} \ar[r] & \PP ol_{d-p^r,p^r}^{\trun} \ar[r]^-{\FF} & \Rep^{\trun}_{p^r} S_{d-p^r}\otimes\sgn
    \ .
    }
 \end{equation*}
Note that $\KK^{\st}_{d,p^r}$ (resp.~$\KK^{\trun}_{d,p^r}$) is exactly the Serre subcategory of objects annihilated by $\FF$ in $\PP ol_{d,p^r}^{\st}$ (resp. $\PP ol_{d,p^r}^{\trun}$).  It is easy to see from Theorem \ref{thm periodicity poly functors} that $\Phi_{\varphi}:\KK^{\st}_{d,p^r}\to\KK^{\trun}_{d-p^r,p^r}$ is an equivalence.  

We will need the following lemma. Recall that a subcategory $\As\subset\Bs$ is called \textit{quotient reflective} if the inclusion $\As\hookrightarrow \Bs$ has a left adjoint such that the unit of the adjunction is an epimorphism.

\begin{lemma}
Let $\Bs$ and $\Bs' $ be abelian categories and suppose $S:\Bs\to \Bs'$ is a Serre quotient functor. Let $\Cs\subset\Bs$ be a quotient reflective abelian subcategory.
Write $\Cs'$ for the essential image of $\Cs$ under~$S$. Then for every morphism $f:SX\to SY$ in $\Cs'$ with $X,\,Y$ in $\Cs$, there exist morphisms $g:Z\to X$ and $h:Z\to Y$ both in $\Cs$, so that $Sg$ is an isomorphism and such that $f=Sh(Sg)^{-1}$.
In particular, $\Cs'\subset \Bs'$ is an abelian subcategory and $S:\Cs\to \Cs'$ is a Serre quotient functor.
\end{lemma}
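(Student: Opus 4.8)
The plan is to reduce everything to the calculus of fractions for the Serre quotient $\Bs'\simeq\Bs/\NN$ with $\NN=\ker S$, and then to observe that the fractions may be taken inside $\Cs$, because a quotient reflective abelian subcategory is automatically closed under subobjects and finite biproducts in $\Bs$. For subobjects: if $C'\hookrightarrow C$ with $C\in\Cs$, the reflection unit $\eta_{C'}\colon C'\to LC'$ is an epimorphism (this is the content of \emph{quotient reflective}), while the monomorphism $C'\hookrightarrow C$ factors through $\eta_{C'}$ by the universal property of the reflection; an epimorphism through which a monomorphism factors is an isomorphism, so $C'\cong LC'\in\Cs$. Closure under finite biproducts is immediate since $\Cs$ is an additive subcategory, and since $\Cs\subset\Bs$ is an abelian subcategory, kernels and cokernels in $\Cs$ agree with those in $\Bs$; I will use this freely.

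First I would take $f\colon SX\to SY$ in $\Cs'$ with $X,Y\in\Cs$ and, using that $S$ is a Serre quotient functor, represent $f$ by a genuine morphism $\bar f\colon X'\to Y/Y''$ of $\Bs$, where $X'\subseteq X$ with $X/X'\in\NN$ and $Y''\subseteq Y$ with $Y''\in\NN$; then $f=(Sq)^{-1}\circ S\bar f\circ(S\iota)^{-1}$, where $\iota\colon X'\hookrightarrow X$ and $q\colon Y\twoheadrightarrow Y/Y''$, and $S\iota$, $Sq$ are isomorphisms. Now set $Z=X'\times_{Y/Y''}Y$, the pullback of $\bar f$ and $q$, with projections $p_1\colon Z\to X'$ and $h\colon Z\to Y$. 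As a subobject of $X'\oplus Y$, and since $X'\oplus Y\in\Cs$ (using $X'\subseteq X\in\Cs$ together with closure under subobjects and biproducts), the object $Z$ lies in $\Cs$; put $g=\iota\circ p_1\colon Z\to X$, a morphism of $\Cs$. The projection $p_1$ is a pullback of the epimorphism $q$, hence epi with $\ker p_1\cong\ker q=Y''\in\NN$, so $Sp_1$ is an isomorphism; together with $S\iota$ this shows $Sg$ is an isomorphism. Finally $\bar f\circ p_1=q\circ h$, so applying the exact functor $S$ and inverting $Sq$, $Sp_1$, $S\iota$ gives $f=Sh\circ(Sg)^{-1}$, which is the assertion.

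For the \emph{in particular} clause I would argue formally. Exactness of $S$ and the agreement of (co)kernels in $\Cs$ and $\Bs$ mean that, in the factorisation $f=Sh(Sg)^{-1}$ with $Sg$ an isomorphism, the kernel and cokernel of $f$ in $\Bs'$ are $S$ of the kernel and cokernel of $h\colon Z\to Y$ taken in $\Cs$; these lie in $\Cs'$, so $\Cs'$ is an abelian subcategory of $\Bs'$ and $S|_\Cs\colon\Cs\to\Cs'$ is exact and essentially surjective. The subcategory $\ker(S|_\Cs)=\NN\cap\Cs$ is a Serre subcategory of $\Cs$, since $\NN$ is Serre in $\Bs$ and subobjects, quotients and extensions in $\Cs$ are computed in $\Bs$. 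The roof representation of the first part says exactly that every morphism of $\Cs'$ has the form $S(h)S(g)^{-1}$ with $\ker g$ and $\coker g$ lying in $\NN\cap\Cs$; this is precisely the universal property identifying $S|_\Cs$ with the Serre quotient functor $\Cs\to\Cs/(\NN\cap\Cs)\simeq\Cs'$.

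The crux is the first paragraph: the observation that \emph{quotient reflective} forces $\Cs$ to be closed under subobjects — without it, the pullback $Z$ would not be known to lie in $\Cs$ and the fraction argument collapses. A secondary point demanding care is that one must build the roof via the pullback rather than take an arbitrary $S$-invertible denominator, so that $g$ visibly factors as an epimorphism onto $X'$ followed by a monomorphism with $\NN$-cokernel; this is what makes $Sg$ transparently invertible while keeping $Z$ a subobject of an object of $\Cs$.
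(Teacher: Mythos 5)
Your proof is correct, but it takes a genuinely different route from the paper's. The paper's argument is more direct: starting from an arbitrary roof $X \xleftarrow{g} Z \xrightarrow{h} Y$ in $\Bs$ with $Sg$ invertible, it factors $g$ and $h$ through the unit $\eta_Z\colon Z\twoheadrightarrow i_*i^*Z$ (using adjunction and that $X,Y\in\Cs$), then observes that since $Sg = Sg'\circ S\eta_Z$ with $Sg$ an isomorphism and $S\eta_Z$ an epimorphism, the map $S\eta_Z$ is actually an isomorphism (split mono plus epi), so $Sg'$ is too. You instead first extract from the quotient-reflective hypothesis that $\Cs$ is closed under subobjects in $\Bs$ (a correct and useful observation, via the epi-plus-mono-implies-iso trick on the unit $\eta_{C'}$), and then construct an \emph{explicit} roof $Z = X'\times_{Y/Y''}Y$ that visibly lives in $\Cs$ as a subobject of $X'\oplus Y$. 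Both arguments use the quotient-reflective hypothesis in essentially the same way — yours just repackages the epi unit as a closure property before invoking it — but your version is a bit longer because it builds a specific fraction instead of massaging an arbitrary one. What you gain is isolating the reusable fact that a quotient reflective abelian subcategory is closed under subobjects; what the paper gains is brevity by applying the reflection unit directly to whatever $Z$ the Serre-quotient calculus hands you. The ``in particular'' clauses agree in substance: both observe that kernels and cokernels of $f$ in $\Bs'$ are $S$ of kernels and cokernels of $h$ computed in $\Cs$, which lie in $\Cs'$, and that the roof presentation characterizes $S|_\Cs$ as the Serre quotient $\Cs\to\Cs/(\ker S\cap\Cs)$.
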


\begin{proof}
 Let $f:SX\to SY$ be a morphism in $\Cs'$ with $X,\,Y$ in $\Cs$. Since $S$ is a Serre quotient, there exists an object $Z$ in $\Bs$ and morphisms $g:Z\to X$ and $h:Z\to Y$ in $\Bs$, so that $Sg$ is an isomorphism and such that $f=Sh(Sg)^{-1}$: 
$$\begin{tikzcd}
    &Z \ar[dl, "g"'] \ar[dr, "h"]&\\
      X&  &Y.
\end{tikzcd}$$
Since $X,\,Y$ are in $\Cs$ and the inclusion $i_*:\Cs\hookrightarrow \Bs$ has a left adjoint $i^*$ by assumption, we know that $g$ and $h$ factor via the unit morphism on $Z$,
$$\begin{tikzcd}
    &Z \ar[dl, "g"'] \ar[dr, "h"]\ar[d,two heads, "\eta"]&\\
      X& i_*i^* Z\ar[r, "h'"]\ar[l, "g'"'] &Y.
\end{tikzcd}$$
Since $Sg$ is an isomorphism and $S\eta$ is an epimorphism, it follows that $Sg'$ is an isomorphism with $f=Sh'(Sg')^{-1}$. 
It follows immediately that $\Cs'$ is closed under kernels and cokernels in $\Bs'$, hence is abelian, and that 
$S:\Cs\to \Cs'$ is a Serre quotient functor.
\end{proof}

By~Lemma~\ref{lemma:p^r_stability_open}, we know that $\PP ol^{\trun}_{d,p^r}$ and $\PP ol^{\st}_{d,p^r}$ are quotient reflective subcategories of $\PP ol^d$, hence the following is an easy corollary of the lemma.

\begin{cor} Let $\mathrm{a}\in\{\trun,\st\}.$
\begin{enumerate}
    \item $\Rep^{\mathrm{a}}_{p^r} S_d$ is an abelian subcategory of $\rep S_d$, containing all $M^{\lambda}$ and $S^{\lambda}$ where $\lambda$ is $p^r$-stable/truncated, and all $D^{\lambda}$ where $\lambda$ is $p$-regular, $p^r$-stable/truncated.
    \item $\FF:\PP ol^{\mathrm{a}}_{d,p^r}\to \left(\Rep^{\mathrm{a}}_{p^r} S_d\right)\otimes\sgn$ is a Serre quotient functor whose kernel $\KK_d^{\mathrm{a}}$ has simples given by $L_{\lambda}$ where $\lambda^t$ is $p$-singular, $p^r$-stable/truncated.
\end{enumerate}
\end{cor}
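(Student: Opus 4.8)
The plan is to obtain both statements as a single application of the preceding lemma to the Schur--Weyl quotient functor, feeding in the combinatorics of Lemma~\ref{lemma:p^r_stability_open} and the highest weight structure of $\PP ol_d$. Concretely, I would verify the two hypotheses of that lemma --- that $\FF\otimes\sgn$ exhibits $\rep S_d$ as a Serre quotient of $\PP ol_d$, and that $\PP ol^{\st}_{d,p^r}$ and $\PP ol^{\trun}_{d,p^r}$ are quotient reflective in $\PP ol_d$ --- and then read off the conclusion.

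First I would record the Serre quotient property: $\FF\colon\PP ol_d\to\rep S_d$ is exact and has a fully faithful right adjoint $\GG$ (fully faithful because $\FF\GG\cong\id$), hence is a Serre quotient functor, whose kernel is the Serre subcategory generated by the simples $L_\lambda$ with $\FF(L_\lambda)=0$, i.e.\ with $\lambda^t$ $p$-singular; this is classical Schur--Weyl duality, see~\cite{Hemm}. Composing with the self-inverse autoequivalence $-\otimes\sgn$ of $\rep S_d$, the functor $\FF\otimes\sgn$ is again a Serre quotient functor, with the same kernel. For quotient reflectivity: by Lemma~\ref{lemma:p^r_stability_open} the sets of $p^r$-stable and of $p^r$-truncated partitions of $d$ are upwards closed in the dominance order, and since transposition is an anti-automorphism of that poset, the index set $\Lambda_0=\{\nu:\nu^t\text{ is }p^r\text{-stable/truncated}\}$ is a lower set (coideal), so $\PP ol^{\mathrm a}_{d,p^r}=(\PP ol_d)_{\Lambda_0}$ is a highest weight subcategory of $\PP ol_d$. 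By the recollement recalled at the start of Section~\ref{section:periodicity_result} (applied iteratively, removing maximal weights one at a time), the inclusion $i_*\colon(\PP ol_d)_{\Lambda_0}\hookrightarrow\PP ol_d$ has a left adjoint $i^*$ sending an object to its largest quotient lying in the subcategory; in particular the unit $M\to i_*i^*M$ is an epimorphism, so $\PP ol^{\mathrm a}_{d,p^r}$ is quotient reflective.

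Next I would apply the preceding lemma with $\Bs=\PP ol_d$, $\Bs'=\rep S_d$, $S=\FF\otimes\sgn$ and $\Cs=\PP ol^{\mathrm a}_{d,p^r}$, for $\mathrm a\in\{\st,\trun\}$. The essential image of $\PP ol^{\mathrm a}_{d,p^r}$ under $\FF\otimes\sgn$ is by definition $\Rep^{\mathrm a}_{p^r}S_d$, so the lemma gives at once that $\Rep^{\mathrm a}_{p^r}S_d$ is an abelian subcategory of $\rep S_d$ and that $\FF\otimes\sgn$ restricts to a Serre quotient functor $\PP ol^{\mathrm a}_{d,p^r}\to\Rep^{\mathrm a}_{p^r}S_d$. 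Post-composing once more with $-\otimes\sgn$, the functor $\FF$ itself restricts to a Serre quotient functor $\PP ol^{\mathrm a}_{d,p^r}\to\Rep^{\mathrm a}_{p^r}S_d\otimes\sgn$; its kernel is $\KK^{\mathrm a}_d=\ker(\FF)\cap\PP ol^{\mathrm a}_{d,p^r}$, a Serre subcategory of $\PP ol^{\mathrm a}_{d,p^r}$ whose simple objects are precisely the $L_\lambda$ with $\lambda^t$ simultaneously $p$-singular and $p^r$-stable/truncated. This is exactly (2), and the first assertion of (1).

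It remains to exhibit $M^\lambda$, $S^\lambda$ and $D^\lambda$ inside $\Rep^{\mathrm a}_{p^r}S_d$ when $\lambda$ is $p^r$-stable/truncated. For such $\lambda$ the transpose $\lambda^t$ lies in the coideal $\Lambda_0$, so the simple $L_{\lambda^t}$, the Weyl functor $\De_{\lambda^t}$, the costandard $\Na_{\lambda^t}$ and the tilting $T_{\lambda^t}$ all lie in $\PP ol^{\mathrm a}_{d,p^r}$; under $\FF$ composed with the sign twist built into the definition of $\Rep^{\mathrm a}_{p^r}S_d$ these are carried, by the formulas of Section~\ref{section SW duality}, to $D^\lambda$ (when $\lambda$ is moreover $p$-regular), $S^\lambda$, and the Young module $Y^\lambda$, all of which therefore lie in $\Rep^{\mathrm a}_{p^r}S_d$; and since $M^\lambda$ decomposes into Young modules $Y^\nu$ with $\nu\ge\lambda$ in dominance --- all again $p^r$-stable/truncated by Lemma~\ref{lemma:p^r_stability_open} --- the permutation module $M^\lambda$ lies in the abelian subcategory $\Rep^{\mathrm a}_{p^r}S_d$ as well. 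The abstract skeleton --- abelian-ness, the Serre quotient property, and the description of the kernel --- is immediate from the lemma; the one genuine, if routine, obstacle is the transpose-and-sign bookkeeping of this last paragraph, namely matching the highest weight poset of $\PP ol_d$, under $\FF\otimes\sgn$, with the labelling of $M^\lambda$, $S^\lambda$ and $D^\lambda$ in the asserted stability ranges, and confirming that the relevant simple, (co)standard and tilting objects genuinely lie in $\PP ol^{\mathrm a}_{d,p^r}$.
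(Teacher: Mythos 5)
Your proposal is correct and executes exactly the argument the paper has in mind: verify that $\FF$ (equivalently $\FF\otimes\sgn$) is a Serre quotient functor, observe that Lemma~\ref{lemma:p^r_stability_open} plus transposition makes $\PP ol^{\mathrm a}_{d,p^r}$ the highest weight subcategory attached to a lower order ideal in $\Lambda$ and hence quotient reflective, and feed everything into the preceding lemma; the final paragraph supplying the Schur--Weyl images of $L_{\lambda^t}$, $\De_{\lambda^t}$, $T_{\lambda^t}$ to produce $D^\lambda$, $S^\lambda$, $Y^\lambda$ (hence $M^\lambda$) is exactly the bookkeeping the paper is suppressing in calling this an ``easy corollary.'' One cosmetic slip: you write that $\Lambda_0$ is a ``lower set (coideal)'' --- a coideal is usually the complement of an ideal, i.e.\ an \emph{upper} set; the set $\{\nu:\nu^t\ p^r\text{-stable/truncated}\}$ is indeed a lower set (order ideal), which is what you need, so simply drop the parenthetical ``coideal.'' You might also note that the paper's Section~\ref{section SW duality} only records the formulas for tiltings, exterior tensor products and simples; the identification of $\FF(\De_{\lambda^t})$ (up to the usual transpose and sign twist) with the Specht module is a standard fact one would pull from~\cite{Hemm} rather than from the displayed formulas.
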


Recall that we assume $A\sub H\sub S_{p^r}$, where $A$ is a transitive elementary abelian $p$-group of order $p^r$.

\begin{thm}\label{thm proof from intro ss}
Suppose $\varphi:\operatorname{Rep}(H)\to\As$ satisfies the following assumptions: 
\begin{enumerate}
    \item for all $H$-modules $V$ we have $\varphi(V\otimes \sgn_H)=0$ whenever $\varphi_{\mathbf{a}}(V)=0$, and
    \item $\varphi(\mathbf{1})$ is a simple object of $\As$.
\end{enumerate}
Then the OTI functor $\Phi_{\varphi}$ (co)restricts to an equivalence $\Phi_{\varphi}:\Rep^{\st}_{p^r} S_d\to \Rep^{\trun}_{p^r} S_{d-p^r}$, independent of the choice of $\varphi$.
\end{thm}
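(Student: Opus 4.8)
The plan is to twist everything by the sign representation and reduce to Theorem~\ref{thm periodicity poly functors}. Set $\psi:\rep H\to\As$ by $\psi(V):=\varphi(V\otimes\sgn_H)$. First I would check that $\psi$ is again a CF functor for $H\subset S_{p^r}$: the projection formula gives $\Ind_{H'}^{H}(M)\otimes\sgn_H\cong\Ind_{H'}^{H}(M\otimes\sgn_{H'})$, so $\psi(\Ind_{H'}^{H}M)=\varphi(\Ind_{H'}^{H}(M\otimes\sgn_{H'}))=0$ for every non-transitive $H'\subset H$. Then I would observe that $\psi$ satisfies the hypotheses of Theorem~\ref{thm periodicity poly functors}: assumption~(1) of the statement we are proving says exactly that $\psi(V)=0$ whenever $\varphi_{\mathbf{a}}(V)=0$, which is hypothesis~(1) of Theorem~\ref{thm periodicity poly functors} for $\psi$; and assumption~(2) says exactly that $\psi(\sgn_H)=\varphi(\mathbf{1})$ is simple, which is hypothesis~(2). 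Hence Theorem~\ref{thm periodicity poly functors} applies to $\psi$, giving an equivalence of highest weight categories $\Phi_\psi:\PP ol^{\st}_{d,p^r}\xto{\sim}\PP ol^{\trun}_{d-p^r,p^r}$ that is independent of the choice of $\psi$, hence of $\varphi$.

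Next I would run $\psi$ through the machinery assembled just before the statement. Because $\psi$ meets the hypotheses of Theorem~\ref{thm periodicity poly functors}, $\Phi_\psi$ also restricts to an equivalence $\KK^{\st}_{d,p^r}\xto{\sim}\KK^{\trun}_{d-p^r,p^r}$ on the kernels of Schur--Weyl duality. Recall that $\FF$ induces Serre quotient functors $\PP ol^{\st}_{d,p^r}\to(\Rep^{\st}_{p^r}S_d)\otimes\sgn$ and $\PP ol^{\trun}_{d-p^r,p^r}\to(\Rep^{\trun}_{p^r}S_{d-p^r})\otimes\sgn$ with kernels $\KK^{\st}_{d,p^r}$ and $\KK^{\trun}_{d-p^r,p^r}$, and that Lemma~\ref{lem:OTI-SWdual} makes $\FF$ intertwine $\Phi_\psi$ with itself (using the simplicity of $\psi(\sgn_H)=\varphi(\mathbf{1})$ to collapse the trivial Deligne factor, as in Section~\ref{section periodic equiv perm mod}). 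Since a functor that is an equivalence on an abelian category and on a Serre subcategory descends to an equivalence of quotients, I would conclude that $\Phi_\psi$ corestricts to an equivalence $(\Rep^{\st}_{p^r}S_d)\otimes\sgn\xto{\sim}(\Rep^{\trun}_{p^r}S_{d-p^r})\otimes\sgn$.

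Finally I would transfer this to $\Phi_\varphi$ by unwinding the twist. For $N\in\rep S_n$, restricting $N\otimes\sgn_{S_n}$ to $S_{n-p^r}\times H$ gives $\operatorname{Res}(N)\otimes(\sgn_{S_{n-p^r}}\boxtimes\sgn_H)$. Pulling the $H$-side twist through $1\boxtimes\varphi$ via the identity $1\boxtimes\psi=(1\boxtimes\varphi)\circ(-\otimes(\mathbf{1}\boxtimes\sgn_H))$, and pulling the $S_{n-p^r}$-side twist through $1\boxtimes\varphi$ via Lemma~\ref{lemma additive functors extend}(2) applied to the exact autoequivalence $-\otimes\sgn_{S_{n-p^r}}$, should yield a natural isomorphism
\[
\Phi_\varphi(N\otimes\sgn_{S_n})\ \cong\ \Phi_\psi(N)\otimes(\sgn_{S_{n-p^r}}\boxtimes\mathbf{1}).
\]
Taking $N=M\otimes\sgn$ with $M\in\Rep^{\st}_{p^r}S_d$ then gives $\Phi_\varphi(M)\cong\Phi_\psi(M\otimes\sgn)\otimes(\sgn\boxtimes\mathbf{1})$, exhibiting $\Phi_\varphi$ restricted to $\Rep^{\st}_{p^r}S_d$ as the composite of the autoequivalence $-\otimes\sgn$ of $\Rep^{\st}_{p^r}S_d$, the equivalence of the previous paragraph, and the autoequivalence $-\otimes(\sgn\boxtimes\mathbf{1})$; hence $\Phi_\varphi$ corestricts to an equivalence $\Rep^{\st}_{p^r}S_d\xto{\sim}\Rep^{\trun}_{p^r}S_{d-p^r}$, and since only the first step involves $\varphi$ (where independence is part of Theorem~\ref{thm periodicity poly functors}), this equivalence is independent of $\varphi$.

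The step I expect to be the main obstacle is the last one: $1\boxtimes\varphi$ is not exact, so it cannot be commuted past arbitrary functors. What makes the argument go through is that each of the two sign twists corresponds to an \emph{exact} autoequivalence of one of the two Deligne factors, so Lemma~\ref{lemma additive functors extend}(2) legitimately applies provided one separates the two twists and treats them one at a time; carrying out this bookkeeping correctly, and tracking which simple object of $\As$ is collapsed at the end, is the delicate part, while everything else is a direct appeal to Theorem~\ref{thm periodicity poly functors}, Lemma~\ref{lem:OTI-SWdual}, and the Serre-quotient setup recorded above.
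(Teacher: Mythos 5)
Your proposal is correct and matches the paper's proof essentially step for step: the paper also defines $\varphi_{\sgn}:=\varphi(-\otimes\sgn)$ (your $\psi$), applies Theorem~\ref{thm periodicity poly functors} to it, descends through the Serre quotient diagram via Schur--Weyl duality, and then untwists by a commutative square expressing exactly your identity $\Phi_\varphi(N\otimes\sgn)\cong\Phi_{\varphi_\sgn}(N)\otimes(\sgn\boxtimes\mathbf{1})$. You have just filled in more of the bookkeeping (the CF check for $\psi$ via the projection formula, and the careful separation of the two sign twists so that Lemma~\ref{lemma additive functors extend}(2) applies to the exact $S_{n-p^r}$-side twist while the $H$-side twist is absorbed into $\varphi$) that the paper leaves implicit in its two displayed diagrams.
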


Again, the semisimplification functor $\varphi_H:\rep H\to(\rep H)^{ss}$ and the CF functors in~Example~\ref{example CF functors} satisfy the above assumptions, as well as the conjectural CF functor constructed in ~\cite[Sec.~5.4]{CF}.

\begin{proof}
    Let $\varphi_{\sgn}:=\varphi(-\otimes \sgn)$. Then, $\varphi_{\sgn}$ satisfies the conditions in~Theorem \ref{thm periodicity poly functors}, so
    we have a commutative diagram of Serre quotients,
\[
      \xymatrix{
    \KK^{\st}_{d,p^r}\ar[r] \ar[d]_{\sim}^{\Phi_{\varphi_{\sgn}}} & \PP ol_{d,p^r}^{\st} \ar[r]^-{\FF} \ar[d]_{\sim}^{\Phi_{\varphi_{\sgn}}} & \Rep^{\st}_{p^r} S_d\otimes\sgn \ar[d]^{\Phi_{\varphi_{\sgn}}} \\
    \KK^{\trun}_{d-p^r,p^r} \ar[r] & \PP ol_{d-p^r,p^r}^{\trun} \ar[r]^-{\FF} & \Rep^{\trun}_{p^r} S_{d-p^r}\otimes\sgn.}
    \]
    It follows that $\Phi_{\varphi}:\Rep^{\st}_{p^r} S_d\otimes\sgn\to \Rep^{\trun}_{p^r} S_{d-p^r}\otimes\sgn$ is an equivalence.  Since
    \begin{equation*}
      \xymatrix{
    \Rep^{\st}_{p^r} S_d\otimes\sgn \ar[d]^{\Phi_{\varphi_{\sgn}}}_{\sim}
    \ar[r]^-{-\otimes \sgn} 
    & \Rep^{\st}_{p^r} S_d
    \ar[d]^{\Phi_{\varphi}} 
    \\
     \Rep^{\trun}_{p^r} S_{d-p^r}\otimes\sgn
    \ar[r]^-{-\otimes \sgn} 
    & \Rep^{\trun}_{p^r} S_{d-p^r}
    }
 \end{equation*}
    commutes, $\Phi_{\varphi}:\Rep^{\st}_{p^r} S_d\to \Rep^{\trun}_{p^r} S_d$ is also an equivalence. 
\end{proof}

\begin{cor}\label{cor proof from intro}
    Suppose that $V\in\Rep^{\st}_{d,p^r} S_d$.  Then we have a decomposition 
    \[
    V|_{H}=\mathbf{1}^{\oplus \ell} \oplus \bigoplus\limits_i N_{i},
    \]
    where each $N_i$ is indecomposable and satisfies $\varphi_{\mathbf{a}}(N_i)=0$ for all generic $\mathbf{a}$.  In particular, $p \mid \dim(N_i)$ for all $i$.
\end{cor}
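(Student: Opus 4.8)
The plan is to derive this from the equivalence in Theorem~\ref{thm proof from intro ss}, applied to two different CF functors, by forgetting the ambient $\rep S_{d-p^r}$-action; throughout I write $M|_A:=\operatorname{Res}^H_AM$ and recall $R_{\mathbf a}\cong\Bbbk[z]/z^p$. First I would verify that $\psi:=\varphi_{\mathbf a}\circ\operatorname{Res}^H_A\colon\rep H\to\Ver_p$ is a CF functor for $H\subseteq S_{p^r}$ (Definition~\ref{def:cf-functor}) meeting the hypotheses of Theorem~\ref{thm proof from intro ss}: for a non-transitive $H'\subseteq H$, the Mackey formula writes $(\Ind^H_{H'}M)|_A$ as a sum of modules induced from the subgroups $A\cap{}^gH'$, and each of these is a \emph{proper} subgroup of $A$ (otherwise ${}^gH'$ would contain the transitive group $A$ and hence be transitive, a contradiction), so $\varphi_{\mathbf a}$, which annihilates every module induced from a proper subgroup of $A$, kills it. For the hypotheses of Theorem~\ref{thm proof from intro ss}: $\psi(\mathbf 1_H)=\varphi_{\mathbf a}(\mathbf 1_A)=L_1$ is simple, and since the restriction of any one-dimensional module to $R_{\mathbf a}$ is trivial, tensoring an $A$-module by $\operatorname{Res}^H_A\sgn_H$ does not change its $R_{\mathbf a}$-freeness, giving condition~(1). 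The semisimplification functor $\varphi_H\colon\rep H\to(\rep H)^{ss}$ satisfies the same hypotheses, as noted right after Theorem~\ref{thm proof from intro ss}. Hence $\Phi_\psi$ and $\Phi_{\varphi_H}$ corestrict to \emph{the same} equivalence $\Phi\colon\Rep^{\st}_{p^r}S_d\xrightarrow{\ \sim\ }\Rep^{\trun}_{p^r}S_{d-p^r}$; concretely, $\Phi_\psi(V)=\Phi(V)\boxtimes L_1$ inside $\rep S_{d-p^r}\boxtimes\Ver_p$ and $\Phi_{\varphi_H}(V)=\Phi(V)\boxtimes\mathbf 1$ inside $\rep S_{d-p^r}\boxtimes(\rep H)^{ss}$, with the \emph{same} object $\Phi(V)\in\Rep^{\trun}_{p^r}S_{d-p^r}$.

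Next I would transport this to $V|_H$ by applying the exact, conservative functor $\rep S_{d-p^r}\boxtimes(-)\to(-)$ that forgets the $\rep S_{d-p^r}$-factor. By Lemma~\ref{lemma additive functors extend} this functor intertwines $1\boxtimes\varphi$ with $\varphi$ and sends $E^{p^r}V\in\rep S_{d-p^r}\boxtimes\rep H$ to $V|_H$ (restriction along $\{1\}\times H$); so it sends $\Phi_\psi(V)$ to $\varphi_{\mathbf a}(V|_A)$ and $\Phi_{\varphi_H}(V)$ to $\varphi_H(V|_H)$. Combining with the first paragraph, $\varphi_{\mathbf a}(V|_A)\cong L_1^{\oplus m}$ and $\varphi_H(V|_H)\cong\mathbf 1^{\oplus m}$ for the \emph{same} integer $m:=\dim_\Bbbk\Phi(V)$. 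Now decompose $V|_H=\mathbf 1_H^{\oplus\ell}\oplus\bigoplus_iN_i$ into indecomposables, collecting the trivial summands. From $\varphi_H(V|_H)\cong\mathbf 1^{\oplus m}$ and additivity, $\bigoplus_i\varphi_H(N_i)\cong\mathbf 1^{\oplus(m-\ell)}$; since $\varphi_H$ sends an indecomposable to $0$ or to a simple object of $(\rep H)^{ss}$, and sends non-isomorphic non-negligible indecomposables to non-isomorphic simples, none of the $\varphi_H(N_i)$ can equal $\mathbf 1=\varphi_H(\mathbf 1_H)$ (as $N_i\not\cong\mathbf 1_H$), so $\varphi_H(N_i)=0$ for all $i$ and $m=\ell$. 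On the other hand, $\varphi_{\mathbf a}(V|_A)\cong L_1^{\oplus m}$ forces the components $\varphi_j$ of $\varphi_{\mathbf a}$ to vanish on $\operatorname{Res}_{R_{\mathbf a}}(V|_A)$ for $2\le j\le p-1$, so by Lemma~\ref{prop OTI on indecomps} that $R_{\mathbf a}$-module is a direct sum of copies of $M_1$ and $M_p$; since the summand $\mathbf 1_H^{\oplus\ell}$ already contributes $M_1^{\oplus\ell}$, each $\operatorname{Res}_{R_{\mathbf a}}(N_i|_A)$ is $M_1^{\oplus c_i}\oplus M_p^{\oplus e_i}$ and $\varphi_{\mathbf a}(N_i)=L_1^{\oplus c_i}$, with $\ell+\sum_ic_i=m$.

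The decisive input is the coincidence $m=\ell$, which yields $\sum_ic_i=0$, hence $c_i=0$ for every $i$, so $\varphi_{\mathbf a}(N_i)=L_1^{\oplus0}=0$ — this is the first assertion of the corollary, and it holds for every generic $\mathbf a$ by construction. The crux of the argument (and the part I would be most careful about) is precisely that it is \emph{the same} underlying functor $\Phi$ — the ``independent of $\varphi$'' part of Theorem~\ref{thm proof from intro ss} — that governs both $\psi$ and $\varphi_H$, so the length of the $L_1$-component of $\varphi_{\mathbf a}(V|_A)$ cannot exceed the contribution of the honest trivial $H$-summands; making this rigorous amounts to bookkeeping around the identifications $\Rep^{\trun}_{p^r}S_{d-p^r}\boxtimes\langle L_1\rangle\simeq\Rep^{\trun}_{p^r}S_{d-p^r}\simeq\Rep^{\trun}_{p^r}S_{d-p^r}\boxtimes\langle\mathbf 1\rangle$ and their compatibility with forgetting the $\rep S_{d-p^r}$-factor. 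Finally, $\varphi_{\mathbf a}(N_i)=0$ means $\operatorname{Res}_{R_{\mathbf a}}(N_i|_A)$ is free over $R_{\mathbf a}\cong\Bbbk[z]/z^p$, so $p=\dim_\Bbbk R_{\mathbf a}$ divides $\dim_\Bbbk N_i$, giving the last statement.
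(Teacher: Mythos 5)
Your argument is correct, and it is substantially more than the one‑sentence proof in the paper, which simply says to apply Theorem~\ref{thm proof from intro ss} to the semisimplification functor $\varphi_H$. Applied to $\varphi_H$ alone (and then forgetting the $\rep S_{d-p^r}$‑factor), the theorem yields $\varphi_H(V|_H)\cong \mathbf{1}^{\oplus m}$, which pins down the number $\ell=m$ of trivial $H$‑summands and forces the complementary indecomposable summands $N_i$ to be negligible, i.e.\ $p\mid\dim N_i$. That is weaker than the stated assertion $\varphi_{\mathbf a}(N_i)=0$: for $r>1$ there are plenty of negligible indecomposable modules that are not free over a generic shifted cyclic subalgebra $R_{\mathbf a}$ (their support variety need not avoid $\mathbf a$). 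Your extra step is exactly what is needed: you also apply the theorem to $\psi=\varphi_{\mathbf a}\circ\operatorname{Res}^H_A$ (after checking the CF and hypothesis conditions via Mackey and the triviality of one‑dimensional $A$‑modules in characteristic $p$), and you invoke the ``independent of $\varphi$'' clause to get $\varphi_{\mathbf a}(V|_A)\cong L_1^{\oplus m}$ with the \emph{same} $m$. Krull--Schmidt over $R_{\mathbf a}$ then forces each $N_i|_{R_{\mathbf a}}$ to be free, giving the stronger conclusion and, a fortiori, $p\mid\dim N_i$. In short: the paper elides the comparison between two CF functors and the use of the independence statement; your proof supplies the missing bookkeeping, and I would regard it as the argument the authors intended rather than the literal one‑functor application stated. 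One small stylistic point: you should note explicitly (as you do, by ``collecting the trivial summands'') that the decomposition is chosen with $N_i\not\cong\mathbf{1}_H$, since the conclusion $\varphi_{\mathbf a}(N_i)=0$ is false for $N_i=\mathbf{1}_H$.
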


\begin{proof}
    Indeed, this follows by applying Theorem \ref{thm proof from intro ss} to the CF functor given by semisimplification with respect to $H$. 
\end{proof}
    
\section{Diagrammatics and the proof of Theorem \ref{thm OTI categorical commutation}} \label{section:appendix_proof_of_categorical_commutation}
In order to provide a clear proof of Theorem \ref{thm OTI categorical commutation}, we recall the diagrammatic presentation of degenerate categorical Heisenberg actions, see also~\cite[Section 3.1]{BSW}.

\subsection{Trace morphisms for finite groups}
We first record a lemma from \cite[Prop.~3.6.6]{Benson}, on representations of finite groups.
Let $H\sub G$ be finite groups and $M,N$ be $G$-modules. 
The transfer map
\[
\operatorname{Tr}_{H,G}:\Hom_H(M,N)\to\Hom_G(M,N)
\]
is defined by
    \[
    \operatorname{Tr}_{H,G}(f):=\sum\limits_{g\in G/H}gfg^{-1}\in\Hom_{G}(M,N).
    \]
\begin{lemma}\label{lem:phi-kills-quotient-orbit}
    A morphism $h:M\to N$ of $G$-modules factors through a module of the form $\Ind_{H}^GV$ if and only if $h=\operatorname{Tr}_{H,G}(f)$ for some $f\in\Hom_H(M,N)$.
\end{lemma}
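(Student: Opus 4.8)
Lemma~\ref{lem:phi-kills-quotient-orbit} asserts that for finite groups $H \sub G$ and $G$-modules $M, N$, a morphism $h \colon M \to N$ of $G$-modules factors through an induced module $\Ind_H^G V$ (for some $H$-module $V$) if and only if $h = \op{Tr}_{H,G}(f)$ for some $f \in \Hom_H(M,N)$.

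\textbf{Approach.} The natural strategy is to exploit the adjunction between restriction and induction together with the explicit Mackey-type formula for the counit of the $(\Ind, \op{Res})$ adjunction. The cleanest route is to first reduce to the case where the module in the middle is $\Ind_H^G(\op{Res}_H M)$ itself, using the universal property of induction.

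\textbf{Key steps.} First I would prove the ``if'' direction: given $f \in \Hom_H(M,N)$, I claim $\op{Tr}_{H,G}(f)$ factors through $\Ind_H^G(\op{Res}_H M)$. Indeed, $f$ corresponds under Frobenius reciprocity to a $G$-map $\tilde f \colon \Ind_H^G(\op{Res}_H M) \to N$, and the canonical $G$-map $\pi \colon \Ind_H^G(\op{Res}_H M) \to M$ given by the counit of adjunction composed with... more precisely, there is a $G$-map $\iota \colon M \to \Ind_H^G(\op{Res}_H M)$ (the unit need not split, but one has the explicit ``averaging'' section only when the index is invertible — so instead) I would use that $\op{Tr}_{H,G}(f) = \tilde f \circ \iota$ where $\iota$ sends $m \mapsto \sum_{g \in G/H} g \otimes g^{-1}m$, which is a well-defined $G$-module homomorphism; a direct computation gives $\tilde f(\iota(m)) = \sum_{g\in G/H} g f(g^{-1}m) = \op{Tr}_{H,G}(f)(m)$. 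Hence $h = \op{Tr}_{H,G}(f)$ factors through $\Ind_H^G(\op{Res}_H M)$, which is of the required form with $V = \op{Res}_H M$.

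For the ``only if'' direction, suppose $h \colon M \to N$ factors as $M \xrightarrow{a} \Ind_H^G V \xrightarrow{b} N$ with $a,b$ being $G$-maps. By Frobenius reciprocity, $a$ corresponds to an $H$-map $a' \colon M \to \op{Res}_H \Ind_H^G V$; composing with the canonical $H$-projection $\op{Res}_H \Ind_H^G V \to V$ (sending $1 \otimes v \mapsto v$ and $g \otimes v \mapsto 0$ for $g \notin H$) gives an $H$-map $M \to \op{Res}_H(\text{restriction of }N)$... the cleaner bookkeeping: write $\Ind_H^G V = \bigoplus_{g \in G/H} g \otimes V$ as a vector space, let $p_1$ denote projection onto the $1 \otimes V$ summand (an $H$-map into $V$), and set $f = b|_{1 \otimes V} \circ (\text{incl}) \circ a' \in \Hom_H(M, N)$ where I use that $b$ restricted to $1\otimes V$ is an $H$-map to $N$. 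Then tracking the $G$-action: $h(m) = b(a(m)) = b\bigl(\sum_{g \in G/H} g\otimes (p_1(g^{-1}a(m)))\bigr) = \sum_g g \cdot b(1 \otimes p_1(g^{-1} a(m))) = \sum_g g f(g^{-1} m) = \op{Tr}_{H,G}(f)(m)$, using that $p_1(g^{-1} a(m))$ depends $H$-equivariantly on $g^{-1}m$. This gives $h = \op{Tr}_{H,G}(f)$.

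\textbf{Main obstacle.} The genuine content is purely bookkeeping — the statement is a standard fact (it appears as cited in \cite[Prop.~3.6.6]{Benson}), so there is no deep obstacle. The one point requiring care is the coset-representative dependence: one must check that the maps $m \mapsto \sum_{g \in G/H} g \otimes g^{-1} m$ and $f \mapsto \op{Tr}_{H,G}(f)$ are independent of the chosen transversal of $H$ in $G$ and land in the correct ($G$-equivariant, resp.\ $G$-equivariant) Hom-space. Since replacing $g$ by $gh$ for $h \in H$ leaves $g \otimes g^{-1}m = gh \otimes h^{-1}g^{-1}m$ unchanged (because $h^{-1}g^{-1}m$ is acted on by $h$ inside the tensor), this is routine. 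Given that all of this is standard, in the paper I would simply cite \cite[Prop.~3.6.6]{Benson} and omit the proof, which is presumably what the authors do.
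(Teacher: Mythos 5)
Your proof is correct, and it reconstructs the standard argument (relative trace/Higman criterion) cleanly: for the ``if'' direction, $\op{Tr}_{H,G}(f)$ factors through $\Ind_H^G(\Res_H M)$ via the $G$-equivariant map $\iota(m)=\sum_{g\in G/H}g\otimes g^{-1}m$ followed by the Frobenius adjoint $\tilde f$ of $f$; for the ``only if'' direction, you correctly observe that writing $a(m)=\sum_{g\in G/H}g\otimes a_1(g^{-1}m)$ (with $a_1$ the $H$-equivariant component of $a$ over the trivial coset) and applying the $G$-map $b$ yields $h=\op{Tr}_{H,G}(f)$ for $f(m)=b(1\otimes a_1(m))$. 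As you anticipate, the paper gives no proof of this lemma and simply cites \cite[Prop.~3.6.6]{Benson}, so there is nothing in the paper to compare against beyond the reference; your reconstruction matches the textbook proof. One small stylistic point: the exposition of the ``only if'' direction has some false starts and could be streamlined by stating directly that $G$-equivariance of $a$ gives $p_1(g^{-1}a(m)) = a_1(g^{-1}m)$, rather than appealing to the looser phrase ``depends $H$-equivariantly on $g^{-1}m$,'' but the underlying computation is sound.
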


\begin{cor}\label{cor:phi-kills-tr}
A $\Bbbk$-linear functor $\varphi:\rep H\to\As$ satisfies property (2) of Definition \ref{def:cf-functor} if and only if $\varphi(\operatorname{Tr}_{H',H}(f))=0$ for all non-transitive subgroups $H'\sub H$ and all $f\in\Hom_{H'}(M,N)$.
\end{cor}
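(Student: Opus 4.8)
The plan is to combine Lemma \ref{lem:phi-kills-quotient-orbit} with the bookkeeping already implicit in Definition \ref{def:cf-functor}, exploiting that property (2) is phrased in terms of objects $\Ind_{H'}^H M$ while the transfer map gives the relevant morphisms. First I would address the forward direction: assume $\varphi$ satisfies property (2), take a non-transitive subgroup $H'\subseteq H$, an $H'$-module $W$ (here I rename the modules in the statement to avoid the clash with $M,N$ below), and a morphism $f\in\Hom_{H'}(M,N)$ of $H$-modules restricted to $H'$. By Lemma \ref{lem:phi-kills-quotient-orbit}, the $H$-morphism $\operatorname{Tr}_{H',H}(f)\colon M\to N$ factors as $M\xrightarrow{a}\Ind_{H'}^H V\xrightarrow{b} N$ for some $H'$-module $V$. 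Applying $\varphi$ and using functoriality gives $\varphi(\operatorname{Tr}_{H',H}(f))=\varphi(b)\circ\varphi(a)$, which factors through $\varphi(\Ind_{H'}^H V)=0$, hence vanishes.

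Next I would handle the converse: suppose $\varphi(\operatorname{Tr}_{H',H}(f))=0$ for every non-transitive $H'\subseteq H$ and every $f\in\Hom_{H'}(M,N)$. I need to show $\varphi(\Ind_{H'}^H V)=0$ for every non-transitive $H'$ and every $H'$-module $V$. The key observation is that $\operatorname{id}_{\Ind_{H'}^H V}$ is itself a transfer: the canonical $H'$-projection $\pi\colon \Ind_{H'}^H V=\bigoplus_{g\in H/H'} g\otimes V\to V$ onto the identity coset, composed with the inclusion of that coset, is an $H'$-endomorphism $e$ of $\Ind_{H'}^H V$ with $\operatorname{Tr}_{H',H}(e)=\operatorname{id}_{\Ind_{H'}^H V}$ (this is Frobenius reciprocity at the level of identity morphisms, and is exactly the content used in the proof of Lemma \ref{lem:phi-kills-quotient-orbit}). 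Taking $M=N=\Ind_{H'}^H V$ and $f=e$ in the hypothesis yields $\varphi(\operatorname{id}_{\Ind_{H'}^H V})=\operatorname{id}_{\varphi(\Ind_{H'}^H V)}=0$, so $\varphi(\Ind_{H'}^H V)=0$, which is property (2).

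The argument is essentially a translation exercise, so I do not expect a genuine obstacle; the one point requiring a little care is verifying that $\operatorname{Tr}_{H',H}$ applied to the evident $H'$-splitting of $\operatorname{id}$ on $\Ind_{H'}^H V$ recovers the identity — i.e. that the standard ``relative trace'' identity $\operatorname{Tr}_{H',H}(e)=\operatorname{id}$ holds for this particular idempotent-up-to-transfer $e$. This is where one invokes that $\Ind_{H'}^H(-)$ is both left and right adjoint to restriction (the modules are finite-dimensional over $\Bbbk$ and $H$ is finite), so the unit/counit of the adjunction split the identity, and summing over cosets reassembles it. With that lemma in hand both implications are immediate from Lemma \ref{lem:phi-kills-quotient-orbit} and functoriality of $\varphi$.
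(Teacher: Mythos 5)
Your proof is correct, and since the paper treats this corollary as an immediate consequence of Lemma \ref{lem:phi-kills-quotient-orbit} without supplying an explicit proof, your argument is essentially the intended one. Both implications ultimately reduce to the lemma together with the additivity of $\varphi$: the forward direction is exactly as you wrote, and for the converse you could in fact skip the explicit construction of the partial idempotent $e$ and the verification of the relative trace identity by simply noting that $\operatorname{id}_{\Ind_{H'}^H V}$ trivially factors through $\Ind_{H'}^H V$, so the lemma already exhibits it as $\operatorname{Tr}_{H',H}(f)$ for some $f$; then $\operatorname{id}_{\varphi(\Ind_{H'}^H V)} = \varphi(\operatorname{Tr}_{H',H}(f)) = 0$ forces $\varphi(\Ind_{H'}^H V)=0$. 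You acknowledge this yourself when you remark that the trace identity is ``exactly the content used in the proof of Lemma \ref{lem:phi-kills-quotient-orbit},'' so the choice to spell it out is a matter of exposition rather than a gap. One tiny cosmetic issue: you introduce an $H'$-module $W$ in the forward direction ``to avoid the clash with $M,N$'' but never use it; $M,N$ are the $H$-modules on which $f$ is defined, and the renaming is unnecessary.
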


\subsection{Diagrammatics}\label{section diagrammatics}

We begin by recalling the degenerate Heisenberg category $\HH eis_{k}$, where $k\in\Z$. This category is monoidal, with monoidal unit $\mathbf{1}$, and is generated by objects $\uparrow,$ $\downarrow$, and morphisms
\begin{gather*}
    \begin{tikzpicture}[vcenter=-0.2em, scale=0.2]
        \path[string] (0,0) edge[->] (0,3);
        \node [dot] at (0,1.5) {}; 
    \end{tikzpicture}
    : {\uparrow} \to {\uparrow},
    \quad
    \begin{tikzpicture}[vcenter=-0.2em, scale=0.2]
        \diagcup{0,3}{1.5,1}{3,3}[->];
    \end{tikzpicture}
    :\mathbf{1} \to {\downarrow} \otimes {\uparrow},
    \quad
    \begin{tikzpicture}[vcenter=-0.2em, scale=0.2]
        \diagcap{0,0}{1.5,2}{3,0}[->];
    \end{tikzpicture}
    : {\uparrow} \otimes {\downarrow} \to \mathbf{1},
    \\
    \begin{tikzpicture}[vcenter=-0.2em, scale=0.2]
        \path[string]
        (0,0) edge[->] (3,3)
        (3,0) edge[->] (0,3)
        ;
    \end{tikzpicture}
    : {\uparrow} \otimes {\uparrow} \to {\uparrow} \otimes {\uparrow},
    \quad
    \begin{tikzpicture}[vcenter=-0.2em, scale=0.2]
        \diagcup{0,3}{1.5,1}{3,3}[<-];
    \end{tikzpicture}
    :\mathbf{1} \to {\uparrow} \otimes {\downarrow},
    \quad
    \begin{tikzpicture}[vcenter=-0.2em, scale=0.2]
        \diagcap{0,0}{1.5,2}{3,0}[<-];
    \end{tikzpicture}
    : {\downarrow} \otimes {\uparrow} \to \mathbf{1}.
\end{gather*}
These morphisms satisfy a number of relations.  To state them, we first define
\begin{gather*}
    \begin{tikzpicture}[vcenter=-0.2em, scale=0.2]
        \path[string]
        (0,0) edge[->] (3,3)
        (3,0) edge[<-] (0,3)
        ;
    \end{tikzpicture}
    :=    
    \begin{tikzpicture}[vcenter=-0.2em, scale=0.1]
        \path[string]
        (3,-5) edge[->] (-3,5)
        (-2,-2) edge (2,3)
        (2,3) edge[out=50,in=180] (5,5)
        (5,5) edge[->,out=0,in=90] (9,-5)
        (-2,-2) edge[out=230,in=0] (-6,-5) 
        (-6,-5) edge[out=180,in=-90] (-9,5)
        ;
    \end{tikzpicture}
    \quad , \quad
    \begin{tikzpicture}[vcenter=-0.2em, scale=0.2]
        \path[string]
        (0,0) edge[<-] (3,3)
        (3,0) edge[->] (0,3)
        ;
    \end{tikzpicture}
    :=
    \begin{tikzpicture}[vcenter=-0.2em, scale=0.1]
        \path[string]
        (3,5) edge[<-] (-3,-5)
        (-2,2) edge (2,-3)
        (2,-3) edge[out=130,in=180] (5,-5)
        (5,-5) edge[out=0,in=270] (9,5)
        (-2,2) edge[out=130,in=0] (-6,5)
        (-6,5) edge[->,out=180,in=-270] (-9,-5)
        ;
    \end{tikzpicture}
    \ .
\end{gather*}
Then the relations are given as
\begin{gather}
    \label{eq:heisenberg-diag-rel}
    \begin{tikzpicture}[vcenter=-0.2em, scale=0.1]
        \path[string]
    	(2.8,0) edge[->,out=90,in=-90] (-2.8,6)
    	(-2.8,0) edge[->,out=90,in=-90] (2.8,6)
    	(2.8,-6) edge[out=90,in=-90] (-2.8,0)
    	(-2.8,-6) edge[out=90,in=-90] (2.8,0)
        ;
    \end{tikzpicture}
    =
    \begin{tikzpicture}[vcenter=-0.2em, scale=0.1]
        \path[string]
        (1.8,-6) edge[->] (1.8,6)
        (-1.8,-6) edge[->] (-1.8,6)
        ;
    \end{tikzpicture}
    \quad , \quad
    \begin{tikzpicture}[vcenter=-0.2em, scale=0.1]
        \path[string]
    	(4.5,6) edge[<-] (-4.5,-6)
    	(4.5,-6) edge[->] (-4.5,6)
        (0,-6) edge[out=90,in=-90] (-4.5,0)
        (-4.5,0) edge[->,out=90,in=-90] (0,6)
        ;
    \end{tikzpicture}
    =
    \begin{tikzpicture}[vcenter=-0.2em, scale=0.1]
        \path[string]
    	(4.5,6) edge[<-] (-4.5,-6)
    	(4.5,-6) edge[->] (-4.5,6)
        (0,-6) edge[out=90,in=-90] (4.5,0)
        (4.5,0) edge[->,out=90,in=-90] (0,6)
        ;
    \end{tikzpicture}
    \quad , \quad
    \begin{tikzpicture}[vcenter=-0.2em, scale=0.2]
        \path[string]
        (0,0) edge[->] (3,3)
        (3,0) edge[->] (0,3)
        ;
        \node [dot] at (0.75,0.75) {}; 
    \end{tikzpicture}
    =
    \begin{tikzpicture}[vcenter=-0.2em, scale=0.2]
        \path[string]
        (0,0) edge[->] (3,3)
        (3,0) edge[->] (0,3)
        ;
        \node [dot] at (2.25,2.25) {};
    \end{tikzpicture}
    +
    \begin{tikzpicture}[vcenter=-0.2em, scale=0.2]
     	\path[string]
        (0,0) edge[->] (0,3)
        (1.5,0) edge[->] (1.5,3)
        ;
    \end{tikzpicture}
    \ ,
    \\
    \begin{tikzpicture}[vcenter=-0.2em, scale=0.1]
        \path[string]
        (3,0) edge[->] (3,4)
    	(3,0) edge[out=-90, in=0] (1,-4)
    	(1,-4) edge[out = 180, in = -90] (-1,0)
    	(-1,0) edge[out=90, in=0] (-3,4)
    	(-3,4) edge[out = 180, in =90] (-5,0)
        (-5,0) edge (-5,-4)
        ;
    \end{tikzpicture}
    =
    \begin{tikzpicture}[vcenter=-0.2em, scale=0.1]
      \path[string]
      (0,-4) edge[->] (0,4)
      ;
    \end{tikzpicture}
    \quad,\quad
    \begin{tikzpicture}[vcenter=-0.2em, scale=0.1]
        \path[string]
        (3,0) edge[->] (3,-4)
        (3,0) edge[out=90,in=0] (1,4)
        (1,4) edge[out=180,in=90] (-1,0)
        (-1,0) edge[out=-90,in=0] (-3,-4)
        (-3,-4) edge[out=180,in=-90] (-5,0)
        (-5,0) edge (-5,4)
        ;
    \end{tikzpicture}
    =
    \begin{tikzpicture}[vcenter=-0.2em, scale=0.1]
      \path[string] (0,-4) edge[<-] (0,4);
    \end{tikzpicture}
    \ .
\end{gather}
Finally, we require the following maps to be isomorphisms in the additive closure of $\HH eis_k$ when $k$ is non-negative or negative, respectively

\begin{equation}\label{eqn magic iso k>=0}
     \begin{pmatrix} 
        \begin{tikzpicture}[vcenter=-0.2em, scale=0.2]
            \path[string]
            (0,0) edge[->] (3,3)
            (3,0) edge[<-] (0,3)
            ;
        \end{tikzpicture}
        \\
        \begin{tikzpicture}[vcenter=-0.2em, scale=0.2]
            \tikzfixsize{(0,0)}{(3,3)}
            \diagcap{0,0}{1.5,2}{3,0}[->];
        \end{tikzpicture}
        \\
        \begin{tikzpicture}[vcenter=-0.2em, scale=0.2]
            \tikzfixsize{(0,0)}{(3,3)}
            \diagcap{0,0}{1.5,2}{3,0}[->];
            \node [dot] at (0.25,1) {};
        \end{tikzpicture}
        \\
        \vdots
        \\
        \begin{tikzpicture}[vcenter=-0.2em, scale=0.2]
            \tikzfixsize{(0,0)}{(3,3)}
            \diagcap{0,0}{1.5,2}{3,0}[->];
            \node [dot, label={[left,xshift=1pt, yshift=-1pt]{\tiny $k\!\shortminus\!1$}}] at (0.25,1) {};
        \end{tikzpicture}
    \end{pmatrix} :
    {\uparrow} \otimes {\downarrow} \xto{\sim} {\downarrow} \otimes {\uparrow} \oplus \mathbf{1}^{\oplus k},
\end{equation}

\begin{equation}\label{eqn magic iso k<0}
    \begin{pmatrix}
        \begin{tikzpicture}[vcenter=-0.2em, scale=0.2]
            \path[string]
            (0,0) edge[->] (3,3)
            (3,0) edge[<-] (0,3)
            ;
        \end{tikzpicture}
        &
        \begin{tikzpicture}[vcenter=-0.2em, scale=0.2]
            \diagcup{0,3}{1.5,1}{3,3}[->];
        \end{tikzpicture}
        &
        \begin{tikzpicture}[vcenter=-0.2em, scale=0.2]
            \diagcup{0,3}{1.5,1}{3,3}[->];
            \node [dot] at (2.75,1.8) {};
        \end{tikzpicture}
        &
        \cdots 
        &
        \begin{tikzpicture}[vcenter=-0.2em, scale=0.2]
            \diagcup{0,3}{1.5,1}{3,3}[->];
            \node [dot, label={[right,xshift=-1pt, yshift=-2pt]{\tiny $k\!\shortminus\!1$}}] at (2.75,1.8) {};
        \end{tikzpicture}
    \end{pmatrix}:
    {\uparrow} \otimes {\downarrow} \oplus \mathbf{1}^{\oplus (-k)}\ \xto{\sim} {\downarrow} \otimes {\uparrow}.
\end{equation}

A degenerate categorical Heisenberg action on $\Cs$ is by definition a $\Bbbk$-linear, monoidal functor $H:\HH eis_{k}\to\operatorname{Fun}(\Cs,\Cs)$. We write $E \coloneqq H(\uparrow)$ and $F \coloneqq H(\downarrow)$. The structure maps $\eta, \epsilon, x, T$ correspond to the appropriate cups and caps, a dot on a strand, and crossing respectively. 

Let us now assume that $\Cs$ is a locally finite abelian category with a degenerate categorical Heisenberg action.  We will continue to draw the functors $E,F$ as ${\uparrow}$ and ${\downarrow}$, and natural transformations between them by the same diagrams used in $\HH eis_k$.  

If $\As$ is another locally finite abelian category, then as noted in Section \ref{sec:Heis-act}, $\Cs\boxtimes\As$ inherits a natural categorical Heisenberg action, where ${\uparrow}$ acts by $E\boxtimes 1$ and ${\downarrow}$ acts by $F\boxtimes 1$.  By abuse of notation, we will use the same diagrammatics for this categorical action as well.

\subsection{Diagrammatics of the OTI functor}
Recalling that $E^n$ admits an action of $S_n$, we may define a functor
\begin{center}
    \begin{tikzpicture}[vcenter=0.2em, scale=0.15]
        \path[string]
        (0,0) edge[-{>[length=0.3em]},ultra thick] (0,5)
        ;
        \node [below] at (0,0) {\tiny $n$};
    \end{tikzpicture}
    :=
    \begin{tikzpicture}[vcenter=0.3em, scale=0.15]
          \path[string]
          (0,0) edge[->] (0,5)
          (3,0) edge[->] (3,5)
          ;
          \node at (1.5,2.5) {...};
          \draw[underbrace] (0,0) -- (3,0);
          \node[below] at (1.5,-0.5) {\tiny$n$};
    \end{tikzpicture} $:\Cs\to \Cs\boxtimes\rep S_n$.
\end{center}
Now let $\varphi:\rep H\to\As$ denote a CF functor for $H\sub S_{p^r}$.   We will represent $\varphi$ diagramatically as:
\begin{gather*}
    \begin{tikzpicture}[vcenter=0.15em, scale=0.15]
        \tikzfixsize{(-0.25,0)}{(0.25,5)}
        \path[string]
        (0,0) edge[densely dashed] (0,5)
        ;
        \node [below] at (0,0) {\tiny $\varphi$};
    \end{tikzpicture}
    : \rep H \to \As.
\end{gather*}
To simplify our notation, we will assume without loss of generality that $H=S_{p^r}$ (see Remark \ref{remark CF general subgroup}).
By Corollary \ref{cor:phi-kills-tr}, condition (2) of CF functors implies that
\begin{gather}\label{eq:dotted-kills-trace}
    \begin{tikzpicture}[vcenter=0.15em, scale=0.15]
        \tikzfixsize{(0,0)}{(1,0)}
        \path[string]
        (0,0) edge[densely dashed] (0,5)
        ;
        \node [below] at (0,0) {\tiny $\varphi$};
    \end{tikzpicture}
    \operatorname{Tr}_{S_{p^r-1},S_{p^r}}(f)=0
\end{gather}
for any $S_{p^r-1}$-equivariant morphism $f$.

Abusing notation, we use the same dashed line diagram for the functor $1\boxtimes\varphi:\Cs\boxtimes\rep S_{p^r}\to\Cs\boxtimes\As$.  It follows that an OTI functor is written diagrammatically as
\begin{gather*}
    \Phi_{\varphi}=
    \begin{tikzpicture}[vcenter=0.1em, scale=0.15]
        \tikzfixsize{(-0.5,0)}{(2.5,5)}
        \path[string]
        (0,0) edge[densely dashed] (0,5)
        (2,0) edge[-{>[length=0.3em]},ultra thick] (2,5)
        ;
        \node [below] at (0,0) {\tiny $\varphi$};
        \node [below] at (2.5,0.5) {\tiny $p^r$};
    \end{tikzpicture}
    : \Cs \to \Cs \boxtimes \As.
\end{gather*}
By Lemma \ref{lemma additive functors extend}(2), we have natural transformations that commute $1 \boxtimes \varphi$ with $E \boxtimes 1$ and $F \boxtimes 1$,
\begin{gather*}
    \begin{tikzpicture}[vcenter=0.2em, scale=0.2]
        \path[string]
        (0,0) edge[densely dashed] (3,3)
        (3,0) edge[->] (0,3)
        ;
        \node [below] at (0,0) {\tiny $\varphi$};
    \end{tikzpicture}
    \quad , \quad
    \begin{tikzpicture}[vcenter=0.2em, scale=0.2]
        \path[string]
        (0,0) edge[->] (3,3)
        (3,0) edge[densely dashed] (0,3)
        ;
        \node [below] at (3,0) {\tiny $\varphi$};
    \end{tikzpicture}
    \quad , \quad
    \begin{tikzpicture}[vcenter=0.2em, scale=0.2]
        \path[string]
        (0,0) edge[densely dashed] (3,3)
        (3,0) edge[<-] (0,3)
        ;
        \node [below] at (0,0) {\tiny $\varphi$};
    \end{tikzpicture}
    \quad , \quad
    \begin{tikzpicture}[vcenter=0.2em, scale=0.2]
        \path[string]
        (0,0) edge[<-] (3,3)
        (3,0) edge[densely dashed] (0,3)
        ;
        \node [below] at (3,0) {\tiny $\varphi$};
    \end{tikzpicture}
    \ ,
\end{gather*}
such that
\begin{gather*}
    \begin{tikzpicture}[vcenter=0.2em, scale=0.2]
        \path[string]
        (0,0) edge[densely dashed, out=40, in=270] (2.25,2)
        (2.25,2) edge[densely dashed, out=90, in=320] (0,4)
        (3,0) edge[out=140, in=270] (0.75,2)
        (0.75,2) edge[->,out=90, in=220] (3,4)
        ;
        \node [below] at (0,0) {\tiny $\varphi$};
    \end{tikzpicture}
    =
    \begin{tikzpicture}[vcenter=0.2em, scale=0.2]
        \path[string]
        (0,0) edge[densely dashed] (0,4)
        (1.5,0) edge[->] (1.5,4) 
        ;
        \node [below] at (0,0) {\tiny $\varphi$};
    \end{tikzpicture}
    \quad,\quad
    \begin{tikzpicture}[vcenter=0.2em, scale=0.2]
        \path[string]
        (0,0) edge[out=40, in=270] (2.25,2)
        (2.25,2) edge[->, out=90, in=320] (0,4)
        (3,0) edge[densely dashed, out=140, in=270] (0.75,2)
        (0.75,2) edge[densely dashed, out=90, in=220] (3,4)
        ;
        \node [below] at (3,0) {\tiny $\varphi$};
    \end{tikzpicture}
    =
    \begin{tikzpicture}[vcenter=0.2em, scale=0.2]
        \path[string]
        (0,0) edge[->] (0,4)
        (1.5,0) edge[densely dashed] (1.5,4)
        ;
        \node [below] at (1.5,0) {\tiny $\varphi$};
    \end{tikzpicture}
    \quad,\quad
    \begin{tikzpicture}[vcenter=0.2em, scale=0.2]
        \path[string]
        (0,0) edge[densely dashed, out=40, in=270] (2.25,2)
        (2.25,2) edge[densely dashed, out=90, in=320] (0,4)
        (3,0) edge[<-,out=140, in=270] (0.75,2)
        (0.75,2) edge[out=90, in=220] (3,4)
        ;
        \node [below] at (0,0) {\tiny $\varphi$};
    \end{tikzpicture}
    =
    \begin{tikzpicture}[vcenter=0.2em, scale=0.2]
        \path[string]
        (0,0) edge[densely dashed] (0,4)
        (1.5,0) edge[<-] (1.5,4) 
        ;
        \node [below] at (0,0) {\tiny $\varphi$};
    \end{tikzpicture}
    \quad,\quad
    \begin{tikzpicture}[vcenter=0.2em, scale=0.2]
        \path[string]
        (0,0) edge[<-,out=40, in=270] (2.25,2)
        (2.25,2) edge[out=90, in=320] (0,4)
        (3,0) edge[densely dashed, out=140, in=270] (0.75,2)
        (0.75,2) edge[densely dashed, out=90, in=220] (3,4)
        ;
        \node [below] at (3,0) {\tiny $\varphi$};
    \end{tikzpicture}
    =
    \begin{tikzpicture}[vcenter=0.2em, scale=0.2]
        \path[string]
        (0,0) edge[<-] (0,4)
        (1.5,0) edge[densely dashed] (1.5,4)
        ;
        \node [below] at (1.5,0) {\tiny $\varphi$};
    \end{tikzpicture}
    \ .
\end{gather*}

\begin{lemma}\label{lem:dashed-commutes}
    The functor $1\boxtimes\varphi:\Cs\boxtimes\rep S_{p^r}\to\Cs\boxtimes\As$ is a morphism of degenerate categorical Heisenberg actions (see Definition \ref{defn categorical commutation}).
\end{lemma}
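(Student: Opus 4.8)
The plan is to take the natural isomorphisms required by Definition~\ref{defn categorical commutation} to be the ``slide'' isomorphisms
\[
\zeta_E\colon (1\boxtimes\varphi)(E\boxtimes 1)\xrightarrow{\ \sim\ }(E\boxtimes 1)(1\boxtimes\varphi),\qquad
\zeta_F\colon (1\boxtimes\varphi)(F\boxtimes 1)\xrightarrow{\ \sim\ }(F\boxtimes 1)(1\boxtimes\varphi),
\]
which exist (as the inverses of the $2$-isomorphisms) by Lemma~\ref{lemma additive functors extend}(2), using that $E$ and $F$ are exact; diagrammatically these are the crossings of a dashed strand with an upward, resp.\ downward, solid strand introduced just above. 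Note that the CF vanishing condition on $\varphi$ plays no role here: the statement holds for an arbitrary $\Bbbk$-linear functor $\varphi$. The key observation is that the inherited degenerate categorical Heisenberg actions on $\Cs\boxtimes\rep S_{p^r}$ and on $\Cs\boxtimes\As$ have all of their structure $2$-morphisms of the form $\alpha\boxtimes 1$, where $\alpha\in\{x,\,T,\,\eta_1,\,\eta_2,\,\epsilon_1,\,\epsilon_2\}$ is the corresponding structure $2$-morphism of the action on $\Cs$; equivalently, they are the images of $\alpha$ under the pseudo-functor $-\boxtimes-$ of \cite[Proposition~3.2.6]{CF} applied in the $\Tak^{\text{ex}}$-variable.

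Each of the four commuting squares of Definition~\ref{defn categorical commutation} will then be deduced from naturality of the $2$-isomorphism of Lemma~\ref{lemma additive functors extend}(2) in its $\Tak^{\text{ex}}$-variable. Concretely, $x\colon E\to E$, $T\colon E^2\to E^2$, $\eta_1\colon\mathbf{1}\to FE$ and $\eta_2\colon\mathbf{1}\to EF$ are all $2$-morphisms between exact functors, hence $2$-morphisms in $\Tak^{\text{ex}}$, and applying naturality of the slide $2$-isomorphism to each of them yields exactly squares $(1)$, $(2)$, $(3)$, $(4)$ respectively (for $(3)$ and $(4)$ one uses that the slide past the identity functor is the identity, so the composite $\eta_1'\Psi$ appears on the degenerate side). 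Since squares $(2)$--$(4)$ involve the slide of a dashed strand past $E^2$, resp.\ past $FE$ and $EF$, I would first record the coherence identifying these with iterated single-strand slides --- namely that the slide $2$-isomorphism attached to a composite of exact functors is the evident composite of the slides attached to the factors --- which is part of the pseudo-functoriality data built into the construction of $-\boxtimes-$ in \cite{CF}. Diagrammatically, all four squares simply say that a dashed strand passes unobstructed through a dot, a solid crossing, or a cup; this is visually transparent, since $1\boxtimes\varphi$ does not interact with the solid-strand part of the calculus.

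The only point demanding genuine attention is the coherence invoked above (that slides past composite exact functors agree with iterated single slides), but this is not a new computation: it is an instance of standard pseudo-functor/bicategory coherence already packaged in \cite[Proposition~3.2.6]{CF}. Once that is in hand, the remaining verifications are formal applications of the interchange law, so I do not anticipate any substantive obstacle in carrying out this proof.
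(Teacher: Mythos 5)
Your proof is correct and follows essentially the same approach as the paper: both arguments take $\zeta_E$, $\zeta_F$ to be the slide $2$-isomorphisms supplied by Lemma~\ref{lemma additive functors extend}(2) and deduce the four squares of Definition~\ref{defn categorical commutation} from naturality of those slides in the $\Tak^{\text{ex}}$-variable. Your additional remarks --- that the CF condition on $\varphi$ plays no role here, and that slides past the composites $E^2$, $FE$, $EF$ must be identified with iterated single-strand slides via the pseudo-functoriality coherence of $-\boxtimes-$ --- are both correct and make explicit points the paper's proof leaves implicit.
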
 

\begin{proof}
It is enough to check the commuting diagrams which correspond to cups, caps, dot and crossing pulling through the dashed strand $1 \boxtimes \varphi$. Indeed, the equalities
\begin{gather*}
    \begin{tikzpicture}[vcenter=.2em, scale=0.2]
        \path[string]
        (0,0) edge[densely dashed] (3,3)
        (3,0) edge[->] (0,3)
        ;
        \node [below] at (0,0) {\tiny $\varphi$};
        \node [dot] at (2.25,0.75) {};
    \end{tikzpicture}
    =
    \begin{tikzpicture}[vcenter=0.2em, scale=0.2]
        \path[string]
        (0,0) edge[densely dashed] (3,3)
        (3,0) edge[->] (0,3)
        ;
        \node [below] at (0,0) {\tiny $\varphi$};
        \node [dot] at (0.8,2.2) {};
    \end{tikzpicture}
    \quad,\quad
    \begin{tikzpicture}[vcenter=0.2em, scale=0.2]
        \path[string]
        (0,0) edge[densely dashed] (4,4)
        (2,0) edge[->, out=45, in=-45, looseness=1.8] (2,4)
        (4,0) edge[->] (0,4)
        ;
        \node [below] at (0,0) {\tiny $\varphi$};
    \end{tikzpicture}
    =
    \begin{tikzpicture}[vcenter=0.2em, scale=0.2]
        \path[string]
        (0,0) edge[densely dashed] (4,4)
        (2,0) edge[->, out=135, in=225, looseness=1.8] (2,4)
        (4,0) edge[->] (0,4)
        ;
        \node [below] at (0,0) {\tiny $\varphi$};
    \end{tikzpicture}
    \quad,\quad
    \begin{tikzpicture}[vcenter=0.2em, scale=0.2]
        \path[string]
        (0,0) edge[densely dashed] (4,4)
        (3.5,2.5) edge[->] (2,4)
        (2.5,1.5) edge (0,4)
        (2.5,1.5) edge[out=-45, in=-45, looseness=2] (3.5,2.5)
        ;
        \node [below] at (0,0) {\tiny $\varphi$};
    \end{tikzpicture}
    =
    \begin{tikzpicture}[vcenter=0.2em, scale=0.2]
        \path[string]
        (3,0) edge[densely dashed] (3,4)
        ;
        \diagcup{0,4}{1,2.75}{2,4}
        \node [below] at (3,0) {\tiny $\varphi$};
    \end{tikzpicture}
    \quad,\quad
    \begin{tikzpicture}[vcenter=0.2em, scale=0.2]
        \path[string]
        (0,0) edge[densely dashed] (4,4)
        (3.5,2.5) edge (2,4)
        (2.5,1.5) edge[->] (0,4)
        (2.5,1.5) edge[out=-45, in=-45, looseness=2] (3.5,2.5)
        ;
        \node [below] at (0,0) {\tiny $\varphi$};
    \end{tikzpicture}
    =
    \begin{tikzpicture}[vcenter=0.2em, scale=0.2]
        \path[string]
        (3,0) edge[densely dashed] (3,4)
        ;
        \diagcup{0,4}{1,2.75}{2,4}[<-]
        \node [below] at (3,0) {\tiny $\varphi$};
    \end{tikzpicture}
\end{gather*}
follow from~Lemma~\ref{lemma additive functors extend}.
We illustrate this by proving the first equality. Let us write \[s_{\varphi,E}: (1\boxtimes\varphi)\circ(E\boxtimes 1)\xto{\sim}(E\boxtimes 1)\circ(1\boxtimes\varphi)\]
for the 2-isomorphism in $\AbCat$ natural in $E$ and~$\varphi$. Then the diagram 
\[\begin{tikzcd}
(1\boxtimes\varphi)(E\boxtimes 1)
\arrow["s_{\varphi,E}"]{d}
\arrow["(1\boxtimes 1_{\varphi})(x\boxtimes 1)"]{rr}
&&(1\boxtimes\varphi)(E\boxtimes 1)
\arrow["s_{\varphi,E}"]{d}\\
(E\boxtimes 1)(1\boxtimes\varphi)
\arrow["(x\boxtimes 1)(1\boxtimes 1_{\varphi})"]{rr}
&&(E\boxtimes 1)(1\boxtimes\varphi)
\end{tikzcd}\]
commutes by naturality of $s$ in $E$, which shows the first equality.
\end{proof}

\subsection{A structural lemma}
If $k\geq0$, then by repeatedly applying the isomorphism~\eqref{eqn magic iso k>=0} we obtain an isomorphism $\iota:E^n F \xto{\sim} F E^n \oplus\bigoplus_{i = 0}^{n-1} (E^{n-1}) ^{\oplus k}$, given explicitly by
 \begin{equation*}
 \iota= \left(
    \begin{tikzpicture}[vcenter,scale=0.15]
        \tikzfixsize{(0,0)}{(6,5)}
        \path[string]
        (0,5) edge[->] (6,0)
        (0,0) edge[->] (2,5)
        (1,0) edge[->] (3,5)
        (4,0) edge[->] (6,5)
        ;
        \node at (2.75,0.5) {...};
        \draw[underbrace] (0,0) -- (4,0);
        \node[below] at (2,-.5) {\tiny$n$};
    \end{tikzpicture}
    \ , \
    \begin{tikzpicture}[vcenter,scale=0.15]
        \path[string]
        (3,0) edge[->] (3,5)
        (6,0) edge[->] (6,5)
        ;
        \diagcap{1,0}{4,3}{7,0};
        \node at (4.5,0.5) {...};
        \draw[underbrace] (3,0) -- (6,0);
        \node[below] at (4.5,-.5) {\tiny$n\!\shortminus\!1$};
    \end{tikzpicture}
    \ , \
    \begin{tikzpicture}[vcenter,scale=0.15]
        \path[string]
        (3,0) edge[->] (3,5)
        (6,0) edge[->] (6,5)
        ;
        \diagcap{1,0}{4,3}{7,0};
        \node [dot] at (1.75,2) {};
        \node at (4.5,0.5) {...};
        \draw[underbrace] (3,0) -- (6,0);
        \node[below] at (4.5,-.5) {\tiny$n\!\shortminus\!1$};
    \end{tikzpicture}
    \ , ... ,
    \begin{tikzpicture}[vcenter,scale=0.15]
        \path[string]
        (3,0) edge[->] (3,5)
        (6,0) edge[->] (6,5)
        ;
        \diagcap{1,0}{4,3}{7,0};
        \node [dot, label={[xshift=-3pt, yshift=-1pt]{\tiny $k\!\shortminus\!1$}}] at (1.75,2) {};
        \node at (4.5,0.5) {...};
        \draw[underbrace] (3,0) -- (6,0);
        \node[below] at (4.5,-.5) {\tiny$n\!\shortminus\!1$};
    \end{tikzpicture}
    \ , \
    \begin{tikzpicture}[vcenter,scale=0.15]
        \path[string]
        (0,0) edge[->] (0,5)
        (3,0) edge[->] (3,5)
        (6,0) edge[->] (6,5)
        ;
        \diagcap{1,0}{4,3}{7,0};
        \node at (4.5,0.5) {...};
        \draw[underbrace] (3,0) -- (6,0);
        \node[below] at (4.5,-.5) {\tiny$n\!\shortminus\!2$};
    \end{tikzpicture}
    \ , ... , \
    \begin{tikzpicture}[vcenter,scale=0.15]
        \path[string]
        (0,0) edge[->] (0,5)
        (-3,0) edge[->] (-3,5)
        ;
        \diagcap{1,0}{2,2}{3,0};
        \node at (-1.5,0.5) {...};
        \draw[underbrace] (-3,0) -- (0,0);
        \node[below] at (-1.5,-.5) {\tiny$n\!\shortminus\!1$};
    \end{tikzpicture}
    \ ,..., \
    \begin{tikzpicture}[vcenter,scale=0.15]
        \path[string]
        (0,0) edge[->] (0,5)
        (-3,0) edge[->] (-3,5)
        ;
        \diagcap{1,0}{2,2}{3,0};
        \node [dot, label={[xshift=0pt, yshift=-2pt]{\tiny $k\!\shortminus\!1$}}] at (2,2) {};
        \node at (-1.5,0.5) {...};
        \draw[underbrace] (-3,0) -- (0,0);
        \node[below] at (-1.5,-.5) {\tiny$n\!\shortminus\!1$};
    \end{tikzpicture}
    \right)^\top.
    \end{equation*}
It will be convenient for us to express the components of the above morphism in a particular normal form.
\begin{lemma}\label{claim:move-dot-right}
    We have that for $i, j\geq 0$
    \begin{gather*}
    \begin{tikzpicture}[vcenter,scale=0.15]
        \tikzfixsize{(0,0)}{(6,5)}
        \path[string]
        (2,0) edge[->] (2,5)
        (5,0) edge[->] (5,5)
        ;
        \diagcap{0,0}{3,3}{6,0};
        \node [dot, label={[xshift=-1pt, yshift=-2pt]{\tiny $j$}}] at (0.75,2) {};
        \node at (3.5,0.5) {\textup{...}};
        \draw[underbrace] (2,0) -- (5,0);
        \node[below] at (3.5,-.5) {\tiny$i$};
    \end{tikzpicture}
    =
    \begin{tikzpicture}[vcenter,scale=0.15]
        \tikzfixsize{(0,0)}{(6,5)}
        \path[string]
        (1,0) edge[->] (1,5)
        (4,0) edge[->] (4,5)
        ;
        \diagcap{0,0}{3,3}{6,0};
        \node [dot, label={[xshift=1pt, yshift=-2pt]{\tiny $j$}}] at (5.25,2) {};
        \node at (2.5,0.5) {\textup{...}};
        \draw[underbrace] (1,0) -- (4,0);
        \node[below] at (2.5,-.5) {\tiny$i$};
    \end{tikzpicture}
    + \sum_{i',j'}
    \begin{tikzpicture}[vcenter,scale=0.15]
        \tikzfixsize{(0,0)}{(6,5)}
        \path[string]
        (1,0) edge[->] (1,5)
        (4,0) edge[->] (4,5)
        ;
        \diagcap{0,0}{3,3}{6,0};
        \bigbox{$\alpha$}{(-4.5,1.25)}{(-0.5,3.25)};
        \path[string]
        (-4,0) edge (-4,1.25)
        (-4,3.25) edge[-> ] (-4,5)
        (-1,0) edge (-1,1.25)
        (-1,3.25) edge[->] (-1,5)
        ;
        \node at (-2.5,0.5) {\textup{...}};
        \node at (-2.5,4) {\textup{...}};
        \node [dot, label={[xshift=1pt, yshift=-2pt]{\tiny $j'$}}] at (5.25,2) {};
        \node at (2.5,0.5) {\textup{...}};
        \draw[underbrace] (1,0) -- (4,0);
        \node[below] at (2.5,-.5) {\tiny$i'$};
    \end{tikzpicture}
    \end{gather*}
    where $i',j'$ run over some values such that $i' < i$ and $j' < j$ , and $\alpha$ is a diagram, depending on $i'$ and $j'$, composed of crossings and dots.
\end{lemma}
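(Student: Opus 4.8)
The plan is to argue by induction, proving a mild strengthening of the statement: if a dot of multiplicity $j$ is placed at an \emph{arbitrary} point along the arc of the cap, then it may be pushed onto the right leg of the cap at the cost of a sum of terms of the stated shape, where now $i'$ is bounded by the number of crossings originally lying between the dot and the right leg. I would order the relevant data by the pair consisting of the number of crossings to the right of the dot and of $j$, taken with the lexicographic order, and induct on this pair. The first step is to fix a convenient planar form of the left-hand side: drag the left leg of the cap rightward past all $i$ vertical strands using the generating crossing $\uparrow\uparrow\to\uparrow\uparrow$, and then close it off against the $\downarrow$-strand on the far right, with no further crossings. In this form the cap is a single consistently oriented arc crossing each of the $i$ middle strands exactly once, and every such crossing is a generating crossing, so the dot--crossing relation in \eqref{eq:heisenberg-diag-rel} applies at each of them.

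For the base of the induction there is nothing to move: if no crossing lies between the dot and the right leg, the dot slides freely along the arc onto the right leg by naturality of $x$ (equivalently, using the curl and zig-zag relations in \eqref{eq:heisenberg-diag-rel} together with the fact that the dot on $\downarrow$ is the mate of the dot on $\uparrow$), and there are no correction terms. The case $j=0$ is trivial for the same reason.

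For the inductive step, let $\sigma$ denote the middle strand involved in the first crossing to the right of the dot. Iterating the dot--crossing relation in \eqref{eq:heisenberg-diag-rel} gives
\[
x^j \text{ below the crossing of the arc with } \sigma
\;=\;
x^j \text{ above that crossing}
\;+\;\sum_{a+b=j-1}\bigl(\text{that crossing resolved, with } x^a \text{ on } \sigma \text{ and } x^b \text{ on the arc}\bigr).
\]
In the first summand the dot has moved past one more crossing, so the inductive quantity strictly drops and we conclude by induction. In each term of the sum the arc no longer meets $\sigma$, hence it crosses only $i-1$ of the middle strands, and the dots carried by the arc and by $\sigma$ have total multiplicity $j-1<j$. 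Gathering the remaining $i-1$ middle strands into a cap over $i-1$ strands, and absorbing $\sigma$ (with its dot) together with the crossings through which the arc has been dragged into a diagram $\alpha$ of crossings and dots, each such term becomes a cap over $i'=i-1<i$ strands carrying a dot of multiplicity at most $j-1$ somewhere along its arc, surrounded by $\alpha$. Applying the strengthened inductive hypothesis to this strictly smaller configuration pushes that dot onto the right leg, producing further terms which again have fewer than $i$ strands under the arc and dot multiplicity strictly less than $j$; folding the auxiliary diagrams thus produced into $\alpha$, all of them have the asserted shape, and summing over everything gives the claim. The main obstacle is exactly the bookkeeping in this last step: one must verify that, after repeatedly resolving crossings and re-normalising, every correction term can be presented with its dot genuinely on the right leg of a cap over fewer strands and with everything else folded into a single box $\alpha$ of crossings and dots. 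This is precisely what the strengthened induction hypothesis is designed to supply, and it is also what forces the second index $j'$ to satisfy $j'<j$ as asserted.
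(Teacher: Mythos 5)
Your proof is correct and takes essentially the same approach as the paper's: both slide the dot across one crossing at a time by iterating the dot--crossing relation in~\eqref{eq:heisenberg-diag-rel}, collect the resulting correction terms, and conclude by induction. The paper inducts directly on $i$ (settling $i=1$ by iterating the relation $j$ times, then peeling off the last crossing in the step from $i-1$ to $i$), whereas you strengthen the statement to allow the dot anywhere on the arc and run a lexicographic induction on the pair (crossings to the right of the dot, dot multiplicity) --- a repackaging of the same argument rather than a genuinely different route.
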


\begin{proof}
    We let $j \geq 0$ be arbitrary and apply induction on $i$. If $i = 0$, the result holds trivially with the second sum being empty. 
If $i=1$, we may iteratively apply~\eqref{eq:heisenberg-diag-rel} to obtain
\begin{align*}
        \begin{tikzpicture}[vcenter,scale=0.15]
            \tikzfixsize{(0,0)}{(4,5)}
            \path[string]
            (2,0) edge[->] (2,5);
            \diagcap{0,0}{2,2.5}{4,0};
            \node [dot, label={[xshift=-1pt, yshift=-2pt]{\tiny $j$}}] at (0.75,2) {};
        \end{tikzpicture}
        &=
        \begin{tikzpicture}[vcenter,scale=0.15]
            \tikzfixsize{(0,0)}{(4,5)}
            \path[string]
            (2,0) edge[->] (2,5);
            \diagcap{0,0}{2,2.5}{4,0};
            \node [dot, label={[xshift=1pt, yshift=-2pt]{\tiny $j$}}] at (3.25,2) {};
        \end{tikzpicture}
        + \sum_{j_1 + j_2 = j-1}
        \begin{tikzpicture}[vcenter,scale=0.15]
            \tikzfixsize{(0,0)}{(6,5)}
            \draw[string, ->] (0,0)
            to[out=90,in=180,looseness=1] (1.5,2.5)
            to[out=0,in=270,looseness=1] (3,5)
            ;
            \diagcap{3,0}{4,2}{5,0};
            \node [dot, label={[right, xshift=-1pt, yshift=-2pt]{\tiny $j_1$}}] at (2.8,3.35) {};
            \node [dot, label={[right, xshift=-1pt, yshift=-1pt]{\tiny $j_2$}}] at (4.75,1.5) {};
        \end{tikzpicture}.
          \end{align*}
Now let $i > 1$. Applying the result for $i-1$, we have
    \begin{align*}
        \begin{tikzpicture}[vcenter,scale=0.15]
            \tikzfixsize{(0,0)}{(6,5)}
            \path[string]
            (2,0) edge[->] (2,5)
            (5,0) edge[->] (5,5)
            ;
            \diagcap{0,0}{3,3}{6,0};
            \node [dot, label={[xshift=-1pt, yshift=-2pt]{\tiny $j$}}] at (0.75,2) {};
            \node at (3.5,0.5) {\textup{...}};
            \draw[underbrace] (2,0) -- (5,0);
            \node[below] at (3.5,-.5) {\tiny$i$};
        \end{tikzpicture}
        &=
        \begin{tikzpicture}[vcenter,scale=0.15]
            \tikzfixsize{(0,0)}{(6,5)}
            \path[string]
            (1,0) edge[->] (1,5)
            (4,0) edge[->] (4,5)
            (6.5,0) edge[->] (6.5,5)
            ;
            \diagcap{0,0}{3.75,3}{7.5,0};
            \node [dot, label={[xshift=0pt, yshift=-2pt]{\tiny $j$}}] at (5.25,2.7) {};
            \node at (2.5,0.5) {\textup{...}};
            \draw[underbrace] (1,0) -- (4,0);
            \node[below] at (2.5,-.5) {\tiny$i\!\shortminus\!1$};
        \end{tikzpicture}
        + \sum_{i',j'}
        \begin{tikzpicture}[vcenter,scale=0.15]
            \tikzfixsize{(0,0)}{(6,5)}
            \path[string]
            (1,0) edge[->] (1,5)
            (4,0) edge[->] (4,5)
            (6.5,0) edge[->] (6.5,5)
            ;
            \diagcap{0,0}{3.75,3}{7.5,0};
            \bigbox{$\alpha$}{(-4.5,1.25)}{(-0.5,3.25)};
            \path[string]
            (-4,0) edge (-4,1.25)
            (-4,3.25) edge[-> ] (-4,5)
            (-1,0) edge (-1,1.25)
            (-1,3.25) edge[->] (-1,5)
            ;
            \node at (-2.5,0.5) {\textup{...}};
            \node at (-2.5,4) {\textup{...}};
            \node [dot, label={[xshift=0pt, yshift=-2pt]{\tiny $j'$}}] at (5.25,2.7) {};
            \node at (2.5,0.5) {\textup{...}};
            \draw[underbrace] (1,0) -- (4,0);
            \node[below] at (2.5,-.5) {\tiny$i'$};
        \end{tikzpicture}
        \\
        &=
        \begin{tikzpicture}[vcenter,scale=0.15]
            \tikzfixsize{(0,0)}{(6,5)}
            \path[string]
            (1,0) edge[->] (1,5)
            (4,0) edge[->] (4,5)
            ;
            \diagcap{0,0}{3,3}{6,0};
            \node [dot, label={[xshift=1pt, yshift=-2pt]{\tiny $j$}}] at (5.25,2) {};
            \node at (2.5,0.5) {\textup{...}};
            \draw[underbrace] (1,0) -- (4,0);
            \node[below] at (2.5,-.5) {\tiny$i$};
        \end{tikzpicture}
        + \sum_{j_1 + j_2 = j-1}
        \begin{tikzpicture}[vcenter,scale=0.15]
            \tikzfixsize{(0,0)}{(6,5)}
            \path[string]
            (1,0) edge[->] (1,5)
            (4,0) edge[->] (4,5)
            ;
            \draw[string, ->] (0,0)
            to[out=90,in=180,looseness=1] (3,2.5)
            to[out=0,in=270,looseness=1] (5.5,5)
            ;
            \diagcap{5,0}{6,2}{7,0};
            \node [dot, label={[right, xshift=-1pt, yshift=-2pt]{\tiny $j_1$}}] at (5,3.35) {};
            \node [dot, label={[right, xshift=-1pt, yshift=-1pt]{\tiny $j_2$}}] at (6.75,1.5) {};
            \node at (2.5,0.5) {\textup{...}};
            \draw[underbrace] (1,0) -- (4,0);
            \node[below] at (2.5,-.5) {\tiny$i\!\shortminus\!1$};
        \end{tikzpicture}
        + \sum_{i',j'} \left(
        \begin{tikzpicture}[vcenter,scale=0.15]
            \tikzfixsize{(0,0)}{(6,5)}
            \path[string]
            (1,0) edge[->] (1,5)
            (4,0) edge[->] (4,5)
            ;
            \diagcap{0,0}{3,3}{6,0};
            \bigbox{$\alpha$}{(-4.5,1.25)}{(-0.5,3.25)};
            \path[string]
            (-4,0) edge (-4,1.25)
            (-4,3.25) edge[-> ] (-4,5)
            (-1,0) edge (-1,1.25)
            (-1,3.25) edge[->] (-1,5)
            ;
            \node at (-2.5,0.5) {\textup{...}};
            \node at (-2.5,4) {\textup{...}};
            \node [dot, label={[xshift=1pt, yshift=-2pt]{\tiny $j'$}}] at (5.25,2) {};
            \node at (2.5,0.5) {\textup{...}};
            \draw[underbrace] (1,0) -- (4,0);
            \node[below] at (2.5,-.5) {\tiny$i'\!\shortplus1$};
        \end{tikzpicture}
        + \sum_{j_1' + j_2' = j'-1}
        \begin{tikzpicture}[vcenter,scale=0.15]
            \tikzfixsize{(0,0)}{(6,5)}
            \path[string]
            (1,0) edge[->] (1,5)
            (4,0) edge[->] (4,5)
            ;
            \draw[string, ->] (0,0)
            to[out=90,in=180,looseness=1] (3,2.5)
            to[out=0,in=270,looseness=1] (5.5,5)
            ;
            \diagcap{5,0}{6,2}{7,0};
            \bigbox{$\alpha$}{(-4.5,1.25)}{(-0.5,3.25)};
            \path[string]
            (-4,0) edge (-4,1.25)
            (-4,3.25) edge[-> ] (-4,5)
            (-1,0) edge (-1,1.25)
            (-1,3.25) edge[->] (-1,5)
            ;
            \node at (-2.5,0.5) {\textup{...}};
            \node at (-2.5,4) {\textup{...}};
            \node [dot, label={[right, xshift=-1pt, yshift=-2pt]{\tiny $j_1'$}}] at (5,3.35) {};
            \node [dot, label={[right, xshift=-1pt, yshift=-1pt]{\tiny $j_2'$}}] at (6.75,1.5) {};
            \node at (2.5,0.5) {\textup{...}};
            \draw[underbrace] (1,0) -- (4,0);
            \node[below] at (2.5,-.5) {\tiny$i'$};
        \end{tikzpicture}
        \right)
    \end{align*}
    where $i' < i-1$ and $j' < j$. Momentarily ignoring the summations, the second diagram has $j_2 < j$, the third diagram has $j' < j$ and $i'+1 < i$, and the last diagram has $j_2' < j' < j$. Hence this is of the desired form.
\end{proof}

The functor $E^nF$ admits a natural action of $S_n$ via the braiding on $E^n$.  
Now let $M\in\Cs$, and write $\iota_M$ for the evaluation of $\iota$ on $M$. We consider the following subspaces of $E^nF(M)$:
\[
        W \coloneqq
        \iota^{-1}_M \left(\bigoplus_{i = 0}^{n-1} (E^{n-1}(M)) ^{\oplus k}\right) =
        \left\{m \in E^n F (M) :
        \begin{tikzpicture}[vcenter,scale=0.15]
            \tikzfixsize{(0,0)}{(6,5)}
            \path[string]
            (0,5) edge[->] (6,0)
            (0,0) edge[->] (2,5)
            (1,0) edge[->] (3,5)
            (4,0) edge[->] (6,5)
            ;
            \node at (2.75,0.5) {...};
            \draw[underbrace] (0,0) -- (4,0);
            \node[below] at (2,-.5) {\tiny$n$};
        \end{tikzpicture}
        (m) = 0
        \right\} \text{ and}
    \]
    \[
    V :=\iota_M^{-1}(FE^n(M))=
    \left\{
        m \in E^nF(M) :
        \begin{tikzpicture}[vcenter,scale=0.15]
            \tikzfixsize{(0,0)}{(7,5)}
            \path[string]
            (2,0) edge[->] (2,5)
            (5,0) edge[->] (5,5)
            (0,0) edge[->] (0,5)
            (-3,0) edge[->] (-3,5)
            ;
            \diagcap{1,0}{4,3}{7,0};
            \node [dot, label={[xshift=1pt, yshift=-2pt]{\tiny $j$}}] at (6.25,2) {};
            \node at (-1.5,0.5) {...};
            \draw[underbrace] (-3,0) -- (0,0);
            \node[below] at (-1.5,-.5) {\tiny$i$};
            \node at (3.5,0.5) {...};
            \draw[underbrace] (2,0) -- (5,0);
            \node[below] at (3.5,-.5) {\tiny$n\!\shortminus\!1\!\shortminus\!i$};
        \end{tikzpicture}
        (m) = 0 \text{ for } 0 \leq i \leq n-1, 0 \leq j \leq k-1
    \right\}.
    \]
Note that the morphism
\begin{tikzpicture}[vcenter,scale=0.15]
    \tikzfixsize{(0,0)}{(6,5)}
    \path[string]
    (0,5) edge[->] (6,0)
    (0,0) edge[->] (2,5)
    (1,0) edge[->] (3,5)
    (4,0) edge[->] (6,5)
    ;
    \node at (2.75,0.5) {...};
    \draw[underbrace] (0,0) -- (4,0);
    \node[below] at (2,-.5) {\tiny$n$};
\end{tikzpicture}
is $S_n$-equivariant, with the $S_n$-action on $FE^n(M)$ coming from the braiding on $E^n$.
In other words, $\iota_M$ restricted to $V$ induces an isomorphism 
$$\begin{tikzpicture}[vcenter,scale=0.15]
    \tikzfixsize{(0,0)}{(6,5)}
    \path[string]
    (0,5) edge[->] (6,0)
    (0,0) edge[->] (2,5)
    (1,0) edge[->] (3,5)
    (4,0) edge[->] (6,5)
    ;
    \node at (2.75,0.5) {...};
    \draw[underbrace] (0,0) -- (4,0);
    \node[below] at (2,-.5) {\tiny$n$};
\end{tikzpicture}:V\xto{\sim} FE^n(M)$$
of $S_n$-modules.
A straightforward check shows that $W$ is also $S_n$-stable. Thus 
$$E^nF(M)=V\oplus W$$ is a decomposition of $S_n$-modules.

The difficult component of the proof of Theorem \ref{thm OTI categorical commutation} is the following lemma.

\begin{lemma}\label{lemma cat action structural}
    \begin{enumerate}
        \item If $k\geq0$, $\iota_M$ induces an isomorphism of $S_n$-modules 
        $$E^nF(M)\cong FE^n(M)\oplus \Ind_{S_{n-1}}^{S_n}(E^{n-1}(M))^{\oplus k},$$ where $S_n$ acts on $FE^n$ and $E^nF$ via the braiding, and $S_{n-1}$ acts on $E^{n-1}$ via the braiding.
        \item Similarly if $k<0$, we have a natural isomorphism of $S_n$-modules 
        $$FE^n(M)\cong E^nF(M)\oplus \Ind_{S_{n-1}}^{S_n}(E^{n-1}(M))^{\oplus(-k)}.$$ 
    \end{enumerate}
\end{lemma}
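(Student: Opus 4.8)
The plan is to reduce the statement to an identification of the single $S_n$-summand $W$, and then to produce the required isomorphism by Frobenius reciprocity. Recall from the discussion preceding the lemma that, for $k\ge 0$, we have a decomposition $E^nF(M)=V\oplus W$ of $S_n$-modules, that $\iota_M$ restricts to an isomorphism $\iota_M|_V\colon V\xto{\sim}FE^n(M)$ of $S_n$-modules (with $S_n$ acting by the braiding on $E^n$), and that $\iota_M|_W\colon W\xto{\sim}\bigoplus_{i=0}^{n-1}E^{n-1}(M)^{\oplus k}$ is an isomorphism of objects. Since $\Ind_{S_{n-1}}^{S_n}(E^{n-1}(M)^{\oplus k})$ also has underlying object $E^{n-1}(M)^{\oplus nk}$, it suffices to produce an $S_n$-equivariant isomorphism $W\xto{\sim}\Ind_{S_{n-1}}^{S_n}(E^{n-1}(M)^{\oplus k})$, natural in $M$, where $S_{n-1}=\langle s_1,\dots,s_{n-2}\rangle$ is the stabilizer of the last $E$-strand and acts on $E^{n-1}(M)$ by the braiding.

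The construction goes as follows. For $0\le j\le k-1$ let $\pi_j\colon E^nF(M)\to E^{n-1}(M)$ be the distinguished cap component of $\iota_M$ — in the notation used to define $V$, the component with $i=n-1$ straight $E$-strands to the left and none to the right, carrying $j$ dots — and put $\pi=\bigoplus_j\pi_j$. Because $\pi_j$ caps the last $E$-strand against $F$ locally, the braid generators $s_1,\dots,s_{n-2}$ — which only involve the first $n-1$ $E$-strands — slide freely past it, so $\pi$ is a morphism of $S_{n-1}$-modules. By the $(\operatorname{Res},\operatorname{Coind})$-adjunction for $S_{n-1}\subset S_n$ it therefore corresponds to an $S_n$-equivariant map, natural in $M$,
\[
\Theta\colon E^nF(M)\longrightarrow \operatorname{Coind}_{S_{n-1}}^{S_n}\!\big(E^{n-1}(M)^{\oplus k}\big)\;\cong\;\Ind_{S_{n-1}}^{S_n}\!\big(E^{n-1}(M)^{\oplus k}\big),
\]
whose coordinate at a coset $\sigma S_{n-1}$ is $\pi\circ\rho(\sigma)$, where $\rho$ is the $S_n$-action on $E^nF(M)$. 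As $V$ is $S_n$-stable and is, by construction, annihilated by every cap component of $\iota_M$ (in particular by $\pi$), each coordinate $\pi\circ\rho(\sigma)$ kills $V$; hence $\Theta$ vanishes on $V$ and factors through $\Theta|_W\colon W\to\Ind_{S_{n-1}}^{S_n}(E^{n-1}(M)^{\oplus k})$.

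It remains to see that $\Theta|_W$ is an isomorphism; since source and target have isomorphic underlying objects, of finite length as $\Cs$ is locally finite, it is enough to check that $\Theta|_W$ is an epimorphism. Choosing the transversal $\{\sigma_0,\dots,\sigma_{n-1}\}$ of $S_{n-1}$ given by the cyclic shifts $\sigma_i=s_{i+1}s_{i+2}\cdots s_{n-1}$, the coordinate $\pi_j\circ\rho(\sigma_i)$ of $\Theta$ is obtained by braiding the last $E$-strand into slot $i+1$ and capping; unravelling this with the braid–cap isotopy relations and the dot slide in~\eqref{eq:heisenberg-diag-rel}, and using Lemma~\ref{claim:move-dot-right} to keep track of the dots, one finds that $\pi_j\circ\rho(\sigma_i)$ equals the $(i,j)$-th cap component of $\iota_M$ plus a $\Bbbk$-linear combination of cap components indexed by pairs $(i',j')$ with $i'<i$. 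Consequently, after identifying the coset-indexed summands of the target with the summands of $\bigoplus_i E^{n-1}(M)^{\oplus k}$, the map $\Theta|_W$ is $\iota_M|_W$ followed by an automorphism of the form $\id+N$ with $N$ strictly (lower) triangular, hence nilpotent; thus $\Theta|_W$ is an isomorphism. Combined with $\iota_M|_V\colon V\xto{\sim}FE^n(M)$ this yields the natural isomorphism of $S_n$-modules asserted in part~(1).

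Part~(2) is proved by the entirely parallel argument with~\eqref{eqn magic iso k<0} in place of~\eqref{eqn magic iso k>=0}: iterating the former gives $\iota_M\colon FE^n(M)\xto{\sim}E^nF(M)\oplus E^{n-1}(M)^{\oplus n(-k)}$, one isolates the distinguished \emph{cup} component $E^{n-1}(M)\to FE^n(M)$, uses the $(\Ind,\operatorname{Res})$-adjunction to build an $S_n$-map $\Ind_{S_{n-1}}^{S_n}(E^{n-1}(M)^{\oplus(-k)})\to FE^n(M)$, and checks as above that it restricts to an isomorphism onto the complement of $E^nF(M)$. The main obstacle in both parts is the triangularity step just described — showing that commuting the braiding past the distinguished cap/cup produces only corrections that are strictly lower in the slot index — which is exactly the point of Lemma~\ref{claim:move-dot-right}; matching transversal elements with the cap components of $\iota_M$ while threading that lemma through the iterated magic isomorphisms is the delicate bookkeeping, whereas the remainder of the argument is adjunction formalism and length counting.
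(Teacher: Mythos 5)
Your proof reaches the conclusion by a genuinely different route from the paper. The paper constructs an $S_{n-1}$-stable subspace $W_0\subseteq W$ (defined so that every cap $c_{i',j'}$ with $i'>0$ vanishes on it), shows that $\iota_M|_{W_0}$ maps $W_0$ isomorphically onto the $i=0$ summand $(E^{n-1}(M))^{\oplus k}$ as $S_{n-1}$-modules, and then proves that the induced map $\Ind_{S_{n-1}}^{S_n}W_0\to W$ is an isomorphism by a triangular argument using the coset representatives $t_\ell$. You instead pick out one cap component $\pi$, apply the $(\operatorname{Res},\operatorname{Coind})$ adjunction to produce an $S_n$-map $\Theta\colon E^nF(M)\to\Ind_{S_{n-1}}^{S_n}(E^{n-1}(M)^{\oplus k})$ directly, show it kills $V$, and argue $\Theta|_W$ is an isomorphism by a triangular argument with the dual set of coset representatives $\sigma_i$. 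These are adjoint-dual formulations of the same Frobenius-reciprocity mechanism, and your version is tidier in that it never introduces the auxiliary subspace~$W_0$.

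The cost of avoiding $W_0$ shows up exactly at the triangularity step, and that is where your argument has a gap. You assert that ``$\pi_j\circ\rho(\sigma_i)$ equals the $(i,j)$-th cap component of $\iota_M$ plus a $\Bbbk$-linear combination of cap components indexed by $(i',j')$ with $i'<i$.'' That is not what Lemma~\ref{claim:move-dot-right} gives: the correction terms it produces are not $\Bbbk$-linear combinations of cap components of $\iota_M$, but diagrams of the form (a box $\alpha$ built from crossings and dots, post-composed on the $E^{n-1}$ output) $\circ$ (a cap with a different index). The coefficients are genuine morphisms, not scalars, and the correction is not itself a component of $\iota_M$. The paper sidesteps this entirely by restricting to $W_0$, on which all the correction terms die because they factor through caps killed on $W_0$; you do not have that vanishing on $W$. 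Your conclusion is likely salvageable — the corrections do factor through the coordinates of $\Theta|_W$ with strictly higher cap position, so one gets a unipotent matrix of \emph{morphism} entries relating $\iota_M|_W$ and $\Theta|_W$, and such a matrix is still invertible — but this needs to be stated and checked, together with the sign of the triangularity (I believe the corrections push the cap position up, not down, so ``$i'<i$'' is backwards if $i'$ indexes left strands, though it agrees if $i'$ is the middle-strand count as in Lemma~\ref{claim:move-dot-right}). As written, the key step of your proof is an assertion rather than a proof.
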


\begin{proof}
We prove this for central charges $k\geq0$, with the case of $k<0$ being similar.  Consider the following subspace of $E^nF(M)$:
\[
    \tilde{W}_0 \coloneqq
    \left\{
        m \in E^nF(M) :
        \begin{tikzpicture}[vcenter,scale=0.15]
            \tikzfixsize{(0,0)}{(7,5)}
            \path[string]
            (2,0) edge[->] (2,5)
            (5,0) edge[->] (5,5)
            (0,0) edge[->] (0,5)
            (-3,0) edge[->] (-3,5)
            ;
            \diagcap{1,0}{4,3}{7,0};
            \node [dot, label={[xshift=1pt, yshift=-2pt]{\tiny $j$}}] at (6.25,2) {};
            \node at (-1.5,0.5) {...};
            \draw[underbrace] (-3,0) -- (0,0);
            \node[below] at (-1.5,-.5) {\tiny$i$};
            \node at (3.5,0.5) {...};
            \draw[underbrace] (2,0) -- (5,0);
            \node[below] at (3.5,-.5) {\tiny$n\!\shortminus\!1\!\shortminus\!i$};
        \end{tikzpicture}
        (m) = 0 \text{ for } 0 < i \leq n-1,\  0 \leq j \leq k-1
    \right\}.
    \]
    We observe that it is an $S_{n-1}$-stable subspace of $E^n F(M)$, where the $S_{n-1}$-action is permutation of all but the first upwards strand. In particular, the intersection $W_0 \coloneqq W \cap \tilde{W}_0$ is also $S_{n-1}$-stable.
      
    By Lemma \ref{claim:move-dot-right}, the components of $\iota$, other than the first, can then be written as
    \begin{eqnarray*}
    \begin{tikzpicture}[vcenter,scale=0.15]
        \tikzfixsize{(0,0)}{(7,5)}
        \path[string]
        (3,0) edge[->] (3,5)
        (6,0) edge[->] (6,5)
        (0,0) edge[->] (0,5)
        (-3,0) edge[->] (-3,5)
        ;
        \diagcap{1,0}{4,3}{7,0};
        \node [dot, label={[xshift=-1pt, yshift=-2pt]{\tiny $j$}}] at (1.75,2) {};
        \node at (-1.5,0.5) {...};
        \draw[underbrace] (-3,0) -- (0,0);
        \node[below] at (-1.5,-.5) {\tiny$i$};
        \node at (4.5,0.5) {...};
        \draw[underbrace] (3,0) -- (6,0);
        \node[below] at (4.5,-.5) {\tiny$n\!\shortminus\!1\!\shortminus\!i$};
    \end{tikzpicture}
    & = &
    \begin{tikzpicture}[vcenter,scale=0.15]
        \tikzfixsize{(0,0)}{(7,5)}
        \path[string]
        (2,0) edge[->] (2,5)
        (5,0) edge[->] (5,5)
        (0,0) edge[->] (0,5)
        (-3,0) edge[->] (-3,5)
        ;
        \diagcap{1,0}{4,3}{7,0};
        \node [dot, label={[xshift=1pt, yshift=-2pt]{\tiny $j$}}] at (6.25,2) {};
        \node at (-1.5,0.5) {...};
        \draw[underbrace] (-3,0) -- (0,0);
        \node[below] at (-1.5,-.5) {\tiny$i$};
        \node at (3.5,0.5) {...};
        \draw[underbrace] (2,0) -- (5,0);
        \node[below] at (3.5,-.5) {\tiny$n\!\shortminus\!1\!\shortminus\!i$};
    \end{tikzpicture}
    + \sum_{i',j'}
    \begin{tikzpicture}[vcenter,scale=0.15]
        \tikzfixsize{(0,0)}{(6,5)}
        \path[string]
        (1,0) edge[->] (1,5)
        (4,0) edge[->] (4,5)
        ;
        \diagcap{0,0}{3,3}{6,0};
        \bigbox{$\alpha$}{(-4.5,1.25)}{(-0.5,3.25)};
        \path[string]
        (-4,0) edge (-4,1.25)
        (-4,3.25) edge[-> ] (-4,5)
        (-1,0) edge (-1,1.25)
        (-1,3.25) edge[->] (-1,5)
        ;
        \node at (-2.5,0.5) {\textup{...}};
        \node at (-2.5,4) {\textup{...}};
        \node [dot, label={[xshift=1pt, yshift=-2pt]{\tiny $j'$}}] at (5.25,2) {};
        \node at (2.5,0.5) {\textup{...}};
        \draw[underbrace] (1,0) -- (4,0);
        \node[below] at (2.5,-.5) {\tiny$i'$};
    \end{tikzpicture}
    \end{eqnarray*}
    where $i',j'$ run over some values such that $i' < n-1-i$ and $j' < j$. Noting that the number of upwards strands in $\alpha$ is strictly greater than $i$, it is clear that $\iota_M$ maps $W_0$ isomorphically onto the $i = 0$ component of the sum $\bigoplus_{i = 0}^{n-1} (E^{n-1}(M)) ^{\oplus k}$, with matrix:
    \begin{equation}\label{eq:iota-for-W0}
    \left(0 , ... , 0 ,
    \begin{tikzpicture}[vcenter,scale=0.15]
        \tikzfixsize{(0,0)}{(6,5)}
        \path[string]
        (1,0) edge[->] (1,5)
        (4,0) edge[->] (4,5)
        ;
        \diagcap{0,0}{3,3}{6,0};
        %
        %
        \node at (2.5,0.5) {\textup{...}};
        \draw[underbrace] (1,0) -- (4,0);
        \node[below] at (2.5,-.5) {\tiny$n\!\shortminus\!1$};
    \end{tikzpicture}
    \ , \
    \begin{tikzpicture}[vcenter,scale=0.15]
        \tikzfixsize{(0,0)}{(6,5)}
        \path[string]
        (1,0) edge[->] (1,5)
        (4,0) edge[->] (4,5)
        ;
        \diagcap{0,0}{3,3}{6,0};
        \node [dot] at (5.25,2) {};
        \node at (2.5,0.5) {\textup{...}};
        \draw[underbrace] (1,0) -- (4,0);
        \node[below] at (2.5,-.5) {\tiny$n\!\shortminus\!1$};
    \end{tikzpicture}
    \ , ... , \
    \begin{tikzpicture}[vcenter,scale=0.15]
        \tikzfixsize{(0,0)}{(6,5)}
        \path[string]
        (1,0) edge[->] (1,5)
        (4,0) edge[->] (4,5)
        ;
        \diagcap{0,0}{3,3}{6,0};
        \node [dot, label={[xshift=3pt, yshift=-1.5pt]{\tiny $k\!\shortminus\!1$}}] at (5.25,2) {};
        \node at (2.5,0.5) {\textup{...}};
        \draw[underbrace] (1,0) -- (4,0);
        \node[below] at (2.5,-.5) {\tiny$n\!\shortminus\!1$};
    \end{tikzpicture}
    \ , 0 , ..., 0
    \right)^\top.
    \end{equation}
    The $S_{n-1}$-action on $E^nF$, permuting the last $n-1$ upward strands, commutes with this matrix. Hence $\iota_M: W_0\to (E^{n-1}(M)) ^{\oplus k}$ is an isomorphism of $S_{n-1}$-modules, with the action on $E^{n-1}(M)$ coming from the braiding. 
    We have now reduced the proof to showing that 
    \[\Ind_{S_{n-1}}^{S_n}W_0\to W \xto{\iota_M} \bigoplus_{i=0}^{n-1} (E^{n-1}(M))^{\oplus k}\]
    is an isomorphism, where the first morphism comes from the universal property of induced modules.
    We consider the following coset representatives of $S_n / S_{n-1}$ which act on the $E$-coloured strands of $E^n F$ by the crossing diagrams
    \begin{equation}\label{eq:Sn-1/Sn-coset-reps}
    \begin{tikzpicture}[vcenter=-1pt,scale=0.15]
        \path[string]
        (3,0) edge[->] (3,3)
        (6,0) edge[->] (6,3)
        ;
        \node at (4.5,1.5) {...};
    \end{tikzpicture}
    \ , \
    \begin{tikzpicture}[vcenter=-1pt,scale=0.15]
        \path[string]
        (0,0) edge[->] (2,3)
        (2,0) edge[->] (0,3)
        (3,0) edge[->] (3,3)
        (6,0) edge[->] (6,3)
        ;
        \node at (4.5,1.5) {...};
    \end{tikzpicture}
    \ , \
    \begin{tikzpicture}[vcenter=-1pt,scale=0.15]
        \path[string]
        (0,0) edge[->] (3,3)
        (2,0) edge[->] (0,3)
        (3,0) edge[->] (1,3)
        (4,0) edge[->] (4,3)
        (7,0) edge[->] (7,3)
        ;
        \node at (5.5,1.5) {...};
    \end{tikzpicture}
    \ , \
    \begin{tikzpicture}[vcenter=-1pt,scale=0.15]
        \path[string]
        (0,0) edge[->] (4,3)
        (2,0) edge[->] (0,3)
        (3,0) edge[->] (1,3)
        (4,0) edge[->] (2,3)
        (5,0) edge[->] (5,3)
        (8,0) edge[->] (8,3)
        ;
        \node at (6.5,1.5) {...};
    \end{tikzpicture}
    \ , ..., \
    \begin{tikzpicture}[vcenter=-1pt,scale=0.15]
        \path[string]
        (0,0) edge[->] (5,3)
        (2,0) edge[->] (0,3)
        (5,0) edge[->] (3,3)
        ;
        \node at (3.15,0.5) {...};
    \end{tikzpicture}\ .
    \end{equation}
    We denote these elements by $t_0, t_1, t_2, t_3, ..., t_{n-1}$ respectively. 
    We will write
    \[
    c_{i,j} \coloneqq
    \begin{tikzpicture}[vcenter,scale=0.15]
        \tikzfixsize{(0,0)}{(7,5)}
        \path[string]
        (2,0) edge[->] (2,5)
        (5,0) edge[->] (5,5)
        (0,0) edge[->] (0,5)
        (-3,0) edge[->] (-3,5)
        ;
        \diagcap{1,0}{4,3}{7,0};
        \node [dot, label={[xshift=1pt, yshift=-2pt]{\tiny $j$}}] at (6.25,2) {};
        \node at (-1.5,0.5) {...};
        \draw[underbrace] (-3,0) -- (0,0);
        \node[below] at (-1.5,-.5) {\tiny$i$};
        \node at (3.5,0.5) {...};
        \draw[underbrace] (2,0) -- (5,0);
        \node[below] at (3.5,-.5) {\tiny$n\!\shortminus\!1\!\shortminus\!i$};
    \end{tikzpicture}
    \]
    for the leading terms in the latter components of $\iota$. Since the dot is on the far right, it is clear that $c_{\ell,j} \circ t_\ell = c_{0,j}$ for any $j$. 
    By writing the components of $\iota$ as in Lemma \ref{claim:move-dot-right}, we see that the $i = \ell$ component of $\iota_M \circ t_\ell$ on $W_0$ is given by the matrix $(c_{0,0},c_{0,1},\ldots,c_{0,k-1})$.  Comparing to~(\ref{eq:iota-for-W0}), we know this is an isomorphism.
    Since the $i$th component of $\iota_M \circ t_\ell(W_0)$ is zero whenever $i > \ell$,
    we have indeed shown that the required map is an isomorphism:
    \[
    \Ind_{S_{n-1}}^{S_n} W_0 = \bigoplus_{\ell=0}^{n-1} t_\ell (W_0) \xto{\sim} \bigoplus_{i=0}^{n-1} (E^{n-1}(M))^{\oplus k}.
    \]
\end{proof}
 
This last result shows that $E^n$ behaves as desired, up to morphisms annihilated by $\varphi$.
\begin{prop}\label{prop:commutes-modulo-traces}
    There exist diagrams $f, f'$ and $f''$ such that
    \begin{enumerate}
        \item \phantom{.}\vspace*{-1.5em} 
            \begin{gather*}
                \begin{tikzpicture}[vcenter=0.1em, scale=0.2]
                    \tikzfixsize{(0,0)}{(3.25,3)}
                    \path[string]
                    (0,0) edge[-{>[length=0.3em]},ultra thick] (3,3)
                    (3,0) edge[->] (0,3)
                    ;
                    \node [dot] at (0.75,2.25) {};
                    \node [below] at (0,0.25) {\tiny $n$};
                \end{tikzpicture}
                =
                \begin{tikzpicture}[vcenter=0.1em, scale=0.2]
                    \path[string]
                    (0,0) edge[-{>[length=0.3em]},ultra thick] (3,3)
                    (3,0) edge[->] (0,3)
                    ;
                    \node [dot] at (2.25,0.75) {};
                    \node [below] at (0,0.25) {\tiny $n$};
                \end{tikzpicture}
                + \op{Tr}_{S_{n-1},S_n}(f),
            \end{gather*}
        \item \phantom{.}\vspace*{-1.5em}
            \begin{gather*}
                \begin{tikzpicture}[vcenter=0.1em, scale=0.2]
                    \path[string]
                    (0,0) edge[<-,out=40, in=270] (2.25,2)
                    (2.25,2) edge[out=90, in=320] (0,4)
                    (3,0) edge[ultra thick,out=140, in=270] (0.75,2)
                    (0.75,2) edge[-{>[length=0.3em]},ultra thick,out=90, in=220] (3,4)
                    ;
                    \node [below] at (3,0.25) {\tiny $n$};
                \end{tikzpicture}
                =
                \begin{tikzpicture}[vcenter=0.1em, scale=0.2]
                    \path[string]
                    (0,0) edge[<-] (0,4)
                    (1.5,0) edge[-{>[length=0.3em]}, ultra thick] (1.5,4) 
                    ;
                    \node [below] at (1.5,0.25) {\tiny $n$};
                \end{tikzpicture}
                + \op{Tr}_{S_{n-1},S_n}(f'),
            \end{gather*}
        \item \phantom{.}\vspace*{-1.5em}
            \begin{gather*}
                \begin{tikzpicture}[vcenter=0.1em, scale=0.2]
                    \path[string]
                    (0,0) edge[ultra thick, out=40, in=270] (2.25,2)
                    (2.25,2) edge[-{>[length=0.3em]}, ultra thick, out=90, in=320] (0,4)
                    (3,0) edge[<-, out=140, in=270] (0.75,2)
                    (0.75,2) edge[out=90, in=220] (3,4)
                    ;
                    \node [below] at (0,0.25) {\tiny $n$};
                \end{tikzpicture}
                =
                \begin{tikzpicture}[vcenter=0.1em, scale=0.2]
                    \path[string]
                    (0,0) edge[-{>[length=0.3em]}, ultra thick] (0,4)
                    (1.5,0) edge[<-] (1.5,4) 
                    ;
                    \node [below] at (0,0.25) {\tiny $n$};
                \end{tikzpicture}
                  + \op{Tr}_{S_{n-1},S_n}(f'').
             \end{gather*}
    \end{enumerate}
\end{prop}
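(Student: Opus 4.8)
The plan is to prove each of the three identities by lifting the corresponding single--strand Heisenberg relation to the $n$--strand bundle $E^n$, tracking the error terms, and recognising the total error as a transfer $\op{Tr}_{S_{n-1},S_n}(-)$. The underlying principle is Lemma~\ref{lem:phi-kills-quotient-orbit} (equivalently Corollary~\ref{cor:phi-kills-tr}): a natural transformation is such a transfer exactly when it factors through an object induced from $S_{n-1}$, so it suffices in each case to display the error term as a map factoring through $\Ind_{S_{n-1}}^{S_n}(-)$. I would carry out the argument for central charge $k\geq 0$; the case $k<0$ is identical after replacing the inversion isomorphism~\eqref{eqn magic iso k>=0} and part~(1) of Lemma~\ref{lemma cat action structural} by~\eqref{eqn magic iso k<0} and part~(2).

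For~(1), I would first resolve the thick $E^n$--strand into its $n$ constituent upward strands, so that the crossing of the single $E$--strand with the bundle becomes a composite of $n$ elementary $\uparrow\uparrow$--crossings. Applying the dot--sliding relation (the third identity in~\eqref{eq:heisenberg-diag-rel}) once at each of these crossings transports the dot from one side of the bundle to the other; the leading term is the right--hand side of~(1), while the $\ell$--th application contributes an error term in which the $\ell$--th elementary crossing has been replaced by two parallel strands. In that error term the lone $E$--strand runs parallel to the $\ell$--th strand of the bundle, so the diagram factors through an object of the form $E^{\ell}\otimes(E\otimes E)\otimes E^{n-1-\ell}$, and hence through one induced from $S_{n-1}$; more precisely, the $n$ error terms are the conjugates of one fixed $S_{n-1}$--equivariant morphism $f$ by the coset representatives $t_0,\dots,t_{n-1}$ of $S_n/S_{n-1}$ from~\eqref{eq:Sn-1/Sn-coset-reps}, so their sum is exactly $\op{Tr}_{S_{n-1},S_n}(f)$.

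For~(2) and~(3), the displayed curl diagrams are natural endomorphisms of $FE^n$ and of $E^nF$ respectively. The idea is to compare them against the $S_n$--module decomposition $E^nF(M)\cong FE^n(M)\oplus\Ind_{S_{n-1}}^{S_n}(E^{n-1}(M))^{\oplus k}$ of Lemma~\ref{lemma cat action structural}, implemented by the isomorphism $\iota_M$. Concretely, I would show that the curl composite of~(3) equals $\iota_M$ followed by the canonical split inclusion of the $FE^n(M)$--summand (precomposed with $\iota_M^{-1}$); then its $FE^n(M)$--component is the identity by Lemma~\ref{lemma cat action structural}, which yields the identity term, and its remaining component factors through $\Ind_{S_{n-1}}^{S_n}(E^{n-1}(M))$, hence equals $\op{Tr}_{S_{n-1},S_n}(f'')$ for some $S_{n-1}$--equivariant $f''$. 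Statement~(2) is the mirror assertion for $FE^n$, obtained in the same way from the other zig--zag relation in~\eqref{eq:heisenberg-diag-rel} and the other adjunction between $E$ and $F$. Identifying the curl with the explicit composite built from $\iota_M$ requires unwinding the definition of $\iota$ through the chain of inversion isomorphisms~\eqref{eqn magic iso k>=0} and the snake relations; Lemma~\ref{claim:move-dot-right}, which rewrites the components of $\iota$ in the normal form used in the proof of Lemma~\ref{lemma cat action structural}, is what makes this tractable.

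The main obstacle is exactly this last identification in~(2) and~(3): matching the cups, caps and crossings of the curl diagram, strand by strand, with the composite expressed through $\iota_M$ and its inverse, so that Lemma~\ref{lemma cat action structural} can be applied. Step~(1), by contrast, is a clean iteration of a single relation, and the assembly of its error terms into a transfer is forced by the explicit coset representatives in~\eqref{eq:Sn-1/Sn-coset-reps}. In all three parts the morphisms $f,f',f''$ emerge explicitly from the construction --- as the error at a single crossing in~(1), and as the restriction to $S_{n-1}$ of the induced--summand contribution in~(2) and~(3) --- and are $S_{n-1}$--equivariant by construction; composing the final identities with the dashed $\varphi$--strand and invoking~\eqref{eq:dotted-kills-trace} then feeds into the proof of Theorem~\ref{thm OTI categorical commutation}.
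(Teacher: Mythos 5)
Your proposal is correct and follows essentially the same route as the paper: part~(1) by resolving the thick strand and iterating the dot--slide relation of~\eqref{eq:heisenberg-diag-rel}, recognising the $n$ error terms as conjugates of a single $S_{n-1}$--morphism by the coset representatives~\eqref{eq:Sn-1/Sn-coset-reps}; parts~(2)--(3) by writing $\mathrm{id}_{E^nF}=\iota^{-1}\iota$, peeling off the first--to--first component as the curl, and observing that the remaining contribution factors through the $\Ind_{S_{n-1}}^{S_n}(E^{n-1})^{\oplus k}$ summand of Lemma~\ref{lemma cat action structural}, hence is a transfer by Lemma~\ref{lem:phi-kills-quotient-orbit}. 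The only cosmetic divergence is that the paper simply cites Brundan for the fact that the first component of the inverse of~\eqref{eqn magic iso k>=0} is the opposite crossing and deduces the $n$--strand statement directly, whereas you propose to re-derive this via snake relations and Lemma~\ref{claim:move-dot-right} --- that lemma is actually used one layer down, inside the proof of Lemma~\ref{lemma cat action structural}, but your instinct that it underlies the normal--form bookkeeping is sound.
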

\begin{proof}
Let us write $t_0,\dots,t_{n-1}\in S_n$ for coset representatives of $S_n/S_{n-1}$, as in~(\ref{eq:Sn-1/Sn-coset-reps}). We consider $S_n$ acting on the thick $n$-strand on the top and the bottom. Then for (1) we have:  \begin{align*}
        \begin{tikzpicture}[vcenter=0.1em, scale=0.2]
            \tikzfixsize{(0,0)}{(3.25,3)}
            \path[string]
            (0,0) edge[-{>[length=0.3em]},ultra thick] (3,3)
            (3,0) edge[->] (0,3)
            ;
            \node [dot] at (0.75,2.25) {};
            \node [below] at (0,0.25) {\tiny $n$};
        \end{tikzpicture}
        &=
        \begin{tikzpicture}[vcenter=0.2em,scale=0.15]
            \tikzfixsize{(0,0)}{(6,5)}
            \path[string]
            (-1,5) edge[<-] (6,0)
            (0,0) edge[->] (2,5)
            (1,0) edge[->] (3,5)
            (4,0) edge[->] (6,5)
            ;
            \node [dot] at (0.25,4.1) {};
            \node at (2.75,0.5) {...};
            \draw[underbrace] (0,0) -- (4,0);
            \node[below] at (2,-.5) {\tiny$n$};
        \end{tikzpicture}
        \\
        &=
        \begin{tikzpicture}[vcenter=0.2em,scale=0.15]
            \tikzfixsize{(0,0)}{(6,5)}
            \path[string]
            (0,5) edge[<-] (7,0)
            (0,0) edge[->] (2,5)
            (1,0) edge[->] (3,5)
            (4,0) edge[->] (6,5)
            ;
            \node [dot] at (5.6,1) {};
            \node at (2.75,0.5) {...};
            \draw[underbrace] (0,0) -- (4,0);
            \node[below] at (2,-.5) {\tiny$n$};
        \end{tikzpicture}
        +
        \sum_{\ell=0}^{n-1}
        \begin{tikzpicture}[vcenter=0.2em,scale=0.15]
            \path[string]
            (0,0) edge[->] (1,5)
            (3,0) edge[->] (4,5)
            (6,0) edge[->] (7,5)
            (9,0) edge[->] (10,5)
            ;
            \path[string]
            (5,0) edge[->] (-1,5)
            (11,0) edge[->] (5,5)
            ;
            \node at (1.6,0.5) {...};
            \draw[underbrace] (0,0) -- (3,0);
            \node[below] at (1.5,-.5) {\tiny$\ell$};
            \node at (7.6,0.5) {...};
            \draw[underbrace] (6,0) -- (9,0);
            \node[below] at (7.5,-.5) {\tiny$n\!\shortminus\!1\!\shortminus\!\ell$};
        \end{tikzpicture}
        \\
        &=
        \begin{tikzpicture}[vcenter=0.1em, scale=0.2]
            \path[string]
            (0,0) edge[-{>[length=0.3em]},ultra thick] (3,3)
            (3,0) edge[->] (0,3)
            ;
            \node [dot] at (2.25,0.75) {};
            \node [below] at (0,0.25) {\tiny $n$};
        \end{tikzpicture}
        +
        \sum_{t_\ell \in S_n/S_{n-1}}
        t_\ell \circ
        \begin{tikzpicture}[vcenter=0.2em,scale=0.15]
            \path[string]
            (6,0) edge[->] (7,5)
            (9,0) edge[->] (10,5)
            ;
            \path[string]
            (4,0) edge[->] (4,5)
            (11,0) edge[->] (5,5)
            ;
            \node at (7.6,0.5) {...};
            \draw[underbrace] (6,0) -- (9,0);
            \node[below] at (7.5,-.5) {\tiny$n\!\shortminus\!1$};
        \end{tikzpicture}
        \circ t_\ell^{-1}
        \\
        &=
        \begin{tikzpicture}[vcenter=0.1em, scale=0.2]
            \path[string]
            (0,0) edge[-{>[length=0.3em]},ultra thick] (3,3)
            (3,0) edge[->] (0,3)
            ;
            \node [dot] at (2.25,0.75) {};
            \node [below] at (0,0.25) {\tiny $n$};
        \end{tikzpicture}
        + \op{Tr}_{S_{n-1},S_n}\bigg(
        \begin{tikzpicture}[vcenter=0.2em,scale=0.15]
            \path[string]
            (6,0) edge[->] (7,5)
            (9,0) edge[->] (10,5)
            ;
            \path[string]
            (4,0) edge[->] (4,5)
            (11,0) edge[->] (5,5)
            ;
            \node at (7.6,0.5) {...};
            \draw[underbrace] (6,0) -- (9,0);
            \node[below] at (7.5,-.5) {\tiny$n\!\shortminus\!1$};
        \end{tikzpicture}
        \bigg),
    \end{align*}
    where the second equality follows from repeated application of Equation (\ref{eq:heisenberg-diag-rel}), and the third equality follows from noticing that conjugation by $t_\ell$ moves $\ell$ crossing strands from the right to the left.

    We prove (2) and (3) for the case of central charge $k\geq0$, with the case of $k<0$ being similar.  We begin by recalling that a two-sided inverse to (\ref{eqn magic iso k>=0}) has the following form:
    \[
\left(\begin{tikzpicture}[vcenter=-0.2em, scale=0.2]
            \path[string]
            (0,0) edge[<-] (3,3)
            (3,0) edge[->] (0,3)
            ;
        \end{tikzpicture}
        \ ,
        g_0, \hdots, g_{k-1}\right),
    \]
    for some morphisms $g_0,\dots,g_{k-1}$ (see Sec.~3 of \cite{Brun}).  In particular, we may write a two-sided inverse to $\iota:E^n F \xto{\sim} F E^n \oplus\bigoplus_{i = 0}^{n-1} (E^{n-1}) ^{\oplus k}$, and it will take the following form:
    \[
    \left(
    \begin{tikzpicture}[vcenter,scale=0.15]
        \tikzfixsize{(0,0)}{(6,5)}
        \path[string]
        (6,5) edge[->] (0,0)
        (2,0) edge[->] (0,5)
        (3,0) edge[->] (1,5)
        (6,0) edge[->] (4,5)
        ;
        \node at (4.25,0.5) {...};
        \draw[underbrace] (2,0) -- (6,0);
        \node[below] at (4,-.5) {\tiny$n$};
    \end{tikzpicture}
    \ ,
    \dots \right).
    \]
    By Lemma \ref{lemma cat action structural}, the identity map
    \[
    E^nF\xto{\iota} FE^n\oplus \bigoplus_{i = 0}^{n-1} (E^{n-1}) ^{\oplus k}\xto{\iota^{-1}}E^nF
    \]
    is given by
    \[
              \begin{tikzpicture}[vcenter=0.1em, scale=0.2]
                    \path[string]
                    (0,0) edge[ultra thick, out=40, in=270] (2.25,2)
                    (2.25,2) edge[-{>[length=0.3em]}, ultra thick, out=90, in=320] (0,4)
                    (3,0) edge[<-, out=140, in=270] (0.75,2)
                    (0.75,2) edge[out=90, in=220] (3,4)
                    ;
                    \node [below] at (0,0.25) {\tiny $n$};
                \end{tikzpicture} +\text{ a morphism factoring through }\Ind_{S_{n-1}}^{S_n}(E^{n-1})^{\oplus k}.
    \]
    which gives (3) via Lemma \ref{lem:phi-kills-quotient-orbit}. A similar argument gives a proof of (2).
\end{proof}

\begin{proof}[Proof of Theorem \ref{thm OTI categorical commutation}]\label{proof thm cat commutation}
Let
\begin{gather*}
    \zeta_E =
    \begin{tikzpicture}[vcenter=0.2em,scale=0.2]
        \path[string]
        (0,4) edge[<-] (3,0)
        (0,0) edge[densely dashed] (2,4)
        (1,0) edge[-{>[length=0.3em]},ultra thick] (3,4)
        ;
        \node [below] at (0,0) {\tiny $\varphi$};
        \node [below] at (1.5,0.3) {\tiny $p^r$};
    \end{tikzpicture}
    \quad \text{and} \quad
    \zeta_F =
    \begin{tikzpicture}[vcenter=0.2em,scale=0.2]
        \path[string]
        (0,4) edge[->] (3,0)
        (0,0) edge[densely dashed] (2,4)
        (1,0) edge[-{>[length=0.3em]},ultra thick] (3,4)
        ;
        \node [below] at (0,0) {\tiny $\varphi$};
        \node [below] at (1.5,0.3) {\tiny $p^r$};
    \end{tikzpicture}
    \ .
\end{gather*}
Then by~Lemma~\ref{lem:dashed-commutes}, $\zeta_E$ and $\zeta_F$ are invertible with
\begin{gather*}
    \zeta_E^{-1} =
    \begin{tikzpicture}[vcenter=0.2em,scale=0.2]
        \path[string]
        (0,0) edge[->] (3,4)
        (2,0) edge[densely dashed] (0,4)
        (3,0) edge[-{>[length=0.3em]},ultra thick] (1,4)
        ;
        \node [below] at (2,0) {\tiny $\varphi$};
        \node [below] at (3.5,0.3) {\tiny $p^r$};
    \end{tikzpicture},
    \quad \text{and} \quad
    \zeta_F^{-1} =
    \begin{tikzpicture}[vcenter=0.2em,scale=0.2]
        \path[string]
        (0,0) edge[<-] (3,4)
        (2,0) edge[densely dashed] (0,4)
        (3,0) edge[-{>[length=0.3em]},ultra thick] (1,4)
        ;
        \node [below] at (2,0) {\tiny $\varphi$};
        \node [below] at (3.5,0.3) {\tiny $p^r$};
    \end{tikzpicture}.
\end{gather*}
The conditions (1) to (4) of Definition \ref{defn categorical commutation} can then be written diagrammatically as
\begin{align*}
    \begin{tikzpicture}[vcenter=0.2em,scale=0.2]
        \path[string]
        (0,4) edge[<-] (4,0)
        (0,0) edge[densely dashed] (2,4)
        (1,0) edge[-{>[length=0.3em]},ultra thick] (3,4)
        ;
        \node [below] at (0,0) {\tiny $\varphi$};
        \node [below] at (1.5,0.3) {\tiny $p^r$};
        \node [dot] at (3,1) {};
    \end{tikzpicture}
    =
    \begin{tikzpicture}[vcenter=0.2em,scale=0.2]
        \path[string]
        (-1,4) edge[<-] (3,0)
        (0,0) edge[densely dashed] (2,4)
        (1,0) edge[-{>[length=0.3em]},ultra thick] (3,4)
        ;
        \node [below] at (0,0) {\tiny $\varphi$};
        \node [below] at (1.5,0.3) {\tiny $p^r$};
        \node [dot] at (0,3) {};
    \end{tikzpicture}
    \quad&,\quad
    \begin{tikzpicture}[vcenter=0.25em, scale=0.2]
        \path[string]
        (-1.625,0) edge[densely dashed] (3.375,5)
        (-0.375,0) edge[-{>[length=0.3em]},ultra thick] (4.625,5)
        (2,0) edge[->, out=45, in=-45, looseness=1.8] (1,5)
        (4,0) edge[->] (-1,5)
        ;
        \node [below] at (-1.625,0) {\tiny $\varphi$};
        \node [below] at (0,0.3) {\tiny $p^r$};
    \end{tikzpicture}
    =
    \begin{tikzpicture}[vcenter=0.25em, scale=0.2]
        \path[string]
        (-1.625,0) edge[densely dashed] (3.375,5)
        (-0.375,0) edge[-{>[length=0.3em]},ultra thick] (4.625,5)
        (2,0) edge[->, out=135, in=225, looseness=1.8] (1,5)
        (4,0) edge[->] (-1,5)
        ;
        \node [below] at (-1.625,0) {\tiny $\varphi$};
        \node [below] at (0,0.3) {\tiny $p^r$};
    \end{tikzpicture}
    \quad,\\
    \begin{tikzpicture}[vcenter=0.25em, scale=0.2]
        \path[string]
        (-1.625,0) edge[densely dashed] (3.375,5)
        (-0.375,0) edge[-{>[length=0.3em]},ultra thick] (4.625,5)
        (3.5,2.5) edge[->] (1,5)
        (2.5,1.5) edge (-1,5)
        (2.5,1.5) edge[out=-45, in=-45, looseness=2] (3.5,2.5)
        ;
        \node [below] at (-1.625,0) {\tiny $\varphi$};
        \node [below] at (0,0.3) {\tiny $p^r$};
    \end{tikzpicture}
    =
    \begin{tikzpicture}[vcenter=0.2em, scale=0.2]
        \path[string]
        (2,0) edge[densely dashed] (2,4)
        (3,0) edge[-{>[length=0.3em]},ultra thick] (3,4)
        ;
        \diagcup{-1,4}{0,2.75}{1,4}
        \node [below] at (2,0) {\tiny $\varphi$};
        \node [below] at (3.5,0.3) {\tiny $p^r$};
    \end{tikzpicture}
    \quad&,\quad
    \begin{tikzpicture}[vcenter=0.25em, scale=0.2]
        \path[string]
        (-1.625,0) edge[densely dashed] (3.375,5)
        (-0.375,0) edge[-{>[length=0.3em]},ultra thick] (4.625,5)
        (3.5,2.5) edge (1,5)
        (2.5,1.5) edge[->] (-1,5)
        (2.5,1.5) edge[out=-45, in=-45, looseness=2] (3.5,2.5)
        ;
        \node [below] at (-1.625,0) {\tiny $\varphi$};
        \node [below] at (0,0.3) {\tiny $p^r$};
    \end{tikzpicture}
    =
    \begin{tikzpicture}[vcenter=0.2em, scale=0.2]
        \path[string]
        (2,0) edge[densely dashed] (2,4)
        (3,0) edge[-{>[length=0.3em]},ultra thick] (3,4)
        ;
        \diagcup{-1,4}{0,2.75}{1,4}[<-]
        \node [below] at (2,0) {\tiny $\varphi$};
        \node [below] at (3.5,0.3) {\tiny $p^r$};
    \end{tikzpicture}  .
\end{align*}

\noindent The second and third equations are direct consequences of (\ref{eq:heisenberg-diag-rel}) and Lemma \ref{lem:dashed-commutes}. For the first equation we have
\begin{align*}
    \begin{tikzpicture}[vcenter=0.2em,scale=0.2]
        \path[string]
        (-1,4) edge[<-] (3,0)
        (0,0) edge[densely dashed] (2,4)
        (1,0) edge[-{>[length=0.3em]},ultra thick] (3,4)
        ;
        \node [below] at (0,0) {\tiny $\varphi$};
        \node [below] at (1.5,0.3) {\tiny $p^r$};
        \node [dot] at (0,3) {};
    \end{tikzpicture}
    =
    \begin{tikzpicture}[vcenter=0.2em,scale=0.2]
        \path[string]
        (0,4) edge[<-] (4,0)
        (0,0) edge[densely dashed] (2,4)
        (2,0) edge[-{>[length=0.3em]},ultra thick] (4,4)
        ;
        \node [below] at (0,0) {\tiny $\varphi$};
        \node [below] at (2.5,0.3) {\tiny $p^r$};
        \node [dot] at (2,2) {};
    \end{tikzpicture}
    =
    \begin{tikzpicture}[vcenter=0.2em,scale=0.2]
        \path[string]
        (0,4) edge[<-] (4,0)
        (0,0) edge[densely dashed] (2,4)
        (1,0) edge[-{>[length=0.3em]},ultra thick] (3,4)
        ;
        \node [below] at (0,0) {\tiny $\varphi$};
        \node [below] at (1.5,0.3) {\tiny $p^r$};
        \node [dot] at (3,1) {};
    \end{tikzpicture}
    +
    \Big(
    \begin{tikzpicture}[vcenter=0.1em,scale=0.2]
        \tikzfixsize{(-0.5,0)}{(3,3)}
        \path[string]
        (0,0) edge[densely dashed] (2,3)
        (2,0) edge[->] (0,3)
        (3,0) edge[-{>[length=0.3em]},ultra thick] (3,3)
        ;
        \node [below] at (0,0) {\tiny $\varphi$};
        \node [below] at (3.25,0.3) {\tiny $p^r$};
    \end{tikzpicture}
    \Big)
    \circ_v
    \Big(
    \begin{tikzpicture}[vcenter=-0.25em,scale=0.2]
        \tikzfixsize{(-0.5,0)}{(0.5,3)}
        \path[string] (0,0) edge[densely dashed] (0,3);
        \node [below] at (0,0) {\tiny $\varphi$};
    \end{tikzpicture}
    \op{Tr}_{S_{p^r-1},S_{p^r}}(f)
    \Big)
    =
    \begin{tikzpicture}[vcenter=0.2em,scale=0.2]
        \path[string]
        (0,4) edge[<-] (4,0)
        (0,0) edge[densely dashed] (2,4)
        (1,0) edge[-{>[length=0.3em]},ultra thick] (3,4)
        ;
        \node [below] at (0,0) {\tiny $\varphi$};
        \node [below] at (1.5,0.3) {\tiny $p^r$};
        \node [dot] at (3,1) {};
    \end{tikzpicture},
\end{align*}
and for the fourth equation we have
\begin{align*}
    \begin{tikzpicture}[vcenter=0.25em, scale=0.2]
        \path[string]
        (-1.625,0) edge[densely dashed] (3.375,5)
        (-0.375,0) edge[-{>[length=0.3em]},ultra thick] (4.625,5)
        (3.5,2.5) edge (1,5)
        (2.5,1.5) edge[->] (-1,5)
        (2.5,1.5) edge[out=-45, in=-45, looseness=2] (3.5,2.5)
        ;
        \node [below] at (-1.625,0) {\tiny $\varphi$};
        \node [below] at (0,0.3) {\tiny $p^r$};
    \end{tikzpicture}
    =
    \begin{tikzpicture}[vcenter=0.1em, scale=0.2]
        \path[string]
        (0,0) edge[densely dashed] (4,4)
        (3.5,2.5) edge (2,4)
        (2.5,1.5) edge[->] (0,4)
        (2.5,1.5) edge[out=-45, in=-45, looseness=2] (3.5,2.5)
        (5.25,0) edge[-{>[length=0.3em]},ultra thick] (5.25,4)
        ;
        \node [below] at (0,0) {\tiny $\varphi$};
        \node [below] at (5.5,0.3) {\tiny $p^r$};
    \end{tikzpicture}
    +
    \Big(
    \begin{tikzpicture}[vcenter=0.1em, scale=0.2]
        \tikzfixsize{(-0.5,0)}{(4,3)}
        \path[string]
        (0,0) edge[->] (0,3)
        (1,0) edge[densely dashed] (3,3)
        (3,0) edge[<-] (1,3)
        (4,0) edge[-{>[length=0.3em]},ultra thick] (4,3)
        ;
        \node [below] at (1,0) {\tiny $\varphi$};
        \node [below] at (4.25,0.3) {\tiny $p^r$};
    \end{tikzpicture}
    \Big)
    \circ_v
    \Big(
    \begin{tikzpicture}[vcenter=-0.25em,scale=0.2]
        \tikzfixsize{(-0.5,0)}{(1.5,3)}
        \path[string]
        (0,0) edge[->] (0,3)
        (1,0) edge[densely dashed] (1,3)
        ;
        \node [below] at (1,0) {\tiny $\varphi$};
    \end{tikzpicture}
    \op{Tr}_{S_{p^r-1},S_{p^r}}(f')
    \Big)
    \circ_v
    \Big(
    \begin{tikzpicture}[vcenter=0.1em,scale=0.2]
        \tikzfixsize{(-0.5,0)}{(3.5,3)}
        \path[string]
        (1,0) edge[densely dashed] (1,3)
        (3,0) edge[-{>[length=0.3em]},ultra thick] (3,3)
        ;
        \diagcup{0,3}{1,1.5}{2,3}[<-]
        \node [below] at (1,0) {\tiny $\varphi$};
        \node [below] at (3.25,0.3) {\tiny $p^r$};
    \end{tikzpicture}
    \Big)
    =
    \begin{tikzpicture}[vcenter=0.2em, scale=0.2]
        \path[string]
        (2,0) edge[densely dashed] (2,4)
        (3,0) edge[-{>[length=0.3em]},ultra thick] (3,4)
        ;
        \diagcup{-1,4}{0,2.75}{1,4}[<-]
        \node [below] at (2,0) {\tiny $\varphi$};
        \node [below] at (3.5,0.3) {\tiny $p^r$};
    \end{tikzpicture},
\end{align*}
where $f,f'$ are as in Proposition \ref{prop:commutes-modulo-traces} and the final equalities apply (\ref{eq:dotted-kills-trace}).  Here $\circ_v$ denotes vertical composition, i.e. $f \circ_v g$ is given by stacking diagram $f$ above~$g$.
\end{proof}

\bibliographystyle{amsalpha}
	
\end{document}